\newcounter{results}[section] 
\newcounter{steps}[section] 
\theoremstyle{plain}
\newtheorem{theorem}[results]{Theorem}
\newtheorem{lemma}[results]{Lemma}
\newtheorem{proposition}[results]{Proposition}
\newtheorem{corollary}[results]{Corollary}
\newtheorem{fact}[results]{Fact}
\newtheorem*{theorem*}{Theorem}
\newtheorem*{lemma*}{Lemma}
\newtheorem*{proposition*}{Proposition}
\newtheorem*{corollary*}{Corollary}
\newtheorem*{exercise*}{Exercise}
\newtheorem*{fact*}{Fact}
\theoremstyle{remark}
\newtheorem{remark}[results]{Remark}
\newtheorem*{remark*}{Remark}
\newtheorem*{question*}{Question}
\theoremstyle{definition}
\newtheorem{definition}[results]{Definition}
\newtheorem*{definition*}{Definition}
\newtheorem*{example*}{Example}
\numberwithin{equation}{section}
\newcommand{\Z}{\ensuremath{\mathbb Z}}
\newcommand{\R}{\ensuremath{\mathbb R}}
\newcommand \eps{\ensuremath{\varepsilon}}
\DeclareMathOperator{\tr}{tr}
\DeclareMathOperator{\supp}{supp}
\DeclareMathOperator{\sym}{Sym}
\DeclareMathOperator{\im}{Image}
\let\div\undefined
\newcommand{\div}{\ensuremath{\mathrm{div}}} 
\DeclareMathOperator{\Ric}{Ric} 
\DeclareMathOperator{\inj}{inj} 
\newcommand{\met}{\ensuremath{\mathscr R}} 
\newcommand{\metaf}{\ensuremath{\mathscr R}^{\text{AF}}}
\newcommand{\mM}{\ensuremath{\mathcal M}} 
\newcommand{\mK}{\ensuremath{\mathcal K}} 
\newcommand{\mL}{\ensuremath{\mathcal L}} 
\newcommand{\mR}{\ensuremath{\mathcal R}} 
\newcommand{\mS}{\ensuremath{\mathcal S}} 
\newcommand{\mC}{\ensuremath{\mathcal C}} 
\newcommand{\mO}{\ensuremath{\mathcal O}} 
\newcommand{\mU}{\ensuremath{\mathcal U}} 
\newcommand{\mt}{\ensuremath{\mathfrak t}}
\newcommand{\Fix}{\text{Fix}} 
\newcommand{\Diff}{\text{Diff}} 
\newcommand{\f}{\phi}
\newcommand{\e}{\varepsilon}
\newcommand{\be}{\begin{equation}}
\newcommand{\ee}{\end{equation}}
     \title[Constrained deformations of PSC metrics, II]{Constrained Deformations of Positive Scalar curvature metrics, II}
     \author{Alessandro Carlotto and Chao Li}
     \address{ \noindent Alessandro Carlotto: 
     	\newline ETH D-Math, R\"amistrasse 101, 8092 Z\"urich, Switzerland 
     	 \newline \textit{E-mail address: alessandro.carlotto@math.ethz.ch} 
     	 \newline \newline \indent Chao Li: 
     	 \newline Princeton University - Department of Mathematics, Fine Hall, 304 Washington Road, 08544 Princeton, United States of America
     	 \newline \textit{E-mail address: chaoli@math.princeton.edu} 
     	 \newline Courant Institute of Mathematical Sciences - New York University, 251 Mercer St, 10012 New York, United States of America
\newline \textit{E-mail address: chaoli@nyu.edu}
}
\begin{document}
     	
     	\begin{abstract}
     	We prove that various spaces of constrained positive scalar curvature metrics on compact 3-manifolds with boundary, when not empty, are contractible. The constraints we mostly focus on are given in terms of local conditions on the mean curvature of the boundary, and our treatment includes both the mean-convex and the minimal case. We then discuss the implications of these results on the topology of different subspaces of asymptotically flat initial data sets for the Einstein field equations in general relativity.
     	\end{abstract}

     	\maketitle      
     
     \tableofcontents
							
	\section{Introduction} \label{sec:intro}
	
	Let $X^n$ be a compact, connected and orientable $n$-dimensional manifold with boundary $\partial X$. 
The endowment of a Riemannian metric $g$ on $X$ allows, among other things, to define the following two geometric functions:
	\[
	R_g: X\to\mathbb{\R},  \ \  \ \ H_g:\partial X\to\R,
	\]
	describing the \emph{scalar curvature} of $X$ and the \emph{mean curvature} of its boundary, respectively.
	The interplay between these two functions, in its diverse forms, and the related study of spaces of positive scalar curvature metrics on manifolds with boundary has proven to be rather subtle and partly elusive, as it is witnessed by the extensive list of open problems and far-reaching conjectures proposed by Gromov in \cite{Gro18a, Gro18b} (see also \cite{Gro18book} for a more general contextualisation).

	We wish to continue here the study we initiated in \cite{CarLi19}, aimed at understanding the topology of spaces of metrics on $X$ defined by pairs of pointwise conditions given in terms of the two curvature functions defined above. In particular, in this article we will focus on the case when $X$ has dimension equal to three, where one can employ Hamilton's Ricci flow, and (as we are about to describe) various recent generalisations thereof, to derive remarkably strong conclusions.
	
	More specifically, we will couple the requirement of \emph{positive scalar curvature} (henceforth abbreviated PSC) with the requirement that the boundary be mean-convex (possibly: \emph{weakly} mean-convex) or minimal, and use the spaces $\met_{R>0,H>0}$, $\met_{R>0,H\geq 0}$ and $\met_{R>0,H=0}$ as references for our discussion.
	Besides the self-evident geometric significance, these are arguably among the most fundamental conditions that arise in the study of asymptotically flat initial data sets for the Einstein field equations under natural physical axioms (such as the so-called \emph{dominant energy condition}) and the customary \emph{outer trapping condition} on the boundary; we will indeed come back to this theme later in the introduction.
	
	\
	
	We recall from Section 2 of \cite{CarLi19} that, for any given compact manifold with boundary $X$, any one of these three spaces of metrics is empty if and only if the other two are; in fact from Theorem 1.1 therein we actually have an explicit topological description of those compact manifolds $X$ for which these spaces are \emph{not} empty: those must be (interior) connected sums of handlebodies (of any genus $\gamma\geq 0$), spherical space forms and a certain number of copies of $S^2\times S^1$. 
	
	 By combining elliptic deformations techniques and the theory of singular Ricci flows (as developed by Kleiner-Lott \cite{KleLot17, KleLot18} and then by Bamler-Kleiner, see in particular \cite{BamKle17,BamKle19}) we will prove the following statement:
	
	\begin{theorem}\label{thm:MainOpen}
	Let $X$ be a compact 3-manifold with boundary. Then any of the three spaces $\met_{R>0, H>0}, \met_{R>0,H\geq 0}, \text{or} \ \met_{R>0,H=0}$ consisting of positive scalar curvature metrics for which the boundary is, respectively, mean-convex, weakly mean-convex, or minimal is either empty or contractible.
	\end{theorem}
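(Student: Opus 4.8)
The plan is to reduce the statement to the case of a \emph{closed} $3$-manifold, and then to deduce it from a $\Z_2$-equivariant version of the Bamler--Kleiner contractibility theorem for spaces of positive scalar curvature metrics, which itself rests on the theory of singular Ricci flows. Throughout we assume the relevant spaces are non-empty (otherwise there is nothing to prove) and work in the $C^\infty$ topology; since all the spaces involved are open subsets of, or submanifolds of, Fr\'echet manifolds, they are ANRs, so it is enough to prove \emph{weak} contractibility.

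\emph{Step 1: reduction to the minimal case.} Using the elliptic (conformal and collar-bending) deformation techniques of \cite{CarLi19}, now applied to compact families of metrics rather than to a single path, we first upgrade the ``all-or-nothing'' principle recalled above and show that the three spaces $\met_{R>0,H>0}$, $\met_{R>0,H\geq 0}$ and $\met_{R>0,H=0}$ are pairwise weakly homotopy equivalent. Granting this, it suffices to prove that the minimal locus $\met_{R>0,H=0}=\met_{R>0,H=0}(X)$ is weakly contractible.

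\emph{Step 2: doubling.} Let $\hat X\eqdef X\cup_{\partial X}X$ be the closed double of $X$, endowed with the reflection involution $\sigma$, whose fixed-point set is the separating copy of $\partial X$. Given $g\in\met_{R>0,H=0}(X)$, a canonical, arbitrarily small modification of $g$ supported in a collar of $\partial X$ --- and leaving both $R>0$ and $H=0$ intact --- arranges that, in Fermi coordinates, all odd normal derivatives of the induced metrics vanish along $\partial X$; the doubled metric $\hat g$ is then smooth and $\sigma$-invariant on $\hat X$. One verifies that this realises a deformation retract of $\met_{R>0,H=0}(X)$ as the space $\met^{\sigma}_{R>0}(\hat X)$ of $\sigma$-invariant positive scalar curvature metrics on $\hat X$: indeed, conversely, any $\sigma$-invariant PSC metric on $\hat X$ restricts over $X$ to a PSC metric whose boundary, being the fixed-point set of an isometric involution, is totally geodesic, hence minimal. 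In particular $\met^{\sigma}_{R>0}(\hat X)$ is non-empty.

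\emph{Step 3: equivariant singular Ricci flow.} It remains to prove that $\met^{\sigma}_{R>0}(\hat X)$ is weakly contractible. Given a compact family of such metrics, we evolve each of them by the singular Ricci flow of Kleiner--Lott and Bamler--Kleiner \cite{KleLot17,KleLot18,BamKle17}. By the uniqueness theorem \cite{BamKle17}, the flow emanating from a $\sigma$-invariant metric is $\sigma$-invariant for all times; its scalar curvature lower bound is preserved; and, the initial metric having $R>0$, the flow becomes extinct in finite time. The continuous dependence of singular Ricci flows on their initial data \cite{BamKle19}, combined with finite-time extinction, then produces a contracting homotopy of $\met^{\sigma}_{R>0}(\hat X)$ --- exactly as in the closed, non-equivariant case --- provided one checks that the a priori estimates, the passage through the extinction time, and the gluing of local flows can all be performed compatibly with $\sigma$. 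This last point --- the equivariant implementation of the Bamler--Kleiner contractibility scheme --- is where the main difficulty lies: although the involution is propagated by the flow essentially for free (being forced by canonicity and uniqueness), one must verify that the delicate family-wise continuity estimates for singular Ricci flows, and the construction of the contracting homotopy itself, survive in the presence of a symmetry and, in particular, behave well near the fixed surface, where singularities of the flow may concentrate. By comparison, the collar normal-form reduction of Step 2 and the in-families comparison of the three boundary conditions in Step 1 are of a softer, elliptic nature, close to the circle of ideas of \cite{CarLi19}.
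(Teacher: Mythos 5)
Your overall architecture coincides with the paper's: straighten the boundary so as to reduce everything to the space of doubling metrics (equivalently, $\sigma$-invariant PSC metrics on the double), prove weak contractibility of that space by an equivariant implementation of the Bamler--Kleiner singular Ricci flow scheme, and upgrade weak contractibility to contractibility via an ANR/Whitehead argument. Two of your steps, however, contain genuine gaps. The first is your opening claim that the spaces are ANRs because they are ``open subsets of, or submanifolds of, Fr\'echet manifolds'': this is true for $\met_{R>0,H>0}$ but false for $\met_{R>0,H\geq 0}$ and $\met_{R>0,H=0}$, since the conditions $H\geq 0$ and $H=0$ are not open in the $C^\infty$ topology and these loci are not in any evident way submanifolds. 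The paper must prove the ANR property by hand (Theorem \ref{thm:ANRstructure}), exhibiting each of these two spaces as a retract of an explicitly constructed open neighbourhood in $\met$, the retraction being a conformal change $g\mapsto u^4g$ with $u=e^w$ supported in a collar, built from the negative part of $H_g$ (resp.\ from $H_g$ itself) and controlled via the continuity of the normal injectivity radius (Lemma \ref{lemma.eta}). Without some such argument the passage from weak to genuine contractibility is unjustified.

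The second gap is in your Step 2. A PSC metric with minimal boundary is in general neither totally geodesic along $\partial X$ nor does it satisfy the infinitely many conditions $\partial_t^{(2\ell+1)}g_t=0$ required for a smooth double; killing the second fundamental form and all higher odd normal derivatives while keeping $R>0$ is a Gromov--Lawson-type bending which, performed in families, does not obviously stay inside $\met_{R>0,H=0}$ throughout the deformation --- the paper explicitly records that its own earlier family versions of this construction fail to preserve the doubling condition along the isotopy. Accordingly the paper invokes the B\"ar--Hanke $h$-principle results to obtain only a \emph{weak homotopy equivalence} $\met_{R>0,D}\hookrightarrow\met_{R>0,H=0}$, not the deformation retraction you assert; your stronger claim of an ``arbitrarily small canonical modification'' realising a retract would need a proof. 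Finally, while Step 3 correctly identifies the equivariant Bamler--Kleiner scheme as the analytic core, note that even in the closed non-equivariant case the contracting homotopy is not a consequence of continuous dependence plus finite-time extinction: it requires the rounding ($\mR$-structure) and partial-homotopy machinery with backward-in-time induction, and it is precisely the non-canonical steps of that machinery that must be redone equivariantly.
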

	
	This conclusion mirrors the corresponding one for compact surfaces, which however builds on much more classical tools, of complex-analytic character, such as the uniformisation theorem (for a detailed argument, which builds on basic properties of the Beltrami equation, see \cite{CarWu21}). Such a picture should however be compared and contrasted with the higher-dimensional scenario, where these spaces of metrics are expected to be non-trivial from a homotopy-theoretic perspective: in particular, by analogy with the closed case (for which the reader may wish to consult the surveys \cite{Car21}, \cite{Wal17, Wal18}, \cite{Sch14} or \cite{RosSto01}) we do expect these spaces to have, in many cases of interest, infinitely many connected components.
	
	We explicitly note that Theorem \ref{thm:MainOpen} is already new and highly significant in the special case of the three-dimensional disk $D^3$, as at the moment there seems to be essentially no information at all about the homotopy groups of these spaces of metrics on $D^n$ for $n\geq 4$. In fact, there seems to be some consensus on the expectation that even just this study at the $\pi_0$-level (i.e. at the level of path-connected components) may be quite out of reach, possibly also for those values of $n$ for which the space of positive scalar curvature metrics on spheres (or, more generally, on compact simply-connected manifolds \emph{without boundary}) has been reasonably understood (e.g. $n=7$, see \cite{GL83}). So, the case of manifolds with boundary does arguably seem to exhibit some genuinely new challenges compared to the closed case.

	Somewhat more in detail, there are \emph{three} fundamental building blocks in the proof of Theorem \ref{thm:MainOpen}. We will now describe them and provide three reference statements whose combination directly implies such a result.
First of all, as we showed in \cite[Section 3]{CarLi19}, one needs to start by straightening the boundary in a controlled fashion so that (among other things) the scalar curvature is kept positive throughout the deformation process. 
	
		\begin{theorem}\label{thm:ReductionToDouble}
	Let $X$ be a compact 3-manifold with boundary. Then any of the three spaces $\met_{R>0, H>0}, \met_{R>0,H\geq 0}, \text{or} \ \met_{R>0,H=0}$, when not empty, is weakly homotopy equivalent to the space $\met_{R>0, D}$ of doubling metrics of positive scalar curvature. 
	\end{theorem}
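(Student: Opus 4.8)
The plan is to realise the space $\met_{R>0,D}$ of doubling metrics as the common homotopical core of the other three. Since a doubling metric is product‑like in a collar of $\partial X$, hence totally geodesic and a fortiori minimal along $\partial X$, one has the chain of inclusions
\[
\met_{R>0,D}\;\hookrightarrow\;\met_{R>0,H=0}\;\hookrightarrow\;\met_{R>0,H\geq 0}\;\hookleftarrow\;\met_{R>0,H>0}.
\]
As recalled above from \cite[Section 2]{CarLi19} the three ``$H$‑spaces'' are simultaneously empty or not, and the straightening discussed below extends this to $\met_{R>0,D}$, so we may assume all four spaces are non‑empty; it then suffices to prove that each of the three displayed inclusions is a weak homotopy equivalence, after which the theorem follows by composition. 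For an inclusion $A\hookrightarrow B$ I will either exhibit a homotopy $\Phi\colon B\times[0,1]\to B$ with $\Phi_0=\mathrm{id}_B$, $\Phi_1(B)\subseteq A$ and $\Phi_t(A)\subseteq A$ for all $t$ (which makes the inclusion a deformation retract, in particular a weak homotopy equivalence), or, when that is impossible, verify the compression criterion directly: every map of pairs $(D^k,S^{k-1})\to(B,A)$ is homotopic through maps of pairs to one landing in $A$, which by the long exact sequence of the pair forces $\pi_*(A)\to\pi_*(B)$ to be an isomorphism for every basepoint.

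The two inclusions $\met_{R>0,D}\hookrightarrow\met_{R>0,H=0}$ and $\met_{R>0,D}\hookrightarrow\met_{R>0,H\geq 0}$ I would settle by invoking, in a parametrized form, the boundary‑straightening procedure of \cite[Section 3]{CarLi19}: given a positive scalar curvature metric $g$ whose boundary is minimal (respectively weakly mean‑convex), that procedure produces a deformation of $g$, supported in a prescribed collar of $\partial X$, ending at a doubling metric, keeping the scalar curvature positive at all times, and keeping the boundary minimal (respectively weakly mean‑convex) throughout. The features I need in addition are that this deformation depends continuously on $g$ — so that it can be run simultaneously over a compact family of metrics — and that it preserves the subspace of doubling metrics (indeed can be arranged to fix it pointwise, since on a metric that is already product‑like in the chosen collar it does nothing). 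Granting this, for $B\in\{\met_{R>0,H=0},\met_{R>0,H\geq 0}\}$ and $A=\met_{R>0,D}$ one obtains a homotopy $\Phi$ as in the first alternative above, hence both inclusions are weak homotopy equivalences; this already disposes of the minimal and the weakly mean‑convex cases of the theorem.

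For the last inclusion $\met_{R>0,H>0}\hookrightarrow\met_{R>0,H\geq 0}$ no deformation of the ambient space into the subspace can exist, since doubling metrics — and more generally the product‑like metrics produced by the straightening — have $H\equiv 0$ and so lie outside $\met_{R>0,H>0}$; I would therefore check the compression criterion directly on compact families. Given $f\colon(D^k,S^{k-1})\to(\met_{R>0,H\geq 0},\met_{R>0,H>0})$, fix a collar of $\partial X$ and, for a small $\delta\geq 0$, replace each metric $g=f(z)$ by a metric agreeing with $g$ away from the collar and obtained by rescaling the induced metrics on the collar leaves, near $\partial X$, by a factor $1+O(\delta)$ chosen so that the boundary mean curvature increases everywhere by a fixed amount comparable to $\delta$. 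This perturbs the metric by $O(\delta)$ in $C^2$ uniformly over the compact — hence uniformly positively scalar‑curved and $C^2$‑bounded — set $f(D^k)$, so for $\delta$ small the scalar curvature stays positive. Letting $\delta$ run from $0$ to such a value produces a homotopy from $f$, through maps of pairs, to a map with image in $\met_{R>0,H>0}$ (and staying in $\met_{R>0,H>0}$ over $S^{k-1}$ throughout, since there the mean curvature was already positive). Hence this inclusion too is a weak homotopy equivalence, and composing with the weakly mean‑convex case settles the strictly mean‑convex case.

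The real difficulty is concentrated entirely in the straightening step, and is imported from \cite[Section 3]{CarLi19}: one is forced to push the boundary toward being totally geodesic — the unfavourable direction for keeping $R_g>0$ — while never letting the scalar curvature drop to zero, and this requires the quantitative construction of that section rather than any soft manipulation. Everything surrounding it is bookkeeping: upgrading that construction to one that is continuous over compact families of metrics, checking that it respects minimality (respectively weak mean‑convexity) and fixes the doubling locus, and assembling these inputs via the elementary homotopy‑theoretic criteria above. As is customary for these infinite‑dimensional spaces of metrics, it is natural — and entirely sufficient for the intended applications — to phrase the conclusion as a \emph{weak} homotopy equivalence.
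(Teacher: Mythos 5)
Your overall strategy (reduce to showing that each inclusion in the chain induces a weak homotopy equivalence, via deformation retractions or the compression criterion for maps of pairs) matches the paper's framework, and your treatment of $\met_{R>0,H>0}\hookrightarrow\met_{R>0,H\geq 0}$ is a plausible sketch of what a direct perturbation argument would look like. But the load-bearing step — the two inclusions of $\met_{R>0,D}$ — contains a genuine gap, and it is exactly the one the paper is at pains to point out. You assert that the boundary-straightening of \cite[Section 3]{CarLi19} can be upgraded to a parametrized deformation that, in addition to preserving $R>0$ and the relevant mean-curvature condition, \emph{preserves the doubling locus} (indeed fixes it, "since on a metric that is already product-like in the chosen collar it does nothing"). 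Two problems: first, a doubling metric is not product-like in a collar — the defining condition \eqref{eq:DoublingCond} only kills the odd-order normal derivatives of $g_t$ at $t=0$, so the Gromov--Lawson-type bending acts nontrivially on such metrics; second, and decisively, the paper states explicitly that the isotopies obtained from \cite[Section 3]{CarLi19}, even in their family version, "do not preserve the doubling boundary conditions throughout the deformation." Without that preservation, your homotopy of a map of pairs $(D^k,S^{k-1})\to(\met_{R>0,H=0},\met_{R>0,D})$ is not a homotopy through maps of pairs (the boundary sphere leaves $\met_{R>0,D}$ at intermediate times), so the compression criterion does not apply and neither inclusion involving $\met_{R>0,D}$ is established. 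This is not bookkeeping: it is the reason the authors abandon the elliptic straightening for these two arrows and instead invoke the $h$-principle-based deformation results of B\"ar--Hanke (Theorems 32, 33 and Corollary 34 of \cite{BarHan20}, applied with $\sigma=0$, $h_0=0$), whose whole point is that they respect the doubling boundary condition along the deformation. To repair your argument you would either have to reprove a version of the straightening that genuinely fixes (or at least preserves) $\met_{R>0,D}$ throughout, or cite the B\"ar--Hanke results as the paper does.
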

	
	  We refer the reader to Section \ref{sec:Elliptic} for the definition of doubling metrics, which are (roughly speaking) those that are `totally geodesic to all (odd) orders' i.e. those metrics that can be reflected across the boundary of $X$ thereby determining a smooth Riemannian metric on the closed manifold one obtains as \emph{double} of $X$. We note that Theorem \ref{thm:ReductionToDouble} follows from the very recent deformation results in \cite{BarHan20}, and we refer the reader again to Section \ref{sec:Elliptic} for a more specific, comparative discussion of the two methodologies.


	 Once the boundary is straightened, our initial problem can essentially be rephrased as a question about spaces of positive scalar curvature metrics subject to an equivariance constraint. We then work within the category of \emph{reflexive 3-manifold}, which we had introduced in Section 4 of \cite{CarLi19}, and adapt to that context the methodology designed by Bamler-Kleiner in \cite{BamKle19}.
	 Thus, the second fundamental block is the aforementioned employment of parabolic techniques, specifically of the singular Ricci flow, to prove the triviality of all homotopy groups of spaces of positive scalar curvature metrics with doubling boundary conditions:

	\begin{theorem}\label{thm:ContractDoubling}
	Let $X$ be a compact 3-manifold with boundary. Then the space $\met_{R>0, D}$ consisting of doubling metrics of positive scalar curvature is either empty or weakly contractible (that is to say: weakly homotopy equivalent to a point). 
	\end{theorem}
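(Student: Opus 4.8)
The plan is to reinterpret the statement as one about a $\mathbb{Z}_2$-equivariant space of positive scalar curvature metrics on a \emph{closed} $3$-manifold, and then to run the singular Ricci flow argument of Bamler--Kleiner \cite{BamKle19} equivariantly.

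\emph{Step 1: passage to the double.} Let $\hat{X}=X\cup_{\partial X}X$ be the double of $X$, endowed with the canonical reflection involution $\sigma\colon\hat{X}\to\hat{X}$, whose fixed-point set $\Fix(\sigma)$ is the image of $\partial X$. By the definition of doubling metric recalled in Section~\ref{sec:Elliptic}, reflection across the boundary identifies $\met_{R>0,D}$ with the space $\met^{\sigma}_{R>0}(\hat{X})$ of $\sigma$-invariant smooth Riemannian metrics of positive scalar curvature on $\hat{X}$, topologised as a subspace of the space of all metrics on $\hat{X}$; this is precisely the setting of reflexive $3$-manifolds from Section~4 of \cite{CarLi19}. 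If $\met^{\sigma}_{R>0}(\hat{X})$ is non-empty, then $\hat{X}$ carries a PSC metric, hence is a connected sum of spherical space forms and copies of $S^2\times S^1$, and in particular the Ricci flow on it becomes extinct in finite time. It therefore suffices to prove that $\met^{\sigma}_{R>0}(\hat{X})$ is weakly contractible, i.e.\ that $\pi_k\big(\met^{\sigma}_{R>0}(\hat{X})\big)=0$ for every $k\geq 0$.

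\emph{Step 2: the singular Ricci flow is automatically equivariant.} Fix $k$ and a continuous map $\Phi\colon S^k\to\met^{\sigma}_{R>0}(\hat{X})$ (or, for the inductive step, a map of a disk relative to its boundary); we must contract it. For each parameter $s$ let $\mathcal{M}_s$ be the singular Ricci flow emanating from $\Phi(s)$, which exists by \cite{KleLot17} and is unique and continuously dependent on the initial metric by \cite{BamKle17, BamKle19}. Because $\Phi(s)$ is $\sigma$-invariant, \emph{uniqueness} of the singular Ricci flow forces $\sigma$ to extend to an isometric involution of the whole space-time $\mathcal{M}_s$; hence every time-slice $\mathcal{M}_s(t)$ carries a $\sigma$-invariant metric, $\sigma$ permutes the strata of the singular set, and positivity of the scalar curvature is preserved along the flow. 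In other words, the output is a continuous family of singular Ricci flows in the reflexive category, and the only thing left to do is to harvest from it an equivariant contraction.

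\emph{Step 3: equivariant contraction, and the main difficulty.} One now follows the Bamler--Kleiner scheme: partition $S^k$ into finitely many regions on which the flows are comparably behaved, use the canonical-neighbourhood structure of singular Ricci flows to excise the high-curvature necks and caps and graft in standard models, and thereby deform $\Phi$ — continuously, and inside $\met_{R>0}(\hat{X})$ — first to a family of locally standard metrics (spherical space-form pieces joined along round necks) and finally to a constant map. Each surgery can be performed $\sigma$-equivariantly: a surgery region disjoint from $\Fix(\sigma)$ is interchanged by $\sigma$ with a second region, and one operates on the pair symmetrically; a surgery region meeting $\Fix(\sigma)$ does so in a reflection-symmetric way, and one uses a $\sigma$-symmetric model. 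Consequently the whole contraction stays in $\met^{\sigma}_{R>0}(\hat{X})$, which gives the vanishing of all $\pi_k$. The conceptual input — equivariance of the flow — is free; the real work, and the expected main obstacle, is to make all of the \emph{choices} in the Bamler--Kleiner argument (cutoff scales and times, the centres of the surgeries, the gluing diffeomorphisms, and above all the concluding deformation, which is an equivariant form of the contractibility of spaces of constant-curvature metrics on spherical space forms, i.e.\ of the Generalised Smale Conjecture also settled in \cite{BamKle19}) simultaneously continuous \emph{and} $\sigma$-equivariant. The delicate case is that of surgery regions straddling $\Fix(\sigma)$, whose local model is a half-neck or half-cap with totally geodesic boundary rather than a genuine neck or cap; handling these is exactly where the reflexive-manifold formalism of \cite{CarLi19} and a symmetric refinement of Bamler--Kleiner's local analysis are required.
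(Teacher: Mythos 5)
Your Steps 1 and 2 coincide with the paper's: the identification of $\met_{R>0,D}$ with the space of reflexive PSC metrics on the double, and the equivariance of the singular Ricci flow obtained from uniqueness (Theorem \ref{thm:UniquenessEquivariance}; the paper also routes this through Dinkelbach--Leeb's equivariant surgical flows and the Kleiner--Lott limiting construction). The strategy of Step 3 --- run Bamler--Kleiner equivariantly, treating symmetrically the regions swapped by $\sigma$ and those straddling $\Fix(\sigma)$ --- is also the paper's. However, your description of what the Bamler--Kleiner mechanism actually is does not match the argument you would need to equivariantize, and this is more than a cosmetic point. Their proof performs no surgery and grafts no standard models into the metrics; instead one first constructs a family of $\mR$-structures (a rounding of the metric and of the time vector field in high-curvature regions, organised by spherical structures), and then runs a \emph{backward-in-time induction} from the extinction time, building \emph{partial homotopies} whose domains change across simplices of a fine subdivision of the parameter space by enlarging neck-like regions or removing $3$-disks near Bryant tips; positivity of scalar curvature is propagated not directly but through the notion of (reflexively) PSC-conformal metrics. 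In particular the concluding step is not an appeal to the Generalised Smale Conjecture --- no such input is used; the induction simply terminates at $T=0$ with a surjective partial homotopy, which yields the deformation to conformally PSC metrics, and a partition-of-unity argument on conformal factors finishes the job.

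Consequently the non-canonical choices that genuinely have to be made $\sigma$-equivariant are different from the ones you list. They are: (i) in the rounding process, the local $O(3)$-actions at Bryant tips lying on $\Fix(f)$ and at necks whose central leaf is either a component of $\Fix(f)$ or meets the fixed locus orthogonally in a circle (these being the only possibilities, by the classification of totally geodesic surfaces in round cylinders), together with the observation that singular fibers diffeomorphic to $\mathbb{R}\mathbb{P}^2$ never meet $\Fix(f)$; (ii) the extension and disk-removal operations on partial homotopies, where the removed disks either come in $f$-swapped pairs away from $\Fix(f)$ or are individually symmetric under the standard reflection; (iii) the equivariant versions of the PSC-conformal extension and excision lemmas. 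Your outline identifies the correct overall strategy, but as written Step 3 would have you equivariantize a surgery-based argument that is not the one actually available, so the sketch does not yet close.
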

	
	As it is well-known, all milestones in our understanding of the Ricci flow (intended in its broadest sense) have led to significant advances on the study of spaces of metrics satisfying certain curvature conditions: Hamilton's original result \cite{Hamilton1982threemanifolds} immediately implies that the \emph{moduli space} of metrics of positive Ricci curvature is contractible, Perelman's theory of Ricci flow with surgery \cite{Per02, Per03a, Per03b} (cf. \cite{KleLot08} and \cite{MorTia07}) has then strikingly been employed by Marques \cite{Mar12} to prove that the moduli space of metrics of positive scalar curvature is path-connected and finally, almost a decade later, Bamler and Kleiner proved that on a compact orientable 3-manifold (without boundary) the space of PSC metrics is either empty or contractible. This breakthrough builds on the definition of a \emph{canonical} version of the Ricci flow (indeed, the aforementioned singular Ricci flow) together with several beautiful ideas about how to use it to design metric deformations, most prominently that of \emph{partial homotopy}, which we shall describe later. We also note that both the argument in \cite{Mar12} and the one in  \cite{BamKle19} crucially build on a \emph{backward-in-time induction scheme}, although the way this idea is actually implemented in the two cases is patently different: in the former the inductive assumption is used `at the surgical times' while in the latter one proceeds from the extinction time backwards by a small, fixed time interval $\Delta T$. 
Although the work by Bamler-Kleiner plays an essential role as an input for the present article, it is a matter of fact that not all arguments developed there are, so to say, canonical, and thus there do need to be suitable modifications when working in the equivariant setting. We will aim at indicating those modifications quite transparently, while avoiding to repeat all that can be easily transplanted from there. 

The proof of Theorem \ref{thm:ContractDoubling} is developed in Section \ref{sec:OutlineParab}, Section \ref{sec:Rstruct} and Section \ref{sec:PartialHom} with some additional, technical material relocated in three appendices for the sake of expository convenience. In order to not overload the present introduction, we have in fact decided to devote Section \ref{sec:OutlineParab} to a detailed outline of the proof of such a theorem, with all key definitions and an accurate description of how all tools described later come together to complete the argument.

Lastly, we note \emph{en passant} that Theorem \ref{thm:ContractDoubling} can be interpreted purely at the level of compact manifolds without boundary, and provides a partial description about the way a special class of equivariant PSC metrics sits inside the space all PSC metrics on the same background manifold. We expect this theme to be further explored (e.g. for different group actions) in the near future.

Let us now describe the third building block for Theorem \ref{thm:MainOpen}, which has to do with the general question about what structural assumptions (or properties) would ensure that a continuous map that induces isomorphisms at the level of all homotopy groups is actually a homotopy equivalence. While it is well-known that this is not always the case, classical results due to Whitehead (see \cite{Whi49a,Whi49b}) guarantee the conclusion to hold whenever the topological spaces involved are CW complexes. In general, to the aim of studying infinite-dimensional manifolds (e.g. within the realm of global non-linear analysis), other conditions have been singled out and we do verify that indeed our spaces are good enough for such an improvement to hold.

	\begin{theorem}\label{thm:ANRstructure}
	Let $X$ be a compact 3-manifold with boundary. Then any of the three spaces $\met_{R>0, H>0}, \met_{R>0,H\geq 0}, \text{or} \ \met_{R>0,H=0}$, when not empty, is an absolute neighborhood retract.
	\end{theorem}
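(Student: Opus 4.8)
The plan is to realise each of the three spaces as a metrizable topological space that is locally an absolute retract (AR) or, at worst, locally an absolute neighborhood retract (ANR), and then to conclude by Hanner's theorem. Throughout we endow the space $\Gamma(\sym^2 T^*X)$ of smooth symmetric $2$-tensors on $X$ with the $C^\infty$ topology, under which it is a Fréchet space, hence metrizable; all three spaces in the statement are subsets of it, hence metrizable. The set $\met_{R>0}$ of positive scalar curvature metrics (with \emph{no} constraint on $\partial X$) is an \emph{open} subset of $\Gamma(\sym^2 T^*X)$, since positive definiteness and $\min_X R_g>0$ are open conditions. I will use three standard facts from infinite-dimensional topology: (i) every convex subset of a metrizable locally convex vector space is an AR (Dugundji); (ii) an open subset of an ANR is an ANR; (iii) a metrizable space admitting an open cover by ANRs is an ANR (Hanner). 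By (i) and (ii), $\met_{R>0}$ is an ANR, being open in the Fréchet space $\Gamma(\sym^2 T^*X)$; and $\met_{R>0,H>0}$, being the intersection of $\met_{R>0}$ with the open set $\{g : \min_{\partial X} H_g>0\}$, is an ANR by (ii). This disposes of the mean-convex case.

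For the other two cases the crucial structural input is that the mean-curvature map $\Phi\colon\met_{R>0}\to C^\infty(\partial X)$, $\Phi(g)=H_g$, is a submersion. Its linearization $D\Phi_g$ is a continuous linear surjection possessing a continuous — indeed tame — linear right inverse $S_g$: for $u\in C^\infty(\partial X)$ one takes $S_g(u)$ to be a symmetric $2$-tensor supported in a fixed collar of $\partial X$, vanishing on $T\partial X$ along $\partial X$ and with normal derivative there a prescribed multiple of $u$ times the boundary metric, so that $D\Phi_g(S_g(u))=u$ by a direct computation. I would then invoke the Nash--Moser inverse function theorem in Hamilton's tame Fréchet category (or, equivalently, pass to the Hölder completions $C^{k,\alpha}$, apply the Banach implicit function theorem, and verify that the resulting charts — being assembled from smooth tame maps and fixed collar data — restrict to $C^\infty$-homeomorphisms) to obtain, for each $g_0\in\met_{R>0}$, an open neighbourhood $\mathcal U$ of $g_0$ and a homeomorphism $\mathcal U\xrightarrow{\ \sim\ }\mathcal V\times\mathcal W$, where $\mathcal V$ is open in the closed, complemented subspace $\ker D\Phi_{g_0}\subseteq\Gamma(\sym^2 T^*X)$ and $\mathcal W$ is a convex open neighbourhood of $H_{g_0}$ in $C^\infty(\partial X)$, under which $\Phi$ becomes the projection onto $\mathcal W$.

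Granting this normal form, the conclusion follows quickly. If $H_{g_0}=0$, then under the above homeomorphism $\met_{R>0,H=0}\cap\mathcal U$ corresponds to $\mathcal V\times\{0\}$, an open subset of the Fréchet space $\ker D\Phi_{g_0}$; thus $\met_{R>0,H=0}$ is a Fréchet manifold and in particular is covered by open ARs, so it is an ANR by (iii). If $H_{g_0}\geq 0$, then — after shrinking $\mathcal W$ to be convex — $\met_{R>0,H\geq 0}\cap\mathcal U$ corresponds to $\mathcal V\times(\mathcal W\cap\mathcal C)$, with $\mathcal C=\{u\in C^\infty(\partial X) : u\geq 0\text{ on }\partial X\}$ a closed convex cone; being a product of convex sets it is convex, hence an AR by (i). Again by (iii), $\met_{R>0,H\geq 0}$ is an ANR. (Combined with Theorem~\ref{thm:MainOpen}, each nonempty space is then even an AR.)

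The step I expect to be the main obstacle is the rigorous justification of this submersion normal form in the $C^\infty$ category: one must check that $\Phi$ is a smooth tame map, that the right inverses $S_g$ form a smooth tame family in $g$ (automatically staying in the \emph{open} set $\met_{R>0}$), and hence that Hamilton's implicit function theorem applies — or, alternatively, that the charts built over the Banach spaces $C^{k,\alpha}(\partial X)$ are mutually compatible and restrict to homeomorphisms for the limiting smooth topology. This amounts to careful bookkeeping around the elliptic collar construction, the same kind of construction that underlies the boundary-straightening results behind Theorem~\ref{thm:ReductionToDouble}, and should involve no difficulty beyond those already handled in the first part of this series.
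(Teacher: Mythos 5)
Your treatment of $\met_{R>0,H>0}$ coincides with the paper's (open subset of the Fr\'echet space of metrics, hence an ANR). For the other two spaces, however, the entire argument rests on the submersion normal form for $\Phi\colon g\mapsto H_g$, and this is precisely where there is a genuine gap, not mere bookkeeping. The construction of pointwise right inverses $S_g$ of $D\Phi_g$ is fine, but it does not yield the product chart: to straighten $\Phi$ one must invert the full map $h\mapsto(\pi h,\,D\Phi_g h)$ for all $g$ in a neighbourhood of $g_0$, and since $S_{g_0}(u)$ is a collar tensor whose boundary jets are prescribed pointwise in $u$ while $D\Phi_g$ involves first derivatives of its argument along $\partial X$, the operator $D\Phi_g\circ S_{g_0}$ differs from the identity by a \emph{first-order differential operator} with small coefficients. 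Such a perturbation is not invertible by a Neumann series in $C^\infty$ (each iterate loses a derivative), so the tame invertibility hypothesis of Hamilton's theorem is exactly what remains to be proved, not a routine verification; Hamilton's surjectivity statement alone gives a local section of $\Phi$, which does not produce a product chart for a nonlinear map. The alternative Banach route fares no better as stated: $\Phi$ maps $C^{k,\alpha}$ metrics to $C^{k-1,\alpha}$ boundary functions, so the implicit function theorem produces charts between spaces of different regularity, and the compatibility and $C^\infty$-convergence of these charts is an unresolved loss-of-derivatives issue. In short, the step you flag as "the main obstacle" is the theorem, and the tools you name do not obviously close it.

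The paper avoids all of this by never constructing charts: it exhibits each of $\met_{R>0,H\geq 0}$ and $\met_{R>0,H=0}$ as a \emph{retract of an explicit open subset} of $\met$ and invokes Proposition \ref{prop:Criteria}. Concretely, one defines an open set $\met_1$ (resp.\ $\met_2$) by requiring $R_g>0$ and that $(H_g)_-$ (resp.\ $\|H_g\|_{C^2(\partial X)}$) be small compared with $\min_X R_g$ and a continuously chosen lower bound $\eta(g)$ for the normal injectivity radius of $\partial X$; the retraction is the explicit conformal change $F(g)=e^{4w}g$ with $w$ built from the (negative part of the) boundary mean curvature, cut off in the collar of width $\eta(g)$, and the quantitative constraints defining $\met_1,\met_2$ guarantee $R_{F(g)}>0$. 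I would recommend replacing your chart construction by such a global conformal retraction: it requires only the continuity of $g\mapsto\eta(g)$ (itself needing an argument, supplied in Appendix \ref{sec:NormalInjRad}) and elementary estimates, and it bypasses the Nash--Moser machinery entirely. Your reduction of the weakly mean-convex case to a convex cone, and the use of Hanner's theorem, would then be unnecessary; note also that as written $\mathcal V\times(\mathcal W\cap\mathcal C)$ is convex only after shrinking $\mathcal V$ to a convex neighbourhood.
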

	
	We postpone the discussion of this result to Section \ref{sec:ANR}. However, we wish to stress how (differently e.g. from the case treated in \cite{BamKle19}) we do need to pay some attention to the way Theorem \ref{thm:MainOpen} can actually be derived from Theorem \ref{thm:ReductionToDouble} and Theorem \ref{thm:ContractDoubling}: indeed, the relations that define the spaces of metrics we work with are \emph{not open} in general, and actually some ad hoc arguments need to be derived both for $\met_{R>0,H\geq 0}$ and for $\met_{R>0,H=0}$. We also mention that the statement above is just a special instance of a more general result, see Remark \ref{rem:ExtANR}. In any event, given the \emph{trio} of ancillary results above, our first main theorem follows at once:

     \begin{proof}(of Theorem \ref{thm:MainOpen})
     By transitivity, Theorem \ref{thm:ReductionToDouble} and Theorem \ref{thm:ContractDoubling} imply that any of the three spaces of metrics in question, 
     $\met_{R>0, H>0}, \met_{R>0,H\geq 0}, \text{or} \ \met_{R>0,H=0}$,
     is weakly contractible, in other words this space has trivial homotopy groups of all orders.
     
     Hence, due to Theorem \ref{thm:ANRstructure} which ensures the space is question to be an absolute neighborhood retract, a suitable Whitehead-type theorem (see Corollary \ref{cor:ContractibilityTopCrit}) allows to conclude. 
     \end{proof}

We can then move on and say some more words about the implications of our work on the study of initial data sets for the Einstein field equations (see e.g. \cite{Car21b} for a thorough introduction to the topic, at least from the perspective of geometric analysis).
Although our first main theorem concerns \emph{compact} manifolds with boundary, a striking blow-up argument (see Section 9 of \cite{Mar12} and \cite{Sch84}, where this idea was  employed to complete the resolution of the Yamabe problem) allows to derive similar consequences for asymptotically flat initial data sets. Here is a (somewhat informal) statement summarising what we can obtain in that respect:

\begin{theorem}\label{thm:GR constractible}Let $X_\infty$ denote a fixed background manifold that is assumed to be diffeomorphic to $\R^3\setminus \left(\sqcup_{j=1}^{\ell} P_{\gamma_j}\right)$, where $P_{\gamma_i}$ is a standard handlebody in $\R^3$ of genus $\gamma_i\ge 0$, and we further assume them to be pairwise separated (in the standard sense that different handleboies are contained in disjoint Euclidean balls). 
\begin{enumerate}
\item {The subspace of asymptotically flat Riemannian metrics subject to any of the following four curvature conditions
\begin{enumerate}
\item $R\geq 0, H\geq 0$, henceforth denoted by $\met^{AF}_{R\geq 0, H\geq 0}$;
\item $R\geq 0, H= 0$, henceforth denoted by $\met^{AF}_{R\geq 0, H=0}$;
\item $R=0, H\geq 0$, henceforth denoted by $\met^{AF}_{R=0, H\geq 0}$;
\item $R=0, H= 0$, henceforth denoted by $\met^{AF}_{R=0, H=0}$;
\end{enumerate}
is (not empty and) contractible.}
\item {The subspace of maximal, asymptotically flat solutions to the vacuum Einstein constraint equations, subject to the dominant energy condition and marginally outer trapped boundary, is (not empty and) contractible.}
\end{enumerate}
\end{theorem}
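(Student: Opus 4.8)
The plan is to deduce both parts from the compact result Theorem~\ref{thm:MainOpen} by a conformal blow-up/blow-down correspondence in the spirit of Schoen and Marques (\cite{Sch84}; see also Section~9 of \cite{Mar12}), carried out in families. The relevant compact model is obtained by capping the single asymptotically flat end of $X_\infty$ with a $3$-ball, producing the compact manifold with boundary $\overline{X}_\infty\eqdef S^3\setminus\left(\sqcup_{j=1}^{\ell}\mathring{P}_{\gamma_j}\right)$, with $\partial\overline{X}_\infty=\sqcup_{j=1}^{\ell}\partial P_{\gamma_j}$. Since $\overline{X}_\infty$ is an interior connected sum of handlebodies it carries positive scalar curvature metrics with mean-convex, weakly mean-convex, and minimal boundary (Theorem~1.1 of \cite{CarLi19}); transplanting one such metric into $X_\infty$ across a neck that is exactly Euclidean outside a compact set and has controlled (respectively nonnegative or vanishing) scalar curvature exhibits explicit elements of each of the four spaces in part~(1), establishing nonemptiness.

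For contractibility I would proceed in two reductions. First, each of $\met^{AF}_{R\geq 0,H\geq 0}$ and $\met^{AF}_{R\geq 0,H=0}$ deformation retracts onto its scalar-flat locus $\met^{AF}_{R=0,H\geq 0}$, respectively $\met^{AF}_{R=0,H=0}$: given $g$ with $R_g\geq 0$ the conformal Laplacian $L_g$ is coercive, so there is a unique $u>0$ with $L_gu=0$, $\partial_\nu u=0$ along $\partial X_\infty$, and $u\to 1$ at infinity, and necessarily $0<u\leq 1$; the linear path $u_t=(1-t)+tu$ obeys $L_gu_t=(1-t)R_g\geq 0$ and $\partial_\nu u_t=0$, so $\{u_t^4g\}$ is an admissible deformation (the boundary mean curvature being rescaled by the positive factor $u_t^{-2}$, hence keeping its sign) ending at a scalar-flat metric, and this depends continuously on $g$. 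Second, the standard inversion about a point at infinity followed by multiplication by the fourth power of the Green's function of the conformal Laplacian (adjusted at $\partial X_\infty$ by the appropriate Robin/Neumann boundary data) turns a scalar-flat asymptotically flat metric on $X_\infty$ into a metric $\overline{g}$ on $\overline{X}_\infty$ with $R_{\overline{g}}\geq 0$ and $R_{\overline{g}}\not\equiv 0$ --- the positive mass theorem, together with its rigidity statement (a nonempty minimal boundary excludes the flat model, so the ADM mass is \emph{strictly} positive), being exactly what forces the curvature created near the capping point to be nontrivial --- so that, by the boundary Yamabe problem, one may further conformally normalize to $R_{\overline{g}}>0$ while preserving the boundary condition. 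The inverse construction is the conformal blow-up of the capping point, $\overline{g}\mapsto G_{p_\infty}^4\overline{g}$, and an explicit interpolation through conformal factors shows that both compositions are homotopic to the identities; hence each of the four spaces in part~(1) is weakly homotopy equivalent to one of the three spaces of Theorem~\ref{thm:MainOpen}, and therefore weakly contractible. Finally, exactly as in the compact case the weighted H\"older topology makes these spaces absolute neighborhood retracts --- the proof of Theorem~\ref{thm:ANRstructure} transplants once a weighted functional-analytic model is fixed --- so the Whitehead-type criterion of Corollary~\ref{cor:ContractibilityTopCrit} upgrades weak contractibility to contractibility, completing part~(1).

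For part~(2), an initial data set $(X_\infty,g,K)$ is maximal when $\tr_gK=0$; the constraints then read $R_g=|K|^2_g+2\mu$ and $\div_gK=J$, the dominant energy condition is $\mu\geq|J|_g$, and the marginally outer trapped condition on $\partial X_\infty$ becomes $H+\tr_{\partial X_\infty}K=0$. Scaling $K\mapsto tK$ for $t\in[0,1]$ and re-solving the (decoupled) momentum and Lichnerowicz equations by the conformal method --- uniquely solvable on an asymptotically flat manifold with coercive conformal Laplacian (automatic under $R_g\geq 0$) and marginally trapped boundary --- produces a continuous deformation retraction onto the time-symmetric locus $\{K=0\}$, which is fixed pointwise. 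In the vacuum case that locus is exactly $\met^{AF}_{R=0,H=0}$, and under the dominant energy condition it is $\met^{AF}_{R\geq 0,H=0}$; either way it is contractible by part~(1), hence so is the space of part~(2).

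The principal obstacle is the conformal compactification in the borderline strata $R=0$ and/or $H=0$, where the conformal class carries no slack and one cannot simply redistribute scalar or boundary mean curvature: the constructions above, and the homotopy between the two compositions, must instead draw on the strict positivity of the ADM mass furnished by the positive mass theorem and its rigidity, all while remaining continuous in the parameter as the metrics degenerate towards the capping point $p_\infty$. A secondary technical hurdle is the asymptotically flat existence, uniqueness, and smooth-dependence theory for the Lichnerowicz equation with marginally outer trapped boundary needed in part~(2).
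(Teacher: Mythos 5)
Your overall blueprint coincides with the paper's: conformal compactification to reduce to Theorem \ref{thm:MainOpen}, conformal-method retractions relating the four spaces, an ANR argument plus Corollary \ref{cor:ContractibilityTopCrit}, and a conformal-method treatment of part (2). However, the central step contains an incorrect claim and omits the two devices that actually make the argument work. The blow-down $\overline g=G^{-4}g$ of a scalar-flat asymptotically flat metric does \emph{not} in general satisfy $R_{\overline g}\geq 0$ pointwise; conformal compactification redistributes scalar curvature with both signs. What is true, and what the paper proves (Proposition \ref{prop.gr.step2}), is that $\overline g$ is of \emph{positive Yamabe type} with minimal boundary, and this follows elementarily from the positivity of the Green's function via an integration by parts against the first Neumann eigenfunction --- no positive mass theorem, and certainly no rigidity statement, enters anywhere in the paper's Section \ref{sec:GR}. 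Your identification of the PMT as the "principal obstacle" therefore rests on a false intermediate assertion. The genuine obstacles are elsewhere: (i) before compactifying, one must first homotope the given family to metrics that are smooth and \emph{conformally flat outside a compact set} (Proposition \ref{prop.gr.step1}, via the linear isomorphism of Proposition \ref{proposition.weighted.sobolev.fredholm}); without this step the pushed-forward metric is not even smooth at the capping point, and your proposal skips it entirely. (ii) After extending the compact family over $D^{\ell+1}$ using Theorem \ref{thm:MainOpen}, the extended metrics are no longer conformally flat near $p_\infty$, so blowing up with the Green's function does not obviously land back in $\metaf$ with the correct boundary values on $S^\ell$; the paper resolves this with a family of diffeomorphisms interpolating between the identity and the exponential map at $p_\infty$ (Lemma \ref{lemma.diffeom.exponential.map}). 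Your "explicit interpolation through conformal factors shows that both compositions are homotopic to the identities" asserts a two-sided weak homotopy equivalence that the paper neither proves nor needs --- it only needs the one-directional "compactify, extend, blow up" scheme to kill each $\pi_\ell$ --- and the homotopy you assert is precisely where the missing diffeomorphism construction would be required.

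On part (2), your route differs from the paper's and is defensible: you propose scaling $K\mapsto tK$ and re-solving the Lichnerowicz equation to deformation-retract $\metaf_T$ onto the time-symmetric locus, whereas the paper uses Marques' trick (enlarge to the space with inequalities, retract that onto $\metaf_T$, and conclude surjectivity of the induced maps on homotopy groups), plus a separate York-decomposition argument to show $\metaf_T$ is an ANR. If you can establish continuity of the solution operator $(g,K,t)\mapsto u$ in the weighted topology (which the paper effectively does via the implicit function theorem in its ANR proof), your version would yield a genuine deformation retraction onto $\metaf_{R=0,H=0}\times\{0\}$ and hence contractibility directly, bypassing the ANR discussion for $\metaf_T$; note that existence for all $t\in[0,1]$ does follow from the sub/supersolution method since $R_g=|K|_g^2\geq t^2|K|_g^2$ and $H_g=-K(V,V)\geq -tK(V,V)$. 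But this continuity, the weighted existence/uniqueness theory, and the ANR property of the four spaces in part (1) (whose proof in the asymptotically flat setting is \emph{not} a transplant of Section \ref{sec:ANR} but a different, linear conformal-retraction argument) all remain to be supplied before the proposal closes.
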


We refer the reader directly to Section \ref{sec:GR} for a specification of the regularity of the class of the data we work with, and for the corresponding asymptotic decay assumptions. We explicitly note that the theorem above applies, as a basic special instance, to the case when $X_{\infty}$ is diffeomorphic to $\mathbb{R}^3$ minus a finite number of disjoint balls, which is indeed the most general 3-dimensional topology (of asymptotically flat time-symmetric data) that is compatible with the dominant energy condition and the \emph{horizon boundary} requirements (see \cite{HuiIlm01}), i.e. with the setting of the Riemannian Penrose inequality. It is also to be remarked that, without much effort, we could also prove a version of Theorem \ref{thm:GR constractible} for asymptotically flat manifolds with multiple ends (and compact boundary components) although we did not explicitly discuss it here as that would bring essentially no new ideas into play.

The proof of Theorem \ref{thm:GR constractible} given Theorem \ref{thm:MainOpen} requires somewhat more (partly subtle) technical work than may look at first sight. In fact, Section \ref{sec:GR} turns out to be a rather significant section of the present article, and we believe some of the ancillary results we obtain there (for instance the construction given in Lemma \ref{lemma.diffeom.exponential.map}) may be of independent interest and applicability beyond our specific purposes.

Let us conclude this introduction with one more comment about our second main theorem. When the background manifold has the topology of $\mathbb{R}^3$ minus a ball, if we simply consider the implications of the result above (say for item (1)(d)) at the $\pi_0$-level we obtain the following striking implication: any given asymptotically flat, time-symmetric solution to the vacuum constraints with minimal boundary can be connected, through a path of such solutions, to the simplest one we know, i.e. the (one-ended) Riemannian Schwarzschild manifold. This seems, at least in a certain respect, a rather non-trivial conclusion as we now know how large and complicated the space of such solutions actually is, as it contains (among others) ample classes of exotic solutions like the localised ones constructed by the first-named author and Schoen in \cite{CarSch16}. So, loosely speaking, the analytic complexity of the space of solution to the Einstein constraints should be contrasted with the striking simplicity of the topology of such a space, which we hereby prove to be contractible.

\

			\textbf{Notation and conventions}.	For the convenience of the reader, we describe here some conventions and notational principles that will be adopted throughout this article. 
			
			The manifolds we deal with are always assumed to be smooth (i.e. $C^{\infty}$), as are their boundaries (if any); when we write \emph{compact manifold} we mean it to be boundaryless, and will instead specify \emph{compact manifold with boundary} if appropriate. All metrics are Riemannian and smooth, and the space of metrics on a given manifold (henceforth denoted $\met$), and all subsets thereof, are endowed with the corresponding topology; given a Riemannian metric $g$ we let $d_g$ denote the corresponding distance, and by writing $B_g(p,r)$ we mean the metric ball of center $p$ and radius $r$.
			
			Concerning the mean curvature, we adopt the following convention: the unit sphere in $\R^3$ has mean curvature equal to $2$; a domain is mean-convex if it has positive mean curvature, thus if an outward deformation with unit speed \emph{increases} area; we shall say that a manifold is \emph{weakly} mean-convex if it has non-negative mean curvature.
			
			For any $n\in\mathbb{N}=\left\{0,1,2\ldots\right\}$ we let $D^n$ denote the closed unit ball in $\R^n$, which we shall also refer to as $n$-dimensional disc, and by $\Delta^n$ the standard $n$-dimensional simplex. When we specify the radius, and write $D^n(r)$ or $B^n(r)$, we mean the $n$-dimensional disc or the $n$-dimensional open ball of radius $r>0$ and centered at the origin in $\R^n$. Throughout this article, $f_0$ denotes the standard reflection in $\R^3$ with respect to the last coordinate plane (and by slight abuse of notation, the same denotes restrictions of the same map to any invariant subdomain). We further let $I=[0,1]$ denote the standard unit interval in $\mathbb{R}$ and, for any $k\in\mathbb{N}=\left\{0,1,2\ldots\right\}$ we let $I^k$ denote the standard product of $k$ copies of $I$, i.e. the unit cube $[0,1]^k$ in $\mathbb{R}^k$.
			
			We will write $u_{+}$ and $u_{-}$ to represent the positive and, respectively, negative part of any given real-valued function $u$, so that $u=(u_{+}-u_{-})/2$ (thus, in particular, if $u\ge 0$ then $u_{-}=0$). We employ the letter $\nu$ to denote a cutoff function, defined once and for all in Section \ref{sec:Rstruct}.

		\

		\noindent \textit{Acknowledgements:} The authors would like to thank Richard Bamler for helpful clarifications, Christos Mantoulidis for conversations on themes related to the present project, Sander Kupers and Boyu Zhang for indicating significant references and related discussions.
		
			This project has received funding from the European Research Council (ERC) under the European Union’s Horizon 2020 research and innovation programme (grant agreement No. 947923). C. L. wishes to acknowledge the support of the National Science Foundation through NSF grant DMS-2005287. The results contained in this paper have been presented by the first-named author in June 2021 during the 8th European Congress of Mathematics.

		\section{Straightening the boundary: weak homotopy equivalences}\label{sec:Elliptic}
		

		Let $n\geq 2$ (in fact: $n\geq 3$ in most circumstances, as we shall specify below) and let $X^n$ be a smooth (i.e. $C^{\infty}$) compact manifold with boundary. 
		Taken two copies of $X$ (which we henceforth label $X_1, X_2$, with $\iota:X_1\to X_2$ the identity map) we consider on the disjoint union $X_1\sqcup X_2$ the equivalence relation $\sim $ given by declaring $x_1\sim x_2$ if $x_1\in \partial X_1, x_2\in \partial X_2$ and $x_2=\iota(x_1)$. We thus define, as a set, $M=(X_1\sqcup X_2)/\sim$ (with the projection $\pi:X_1\sqcup X_2\to M$, and let $X'=\pi(\partial X_1)=\pi(\partial X_2)$) endowed with the differentiable structure associated to the cover $\left\{\pi(X_1\setminus \partial X_1), \pi(X_2\setminus \partial X_2), Y\right\}$ where $Y$ is a collar neighborhood of $X'$ in $M$. Indeed, given any metric $g_0$ on $X$ we know that, by the tubular neighborhood theorem, the exponential map provides an identification $ Y_1\equiv \partial X_1\times (-\epsilon,0]$, and similarly (on the second copy) $Y_2\equiv \partial X_2\times [0,\epsilon)$ so that we can just set $Y=\pi(Y_1\cup Y_2)$. Note that, if we vary the metric $g_0$ the resulting manifold $M$ does not change, up to diffeomorphism.
		
		\begin{remark}\label{rmk:doubling}
Here are below we will employ the label $_{D}$ in order to refer to the \emph{doubling} metrics on the background manifold $X$, namely those metrics that can be smoothly doubled to a smooth Riemannian metrics on the double $M$.
To rephrase this requirement recall that, near any connected component of $\partial X$, there is a standard local representation of a metric $g$ determined by a distance function from a boundary component
\begin{equation}\label{eq:FermiCoord}
g=dt\otimes dt + g_t,
\end{equation}
hence $g$ is doubling if and only if 
\be\label{eq:DoublingCond}
\frac{\partial^{(2\ell+1)}}{\partial t^{(2\ell+1)}}g_{t}=0, \ \text{for all} \ \ell\in\mathbb{N}.
\ee
More generally, given curvature conditions $\ast$ defined in terms of the scalar curvature (typically: $R>0$) we shall write $\met_{\ast, D}$ to denote the set of doubling metrics satisfying $\ast$.
\end{remark}

		One may first wonder what manifolds with boundary do support positive scalar curvature \emph{doubling} metrics.
With that goal in mind, let us first recall the following basic deformation result.

	\begin{lemma}\label{lem:BasicEquiv}(Corollary 2.6 in \cite{CarLi19})
			Let $n\geq 3$ and let $X^n$ be a connected, compact manifold with boundary. Then the following three assertions are equivalent:
	\begin{enumerate}
	\item [i)]{$\met_{R>0, H>0}\neq\emptyset$;}
	\item [ii)]{$\met_{R>0, H\geq 0}\neq\emptyset$;}
	\item [iii)]{$\met_{R\geq 0, H>0}\neq\emptyset$.}
	\end{enumerate}	
Furthermore, each of these conditions is equivalent to
	\begin{enumerate}
	\item [iv)]{$\met_{R\geq 0, H\geq 0}\neq\emptyset$,}
\end{enumerate}	
unless the space $\met_{R\geq 0, H\geq 0}$ only contains Ricci flat metrics making the boundary totally geodesic.
	\end{lemma}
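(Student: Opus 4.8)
The plan is to establish the implications via explicit, controlled local deformations of the metric near the boundary, leveraging the fact that scalar curvature and mean curvature are, respectively, second-order and first-order quantities, so they respond to perturbations at different rates. The cyclic scheme I would set up is: iii)\,$\Rightarrow$\,i)\,$\Rightarrow$\,ii)\,$\Rightarrow$\,iii), together with the analysis of when iv) joins the circle.

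First, for iii)\,$\Rightarrow$\,i): starting from a metric with $R\geq 0$ everywhere and $H>0$ on $\partial X$, I would perturb the interior to gain strict positivity of the scalar curvature while keeping the boundary mean curvature strictly positive. The cleanest route is a conformal change $g\mapsto u^{4/(n-2)}g$ with $u=1+\eps\varphi$ for a suitable smooth $\varphi$: the scalar curvature transforms with the conformal Laplacian and the mean curvature with a first-order boundary operator, so one can solve (or merely perturb) to push $R$ strictly positive on the compact manifold; since $H>0$ is an open condition and the boundary operator depends only on the first derivatives of $u$, choosing $\eps$ small enough (and $\varphi$ with controlled normal derivative at $\partial X$, e.g.\ $\partial_\nu\varphi=0$) preserves $H>0$. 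Alternatively, one can quote the standard fact that $R\geq 0$ with $R>0$ somewhere (or with the relevant operator having positive first eigenvalue, which $H>0$ helps guarantee via the boundary term) can be conformally deformed to $R>0$; the boundary condition is untouched to first order. This gives i).

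Next, i)\,$\Rightarrow$\,ii) is trivial since $\met_{R>0,H>0}\subseteq\met_{R>0,H\geq 0}$. For ii)\,$\Rightarrow$\,iii), one starts from $R>0$, $H\geq 0$ and must \emph{increase} the boundary mean curvature to make it strictly positive while retaining $R>0$. Here I would work in Fermi coordinates $g=dt\otimes dt+g_t$ near $\partial X$ and modify $g$ only in a thin collar $\{t<\delta\}$: bending the boundary slightly outward increases $H$ linearly in the perturbation parameter, whereas the change in $R$ is controlled and can be made small (quadratically, or at worst linearly with a small coefficient) by localizing in a collar of width $\delta$ and using a cutoff; since $R>0$ is open on the compact $X$, for $\delta$ and the perturbation small the scalar curvature stays positive, and we land in $\met_{R\geq 0,H>0}$, hence iii). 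This closes the three-way equivalence.

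Finally, for the role of iv): clearly iv) follows from i) together with iii) only in the weak sense, so the real content is iv)\,$\Rightarrow$ (i)--iii) under the stated exception. Given $g$ with $R\geq 0$ and $H\geq 0$, I would attempt the same collar-bending deformation as above to gain $H>0$ somewhere (then conformally upgrade to $R>0$ as before). This succeeds \emph{unless} the deformation is obstructed, which by a variational/Gauss--Bonnet-type argument happens precisely when $g$ is Ricci flat with totally geodesic boundary: indeed, integrating the linearized scalar curvature and using the boundary term (a Reilly-type formula / first variation of total scalar curvature $\int_X R_g\,dv_g + 2\int_{\partial X} H_g\,d\sigma_g$) shows that one cannot strictly increase the mean curvature while keeping $R\geq 0$ precisely when the trace-free Ricci tensor and the second fundamental form both vanish and $R\equiv 0$; by the maximum principle (or Bochner) this forces $g$ to be Ricci flat and $\partial X$ totally geodesic. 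The main obstacle is this last point: pinning down the rigidity case requires a careful first- (and possibly second-) variation analysis of the total scalar curvature functional with boundary, showing that the \emph{only} way iv) fails to imply the strict conditions is the asserted degenerate one. The interior deformations (conformal changes) and the collar-bending are essentially routine once set up; the rigidity dichotomy for iv) is where the argument has to be done with care.
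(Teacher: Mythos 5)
Your cyclic scheme for the equivalence of i), ii), iii) is essentially the intended one: the paper proves this by citing Corollary 2.6 of \cite{CarLi19}, whose argument rests on exactly the two tools you invoke, namely (a) the conformal change by the first eigenfunction of the conformal Laplacian with an oblique (Robin) boundary condition $\partial_\nu\phi + 2c(n)^{-1}\eta H_g\phi=0$, whose first eigenvalue is forced positive by the test function $\phi\equiv 1$ whenever $\int_X R_g + 2\eta\int_{\partial X}H_g>0$, and (b) a conformal factor $e^{w}$ supported in a thin collar with prescribed normal derivative at $\partial X$, which raises $H$ by a definite amount while perturbing $R$ by $O(\eps/\eta)$. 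One small slip: in iii)$\Rightarrow$i) your suggestion $\partial_\nu\varphi=0$ is incompatible with $\Delta\varphi<0$ on all of $X$ (integrate by parts), so if $R\equiv 0$ somewhere the naive perturbation needs $\partial_\nu\varphi>0$ in an averaged sense; but your eigenvalue fallback handles this correctly, since $H>0$ alone makes $\lambda_1>0$.

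The genuine gap is in the iv) dichotomy. The move you propose there --- collar-bending to gain $H>0$, then a conformal upgrade --- is obstructed whenever $R\equiv 0$ and $H\equiv 0$, \emph{not} only when $g$ is Ricci flat with totally geodesic boundary. Indeed, increasing $H$ via a conformal factor $u=e^w$ requires $\partial_\nu w>0$ somewhere on $\partial X$, hence $\int_X\Delta w>0$, hence $\Delta w>0$ somewhere, and if $R\equiv 0$ the conformally changed scalar curvature must then go negative; more structurally, when $R\equiv 0$ and $H\equiv 0$ the relevant first eigenvalue is exactly $0$ with constant eigenfunction, and \emph{no} conformal deformation (collar-bending included) can improve it. A scalar-flat metric with minimal boundary that is merely not Ricci flat (or not totally geodesic) defeats your scheme. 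What is missing is a genuinely non-conformal deformation to break this degeneracy: in \cite{CarLi19} one first shows (their Proposition 2.5) that such a metric has either $\Ric\not\equiv 0$ at some interior point or $\II\not\equiv 0$ at some boundary point, and then deforms by $g_t=g-t\chi\Ric_g$ (localized in the interior), respectively by an inward push of the boundary along a first eigenfunction of the Jacobi operator; the first variation $\tfrac{d}{dt}\lambda_1=c(n)^{-1}V^{-1}\int\chi|\Ric_g|^2>0$ (resp.\ the second variation of area) makes $\lambda_1>0$ for small $t$, after which the conformal eigenfunction machinery of step (a) restores the pointwise conditions. Your Reilly-type variational discussion correctly identifies \emph{which} metrics are rigid, but does not supply the deformation realizing the non-rigid case, and the deformation you do supply cannot do the job.
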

	
	\begin{remark}\label{rem:Def3dNd}
	The proof of the statement above was written, in \cite{CarLi19} for $n=3$, but applies identically (except for purely notational changes) to any dimension $n\geq 3$; we however note that when $n=3$ the last clause would additionally force $X$ (if orientable, else its orientable double cover) to be diffeomorphic to $S^1\times S^1\times I$ and any metric in $\met_{R\geq 0, H\geq 0}$ to be flat. In general, i.e. for $n\geq 4$, the same conclusion does not hold (as shown by well-known examples like the product of a ribbon $S^1\times I$ times a $K3$ surface); however, some additional information may be derived by appealing to the characterisation of compact Ricci-flat manifolds (of any dimension) given in \cite{FisWol75}.
	\end{remark}
	
	Hence, it follows from Theorem 5.7 in \cite{GroLaw80-Spin} (combined with an appropriate smoothing technique, for which \cite{Mia02} would suffice) that one can ensure that, so to say, doubling metrics exist in abundance:
	
	\begin{corollary}\label{cor:StraightBoundary}
	Let $n\geq 3$ and let $X^n$ be a connected, compact manifold with boundary. If $X$ supports metrics in $\met_{R\geq 0, H\geq 0}$ at least one of which is either non Ricci flat or has non-totally geodesic boundary then $X$ shall also support doubling metrics of positive scalar curvature.
	\end{corollary}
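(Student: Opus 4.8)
The plan is to combine the qualitative dichotomy of Lemma~\ref{lem:BasicEquiv} with a reflection-symmetric version of the classical Gromov--Lawson doubling construction; the only genuinely delicate point is that the latter has to be carried out \emph{equivariantly}.

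First I would use the hypothesis to fall into the favourable case of Lemma~\ref{lem:BasicEquiv}. Since by assumption $\met_{R\geq 0,H\geq 0}$ is non-empty and contains a metric that is not Ricci-flat with totally geodesic boundary, the exceptional clause of that lemma does not apply, so condition~(iv) there is equivalent to condition~(i); hence $\met_{R>0,H>0}\neq\emptyset$, and I fix a metric $g\in\met_{R>0,H>0}$ on $X$ for the rest of the argument.

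Next I would carry out the doubling construction on $M$, the double of $X$, with $\sigma\colon M\to M$ the canonical involution exchanging the two copies $X_1,X_2$ of $X$ and fixing $X'=\partial X_1=\partial X_2$. Writing $g=dt\otimes dt+g_t$ on a collar $\partial X\times[0,\delta)$ and keeping $g$ fixed near $t=\delta$, Theorem~5.7 of \cite{GroLaw80-Spin} lets one bend the metric inside the collar, exploiting $H_g>0$, so that reflecting it across $X'$ gives a metric $\bar g$ on $M$ which is Lipschitz across $X'$, is smooth with $R_{\bar g}>0$ away from $X'$, and has along $X'$ a corner whose mean-curvature jump has exactly the sign --- dictated by the mean-convexity of $\partial X$ --- for which the corner-smoothing technique of \cite{Mia02} applies. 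I would then mollify in the normal direction with an even kernel, symmetrising any auxiliary choices, as in \cite{Mia02}: this yields a genuinely smooth metric $\hat g$ on $M$, which by evenness of the kernel is $\sigma$-invariant, and with $R_{\hat g}>0$ everywhere on $M$ (strict mean-convexity forces the smoothing to be strict, modulo at worst a final $\sigma$-invariant conformal adjustment to restore strict positivity).

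Finally I would note that a smooth $\sigma$-invariant metric on $M$ automatically restricts, on each copy of $X$, to a doubling metric: its reflection across $\partial X$ equals $\hat g$ by $\sigma$-invariance and hence is smooth --- equivalently, in $\sigma$-invariant Fermi coordinates $\hat g=dt\otimes dt+\hat g_t$ the family $\hat g_t$ is even in $t$, which is precisely condition~\eqref{eq:DoublingCond}. Since $R_{\hat g}>0$ on all of $M$, this gives $\hat g|_X\in\met_{R>0,D}$ and proves the corollary. I expect the equivariance to be the only real obstacle: an arbitrary smooth positive scalar curvature metric on $M$ need not restrict to a doubling metric on $X$, so it is essential that the whole deformation respect $\sigma$ --- which it does, since the Gromov--Lawson bending is defined by reflecting the collar and Miao's procedure is a convolution in the normal variable, and both therefore preserve the reflection symmetry.
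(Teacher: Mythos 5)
Your proposal is correct and follows essentially the same route the paper intends: reduce via Lemma~\ref{lem:BasicEquiv} to a metric in $\met_{R>0,H>0}$, then apply the Gromov--Lawson bending of Theorem 5.7 in \cite{GroLaw80-Spin} together with Miao's corner-smoothing \cite{Mia02}, performed symmetrically with respect to the canonical involution of the double so that the resulting PSC metric restricts to a doubling metric on $X$. Your explicit attention to the equivariance of the mollification (and the final invariant conformal adjustment) is exactly the point the paper leaves implicit.
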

	
	As we showed in \cite[Section 3]{CarLi19} one can actually start from the Gromov-Lawson doubling counstruction to actually design isotopies of smooth metrics that straighten the boundary. In fact, following the very same arguments, rather simple modifications allow to design deformations for finite-dimensional families of Riemannian metrics; however, the resulting isotopies do not preserve the doubling boundary conditions throughout the deformation. That being said, a totally different construction (ultimately appealing to a suitable $h$-principle) has very recently been proposed by B\"ar-Hanke in \cite{BarHan20}, which does indeed enjoy such additional feature. In particular, there holds the following statement:
		
		\begin{theorem}\label{thm:WeakHom}
		Let $X^3$ be a connected,  compact 3-manifold with boundary. Then, in the following commutative diagram, any inclusion map induces a weak homotopy equivalence
		\[
  \begin{tikzcd}
    & & \met_{R>0, H>0} \arrow{d}{}   \\
      \met_{R>0, D}\arrow{r}{} & \met_{R>0, H=0} \arrow{r}{} & \met_{R>0, H\geq 0}.
  \end{tikzcd}
\]
\end{theorem}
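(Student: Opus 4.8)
The plan is to deduce Theorem~\ref{thm:WeakHom} from the B\"ar--Hanke deformation result in \cite{BarHan20} together with the elementary fact that a weak homotopy equivalence is detected by the homotopy groups, using the commutativity of the diagram to propagate the conclusion. First I would recall the precise form of the B\"ar--Hanke theorem: their $h$-principle-based construction produces, for a compact manifold with boundary $X$ carrying a metric of positive scalar curvature with (weakly) mean-convex boundary, a metric in $\met_{R>0,D}$, and --- crucially --- the assignment can be performed continuously in families, so that the inclusion of the space of doubling PSC metrics into the space of PSC metrics with $H\ge 0$ boundary is a weak homotopy equivalence (indeed this is essentially the content needed for Theorem~\ref{thm:ReductionToDouble}). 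This handles the long bottom arrow $\met_{R>0,D}\to\met_{R>0,H\ge 0}$ of the diagram directly.

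Next I would address the two remaining basic arrows, namely $\met_{R>0,H=0}\hookrightarrow\met_{R>0,H\ge 0}$ and $\met_{R>0,H>0}\hookrightarrow\met_{R>0,H\ge 0}$. The cleanest route is a two-out-of-three (or rather, a diagram-chase) argument: since a doubling metric is in particular totally geodesic to first order, one has $\met_{R>0,D}\subset\met_{R>0,H=0}$, so the bottom row factors as $\met_{R>0,D}\to\met_{R>0,H=0}\to\met_{R>0,H\ge 0}$ where the composite is a weak homotopy equivalence. To conclude that each factor is a weak homotopy equivalence it then suffices to show that the first map $\met_{R>0,D}\to\met_{R>0,H=0}$ is a weak homotopy equivalence; this again follows from B\"ar--Hanke, who also treat the minimal boundary condition, producing doubling deformations that stay within the minimal-boundary stratum. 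Granting that, both $\met_{R>0,D}\to\met_{R>0,H=0}$ and the composite are weak equivalences, hence so is $\met_{R>0,H=0}\to\met_{R>0,H\ge 0}$ by the long exact sequence / naturality of $\pi_k$. For the vertical arrow $\met_{R>0,H>0}\hookrightarrow\met_{R>0,H\ge 0}$ I would similarly produce a doubling deformation from within the mean-convex stratum (the $h$-principle does not obstruct keeping $H>0$ along the way, since strict mean-convexity is an open condition), giving a weak homotopy equivalence $\met_{R>0,D'}\to\met_{R>0,H>0}$ for a suitable auxiliary reference, and then chase the triangle. Alternatively, and perhaps more transparently, one can invoke the full strength of the B\"ar--Hanke statement, which directly asserts that the inclusion of doubling metrics into each of the three spaces is a weak homotopy equivalence, whence all arrows in the diagram are weak homotopy equivalences by commutativity.

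The main obstacle I expect is not conceptual but bookkeeping: one must verify that the B\"ar--Hanke construction can genuinely be applied in each of the three regimes (mean-convex, weakly mean-convex, minimal) and that the parametrized version produces deformations respecting the relevant boundary constraint throughout, not merely at the endpoints --- this is exactly the point the text flags as the advantage of \cite{BarHan20} over the earlier Gromov--Lawson-based isotopies of \cite{CarLi19}, which ``do not preserve the doubling boundary conditions throughout the deformation.'' A secondary technical point is the precise topology on these function spaces (the $C^\infty$ topology) and checking that the maps built via the $h$-principle are continuous in it, and that the homotopies have compact parameter domains so that weak homotopy equivalence is the right notion to verify (it suffices to check surjectivity and injectivity of $\pi_k$ for all $k$ and all basepoints). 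Once these verifications are in place, the proof is a short diagram chase using naturality of homotopy groups.
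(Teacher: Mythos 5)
Your proposal is correct and follows essentially the same route as the paper: the paper's proof consists precisely of citing B\"ar--Hanke (Theorems 32, 33 and Corollary 34 of \cite{BarHan20}, applied with $\sigma=0$, $h_0=0$) for the inclusions $\met_{R>0,H=0}\to\met_{R>0,H\ge 0}$, $\met_{R>0,H>0}\to\met_{R>0,H\ge 0}$ and $\met_{R>0,D}\to\met_{R>0,H\ge 0}$, with the remaining arrow $\met_{R>0,D}\to\met_{R>0,H=0}$ obtained by the same two-out-of-three diagram chase you describe. Your identification of the key verification point --- that the parametrized B\"ar--Hanke deformations respect the relevant boundary constraint throughout, unlike the Gromov--Lawson-based isotopies of \cite{CarLi19} --- matches the paper's own discussion.
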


\begin{remark}\label{rmk:WeakHom}
Employing the long exact sequence for the homotopy of pairs, the claim that the inclusion map $A\rightarrow X$ induces a weak homotopy equivalence follows by showing that all \emph{relative} homotopy groups vanish, which in turn can be conveniently obtained by checking that for any integer $i\in\mathbb{N}$ one has that \emph{any (continuous) map $(D^i,\partial D^i)\to (X,A)$ is homotopic, through such maps, to a map $D^i\to A$}; when $i=0$ one requires that each connected components of $X$ contains points of $A$. 
\end{remark}

 The weak homotopy equivalences $\met_{R>0, H=0}\rightarrow \met_{R>0, H\geq 0}$ and $\met_{R>0, D}\rightarrow \met_{R>0, H\geq 0}$ follow from Theorem 32 and Corollary 34 in \cite{BarHan20} (applied for $\sigma=0, h_0=0$);
similarly the weak homotopy equivalence $\met_{R>0, H>0 }\rightarrow \met_{R>0, H\geq 0}$ follows from Theorem 33 therein (again for $\sigma=0, h_0=0$).

		\section{ANRs and strong homotopy equivalences}\label{sec:ANR}
		
		We now wish to discuss how \emph{weak} homotopy equivalences, see Remark \ref{rmk:WeakHom}, can be upgraded to actual homotopy equivalences, so to ultimately conclude the contractibility of some of the spaces of metrics we are dealing with.
		It is to be remarked that these tools will also be crucially employed in Section \ref{sec:GR}, when discussing the applications to asymptotically flat initial data sets in general relativity. 
		
		In order to avoid ambiguities, we shall first recall certain basic definitions and a few classical facts.
		
		\begin{definition}\label{def:Retract}
		Given a topological space $Z$, a subset $A\subset Z$ is called a retract of $Z$ if there exists a continuous map (to be called a \emph{retraction}) $r:Z\to A$ such that $r(a)=a$ for any $a\in A$.
		\end{definition}
		
		\begin{remark}\label{rem:BasicRetract}
		\begin{itemize}
		    \item a retraction determines a factorisation of the identity map of the set $A$, namely we have the commutative diagram
		    
		    		\[
  \begin{tikzcd}
   A \arrow{r}{i} \arrow{dr}{id_A}  & Z \arrow{d}{r}   \\
       & A.
  \end{tikzcd}
\]
		    
		    thus we have $r_{\ast}\circ i_{\ast}=(id_A)_{\ast}$ at the level of homotopy groups (in particular the inclusion induces an injection).
		    \item in the setting above, if $Z$ is Hausdorff (e.g. if it is a metric space) then $A$ must be a closed subset of $Z$;
		    \item the following (universal) extension property holds true: given any topological space $W$ and $f\in C^0(A,W)$ there exists $\tilde{f}\in C^0(Z,W)$ extending $f$; indeed, it suffices to set $\tilde{f}=f\circ r$. 
		    		\[
  \begin{tikzcd}
   A \arrow{r}{f}  & W   \\
   Z \arrow{u}{r} \arrow{ur}{\tilde{f}}  & 
  \end{tikzcd}
\]
		\end{itemize}
		\end{remark}

		\begin{definition}\label{def:DefRetract}
		In the setting of the previous definition, we will say that $A\subset Z$ is a deformation retract of $Z$ if there exists a homotopy $H:Z\times I\to Z$ such that $H(z,0)=i\circ r(z)$ and $H(z,1)=z$ for all $z\in Z$.
		\end{definition}
		
		\begin{remark}\label{rem:DeformationRetract}
		If $A$ is a deformation retract of $Z$ then the inclusion and retraction maps determine a (strong) homotopy equivalence between $A$ and $Z$. In particular, the two spaces will have the same homotopy groups.
		\end{remark}
		
		\begin{definition}\label{def:ANR}
		  Let $\mathscr{T}$ denote the class of metric spaces. Then:
		  \begin{enumerate}
		      \item $X\in \mathscr{T}$ is called an absolute retract (henceforth abbreviated AR) if 
		      \[
		      \begin{cases}
		      Y\in \mathscr{T} \\
		      X \subset Y \ \text{closed} 
		      \end{cases} \ \Longrightarrow \ X \ \text{is a retract of} \ Y.
		      \]
		      \item $X\in \mathscr{T}$ is called an absolute neighborhood retract (henceforth abbreviated ANR) if 	      \[
		      \begin{cases}
		      Y\in \mathscr{T} \\
		      X \subset Y \ \text{closed}
		      \end{cases} \ \Longrightarrow \ X \ \text{is a neighborhood retract of} \ Y, i.e.
		      \]
		      there exists $U\subset Y$ open, with $X$ a retract of $U$.
		       \end{enumerate}
		 \end{definition}
		 
		 \begin{remark}
		 As shown e.g. in Theorem 3.1(ii) of \cite{MarSeg82} the condition that a topological space $X$ be an ANR could be equivalently be given as follows: for every metric space $Y$, every closed subspace $A$ of $Y$ and every continuous map $f:A\to X$, there are an open neighbourhood $U$ of $A$ in $Y$ and a continuous map $\overline{f}: U \to X$ that extends $f$.
		 	 We explicitly note that the latter is in fact the definition adopted by Palais in \cite{Pal66}.
		 	 \end{remark}
		 	 
		 	 The following statement is one of the most fundamental results related to such a notion.
		 	 
		 	 \begin{theorem}\label{thm:Dugundji}(Dugundji extension theorem, cf. Theorem 3.3 in \cite{MarSeg82})
		 	 A convex subset of a normed linear space is an ANR. 
		 	 \end{theorem}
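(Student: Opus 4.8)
The plan is to prove the Dugundji extension theorem in its classical form, from which the stated corollary (a convex subset of a normed linear space is an ANR) follows immediately: given a metric space $Y$, a closed subspace $A\subset Y$, a normed linear space $E$, a convex subset $C\subset E$, and a continuous map $f:A\to C$, one constructs a continuous extension $\overline f:Y\to C$ — in fact defined on all of $Y$, so that $C$ is even an AR, but for the ANR conclusion we only need the extension on a neighborhood. The whole point is that the extension takes values in the convex hull of the image of $f$, hence stays inside $C$.

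First I would build a locally finite open cover of the open set $Y\setminus A$ that refines the cover by balls shrinking toward $A$. Concretely, for each $y\in Y\setminus A$ set $d(y)=\dist(y,A)>0$ and consider the balls $B(y,\tfrac{1}{3}d(y))$; since $Y\setminus A$ is metrizable (hence paracompact) this cover admits a locally finite open refinement $\{U_\alpha\}_{\alpha\in\Lambda}$, and one subordinates a partition of unity $\{\varphi_\alpha\}$ to it. For each $\alpha$ one picks a point $a_\alpha\in A$ that is, up to a factor of two, a nearest point of $A$ to (a chosen point of) $U_\alpha$; the key geometric estimate, following directly from the triangle inequality and the $\tfrac13 d(y)$ shrinking, is that if $y$ lies in the support of $\varphi_\alpha$ then $d_Y(y,a_\alpha)\le 6\,d(y)$ (any explicit constant works). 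One then defines
\[
\overline f(y)=\begin{cases} f(y), & y\in A,\\[2pt] \sum_{\alpha\in\Lambda}\varphi_\alpha(y)\,f(a_\alpha), & y\in Y\setminus A.\end{cases}
\]
Because $\{\varphi_\alpha\}$ is a locally finite partition of unity, the sum is a locally finite convex combination of points of $f(A)\subset C$, so $\overline f$ is well-defined, continuous on the open set $Y\setminus A$, and takes values in $C$ by convexity.

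The remaining step — and the one I expect to be the main obstacle — is continuity of $\overline f$ at points $a\in A$. Here one fixes $\eps>0$ and uses continuity of $f$ at $a$ to find $\delta>0$ with $\|f(a')-f(a)\|<\eps$ whenever $d_Y(a',a)<\delta$. For $y\in Y\setminus A$ close enough to $a$ (say $d_Y(y,a)<\delta/7$), every $\alpha$ with $\varphi_\alpha(y)\neq 0$ satisfies $d_Y(y,a_\alpha)\le 6\,d(y)\le 6\,d_Y(y,a)$, hence $d_Y(a_\alpha,a)\le 7\,d_Y(y,a)<\delta$, so $\|f(a_\alpha)-f(a)\|<\eps$; since $\overline f(y)-f(a)=\sum_\alpha\varphi_\alpha(y)(f(a_\alpha)-f(a))$ is a convex combination of vectors of norm $<\eps$, we get $\|\overline f(y)-f(a)\|<\eps$. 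Tracking the constants and the locally-finite-sum manipulations is the only delicate bookkeeping. Finally, to deduce the ANR (rather than AR) statement in the generality needed — $C$ closed in some ambient metric space $Y$ — one applies the construction with $A=C$, $f=\id_C$: the extension $\overline f:Y\to C$ is a retraction of all of $Y$ onto $C$, so $C$ is in fact an AR, and a fortiori an ANR; alternatively, embedding $C$ as a closed subset of a convex set in a normed space and retracting gives the general case. This is exactly how the result will be invoked later, e.g. to establish Theorem \ref{thm:ANRstructure}, where spaces of metrics are realized as (convex, or retracts of convex) subsets of suitable Banach spaces of sections.
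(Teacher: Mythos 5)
The paper does not prove this statement at all: it is quoted as a classical result with a citation to Theorem 3.3 of \cite{MarSeg82}. Your argument is the standard textbook proof of Dugundji's extension theorem (shrunken-ball cover of $Y\setminus A$, locally finite subordinate partition of unity, approximate nearest points $a_\alpha\in A$, convex-combination extension, and the triangle-inequality estimate $d_Y(a_\alpha,a)\lesssim d_Y(y,a)$ giving continuity at points of $A$), and it is correct; the deduction that a convex set is even an AR, hence an ANR, by extending $\mathrm{id}_C$ to a retraction is also the standard one.
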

		 	 
		 	 To provide a succinct statement for a criterion, due to Palais, generalising the assertion above we first need to agree on the following.
		 	 
		 	 \begin{definition}\label{def:PalaisManifold}
		 	  (In this section) we will call \emph{manifold} a topological manifold with boundary, i.e. more precisely a Hausdorff space where each point has a neighborhood homeomorphic to a convex open set either in a locally convex topological vector space $V$ or in a half-space of $V$, the latter being understood as a set of the form $\left\{v\in V \ : \ \ell(v)\geq 0\right\}$ for a continuous linear functional $\ell$ on $V$.
		 	 \end{definition}
		 	 
		 	 \begin{theorem}\label{thm:PalaisCriterion} (Theorem 5 in \cite{Pal66})
		 	 A metrisable manifold is an ANR. More generally, a metrisable space is an ANR if each point has a neighborhood homeomorphic to a convex set in a locally convex topological vector space.
		 	 \end{theorem}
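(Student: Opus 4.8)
The plan is to obtain both assertions from the Dugundji extension theorem (Theorem~\ref{thm:Dugundji}, taken in the form valid for arbitrary locally convex topological vector spaces) together with the classical local-to-global principle for absolute neighborhood retracts, essentially due to Hanner. The first step is to observe that the manifold statement is a special case of the second assertion: if $X$ is a metrisable manifold in the sense of Definition~\ref{def:PalaisManifold}, every point has a neighbourhood homeomorphic either to a convex open subset $O$ of a locally convex space $V$ or to a convex open subset $O$ of a half-space $H=\{\ell\geq 0\}$ of $V$, and since $H$ is convex the segment between two points of $O$ is the same computed in $H$ or in $V$, so in either case $O$ is a convex subset of $V$. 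Hence it is enough to prove the general assertion.

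So let $X$ be metrisable, and suppose each $x\in X$ has a neighbourhood $N_x$ homeomorphic via $h_x$ to a convex subset $C_x$ of a locally convex space. Fix an open set $W_x$ with $x\in W_x\subseteq N_x$, so that $h_x(W_x)$ is relatively open in $C_x$. The key local fact is that a relatively open subset $O$ of a convex set $C$ in a locally convex space is an ANR: for any metric space $Y$, closed $A\subseteq Y$ and continuous $f\colon A\to O\subseteq C$, the Dugundji extension theorem yields a continuous extension $\bar f\colon Y\to C$, whence $U\eqdef\bar f^{-1}(O)$ is an open neighbourhood of $A$ on which $\bar f$ restricts to an extension of $f$ with values in $O$; by the extension-property characterisation of ANRs recorded after Definition~\ref{def:ANR}, $O$ is an ANR. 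Transporting back through $h_x$, we conclude that $\{W_x\}_{x\in X}$ is an open cover of $X$ by ANRs, i.e. $X$ is \emph{locally an ANR}.

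It then remains to invoke Hanner's theorem that a metrisable space which is the union of an open family of ANRs is itself an ANR; I would prove it along the standard lines — the two-set patching lemma (if $X=W_1\cup W_2$ with $W_1$, $W_2$ and $W_1\cap W_2$ open ANRs, then $X$ is an ANR), the observation that the union of a discrete family of open ANRs is an ANR (being their topological disjoint sum), a reduction via Stone's theorem to a $\sigma$-discrete open cover, and an inductive combination of the resulting countably many pieces. I expect this local-to-global passage to be the main obstacle: it is not formal, it rests essentially on metrisability/paracompactness, and the two-open-ANR patching lemma itself requires a careful splicing argument, extending a given $f\colon A\to X$ over neighbourhoods of $A\cap W_i$ separately and interpolating with a Urysohn function on $Y$. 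By contrast, the only input specific to our setting — that the local models are open subsets of convex sets in locally convex spaces — is entirely absorbed into the single application of the Dugundji extension theorem above.
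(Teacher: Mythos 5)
This statement is quoted in the paper directly from \cite{Pal66} (Theorem 5 there) and is not proved in the text, so there is no in-paper argument to compare against; your reconstruction is essentially Palais's own proof, and it is correct in outline. The reduction of the manifold case to the general one is fine (a convex open subset of a half-space is a convex subset of the ambient space), the passage from ``neighbourhood homeomorphic to a convex set'' to ``open neighbourhood homeomorphic to a relatively open subset of a convex set'' is fine, and the local fact is handled correctly by the Dugundji extension theorem in its locally convex form (which you rightly flag: the version recorded as Theorem \ref{thm:Dugundji} is stated only for normed spaces, whereas Dugundji's original theorem, and the one Palais uses, covers arbitrary locally convex targets); alternatively you could have said more simply that $C_x$ is an AR by Dugundji and then invoked the first bullet of Proposition \ref{prop:Criteria}. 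The one substantive caveat is that the entire weight of the argument rests on Hanner's local characterisation theorem (a metrisable space that is locally an ANR is an ANR), which you sketch but do not prove; your sketch (two-open-set patching, discrete unions, Stone's $\sigma$-discrete refinement, countable induction) is the standard and correct route, but as written this step is a citation rather than a proof. Since Hanner's theorem is a classical result that Palais himself cites, this is acceptable, but you should present it explicitly as an appeal to the literature rather than as something you ``expect'' to be able to prove.
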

		 
		 Note that the previous theorem implies that, in particular, Banach manifolds and  (more generally) Fr\'echet manifolds are indeed examples of ANRs; as a result, the class of all smooth tensors, of any fixed type, on a compact manifold is an ANR. That being said, we
		 next recall the following two general topological criteria:
		 
		 \begin{proposition}\label{prop:Criteria} (Proposition A.6.4 in \cite{FriPic90})
		 \begin{itemize}
	\item	 An open subset of an ANR is an ANR.        
		         	 	\item	 A retract of an ANR is an ANR.  
		 \end{itemize}
		         	 \end{proposition}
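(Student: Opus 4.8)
The plan is to derive both assertions directly from the extension-property characterisation of ANRs recalled immediately above (the reformulation of Theorem 3.1(ii) in \cite{MarSeg82}, which is also Palais' definition in \cite{Pal66}): a metrisable space $X$ is an ANR precisely when, for every metric space $Y$, every closed subspace $A\subseteq Y$, and every $f\in C^0(A,X)$, there exist an open neighbourhood $U$ of $A$ in $Y$ and a continuous extension $\overline{f}\in C^0(U,X)$ of $f$. Granting this reformulation, each of the two statements reduces to a short diagram chase; the only preliminary observation needed is that an open subset of a metric space, and a subspace of a metric space, are again metrisable, so the candidate spaces genuinely belong to the class $\mathscr{T}$ on which the notion is defined.

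For the first bullet, I would argue as follows. Let $Z$ be an ANR and $V\subseteq Z$ open, hence metrisable. Given a metric space $Y$, a closed subset $A\subseteq Y$ and $f\in C^0(A,V)$, I regard $f$ as a map $A\to Z$ and invoke the ANR property of $Z$: there are an open neighbourhood $W$ of $A$ in $Y$ and $g\in C^0(W,Z)$ with $g|_A=f$. Then $U\eqdef g^{-1}(V)$ is open in $Y$ (since $W$ is open in $Y$ and $g$ is continuous), it contains $A$ because $f(A)\subseteq V$, and $\overline{f}\eqdef g|_U$ takes values in $V$ and extends $f$. This exhibits $V$ as an ANR.

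For the second bullet, let $Z$ be an ANR and let $A\subseteq Z$ be a retract, with retraction $r\colon Z\to A$ and inclusion $\iota\colon A\to Z$, so that $r\circ \iota=\mathrm{id}_A$; note $A$ is metrisable (indeed closed in $Z$, by Remark \ref{rem:BasicRetract}). Given a metric space $Y$, a closed $B\subseteq Y$ and $f\in C^0(B,A)$, I apply the ANR property of $Z$ to the composite $\iota\circ f\colon B\to Z$, obtaining an open neighbourhood $U$ of $B$ in $Y$ and $g\in C^0(U,Z)$ with $g|_B=\iota\circ f$. Then $r\circ g\in C^0(U,A)$, and for every $b\in B$ one has $(r\circ g)(b)=r(\iota(f(b)))=f(b)$, so $r\circ g$ extends $f$. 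Hence $A$ is an ANR.

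I do not expect a genuine obstacle here: both arguments are purely formal consequences of the universal extension property. If anything, the single point warranting explicit mention is the metrisability check noted above, needed so that $V$ (respectively the retract $A$) is a legitimate object of $\mathscr{T}$; this is immediate since subspaces of metric spaces are metric. An alternative, equally short route would phrase everything in terms of retractions onto neighbourhoods in a model normed space via the Dugundji embedding, but the extension-property proof above is cleaner and self-contained given what has already been set up.
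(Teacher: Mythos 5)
Your proof is correct. Note that the paper itself does not prove this proposition at all — it is quoted verbatim as Proposition A.6.4 of \cite{FriPic90} — so there is no internal argument to compare against; what you have supplied is the standard textbook proof. Both of your reductions are valid given the extension-property reformulation of the ANR condition that the paper recalls right before (Theorem 3.1(ii) of \cite{MarSeg82}): for the open subset $V\subseteq Z$ you correctly pass to $U=g^{-1}(V)$, which is open in $Y$ because $W$ is, and for the retract you correctly post-compose the extension with the retraction $r$. The metrisability remark is the right thing to flag and is indeed immediate. The only cosmetic point is that closedness of the retract (via the Hausdorff remark) plays no role in your argument and could be omitted.
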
    	 
		

		Now, the key reason why we are interested in studying absolute neighborhood retracts is the following assertion:
		
		\begin{theorem}(see Theorem 15 in \cite{Pal66}, cf. \cite{Mil59})
		A map of ANRs that induces an isomorphism at the level of all homotopy groups (for any choices of base points) is a homotopy equivalence. 
		\end{theorem}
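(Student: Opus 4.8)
The statement we must prove is a version of Whitehead's theorem for absolute neighborhood retracts, and the plan is to deduce it from the classical Whitehead theorem \cite{Whi49a, Whi49b} together with Milnor's theorem \cite{Mil59} asserting that every metric ANR has the homotopy type of a CW complex. (This is precisely why the cited source appends the ``cf. \cite{Mil59}'': the fact rests on the observation that an ANR is a retract of an open subset of a normed linear space, hence is dominated by a CW complex, to which Milnor's domination criterion then applies.) Since all the relevant input is external, the argument we give is essentially a formal one.

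So let $f\colon A\to B$ be a continuous map between ANRs such that for every $k\in\N$ and every base point $a\in A$ the induced map $\pi_k(A,a)\to\pi_k(B,f(a))$ is an isomorphism (with the convention that, for $k=0$, this means a bijection of pointed sets matching up the path components). First I would invoke Milnor's theorem to fix CW complexes $K_A$, $K_B$ and homotopy equivalences $h_A\colon K_A\to A$, $h_B\colon K_B\to B$; let $h_A'$, $h_B'$ be chosen homotopy inverses. Set
\[
g\eqdef h_B'\circ f\circ h_A\colon K_A\longrightarrow K_B.
\]
Because $h_A$ and $h_B'$ are homotopy equivalences, they induce isomorphisms on all homotopy groups for all base points; composing with the hypothesis on $f$, the map $g$ induces isomorphisms $\pi_k(K_A,x)\to\pi_k(K_B,g(x))$ for every $k$ and every $x$. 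In other words, $g$ is a weak homotopy equivalence between CW complexes.

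Now Whitehead's theorem \cite{Whi49a, Whi49b} applies directly and shows that $g$ is a genuine homotopy equivalence. To conclude, observe that $h_B\circ g\circ h_A' = h_B\circ h_B'\circ f\circ h_A\circ h_A'\simeq f$, so that $f$ is homotopic to the composition of the three homotopy equivalences $h_A'$, $g$, $h_B$; since being a homotopy equivalence is invariant under homotopy and stable under composition, $f$ is itself a homotopy equivalence, as claimed.

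The one genuinely substantive ingredient is Milnor's theorem that ANRs carry the homotopy type of CW complexes; everything downstream of it --- transporting $f$ to a map of CW complexes, recognizing it as a weak homotopy equivalence, and invoking Whitehead --- is routine. The only point requiring a little care is the base-point bookkeeping in the disconnected case: it is exactly the hypothesis that the isomorphisms hold for \emph{all} base points, together with the matching of path components at the $\pi_0$-level, that guarantees $g$ qualifies as a weak homotopy equivalence in the sense demanded by Whitehead's theorem.
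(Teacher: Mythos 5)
Your argument is correct and is essentially the intended one: the paper does not prove this statement but quotes it as Theorem 15 of \cite{Pal66} (cf.\ \cite{Mil59}), and your deduction --- Milnor's theorem that ANRs have the homotopy type of CW complexes, transport of $f$ to a weak homotopy equivalence of CW complexes, and the classical Whitehead theorem \cite{Whi49a,Whi49b} --- is exactly the standard route those citations encode. The base-point and $\pi_0$ bookkeeping you flag at the end is handled correctly.
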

		
		\begin{corollary}\label{cor:ContractibilityTopCrit}
		If $X$ is an ANR then the following assertions are equivalent:
		\begin{enumerate}
		    \item $\pi_n(X)=0$ for all $n\in\mathbb{N}=\left\{0,1,2,\ldots \right\}$;
		    \item $X$ is contractible.
		\end{enumerate}
		\end{corollary}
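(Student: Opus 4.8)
The plan is to deduce both implications directly from the Whitehead-type theorem recalled just above (the one attributed to Palais and Milnor), together with the single additional observation that a one-point space is an ANR.

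First, the implication $(2)\Rightarrow(1)$ is immediate and does not even use the ANR hypothesis: if $X$ is contractible then it is homotopy equivalent to a point, and a homotopy equivalence induces isomorphisms on all homotopy groups (for all base points), so $\pi_n(X)=0$ for every $n\in\mathbb{N}$.

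For the converse $(1)\Rightarrow(2)$, I would begin by recording the degenerate reading of the case $n=0$: the hypothesis $\pi_0(X)=0$ is to be understood as saying that $X$ has exactly one path component, so in particular $X\neq\emptyset$ and $X$ is path-connected. Pick any point $p\in X$ and consider the inclusion $j\colon\{p\}\hookrightarrow X$. The singleton $\{p\}$ is a convex subset of the normed linear space $\R$, hence an ANR by the Dugundji extension theorem (Theorem \ref{thm:Dugundji}); and $X$ is an ANR by assumption. Since $X$ is path-connected the choice of base point is immaterial, and for every $n$ the induced map $\pi_n(\{p\})\to\pi_n(X)$ is a bijection of groups (respectively pointed sets when $n=0$), because both sides are trivial. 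Thus $j$ is a map of ANRs inducing an isomorphism on all homotopy groups for every choice of base point, so the Whitehead-type theorem stated immediately before this corollary applies and shows that $j$ is a homotopy equivalence. In other words $X$ is homotopy equivalent to a point, i.e. $X$ is contractible.

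There is essentially no obstacle in this argument; the only points that must be stated carefully are the convention on $\pi_0$ (which guarantees $X\neq\emptyset$ and path-connectedness, so that the cited theorem can be applied with a genuine base point on $X$) and the trivial but necessary remark that a one-point space qualifies as an ANR.
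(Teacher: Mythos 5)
Your argument is correct and is exactly the intended derivation: the paper leaves this as an immediate corollary of the Palais--Milnor theorem stated just above it, and your proof (a point is an ANR, the inclusion of a point induces isomorphisms on all homotopy groups under hypothesis (1), hence is a homotopy equivalence) is the standard way to fill it in. The care you take with the $\pi_0$ convention and with justifying that a singleton is an ANR via Theorem \ref{thm:Dugundji} is appropriate and consistent with the tools the paper has set up.
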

		
		Let us first see an application of this machinery to the simpler case of compact manifolds without boundary. For a fixed compact background manifold $M$, we can regard $\met$ (the space of Riemannian metrics on $M$) as an open cone inside the locally convex topological vector space of smooth symmetric $(0,2)$ tensors on $M$. The latter is in fact a Fr\'echet space. Hence $\met$ is an ANR. We claim that $\met_{R>0}$ is also an ANR: indeed one can consider the continuous maps
						    		\[
  \begin{tikzcd}
   \met \arrow{r}{Scal}  & C^{\infty}(M)  
   \arrow{r}{Min} & \R 
  \end{tikzcd}
\]
where $\text{Min}(f)=\min_{x\in M}f(x)$.
Then $\met_{R>0}$ is the pre-image, through such composite map, of the open interval $(0,\infty)$ so this is an open set inside $\met$ and thus by Proposition \ref{prop:Criteria} we conclude as claimed.

In a similar fashion we can also derive, as a direct consequence of the chain of topological results above, Theorem \ref{thm:ANRstructure} for the space $\met_{R>0, H>0}$, i.e. when only \emph{open} conditions come into play. 

\begin{proof}(of Theorem \ref{thm:ANRstructure}, for $\met_{R>0, H>0}$)
As above, consider the Fr\'echet space $\met$ of Riemannian metrics on $X$; by Theorem \ref{thm:PalaisCriterion} and the first part of Proposition \ref{prop:Criteria} this is an ANR. In fact, the space $\met_{R>0, H>0}$ is also an ANR: arguing as above, one considers 
						    		\[
  \begin{tikzcd}
   \met \arrow{r}{Scal}  & C^{\infty}(X)  
   \arrow{r}{Min} & \R 
  \end{tikzcd}, 
     \begin{tikzcd}
   \met \arrow{r}{Mean}  & C^{\infty}(\partial X)  
   \arrow{r}{Min} & \R 
  \end{tikzcd}
\]
and $\met_{R>0, H>0}$ is the intersection of the pre-images of $(0,\infty)$ through the two maps above. Thus, 
this is an open set inside $\met$, whence (again by Proposition \ref{prop:Criteria})  the conclusion follows.     
\end{proof}

Gaining the corresponding results for the spaces $\met_{R>0, H\geq 0}$ and $\met_{R>0, H=0}$ requires some more work, and suitable \emph{ad hoc} arguments. This is what we deal with in the remainder of this section.

\

Consistently with the notation employed throughout this article, we let $(X,g)$ denote a compact Riemannian manifold with (nonempty) boundary. Denote then by $V$ the outward-pointing unit normal vector field of $\partial X$, and by $i(\partial X,g)$ the normal injectivity radius, which is defined as the supremum of the set of numbers $r$ such that
\[\exp_{\cdot}(-rV): \partial X\to X,\quad p\in \partial X\mapsto \exp_p(-rV)\]
is a diffeomorphism onto its image. It should be a standard fact that $i(\partial X,g)$ depends continuously on $g$ with respect to the $C^2$ topology, but (to our surprise) we were unable to find any proper reference. As a result, for the sake of completeness (and given the importance of this matter with respect to the purposes of the present work) we have decided to provide a short proof of such a result exploiting a beautiful idea in \cite{Sakai1983continuity}, see Appendix \ref{sec:NormalInjRad}. 

\

In this section, we let $t: X\rightarrow \R_{\ge 0}$ be the distance function to $\partial X$. We shall write a Riemannian metric $g$ in normal coordinates, i.e. we write $g=dt^2 + g_t(x)$, with $x\in \partial X$ and $t$ non-negative, small enough.
For suitably small values of $\rho$, the level surface $t^{-1}(\rho)$ is smooth and, denoted by $S^g_{\rho}$ its shape operator, we note that at any given point
 the Hessian $\nabla_g^2 t$ is the second fundamental form of $t^{-1}(\rho)$, taken with respect to the unit normal vector $\nabla_g t$, so it equals $S^g_{\rho}$ modulo a contraction with the background metric $g$.
 Also, with our general sign conventions, its trace $\Delta_g t$ equals \emph{modulo a sign change} the mean curvature of the level surface. We will further denote by $H_g(x,t)$ the mean curvature at a point $x$ on the slice at distance $t$, with respect to $-\nabla_g t$. When no ambiguity is likely to arise we will write $H_g(\cdot)$ in lieu of $H_g(\cdot, 0)$. Firstly, we need the following preparatory result.

\begin{lemma}\label{lemma.eta}
    There exists a function $\eta: \met_{R>0}\to\R$, continuous with respect to the $C^2$-topology, such that the normal injectivity radius of $\partial X$ is at least $\eta(g)$,  $\eta\leq 1$ and, in addition, the following properties hold true:
    \begin{enumerate}
        \item for any metric $g\in\met_{R>0}$ there holds 
        \begin{equation}\label{eq:eta1}
        \eta(g)\leq \min\left\{\sqrt{\min_{X }R_g}/10, \ \min_{X} R_g/100\right\};
        \end{equation}
        \item for any metric $g\in\met_{R>0}$, and for all $t\le \eta(g)$ there holds
        \begin{equation}\label{eq:eta2}
         \|H_g(\cdot,t)|_{L^{\infty}(\partial X)}\le 2 (\|H_g\|_{L^{\infty}(\partial X)}+1)
         \end{equation}
         as well as
         \begin{equation}\label{eq:eta3}
         \|H_{g}(\cdot, 0)\|_{C^2(\partial X, g_t)}\le 2 (\|H_g\|_{C^2(\partial X, g_0)}+1)
         \end{equation}
    \end{enumerate}
\end{lemma}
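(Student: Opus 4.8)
The strategy is to construct $\eta$ as a pointwise minimum of several continuous functionals of $g$, then verify the quantitative estimates (1) and (2) hold on the scale $\eta(g)$. First I would set $\eta_0(g) = \min\{i(\partial X, g), 1\}$, which is continuous in the $C^2$-topology by the result proved in Appendix \ref{sec:NormalInjRad}; this guarantees that normal coordinates $g = dt^2 + g_t$ are well-defined for $t \le \eta_0(g)$ and that the metric coefficients $g_t(x)$ and their derivatives depend continuously on $(g,t)$. Next, the quantities $\min_X R_g$, $\|H_g\|_{L^\infty(\partial X)}$ and $\|H_g\|_{C^2(\partial X, g_0)}$ are all continuous functionals of $g$ with respect to the $C^2$-topology (the latter two involve only the first two derivatives of $g$ at the boundary), so the function $\eta_1(g) \coloneqq \min\{\sqrt{\min_X R_g}/10,\ \min_X R_g/100\}$ appearing in \eqref{eq:eta1} is continuous and positive on $\met_{R>0}$.

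The substance of the argument is producing a continuous, positive $\eta_2(g)$ so that \eqref{eq:eta2} and \eqref{eq:eta3} hold for $t \le \eta_2(g)$; then one simply defines $\eta(g) = \min\{\eta_0(g), \eta_1(g), \eta_2(g)\}$. For \eqref{eq:eta2}: the mean curvature $H_g(x,t)$ of the slice $t^{-1}(t)$ evolves by a Riccati-type equation $\partial_t H_g = |S^g_t|^2 + \Ric_g(\nabla_g t, \nabla_g t)$ (up to the sign convention fixed in the text), and the right-hand side is bounded on the compact set $\{t \le \eta_0(g)\}$ by a constant $C(g)$ depending continuously on $g$; integrating gives $\|H_g(\cdot,t)\|_{L^\infty} \le \|H_g\|_{L^\infty} + t\cdot C(g)\cdot(1 + \|H_g\|_{L^\infty}^{?})$ — more precisely one uses Gr\"onwall on the Riccati equation to control the growth — so choosing $\eta_2(g)$ small enough (continuously in $g$) forces the factor-$2$ bound \eqref{eq:eta2}. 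For \eqref{eq:eta3}, note that the left-hand side is the $C^2(\partial X)$-norm of the \emph{fixed} function $H_g(\cdot, 0)$ but measured in the \emph{varying} metric $g_t$ on $\partial X$; since $g_t \to g_0$ in $C^2(\partial X)$ as $t \to 0$ with rate controlled continuously by $g$ (again because $\partial_t g_t = -2 S^g_t$ and the shape operator and its tangential derivatives are bounded on the collar), the norm $\|H_g(\cdot,0)\|_{C^2(\partial X, g_t)}$ converges to $\|H_g(\cdot,0)\|_{C^2(\partial X, g_0)}$ uniformly for $g$ in a neighborhood, so a suitably small $\eta_2(g)$ — defined e.g.\ as the supremum of admissible thresholds, which one checks is lower semicontinuous, then regularized downward to a genuinely continuous function — does the job.

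The technical care needed is entirely in the \emph{continuity} of the threshold: the naive definition of $\eta_2(g)$ as ``the largest $t$ for which the estimates hold'' is only lower semicontinuous in general. The standard fix is to first produce, for each $g$, an open neighborhood $\mathcal{U}_g$ and a constant $c_g > 0$ such that the estimates hold with threshold $c_g$ for all metrics in $\mathcal{U}_g$ (possible by the uniform continuity discussion above), then take a locally finite refinement and a continuous partition of unity, or more simply invoke the fact that the infimum of a locally uniformly positive, upper semicontinuous family can be replaced by a continuous minorant. I would present this via the partition-of-unity smoothing of a lower semicontinuous positive function, which is routine.

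\textbf{Main obstacle.} The genuine difficulty is not any single estimate — the Riccati equation and the $g_t \to g_0$ convergence are standard — but rather keeping all the thresholds \emph{simultaneously continuous in $g$} while they are each only naturally defined as ``the largest $t$ that works.'' Getting \eqref{eq:eta3} right is the most delicate point, because it mixes a norm of a function with the metric in which that norm is taken, so one must track the $C^2$-dependence of $g_t$ on both $t$ and $g$ carefully; this is exactly the kind of bookkeeping where the continuity of $i(\partial X, g)$ from Appendix \ref{sec:NormalInjRad} is silently but essentially used, since it is what makes the collar (and hence everything defined on it) depend continuously on $g$ in the first place.
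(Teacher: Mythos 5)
Your proposal is correct and follows essentially the same route as the paper: continuity of the normal injectivity radius from Appendix \ref{sec:NormalInjRad}, the (matrix and traced) Riccati equation with ODE comparison to get \eqref{eq:eta2}, continuity of $t\mapsto g_t$ on the collar to get \eqref{eq:eta3}, and a lower envelope of continuous thresholds to define $\eta$. The paper is more terse about the lower-semicontinuity issue you flag (it simply asserts the thresholds can be chosen continuously in $g$), but your partition-of-unity remedy is the standard way to make that assertion rigorous and does not change the argument.
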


\begin{proof}
It descends from Proposition \ref{proposition.continuity.normal.inj} that one can find a function $\eta_1$, depending continuously on $g$ with respect to the $C^2$-topology, such that the normal injectivity radius of $\partial X$ is at least $\eta_1(g)$. In particular, this determines a cotinuous function $\eta_1:\met\to\R$, which we shall then restrict (without renaming) to $\met_{R>0}$, the subset of metrics of positive scalar curvature.

The scalar curvature function is a continuous map from the space of metrics, endowed with the $C^2$-topology, so it is clear that \eqref{eq:eta1} can be accomodated as well. (For instance, one could preliminarily define $\eta$ to be the lower envelope of the three continuous functions $\eta_1(g), \sqrt{\min_X R_g}/10, \min_X R_g/100$).

Similarly, possibly by shrinking $\eta(g)$ even further one can make sure \eqref{eq:eta3} holds: indeed, the family $(g_t)$ is a continuous path of smooth metrics on $\partial X$ so, for every given $g$ one can always satisfy \eqref{eq:eta3} for small enough $t$ (continuously depending on $g$) just by virtue of a continuity argument. In fact, note that for this specific purpose the $C^1$ topology on the space of metrics would suffice, as the expression of the Hessian of a function in local coordinates only involves the components of the metric as well as its first-order derivatives.

Thus, the rest of the proof concerns the requirement \eqref{eq:eta2} instead. Then, given $g\in\met$, consider a unit speed geodesic $\gamma$ orthogonal to $\partial X$. For $t\le \eta_1(g)$, we have the following Riccati equation along $\gamma$ (see e.g. Corollary 7.24 in \cite{Lee19}):
    \[
    \nabla^g_{\partial_t} (S^g_s) + (S^g_t)^2 = -\text{Rm}_g(\partial_t,\cdot) \partial_t,\]
    where $\text{Rm}_g$ stands for the Riemann curvature tensor, seen as a tensor of type $(1,3)$.
    
    By standard ODE comparison theory, thanks to the compactness of $X$ there exists a positive number $\eta(g)\leq\eta_1(g)$, depending on a two-sided bound on the sectional curvature of $X$ and on the second fundamental form of $\partial X$, such that when $s\le \eta(g)$, one has
    \[
    |S^g_t|^2(\gamma(s))\le 2 (|S^g_0|^2(\gamma(0))+1)\leq 2 (\|S^g_0\|_{L^{\infty}(\partial X)}^2+1)
    \]for any such $\gamma$.
    
    Tracing the Riccati equation, we then have
    \[\partial_t(H_g(\cdot, t)) + |S^g_t|^2 = -\Ric_g(\partial_t,\partial_t).\]
     Thus, exploiting the previous bound on $|S^g_t|^2$, by possibly shrinking $\eta$ we similarly conclude that $|H_g(x,t)| \le 2 (|H_g |(x,0)+1)$ for all $x\in\partial X$, whenever $t\le \eta(g)$. 
    
   As a result, the argument above allows to design a function $\eta$, depending continuously on $g$ with respect to the $C^2$-topology, and bounded from above by $\eta_1$ at each point, satisfying all desired properties.

\end{proof}

To proceed, we fix a smooth function $\f: [0,1]\to[-1,0]$ supported in $[0,2/3)$, such that $\f(x)=-x$ when $x\in [0,\frac 13]$, and also $|\f'|, |\f''|<5$.

\begin{proof}(of Theorem \ref{thm:ANRstructure}, for $\met_{R>0, H\geq 0}$)

We consider the following space of Riemannian metrics $\met_1$ on $X$ defined by:
\begin{multline*}
    \met_1=\bigg\{g \in\met : R_g>0, \\ 1000 (H_g)_{-} <  \min\left\{\sqrt{\min_{X} R_g}, \min_{X}R_g\cdot \eta(g), \min_X R_g\cdot (\|H_g\|_{L^\infty(\partial X)}+1)^{-1}\right\} \bigg\}.
\end{multline*}
 Observe that $\met_{R>0, H\ge 0}\subset \met_1$, and that $\met_1$ is an open subset of the space of Riemannian metrics on $X$. Hence $\met_1$ is an ANR.
We will now prove that there exists a retraction $F: \met_1\to \met_{R>0, H\ge 0}$, which will imply the conclusion by virtue of (the second assertion in) Proposition \ref{prop:Criteria}.

 For any $g\in \met_1$, denote $\eps=\max (H_g)_{-}$, and consider $\eta=\eta(g)$ provided by Lemma \ref{lemma.eta}. Define the functions $u, w: X\to\mathbb{R}$ by letting 
\[ 
 w(x,t)=\frac14 \eps \eta \f \left(\frac{t}{\eta}\right), \ \ u=e^w. 
\] 
It is of course understood that the function $w$ extends to a smooth function on $X$, that is identically equal to 0 away from a tubular neighborhood of width $\eta$ of the boundary. 
 We then define
    \[F(g)=u^{4} g.\]
    We observe first that if $g\in \met_{R>0,H\ge 0}$, then $\eps=0$, hence $w=0, u=1$ on $X$ and thus $F(g)=g$. That is to say, $F$ is the identity map on $\met_{R>0, H\ge 0}$. Thus, it suffices to check that $F(\met_1)\subset \met_{R>0, H\ge 0}$. Denote $\tilde g=F(g)$. We compute:
    \[H_{\tilde g} = u^{-3}\left(H_g u +4 V(u)\right)=u^{-2}\left(H_g+4V(w)\right),\]
    and, along $\partial X$,
    \[
    V(w) = -\frac14 \eps \f'(0) = \frac14 \eps.\]
    Thus $H_g+4V(w) \ge -\eps + \eps \ge 0$.
    
    Similarly, 
    \[R_{\tilde g}=u^{-5} (R_g u - 8 \Delta_g u)= u^{-4}(R_g - 8|\nabla_g w|^2 - \Delta_g w).\]
    We have for $t\leq\eta(g)$ that
    \[\nabla_g w = \frac 14 \eps \f'\left(\frac{t}{\eta}\right)\nabla_g t\quad \Rightarrow \quad |\nabla_g w|^2 \le \frac{1}{16}\eps^2 |\f'|^2<\frac{1}{20} \min_X R_g,\]
      thanks to the fact that $g\in\met_1$.
    Moreover,
    \[\Delta_g w = \frac{\eps}{4\eta} \f''\left(\frac{t}{\eta}\right)|\nabla_g t|^2 + \frac{\eps}{4} \f'\left(\frac{t}{\eta}\right)\Delta_g t.\]
    Thus, relying  on \eqref{eq:eta2} in Lemma \ref{lemma.eta} and again on the specific conditions defining $\met_1$, we estimate:
    \[|\Delta_g w|\le \eps \left(\frac{5}{4\eta} + \frac52 \|H_g\|_{L^\infty(\partial X)}+\frac 52\right)<\frac{1}{20} \min_X R_g.\]
    We therefore conclude that $R_{\tilde g}>0$ on $X$, so $F$ maps $\met_1$ inside $\met_{R>0, H\geq 0}$, as claimed.
\end{proof}

We now proceed and consider the case of \emph{minimal} boundary conditions. In what follows, we let $\nabla_g, \Delta_g$ denote the operators associated to the metric $g$, and let $\overline{\nabla}_{g_t},\overline{\Delta}_{g_t}$ denote the corresponding operators associated to $g_t$ on a $t=\text{constant}$ slice. We recall that, for any function $\psi\in C^2(X,\R)$ there holds
 \begin{equation}\label{eq:LaplaceInterface}
 \Delta_g \psi = \overline{\Delta}_{g_t} \psi + H_{g}(\cdot, t) \frac{\partial \psi}{\partial t} + \frac{\partial^2 \psi}{\partial t^2}.
 \end{equation}

The following proof is then a variation of the argument above.

\begin{proof}(of Theorem \ref{thm:ANRstructure}, for $\met_{R>0, H=0}$)

We consider the following space of Riemannian metrics $\met_2$ on $X$ defined by: \begin{multline*}
\met_2=\bigg\{g\in\met: R_g>0, \\  1000\|H_g\|_{C^2(\partial X, g_0)}<\min\left\{\sqrt{\min_X R_g}, \min_X R_g \cdot \eta(g), \min_X R_g\cdot (\|H_g\|_{C^2(\partial X, g_0)}+1)^{-1}\right\}\bigg\},
\end{multline*}
where the function $\eta$ is again the one constructed in Lemma \ref{lemma.eta}. 
    Clearly $\met_2$ is an open subset of the set $\met$ of Riemannian metrics on $X$ hence it is an ANR.
We will now prove that there exists a retraction $F: \met_2\to \met_{R>0, H=0}$, which will imply the conclusion by virtue of (the second assertion in) Proposition \ref{prop:Criteria}.
    
    For a metric $g\in \met_2$, and a point near $\partial X$, we define a function $w$ by letting 
    \[w(x,t)=-\frac14 H_g(x,0)\eta \f\left(\frac{t}{\eta}\right),\] which extends smoothly to all of $X$.
    Then define $u=e^w$, and $F(g)=u^4 g$.
    
 $F$ is the identity map on $\met_{R>0, H=0}$; denoted $\tilde g= F(g)$ it is also straightforward to check that $H_{\tilde g}=0$. Thus, it only remains to verify that $R_{\tilde g}>0$, which ensures that such a map is well-defined. As noted before, this requirement is equivalent to $8|\nabla_g w|^2 + \Delta_g w < R_g$. 
    
    To check this inequality, we first estimate the gradient term:
    \[|\nabla_g w|\le \frac14 |\overline{\nabla}_{g_t} H_{g}(\cdot,0)|\eta + \frac54 |H_{g}(\cdot,0)|\leq \frac12 (\|H_g\|_{C^2(\partial X, g_0)}+1)\eta + \frac54 |H_{g}(\cdot,0)|,\]
    where we have exploited equation \eqref{eq:eta3}.
    Thus, employing \eqref{eq:eta1} in Lemma \ref{lemma.eta}, the fact that $g\in\met_2$ implies, in particular, that $|\nabla_{g} w|^2 <\frac{1}{20} \min_X R_g$. 
       Using \eqref{eq:LaplaceInterface} and then again \eqref{eq:eta3} as well as \eqref{eq:eta2}, we similarly have:
    \begin{multline*}|\Delta_g w|(\cdot,t) \le \frac14 |\overline{\Delta}_{g_t} H_{g}(\cdot, 0)|\eta + \frac54 |H_{g}(\cdot, 0)| |H_g(\cdot,t)| + \frac54 \frac{|H_{g}(\cdot,0)|}{\eta}\\
    \leq \frac12 (\|H_g\|_{C^2(\partial X, g_0)}+1)\eta + \frac52 |H_{g}(\cdot, 0)|(\|H_g\|_{C^2(\partial X, g_0)}+1) + \frac54 \frac{|H_{g}(\cdot,0)|}{\eta}.\end{multline*}
Now, simply by appealing to \eqref{eq:eta1} and to the appropriate assumptions in the definition of $\met_2$ one concludes that $|\Delta_g w|< \frac{1}{20} \min_X R_g$, which allows to complete the proof.
\end{proof}
		
		\begin{remark}\label{rem:ExtANR}
		We explicitly note that simple variations on the arguments we presented above allow to derive the conclusion of Theorem \ref{thm:ANRstructure} for the spaces $\met_{R>R_0, H>H_0}$, $\met_{R>R_0, H\geq H_0}$ and $\met_{R>R_0, H=H_0}$ for any value of $R_0$ and $H_0$, and for any compact background manifold of dimension $n\geq 3$. 
		\end{remark}
		
        \section{Outline of the proof of the weak contractibility}\label{sec:OutlineParab}
        
        In this section we shall prove that the space of \emph{doubling} positive scalar curvature metrics has vanishing homotopy groups in all degrees (i.e. of Theorem \ref{thm:ANRstructure}), giving for granted various technical details whose presentation is postponed to Section \ref{sec:Rstruct}, Section \ref{sec:PartialHom}, Appendix \ref{sec:PSCconf}, Appendix \ref{app:EquivExtHom} and Appendix \ref{app:EquivDiskRemov} instead. We will in fact also describe the precise content of all these additional sections and describe how the pieces come together.

        We will freely employ the terminology concerning singular Ricci flows, and families thereof, that is given in \cite{BamKle19} and limit ourselves to explicitly recall here those notions that are most fundamental, or anyway key to the arguments we are about to present. The proof of Theorem \ref{thm:ContractDoubling} builds on a suitable \emph{equivariant} counterpart of the constructions given there; for the sake of expository clarity and concreteness we have decided to develop the theory to the extent needed for our specific scopes rather than dealing with general group actions (whose treatment may, however, be relevant to other classes of applications).
        
        \begin{definition}\label{def:RicciFlowSpacetime}
        A Ricci flow spacetime is a tuple $(\mM, \mt, \partial_\mt,g)$ with the following properties:
        \begin{enumerate}
            \item $\mM$ is a smooth 4-manifold with boundary $\partial\mM$;
            \item $\mt:\mM\to [0,\infty)$ is a smooth function without critical points (called \emph{time function});
            \item $\partial \mM=\mt^{-1}(0)$, i.e. the boundary of $\mM$ coincides with the initial time slice; 
            \item $\partial_{\mt}$ is a smooth vector field (called \emph{time vector field}) on $\mM$ that satisfies $\partial_{\mt}\mt\equiv 1$;
            \item $g$ is a smooth Riemannian metric on the horizontal (equiv. spatial) sub-bundle $\ker (d\mt)$;
            \item $g$ satisfies the Ricci flow equation $\mL_{\mt}g=-2\Ric(g)$.
        \end{enumerate}
        \end{definition}

        We will further need some related notation: for $t\geq 0$ we set $\mM_{t}=\mt^{-1}(t)$, and let $g_t$ denote the restriction of $g$ to $\mM_t$. It is tacitly understood, in item (6) above and throughout the article, that all curvature tensors (such as the Ricci curvature) at a given point of $\mM_t$ refer to $g_t$. 
        When no ambiguity is likely to arise, we shall systematically write $\mM$ in lieu of $(\mM, \mt, \partial_\mt,g)$.
        
        \begin{definition}\label{def:SingularRicciFlow}
        A singular Ricci flow is a Ricci flow spacetime $(\mM, \mt, \partial_\mt,g)$ satisfying the following properties:
        \begin{enumerate}
            \item $\mM_0$ is a compact manifold; 
            \item $(\mM, \mt, \partial_\mt,g)$ is complete in the sense of Definition 3.12 in \cite{BamKle19};
            \item for every $\e>0$ and $0\leq T<\infty$ there is a constant $r_{\e,T}$ such that $(\mM, \mt, \partial_\mt,g)$ satisfies the $\e$-canonical neighborhood assumption below scale $r_{\e,T}$ on $[0,T]$.
        \end{enumerate}
        It is said that such a spacetime is \emph{extinct at time $t$} if $\mM_t=\emptyset$ (note that if a singular Ricci flow is extinct at time $t$ then it is also extinct at all later times $t'\geq t$, cf. Theorem 1.11 in \cite{KleLot17}).
        \end{definition}
        
        Concerning the notion of completeness of Ricci flow spacetimes, a good example to be kept in mind is that of a rotationally symmetric dumbbell metric on $S^3$: if the neck is pinched enough at the initial time, and if let $t_0>0$ denote the (first) singular time for the flow, then the associated Ricci flow spacetime (which is obtained, following \cite{KleLot17}, as a suitable limit of surgical Ricci flows in the sense of Perelman, as the surgery parameters become arbitrarily small) will have a point missing on the slice $\mM_{t_0}$, which however can only be approached through curves along which the ambient curvature of $\mM$ (in the sense above, i.e. for the spatial sub-bundle) gets arbitrarily large. 
        
        \
        
         Roughly speaking, our strategy here is a refinement of the one presented for proving Theorem 1.2 in \cite{CarLi19}. We start with a somewhat more general definition of the basic objects we will be dealing with.

    \begin{definition}\label{def:ReflexTriple}
    	We shall define a reflexive $n$-manifold to be a triple $(M,g,f)$ such that:
    	\begin{enumerate}
    		\item {$M$ is a manifold with boundary, of dimension $n\geq 3$, which we postulate to be orientable;}
    		\item {$g$ is a smooth Riemannian metric on $M$;}
    		\item {$f\in C^{\infty}(M,M)$ is an isometric involution of $(M,g)$ such that the following condition (henceforth denoted $(\star_{\text{sep}})$ holds:
    		the set of its fixed points, $\Fix(f)$, is a smooth hypersurface (possibly with boundary, in case $\partial M\neq\emptyset$) and for any $x\in M\setminus \Fix(f)$ we have that $x$ and $f(x)$ belong to different connected components of $M\setminus \Fix(f)$.}
    	\end{enumerate}	
    \end{definition}
    
    Let us stress that, consistently with the notational conventions we have stipulated, when we write \emph{manifold with boundary} we also allow for the special case when the boundary in question is actually empty. Also, it is understood that if $M$ is connected then $\Fix(f)\neq\emptyset$.
    
    In particular, we explicitly note that $M$ may be non-compact, and (be it compact or not) it is allowed to have a non-empty boundary. When we write that $\Fix(f)$ \emph{is a smooth hypersurface} we mean that $\Fix(f)$ is a codimension one smooth embedded submanifold; when $\partial M\neq\emptyset$ we agree that the boundary of such an hypersurface is a subset of $\partial M$ (hence it is in fact a closed submanifold of $\partial M$).
    
  However, in the special case when $M$ is a compact manifold without boundary, then it follows that:		
    			\begin{enumerate}
    			\item{if $M$ is connected, then $\Fix(f)$ is a closed hypersurface;}
    			\item{if $M$ is disconnected, then there exist $2d$ connected components 
    				\[
    				\left\{M^{\ast}_1, M^{\ast\ast}_1,\ldots, M^{\ast}_d, M^{\ast\ast}_d \right\}
    				\] such that $\left\{M^{\ast}_i, M^{\ast\ast}_i\right\}$ are isometric under $f$ for any $i=1,\ldots, d$, and any other component $M^{\bullet}$ in $M$ is such that the triple $(M^{\bullet}, g^{\bullet}, f^{\bullet})$ is itself a connected reflexive triple (with $g^{\bullet}, f^{\bullet}$ denoting the restriction of the metric $g$ and the involution $f$ to $M^{\bullet}$, respectively).}	
    		\end{enumerate}
Note that, by virtue of the requirement (iii) above, $\Fix(f)$ is always a separating submanifold, hence it is two-sided and orientable (since $M$ is postulated to be so).

         Given a compact topological space $K$ (which we will most often take to be a $k$-dimensional disk, $D^k$, for some $k\geq 0$), a compact manifold with boundary $X$ and a finite-dimensional family of Riemannian metrics $(g^s)_{s\in K}$ with each $g^s$ doubling, we will describe the family in question as a family of reflexive triples $(M^s,g^s,f^s)_{s\in K}$ where $M^s=M$ is the double of $X$, $g^s$ is (with slight, yet convenient abuse of notation) the corresponding doubled Riemmannian metric and $f^s=f\in C^\infty(M,M)$ is an involution of $M$ encoding the symmetry of the manifold in question, in the sense that $f^{\ast}g^s=g^s$ for all $s\in D$.
         We shall then consider the associated Ricci flow spacetimes  $\mM^s$ and employ them to ultimately design a homotopy deforming the given space (typically: a simplex) of metrics $K$ towards positive scalar curvature metrics. The key point of our work is that all operations we consider need to be, in a suitable sense, equivariant, so to allow from the previous construction(s) to derive by restriction a corresponding homotopy at the level of the doubling metrics $g^s$ on $X$. To properly formalise this idea, we start by introducing a convenient specification of the two definitions above.

        \begin{definition}\label{def:ReflexiveRicciFlow}
       We shall say that $(\mM, \mt, \partial_\mt,g, f)$ is a reflexive Ricci flow spacetime (respectively: a reflexive singular Ricci flow) if  $(\mM, \mt, \partial_\mt,g)$ is a Ricci flow spacetime (respectively: a singular Ricci flow) and, in addition, $f\in C^{\infty}(\mM,\mM)$ satisfies the following properties:
       \begin{enumerate}
           \item for any $t\geq 0$ the map $f$ restricts to a smooth map at the level of the time-t-slice, i.e. $f_t\in C^{\infty}(\mM_t, \mM_t)$, and
           the triple $(\mM_t, g_t, f_t)$ is a reflexive 3-manifold without boundary;
           \item for any $t', t''\geq 0$ if $x\in \mM_{t'}$ survives until time $t''$ then (in the sense of \cite{BamKle19}) so will the point $f_{t'}(x)$ and one has $f_{t''}(x(t''))=(f_{t'}(x))(t'')$; equivalently, denoted by $\Phi=\Phi(t',t'')$ the flow of the time vector field $\mt$ from $t', t''$, we have that whenever either composition is well-defined then the following diagram of smooth maps is commutative:
           		\[
  \begin{tikzcd}
   \mM_{t'} \arrow{r}{f_{t'}} \arrow{d}{\Phi}  & \mM_{t'} \arrow{d}{\Phi}   \\
    \mM_{t''} \arrow{r}{f_{t''}}  & \mM_{t''} .
  \end{tikzcd}
\]
       \end{enumerate}
        \end{definition}

       The following statement ensures the existence and uniqueness of singular Ricci flows emanating from compact Riemannian 3-manifolds. To phrase it we employ the terminology introduced in Section 4 of \cite{BamKle17} concerning \emph{continuous families} of structures (such as, e.g., smooth manifolds, smooth maps, tensor fields, Riemannian manifolds, Ricci flow spacetimes, singular Ricci flows etc.\ldots). For the specific purposes of the present work, it will be important to recall the content of Corollary 4.24 in \cite{BamKle19}: if $(M^s)_{s\in K}$ is a transversely continuous family of compact manifolds with boundary, and the parameter space $K$ is connected, then in fact the family in question comes from a fiber bundle with fiber $M$ (independent of $s$) and structure group $\Diff(M)$.


        \begin{theorem}\label{thm:ShortTime}(Theorem 4.1 and Theorem 4.2 in \cite{BamKle17}) Let $K$ denote an arbitrary topological space.
        For any continuous family of closed Riemannian 3-manifolds $(M^s,g^s)_{s\in K}$ there is a continuous family of singular Ricci flows $(\mM^s)_{s\in K}$ whose continuous family of time-0-slices $(\mM^s_0,g^s_0)_{s\in K}$ is isometric to $(M^s,g^s)_{s\in K}$. Furthermore, uniqueness holds in the following sense. Consider two families of singular Ricci flows $(\mM^{i,s})_{s\in K}, i=1,2$ and isometries $\phi_i: \cup_{s\in K} \mM^{i,s}_0\to\cup_{s\in K} M^s$: then there is an isometry $\Psi: \cup_{s\in K}\mM^{1,s}\to \cup_{s\in K}\mM^{2,s}$ of continuous families of singular Ricci flows with the property that $\phi_1=\phi_2\circ \Psi$ on $\cup_{s\in K}\mM^{1,s}_0$.
        \end{theorem}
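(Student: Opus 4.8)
The statement is verbatim Theorem 4.1 and Theorem 4.2 of \cite{BamKle17}, so the plan is not to reprove it but to recall the architecture of the argument and indicate why no new ingredient is needed for our purposes. There are three pieces. First, for a \emph{single} closed Riemannian $3$-manifold $(M,g)$ the existence of a singular Ricci flow is the theorem of Kleiner--Lott \cite{KleLot17}: one runs Perelman's Ricci flow with surgery at surgery scale $\delta>0$, and shows that as $\delta\to 0$ the surgical spacetimes subconverge — smoothly on compact subsets away from the high-curvature region — to a Ricci flow spacetime $\mM$ in the sense of Definition \ref{def:RicciFlowSpacetime}. The curvature estimates underlying Perelman's work (no local collapsing, the canonical neighborhood theorem, the Hamilton--Ivey pinching estimates) pass to the limit and show that $\mM$ is complete in the required sense and satisfies the $\e$-canonical neighborhood assumption below a scale $r_{\e,T}$ on each $[0,T]$, i.e.\ that $\mM$ is a singular Ricci flow in the sense of Definition \ref{def:SingularRicciFlow}.

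Second, uniqueness. Given two singular Ricci flows with isometric time-$0$ slices, one constructs the comparison isometry $\Psi$ by a continuity argument in time: one shows that an isometry defined on $[0,t_0]$ (suitably interpreted at the spacetime level) extends a little past $t_0$. Away from singular times this is backward uniqueness / forward stability for the smooth Ricci flow, implemented via the Ricci--DeTurck / harmonic-map-heat-flow trick. The substantive point is the passage through a singular time, where the two flows are a priori two different resolutions of the same singularity: there one uses the $\e$-canonical neighborhood structure (necks, caps, and the classification of $\kappa$-solutions) to match the two spacetimes up to a small error on the part that survives, and then promotes the approximate matching to an exact isometry by a quantitative parabolic-uniqueness estimate. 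This is the technical heart of \cite{BamKle17} and I would simply invoke it.

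Third, the parametrized statement. Locally in the parameter near any $s_0\in K$ the classical Ricci flow of $g^{s_0}$ exists smoothly on some $[0,T_0]$, and by continuous dependence / stability of smooth Ricci flow the flows of all nearby $g^{s}$ also exist smoothly on $[0,T_0]$ and assemble into a transversely continuous family there; one then continues each such local family past $T_0$ using the single-flow existence and glues the overlapping local families by means of the \emph{canonical} isometry furnished by the uniqueness statement — this is exactly what makes the output a transversely continuous family of singular Ricci flows (and, via Corollary 4.24 of \cite{BamKle19}, a fiber bundle when $K$ is connected). The uniqueness clause for families is then the parametrized version of the single-flow uniqueness, obtained by carrying the parameter through the same argument. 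The one genuinely hard step is the uniqueness across singular times — certifying that two independently produced resolutions of a forming singularity agree up to isometry; everything else is either classical (Perelman's surgery, short-time existence and stability of smooth Ricci flow) or bookkeeping with continuous families, and none of it needs to be reproved here.
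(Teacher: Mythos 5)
The paper offers no proof of this statement at all: it is imported verbatim as Theorem 4.1 and Theorem 4.2 of \cite{BamKle17}, so invoking those results is precisely what the paper does, and your expository sketch of the underlying Kleiner--Lott existence, Bamler--Kleiner uniqueness, and the gluing of local families via canonical uniqueness is a faithful account of how the cited theorems are established. Nothing further is required here.
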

        
        In the case of classical, smooth Ricci flows, the local uniqueness result implies that initial data with symmetry will evolve maintaining such a symmetry. The same conclusion actually holds true, when properly phrased, in our setting, which we
     encode in the following statement:

        \begin{theorem}\label{thm:UniquenessEquivariance}
        Given any reflexive triple $(M,g,f)$ we have that the unique singular Ricci flow it determines is a reflexive singular Ricci flow in the sense of Definition \ref{def:ReflexiveRicciFlow}.
        \end{theorem}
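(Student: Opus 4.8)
The plan is to construct the involution at the level of the whole Ricci flow spacetime by feeding the uniqueness clause of Theorem~\ref{thm:ShortTime} with two different identifications of the time-$0$ slice, and then to verify the axioms of Definition~\ref{def:ReflexiveRicciFlow} slice by slice; the only genuinely non-formal point will be that the separating condition $(\star_{\text{sep}})$ propagates through the singular times of the flow.

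First I would take, by Theorem~\ref{thm:ShortTime} (with parameter space a point), the singular Ricci flow $\mM$ together with an isometry $\phi\colon\mM_0\to M$ identifying its time-$0$ slice with $(M,g)$. Since $f$ is an isometry of $(M,g)$, the composite $f\circ\phi$ is a second such identification, and the uniqueness statement applied to the pair $(\phi,f\circ\phi)$ yields an isometry of singular Ricci flows $F\colon\mM\to\mM$ with $\phi\circ F|_{\mM_0}=f\circ\phi$; in particular $F$ is a diffeomorphism with $\mt\circ F=\mt$ and $F_\ast\partial_\mt=\partial_\mt$, restricting to $f$ on the time-$0$ slice. Applying the uniqueness statement again to the pair $(\phi,\phi)$ shows that $\mathrm{id}_\mM$ is the only isometry of $\mM$ fixing the identification $\phi$; since $F\circ F$ restricts to $f\circ f=\mathrm{id}$, we get $F\circ F=\mathrm{id}_\mM$, so $F$ is an isometric involution. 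Writing $f_t\eqdef F|_{\mM_t}$, each $f_t$ is an isometric involution of the closed orientable $3$-manifold $(\mM_t,g_t)$, and the relations $\mt\circ F=\mt$, $F_\ast\partial_\mt=\partial_\mt$ are precisely the commuting square with the flow $\Phi$ in item (2) of Definition~\ref{def:ReflexiveRicciFlow}: a point survives to a later time if and only if its $f$-image does, and the two survive in parallel. This settles everything except item (1), namely that $(\mM_t,g_t,f_t)$ is a reflexive $3$-manifold (without boundary) for every $t$.

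For item (1) it suffices to prove the separating clause of $(\star_{\text{sep}})$: no $x\in\mM_t\setminus\Fix(f_t)$ lies in the same connected component of $\mM_t\setminus\Fix(f_t)$ as $f_t(x)$. Granting this, a component of $\Fix(f_t)$ of codimension $\geq 2$ is impossible (a small punctured neighbourhood of such a point would be connected, $f_t$-invariant, and would meet $\mM_t\setminus\Fix(f_t)$ in a single component), and a component of $\mM_t$ fixed pointwise by $f_t$ is impossible by flowing it backwards to the time-$0$ slice, where $\Fix(f)$ has empty interior in each component of $M$; hence $\Fix(f_t)$ is a (two-sided, orientable) hypersurface and $(\mM_t,g_t,f_t)$ is reflexive. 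The separating clause holds at $t=0$ by hypothesis, and it is preserved under nonsingular Ricci flow because the flow is $F$-equivariant and $\Phi$ carries the two $f_t$-swapped halves of a slice onto the two $f_t$-swapped halves of a later slice.

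The hard part will be to see that the separating property survives the singular times. I would argue from the local structure of the flow near such a time: the topology change of $\mM$ there is the Kleiner--Lott surgery-type transition, performed along a family of $\e$-necks and $\e$-caps, and by equivariance any such family is either disjoint from its $f_t$-image --- in which case the region and its mirror are handled symmetrically and $\Fix(f_t)$ is left untouched --- or it is $f_t$-invariant, in which case a short case analysis (an $f_t$-invariant neck meets $\Fix(f_t)$, if at all, either in a central $S^2$ cross-section or in an interval's worth of equatorial circles, these being the only $f_t$-invariant totally geodesic possibilities) shows that excising it and gluing in the standard caps carries a pair of swapped halves to a pair of swapped halves. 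An alternative worth keeping in reserve is to realise $\mM$ as a limit of Ricci flows with surgery performed $f$-equivariantly (the surgeries carried out on $f$-invariant collections of necks), observe that an equivariant surgery manifestly preserves $(\star_{\text{sep}})$, and then invoke uniqueness (Theorem~\ref{thm:ShortTime}) to identify this equivariant limit with $\mM$. In either approach the real obstacle is the same: the singular Ricci flow is canonical only away from the surgery regions, so one must check directly that equivariance --- and in particular the separating property, which is what makes a manifold genuinely reflexive rather than merely $\Z/2$-symmetric --- is not destroyed when the topology changes.
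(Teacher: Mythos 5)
Your route is genuinely different from the paper's, and the difference is instructive. The paper does not touch the uniqueness theorem to \emph{produce} the involution at all: it invokes Dinkelbach--Leeb to get that Perelman-type Ricci flows \emph{with surgery} starting from a reflexive triple can be performed equivariantly (so that each surgical flow is a reflexive Ricci flow with surgery in the sense of \cite{CarLi19}), passes to the Kleiner--Lott limit as the surgery parameters degenerate, observes that equivariance survives the limit, and only then uses Theorem \ref{thm:ShortTime} to identify this equivariant limit with \emph{the} singular Ricci flow. That is exactly your ``alternative worth keeping in reserve''. Your primary construction of $F$ --- feeding the pair $(\phi, f\circ\phi)$ into the uniqueness clause and deducing $F\circ F=\mathrm{id}$ --- is cleaner and more canonical, and it cheaply delivers items (2) of Definition \ref{def:ReflexiveRicciFlow}; its price is that it gives you no geometric control over $\Fix(f_t)$ at positive times, which is why all the work gets pushed into verifying $(\star_{\text{sep}})$. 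The paper's route buys that verification almost for free, since equivariant surgery along an $f$-invariant collection of necks visibly preserves the separating property, and the limit inherits it.

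Two soft spots. First, Theorem \ref{thm:ShortTime} as stated asserts the \emph{existence} of an isometry $\Psi$ compatible with the initial identifications, not its uniqueness; your step ``$\mathrm{id}_\mM$ is the only isometry fixing $\phi$'' therefore needs the full strength of the Bamler--Kleiner uniqueness theorem (an isometry of singular Ricci flows is determined by its restriction to the time-$0$ slice), which is true but should be invoked explicitly. Second, and more seriously, your primary plan for propagating $(\star_{\text{sep}})$ across singular times reasons as if $\mM$ underwent discrete neck-excision-and-cap-gluing operations; the singular Ricci flow has no surgery times and no glued-in caps --- points simply fail to survive --- so that case analysis cannot be run on $\mM$ itself. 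It has to be run either on the surgical approximations (your backup plan, i.e.\ the paper's proof, which requires citing Dinkelbach--Leeb for the existence of equivariant surgery) or via a direct study of how connected components of $\mM_t\setminus\Fix(f_t)$ vary in $t$ using the Kleiner--Lott structure theory. As written, the primary plan does not close.
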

        
        \begin{proof}
        It follows from the work by Dinkelbach-Leeb \cite{DinLee09} that, for whatever choice of the surgical parameters, the Ricci flow with surgery emanating from reflexive 3-manifolds is a reflexive Ricci flow with surgery in the sense of Theorem 4.26 in \cite{CarLi19}. The corresponding singular Ricci flow is then uniquely determined as a subsequential limit of surgical Ricci flow by the uniqueness theorem stated above. Thus, one can follow the same construction developed by Kleiner-Lott in \cite{KleLot17} for the non-equivariant case, and indeed it is easily checked that such a limit process ensures the desired equivariance properties.
        \end{proof}
        
        Concerning the long-time behaviour of the flow we will employ the following basic statement, whose input is a purely topological condition on the background manifold.
        
        \begin{theorem}\label{thm:Extinction}(Theorem 2.16 in \cite{BamKle17-Diff})
        Let $M$ be a finite connected sum of spherical space forms and copies of $S^2\times S^1$ (which happens if and only if $M$ supports positive scalar curvature metrics), and let $K$ be a compact subset of Riemannian metrics on $M$. Then there exists a finite time $T_{\text{ext}}$ such that any singular Ricci flow $(\mM, \mt, \partial_\mt,g)$ with the property that $(\mM_0, g_0)$ is isometric to $(M,h)$ for some $h\in K$ is extinct at time $T_{\text{ext}}$.
        \end{theorem}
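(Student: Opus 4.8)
The plan is to reduce the statement to the \emph{finite extinction time} phenomenon for singular Ricci flows on $3$--manifolds without aspherical prime summands — which is the Colding--Minicozzi/Perelman min-max argument (see \cite{Per03b}), in the form extended to the canonical/singular setting by Kleiner--Lott \cite{KleLot17} — and to check that the resulting extinction estimate can be made uniform across the compact family $K$. The topological hypothesis on $M$ is used precisely to guarantee that a suitable sweepout invariant is strictly positive on all metrics.

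\emph{Step 1 (a width detecting the topology, uniformly bounded on $K$).} To every Riemannian metric $g$ on $M$ one associates the min-max invariant $W(g)$ of Colding--Minicozzi and Perelman — the width of the relevant sweepout family (by $2$--spheres, resp.\ by one-parameter families of $2$--spheres, according to whether $\pi_2(M)\neq 0$ or $M$ is a prime spherical space form with $\pi_3\neq 0$). Since $M$ is a finite connected sum of spherical space forms and copies of $S^2\times S^1$, these sweepouts are homotopically nontrivial, so $W(g)>0$ for \emph{every} metric $g$ on $M$; moreover $g\mapsto W(g)$ is continuous on $\met$. Hence, by compactness of $K$, $W_\ast\eqdef\sup_{h\in K}W(h)<\infty$, and likewise $\lambda_\ast\eqdef\inf_{h\in K}\min_M R_h>-\infty$.

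\emph{Step 2 (differential inequality and finite extinction for smooth flows).} Along a smooth Ricci flow $(g(t))$ on the compact manifold $M$, the width obeys a barrier/forward-difference-quotient inequality of the form
\[
\frac{d}{dt}W(g(t))\le -c_1+\frac{c_2}{t+c_0}\,W(g(t)),
\]
with $c_1,c_2>0$ universal and $c_0$ depending only on a lower bound for $R_{g(0)}$ (one uses $R_{\min}(g(t))\ge -\tfrac32(t+c_0)^{-1}$, valid with $c_0$ an explicit function of $\min_M R_{g(0)}$, or $c_0$ arbitrary if the latter is nonnegative). Integrating this ODE comparison shows that $W(g(t))$, which is strictly positive as long as $g(t)$ exists, must vanish after a time bounded by an explicit function $\Theta(W_\ast,\lambda_\ast)$ — whence the flow must become extinct (or singular in a way that forces extinction) by that time. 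Both arguments of $\Theta$ are controlled uniformly over $K$ by Step 1.

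\emph{Step 3 (passage to singular Ricci flows).} The singular Ricci flow $\mM^h$ emanating from $(M,h)$ arises as a limit of Ricci flows with surgery as the surgery parameters degenerate (Kleiner--Lott \cite{KleLot17}); each surgery excises high-curvature neck regions and caps them off, operations which cannot increase the min-max width $W$. Consequently $t\mapsto W(g_t)$ is well defined on the (possibly disconnected) time-$t$ slices of $\mM^h$ and still satisfies the inequality of Step 2 across the continuum of singular times, so $\mM^h$ is extinct no later than $T_{\mathrm{ext}}\eqdef\Theta(W_\ast,\lambda_\ast)$, a bound depending only on $M$ and $K$; once $\mM^h_{t_0}=\emptyset$ one has $\mM^h_t=\emptyset$ for all $t\ge t_0$ (Theorem 1.11 in \cite{KleLot17}). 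This proves the theorem with $T_{\mathrm{ext}}$ as above.

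\emph{Expected main difficulty.} The delicate point is Step 3: one must verify that $W$ is lower semicontinuous under the convergence of surgical flows to the singular Ricci flow and does not jump upward at the a priori uncountably many singular times, and that the a priori curvature and volume control built into Definition \ref{def:SingularRicciFlow} is strong enough to run the comparison through singularities. A technically lighter alternative, which avoids re-deriving the monotonicity in the singular setting, is a compactness-and-contradiction argument: were there no uniform $T_{\mathrm{ext}}$, one could choose $h_i\in K$ with extinction times $\to\infty$ and, extracting, $h_i\to h_\infty\in K$; by continuous dependence (stability) of singular Ricci flows on their initial data (\cite{BamKle17}, cf.\ \cite{BamKle19}) the flows $\mM^{h_i}$ converge to $\mM^{h_\infty}$ on compact time intervals, and since $\mM^{h_\infty}$ is extinct at some finite $T_\infty$, a quantitative ``almost extinct $\Rightarrow$ extinct shortly after'' lemma (using the $\varepsilon$-canonical neighbourhood assumption at small scales together with volume decay) would force $\mM^{h_i}$ to be extinct by time $T_\infty+1$ for large $i$, a contradiction. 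Either way, the substance lies in singularity control rather than in the elementary ODE comparison.
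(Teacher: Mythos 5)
The paper does not prove this statement at all: it is imported verbatim as Theorem~2.16 of Bamler--Kleiner's paper on Ricci flow and diffeomorphism groups (the reference \cite{BamKle17-Diff} in the theorem header), and is used as a black box. So there is no internal argument to compare yours against; the relevant question is whether your sketch would stand on its own.

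Your Steps~1 and~2 are the standard Colding--Minicozzi/Perelman width argument and are fine in outline (positivity of $W$ for every metric on such an $M$, an upper bound $W_\ast$ over the compact family $K$ by upper semicontinuity, and the barrier inequality $\frac{d}{dt}W\le -c_1+\frac{c_2}{t+c_0}W$ giving an explicit extinction bound for smooth flows). The genuine gap is Step~3, which you correctly identify as the delicate point but do not close. The singular Ricci flow is not a Ricci flow with surgery: it is a limit of surgical flows as the surgery scale degenerates, its time slices can change topology along an a priori complicated set of times, and the assertion that $t\mapsto W(g_t)$ is well defined on the (possibly disconnected, possibly incomplete-looking) slices $\mM_t$ and still satisfies the barrier inequality in the sense needed to integrate it is precisely the technical content of the cited theorem --- it requires the canonical-neighbourhood structure at small scales and a careful argument that the width does not jump upward across singular times. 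Your alternative compactness-and-contradiction route has the same status: it hinges on a quantitative ``almost extinct $\Rightarrow$ extinct shortly after'' lemma and on continuity of singular Ricci flows in their initial data on compact time intervals, neither of which you establish. As a proof proposal the strategy is the right one (and matches what the cited source actually does), but as written it defers exactly the step that makes the theorem nontrivial; for the purposes of this paper the honest resolution is the one the authors adopt, namely to quote Bamler--Kleiner.
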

        
        Now, as we will see, one can actually modify the space-time track of the singular Ricci flow $(\mM_t)_{t\geq 0}$ (which is in practice defined over a compact time interval, because of the previous conclusion on the extinction time) to obtain an actual homotopy of metrics. In particular, we will ultimately prove the following result:
        
           \begin{theorem}\label{thm:MainHomotopy}
        Let $X$ be an orientable compact manifold with boundary supporting doubling metrics with positive scalar curvature. Let $k\in\mathbb{N}_{\ast}=\left\{1,2,\ldots\right\}$ and let $(h^s)_{s\in D^k}$ be a continuous family of doubling Riemannian metrics on $X$.
        Then there is a continuous family of doubling Riemannian metrics $(h^s_t)_{s\in D^k, t\in [0,1]}$ on $X$, such that:
        \begin{enumerate}
            \item for all $s\in D^k$: $h^s_0=h^s$, and $h^s_1$ has positive scalar curvature; 
            \item if $(X, h^s)$ has positive scalar curvature, then so will $(X, h^s_t)$ for all $t\in [0,1]$.
        \end{enumerate}
        \end{theorem}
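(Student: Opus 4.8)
The plan is to reduce, via the doubling construction of Section~\ref{sec:Elliptic}, to an equivariant statement on the closed double $M$ of $X$, and then adapt the singular Ricci flow machinery of \cite{BamKle19} to the reflexive category. First I would let $(g^s)_{s\in D^k}$ denote the continuous family of doubled metrics on $M$ associated with $(h^s)_{s\in D^k}$; since each $h^s$ is doubling, each $g^s$ is a smooth metric invariant under the fixed isometric involution $f$, so that $(M,g^s,f)_{s\in D^k}$ is a continuous family of reflexive $3$-manifolds in the sense of Definition~\ref{def:ReflexTriple}, with $\Fix(f)$ a copy of $\partial X$ and the two components of $M\setminus\Fix(f)$ the two copies of the interior of $X$. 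By Theorem~\ref{thm:ShortTime} this family produces a continuous family of singular Ricci flows $(\mM^s)_{s\in D^k}$ whose time-$0$-slices recover $(M,g^s)$, and by Theorem~\ref{thm:UniquenessEquivariance} each $\mM^s$ carries a compatible involution, i.e.\ is a reflexive singular Ricci flow. Since $M$ supports PSC metrics (e.g.\ any $g^s$), Theorem~\ref{thm:Extinction} furnishes a uniform extinction time $T_{\text{ext}}<\infty$ for the whole family.

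Next I would develop the notion of an \emph{equivariant partial homotopy} relative to $(\mM^s)$: a partial homotopy in the sense of \cite{BamKle19} --- i.e.\ a subcomplex of a subdivision of $D^k$ over which a genuine continuous family of PSC metrics is already defined, together with ``PSC-conformal'' data over the remaining cells and comparison maps into the Ricci-flow spacetimes --- in which every constituent is required to be compatible with the involutions $f$ and $f^s_t$. The core of the proof is then a backward induction on time: starting at $t=T_{\text{ext}}$, where every slice $\mM^s_t$ is empty and the empty equivariant partial homotopy is trivially defined over all of $D^k$, I would show that an equivariant partial homotopy living on $[\,t,\infty)$ can be modified and extended to one living on $[\,t-\Delta T,\infty)$ for a fixed step $\Delta T>0$ depending only on the canonical-neighbourhood constants; after finitely many steps one reaches $t=0$, where the partial homotopy is defined over all of $D^k$ and yields a genuine continuous family $(\bar g^s)_{s\in D^k}$ of $f$-invariant PSC metrics on $M$ together with an $f$-equivariant homotopy from $(g^s)$ to $(\bar g^s)$.

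The inductive step concentrates essentially all of the difficulty. Over the newly added time slab one uses the $\e$-canonical neighbourhood assumption to decompose each slice into spherical pieces, $\e$-necks, and a complementary thick region; one extends the PSC-conformal structure across the exposed part of the spacetime using the conformal deformation results of Appendix~\ref{sec:PSCconf}, removes the caps coming from surgery and extinction regions by the disk-removal procedure of Appendix~\ref{app:EquivDiskRemov}, and promotes the conformal data over a maximal subcomplex to a genuine PSC family via the extension procedure of Appendix~\ref{app:EquivExtHom}. \textbf{The hard part is making all of this equivariant}, and the crux is the interaction between the canonical decomposition and $\Fix(f^s_t)$ (which descends to $\partial X$ in the quotient): by the separation property $(\star_{\text{sep}})$ each piece of the decomposition is either preserved by the involution --- and then carries its own induced reflexive structure, symmetric across $\Fix(f^s_t)$ --- or is swapped with a disjoint mirror piece; in the first case one performs the deformation symmetrically so that it commutes with $f^s_t$, and in the second one performs it on one representative and copies it by $f^s_t$ to the other. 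One must also arrange that all combinatorial choices (the subdivision of $D^k$, the auxiliary scales, the order in which cells are filled) are made $f$-equivariantly, which is unproblematic since $f$ does not move the parameter $s$; the reflexive analogues of the Bamler--Kleiner extension and removal lemmas then go through essentially verbatim once this symmetry-versus-boundary dichotomy is built in.

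Finally, with a continuous family $(\bar g^s)$ of $f$-invariant PSC metrics on $M$ and an $f$-equivariant homotopy $(\bar g^s_t)_{t\in[0,1]}$ from $(g^s)$ to $(\bar g^s)$ in hand, I would restrict to one of the two halves of $M$ cut out by $\Fix(f)$, obtaining a family $(h^s_t)_{s\in D^k,\,t\in[0,1]}$ on $X$ with $h^s_0=h^s$ and $R_{h^s_1}>0$. Since the involution stays isometric along the whole homotopy, $\Fix(f)$ remains totally geodesic to all odd orders, so each $h^s_t$ is again doubling; this gives (1). For (2), note that if $h^s$ is PSC then $g^s$ is PSC, and positive scalar curvature is preserved by (singular) Ricci flow and by all the conformal and interpolation deformations built into the partial-homotopy construction --- indeed the members of a partial homotopy are by definition PSC metrics --- so the entire path $(h^s_t)_t$ remains in $\met_{R>0, D}$. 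This last point is precisely why the construction must be engineered to be \emph{compatible with}, and not merely to terminate at, positivity of the scalar curvature.
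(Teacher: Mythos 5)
Your overall architecture coincides with the paper's: pass to the double $(M,g^s,f)$, invoke Theorems \ref{thm:ShortTime}, \ref{thm:UniquenessEquivariance} and \ref{thm:Extinction} to obtain a family of reflexive singular Ricci flows with uniform extinction time, run the Bamler--Kleiner backward-in-time partial-homotopy induction in the reflexive category (with exactly the dichotomy you describe between pieces meeting $\Fix(f)$ and pairs of pieces swapped by $f$, as in Sections \ref{sec:Rstruct}--\ref{sec:PartialHom} and the appendices), and restrict back to $X$. One correction of substance to your setup: a partial homotopy is not ``a subcomplex of a subdivision of $D^k$ over which a genuine PSC family is already defined'' --- every simplex of the refined complex carries data, and the partiality is \emph{spatial}: the embeddings $\psi^{\sigma}_s:Z^{\sigma}\to\mM^s_T$ may miss high-curvature regions of the time-$T$ slices. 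What one proves at $T=0$ is surjectivity of these embeddings, not coverage of the parameter space.

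The genuine gap is in your treatment of assertion (2), which rests on the false claim that ``the members of a partial homotopy are by definition PSC metrics.'' They are not: the intermediate metrics $g^{\sigma}_{s,t}$ (pull-backs of the rounded metrics $g'^{,s}$, interpolations, neck extensions, disk removals) are only required to be \emph{reflexively PSC-conformal} over the relevant parameters, i.e.\ to admit an equivariant conformal factor rendering them PSC, and these factors are a priori only local in $(s,t)$. Accordingly the paper proves an intermediate statement (Theorem \ref{thm:MainHomotopyConfVersion}) producing a homotopy through PSC-\emph{conformal} metrics, preserved over a \emph{closed} subset $K$ of parameters, and then needs two further steps that your plan omits: (i) since $\{s: R_{h^s}>0\}$ is open rather than closed, one first runs the classical Ricci flow for a short uniform time $\tau$ (using that the double supports no Ricci-flat metric) so as to work with the closed set $\{s: R_{h^s}\ge 0\}$ while gaining strict positivity for $0<t\le\tau$; (ii) one then globalizes the local conformal factors by a covering and partition-of-unity argument into a single continuous family $w$ with $w\equiv 1$ at $t=0$, the final homotopy being the conformally corrected one. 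Without these steps the path you construct need not stay in $\met_{R>0,D}$ for PSC initial data, and need not even terminate at a PSC metric, so both conclusions (1) and (2) would fail as argued.
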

        
        \begin{remark}\label{rem:TopClassification}
      As already mentioned in the introduction, recall from Theorem 1.1 in \cite{CarLi19} that $X$, an orientable compact manifold with boundary, supports doubling metrics with positive scalar curvature if and only if it takes the form of a finite connected sum of handlebodies (of any genus $\gamma\geq 0$), spherical space forms and copies of $S^2\times S^1$. Here the connected sums are understood in the most standard sense, i.e. at interior points (away from the boundary).
        \end{remark}
        
        \begin{remark}\label{rem:RephraseDoubling}
        One can equivalently rephrase the statement above by considering, as an input, a continuous family of reflexive manifolds $(M, h^s, f)$ and obtaining, as an output, a continuous family of reflexive manifolds  $(M, h^s_t, f)$ ending at positive scalar curvature metrics, and preserving such a positivity condition when satisfied for some $s\in D^k$ at $t=0$.
        \end{remark}
        
        It is clear that Theorem \ref{thm:MainHomotopy} implies Theorem \ref{thm:ContractDoubling}, as described below.
        
        \begin{proof}
        First of all, let us observe that the space $\met_D$ (when not empty) is contractible (in fact, it is a cone): indeed, it can be regarded as the cone of positive definite elements in the vector space of smooth $(0,2)$-tensors on the double of $X$ that are invariant under the involution $f$ in the sense that $f^{\ast}\sigma=\sigma$. Hence, in particular $\met_D$ is path-connected.

Keeping in mind Remark \ref{rmk:WeakHom}, let $h: (D^k,\partial D^k)\to (\met_D, \met_{R>0, D})$ be a given continuous map of pairs. Applying Theorem \ref{thm:MainHomotopy} we obtain a continuous homotopy of pairs $\overline{h}=\overline{h}(k)$ with the two properties (1) and (2) above. The existence of such a homotopy, when read at the level $k=0$ ensures that $CC(\met_D)\leq CC(\met_{R>0,D})$ and, when read at the level $k=1$ ensures that $CC(\met_D)\geq CC(\met_{R>0,D})$, where we have denoted by $CC(Z)$ the number in $\overline{\mathbb{N}}$ of path-connected components of a topological space $Z$. As a result, $\met_{R>0,D}$ is path-connected.
        
        That being said, the existence of a continuous homotopy of pairs $\overline{h}=\overline{h}(k)$ with the two properties (1) and (2) above implies $\pi_{k-1}(\met_D, \met_{R>0, D})=0$. So from the contractibility of $\met_D$ it then follows that  $\met_{R>0, D}$ has vanishing homotopy groups in all degrees, as we had to prove.
        \end{proof}
        
        We will derive Theorem \ref{thm:MainHomotopy} from another similar statement, where (roughly speaking) the positive scalar curvature condition is replaced, throughout, by that of PSC-conformality.

        \begin{definition}\label{def:ReflexivePSCconf}
          A compact reflexive 3-manifold with boundary $(M,g,f)$ is called reflexively PSC-conformal if there is a smooth positive function $w\in C^{\infty}(M)$ such that:
          \begin{enumerate}
              \item [(0)] $w$ is $f$-equivariant, i.e. $w\circ f=w$; 
              \item [(1)] $w^4 g$ has positive scalar curvature;
              \item [(2)]  $w$ restricted to each boundary component of $M$ is constant;
              \item [(3)] every boundary component of $(M, w^4g)$ is totally geodesic and isometric to the standard round 2-sphere (of unit radius).
          \end{enumerate}
        \end{definition}
        Note that when $\partial M=\emptyset$ this definition is simply singling out manifolds of positive Yamabe invariant, or (which is the same, by the so-called trichotomy theorem) those manifolds such that the first eigenvalue of the conformal Laplace operator has positive sign. In the case of compact manifolds with boundary we impose additional, and fairly restrictive, boundary conditions on the conformal factor. The notion above is studied in Appendix \ref{sec:PSCconf}, where we collect (properly rephrased and proved in the equivariant setting) various results that are ancillary to some of the constructions we need to perform.
        
        Here is the anticipated statement:
        
        \begin{theorem}\label{thm:MainHomotopyConfVersion}
        Let $M$ be a compact 3-manifold diffeomorphic to a finite connected sum of spherical space forms and copies of $S^2\times S^1$, and let $f\in C^{\infty}(M,M)$ be an involutive diffeomorphism satisfying the assumption $(\star_{\text{sep}})$. Let
        $k\in\mathbb{N}_{\ast}=\left\{1,2,\ldots\right\}$ and let $(g^s)_{s\in D^k}$ be a continuous family of Riemannian metrics, such that for all $s\in D^k$ one has that $(M, g^s,f)$ is a reflexive 3-manifold.
           Let $K\subset D^k$ be a closed subset with the property that $(M, g^s)$ is a Riemannian manifold of positive scalar curvature for all $s\in K$. 
           
           Then there is a continuous family of Riemannian metrics $(g^s_t)_{s\in D^k, t\in [0,1]}$ on $M$, such that:
        \begin{enumerate}
            \item [(0)] for all $s\in D^k$ and $t\in [0,1]$ one has that $(M, g^s_t, f)$ is a reflexive 3-manifold;
            \item [(1)] for all $s\in D^k$ one has that $g^s_0=g^s$, and $g^s_1$ is reflexively PSC-conformal; 
            \item [(2)] if $s\in K$ then $(M, g^s_t, f)$ is reflexively PSC-conformal for all $t\in [0,1]$.
        \end{enumerate}
       \end{theorem}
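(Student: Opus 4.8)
The plan is to follow the Bamler--Kleiner strategy of \cite{BamKle19}, carried out equivariantly, via a backward-in-time induction on a partition of the (uniformly bounded) time interval $[0, T_{\text{ext}}]$. First I would invoke Theorem \ref{thm:ShortTime} to produce, from the continuous family $(g^s)_{s\in D^k}$, a continuous family of singular Ricci flows $(\mM^s)_{s\in D^k}$, and then Theorem \ref{thm:UniquenessEquivariance} to upgrade each $\mM^s$ to a \emph{reflexive} singular Ricci flow $(\mM^s, \mt^s, \partial_{\mt^s}, g^s, f^s)$; by Theorem \ref{thm:Extinction} (applicable because $M$ is a connected sum of spherical space forms and copies of $S^2\times S^1$) there is a uniform extinction time $T_{\text{ext}}$, so the whole family of spacetimes lives over $[0, T_{\text{ext}}]$. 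Pick $0 = T_0 < T_1 < \cdots < T_N = T_{\text{ext}}$ with each $T_{i+1}-T_i = \Delta T$ small. The inductive claim, run from $i = N$ downwards, is that the restriction of the family of metrics to the time slice $\mM^s_{T_i}$ admits (after a controlled homotopy) an equivariant \emph{partial homotopy} — in the sense of the equivariant adaptation of the partial-homotopy formalism of \cite{BamKle19} — over the pair $(D^k, K)$, where over $K$ the metrics are already reflexively PSC-conformal. At $i = N$ the slices are empty (or, absorbing a short initial-time deformation, already positively curved near extinction), so the claim is vacuous; the passage from $T_{i+1}$ back to $T_i$ is the content of the induction step.

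For the induction step I would combine two ingredients, both to be proved in the equivariant category in Section \ref{sec:Rstruct} and Section \ref{sec:PartialHom} and the appendices. First, a \emph{forward} step: the Ricci flow itself transports the family of metrics from $\mM^s_{T_i}$ to $\mM^s_{T_{i+1}}$, and because the flow is reflexive (the diagram in Definition \ref{def:ReflexiveRicciFlow}(2) commutes) the transported family stays equivariant; one then has to check that PSC-conformality on $K$ is preserved or can be restored, using the fact that positive scalar curvature is preserved under Ricci flow together with the conformal-Laplacian monotonicity results collected (equivariantly) in Appendix \ref{sec:PSCconf}. Second, a \emph{surgery/neck} step: across the interval $[T_i, T_{i+1}]$ the spacetime $\mM^s$ may develop high-curvature regions (necks and caps governed by the $\e$-canonical neighborhood assumption); on such regions one removes and caps off, equivariantly — here the separating hypersurface $\Fix(f^s)$ either misses a neck entirely or cuts it symmetrically, so by the Dinkelbach--Leeb-type analysis the surgeries can be performed $f$-equivariantly, and the effect on the partial homotopy is controlled by the equivariant disk-removal and extension lemmas of Appendix \ref{app:EquivDiskRemov} and Appendix \ref{app:EquivExtHom}. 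Concatenating the backward transport with the repair of the partial homotopy gives the partial homotopy over the slice at time $T_i$, closing the induction; at $i = 0$ the partial homotopy is defined over all of $\mM^s_0 = M$, and unwinding it yields the desired family $(g^s_t)_{s\in D^k, t\in[0,1]}$ with $(M, g^s_t, f)$ reflexive throughout, $g^s_1$ reflexively PSC-conformal, and reflexive PSC-conformality preserved on $K$ — which is exactly items (0), (1), (2).

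The key point throughout is \textbf{equivariance}: every object (the singular Ricci flows, the canonical neighborhoods, the necks and caps, the partial homotopies, the cutoff and interpolation constructions) must commute with the involutions $f^s$, so that restricting to one side of $\Fix(f^s)$ and folding back across the double recovers a construction on $X$ itself. Concretely one exploits that $\Fix(f^s)$ is a separating two-sided hypersurface (guaranteed by $(\star_{\text{sep}})$) which, under the flow, either stays in the low-curvature region or sits as the central slice of an $\e$-neck; this dichotomy is what makes the equivariant surgery well-defined. I expect the main obstacle to be precisely this step: verifying that Bamler--Kleiner's partial-homotopy machinery — in particular the passage through surgery times and the gluing estimates for the homotopies near capped regions — can be made to respect the $\Z/2$-symmetry, including the delicate case where a high-curvature region is centered on $\Fix(f^s)$ and the cap must itself be chosen $f$-symmetric (a round-hemisphere-type cap respecting the reflection). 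Most of the rest is a matter of carefully transcribing the non-equivariant arguments, but this symmetric-surgery analysis, together with tracking the rather rigid boundary conditions (2)--(3) in Definition \ref{def:ReflexivePSCconf} through all the deformations, is where the genuine work lies.
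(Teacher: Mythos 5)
Your overall strategy is the right one and matches the paper's: run the family of metrics through the (unique, hence reflexive) singular Ricci flows, use the uniform extinction time, and perform a backward-in-time induction constructing equivariant partial homotopies, which at time $0$ become surjective and unwind into the desired family $(g^s_t)$. The base case (trivial partial homotopy past $T_{\text{ext}}$), the role of $(\star_{\text{sep}})$, and the pointers to the equivariant extension and disk-removal operations are all correctly placed.

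There is, however, one genuine gap: you never construct the \emph{reflexive $\mR$-structures} (the rounding process of Section \ref{sec:Rstruct}, Theorem \ref{thm:ExistR-struct}), and the partial-homotopy formalism cannot even be set up without them. A partial homotopy in the sense of Definition \ref{def:PartialHom} is defined \emph{relative to} a family of $\mR$-structures: the domains $Z^\sigma$ must differ across faces only by unions of spherical fibers or by components in $U_{S3}$, the metric deformations must be compatible with the spherical structure near $\partial Z^\sigma$, and the backward transport uses the \emph{modified} time vector field $\partial'_{\mt}$ (which preserves the spherical structure), not the original $\partial_{\mt}$. Since the singular Ricci flow only makes high-curvature regions \emph{approximately} cylindrical or Bryant-like, one must first deform $g$ to a genuinely rotationally symmetric $g'$ there — equivariantly, which is where most of the new work in the paper lies (e.g.\ the dichotomy of whether a Bryant tip or a CMC sphere $\Gamma_x$ lies on $\Fix(f^s_t)$ or comes in $f$-related pairs). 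Without this step the extension and disk-removal lemmas you invoke have no hypotheses to latch onto. Relatedly, your description of the induction step as equivariant ``surgery'' with symmetric ``caps'' mischaracterizes the mechanism: the singular Ricci flow is surgery-free, and the topological transitions are absorbed not by modifying the flow but by letting the domains $Z^\sigma$ of the partial homotopy vary with the simplex $\sigma$ of a fine simplicial refinement of $D^k$ — enlarging along necks and deleting $3$-disks near Bryant tips. The Dinkelbach--Leeb input enters only once, in proving that the singular Ricci flow itself is reflexive (Theorem \ref{thm:UniquenessEquivariance}), not in any per-time-step surgery.
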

       
       
       \begin{proof}(of Theorem \ref{thm:MainHomotopy} given Theorem \ref{thm:MainHomotopyConfVersion})
       
       Let us define
       \[
       D^k_{NN}:=\left\{ s\in D^k \ : \ R_{h^s}\geq 0 \right\}
       \]
       namely the subset of metrics having non-negative scalar curvature at all points. Of course, this is a closed subset of $D^k$ hence it is compact. By our topological assumption, the manifold $M$ obtained as double of $X$ does not support Ricci-flat metrics (indeed, it is a connected sum of spherical space forms and copies of $S^2\times S^1$) thus it follows from the evolution equation for the scalar curvature under Ricci flow and continuous dependence on initial data (as given, for instance, as Theorem A in \cite{BauGueIse20}) that one can find $\tau>0$ such that the Ricci flow with initial datum $h^s$ is well-defined (as a smooth classical flow) up to time $2\tau$ for all $s\in D^k$, and, in addition $R_{h^s(t)}>0$ for all $0<t\leq \tau$ whenever $s\in D^k_{NN}$. 
       
       At that stage, after this preliminary deformation, we wish to apply Theorem \ref{thm:MainHomotopyConfVersion} to the set of metrics $(h^s(\tau))_{s\in D^k}$, with $K=D^k_{NN}$.
       Thereby, we obtain a continuous family of Riemannian metrics $(\tilde{h}^s_{t})_{s\in D^k, \ t\in [\tau,1]}$ such that (among other things) $\tilde{h}^s_{t}$ is reflexively PSC-conformal whenever 
       \[
       (s,t)\in K_{\Pi}:=(K\times [\tau,1]) \cup (D^k\times \left\{1\right\}).
       \]
       Set then
       \[
       h^s_t=\begin{cases}
       h^s(t) & \text{if} \ t\in [0,\tau] \\
       \tilde{h}^s_{t} & \text{if} \ t\in [\tau,1].
    \end{cases}
       \]
        Recalling the first part of property (1) of Theorem \ref{thm:MainHomotopyConfVersion}, we see at once that this is indeed a continuous family of Riemannian metrics on $M$. 
      Also note that, in applying the theorem we have $f^s=f$ for all $s\in D^k$ so $(M, h^s_t, f)$ is indeed a reflexive 3-manifold for all $(s,t)\in D^k\times [0,1]$.

       Through a standard covering argument, we can find
           finitely many open sets $V_1,\ldots, V_L$ in $D^k\times [0,1]$ whose union covers  $K_{\Pi}$ while being disjoint from $D^k\times\left\{0\right\}$, and associated smooth (positive) conformal factors $w_1, \ldots, w_L$ respectively, such that $R_{w^4_i h^s_{t}}>0$ whenever $(s,t)\in V_i$. Let then $V_0=(D^k\times [0,1])\setminus K_{\Pi}$ and set $w_0=1$ identically on $M$. All conformal factors in question are even, i.e. they are equivariant with respect to $f$.
           
           Considering a partition of unity (through smooth cutoff functions $\varphi_0, \varphi_1, \ldots, \varphi_L)$ subordinate to the open cover $V_0, V_1 \ldots, V_L$ of $D^k$, we can take convex combinations of the conformal factors on the overlappings so to obtain (thanks e.g. to the standard results in Appendix B of \cite{CarLi19}) a continuous family of positive functions $w: D^k\times [0, 1]\to C^{\infty}(M)$ such that $R_{w^4h^s_t}>0$ for all $(s,t)\in K_{\Pi}$. Furthermore, our construction ensures that $h^s_0=h^s$ for all $s\in D^k$. 
         
         Rephrasing things in terms of restricted doubling metrics on $X$ completes the proof. 
       \end{proof}
        
        Hence, our initial task (i.e. proving Theorem \ref{thm:ContractDoubling}) boils down to discussing how to actually design such homotopies. For reasons that will become clear in the sequel, it turns out that such `classical' homotopies are in turn derived from certain \emph{partial homotopies}, which in our special equivariant context can be defined as follows. 
        
        In short, when one works with a finite dimensional simplex of initial data for the (singular) Ricci flow the corresponding time-t-slices $\mM^s_t$ (for fixed $t\geq 0$) will have different topology as one varies the parameter $s$ and thus the homotopies one can construct by backward-in-time induction (essentially following the idea in \cite{Mar12}, transposed to the very different setting of non-surgical flows) will in general be defined on domains that vary with $s$: this issue is solved by considering a fine simplicial subdivision of $D^k$ and allowing for topological changes as one transitions from one (say, top-dimensional) simplex to another, through a lower-dimensional simplex. These transitions roughly correspond to adding or removing regions of very high curvature, which are indeed those that \emph{may or may not} be in the domain of the homotopy corresponding to a certain simplex. Further, to effectively implement these ideas and operate with partial homotopies it turns out to be very convenient to slightly deform the metric to become exactly round in the regions of high curvature (a statement to be interpreted differently depending on the specific $\kappa$-solution which models the region we are considering). 
        
        In order to formalise this strategy, the first notion we need to recall (or, rather, recast in our equivariant setting) is that of \emph{spherical structure} which, in turn, in instrumental to defining the fundamental concept of $\mR$-structure.
        
        \begin{definition}\label{def:S-structure}
               
               Given a manifold $M$, possibly with boundary, and an
    involutive diffeomorphism $f\in C^{\infty}(M,M)$ satisfying the assumption $(\star_{\text{sep}})$, a reflexive spherical structure on an $f$-invariant set $U\subset M$ (by which we mean that $f(U)=U$) is a smooth fiber bundle structure on an open and dense $f$-invariant set $U'\subset U$, whose fibers are diffeomorphic to $S^2$ and equipped with a smooth fiberwise metric of constant curvature 1 such that the following statements hold:
               \begin{enumerate}
                   \item for any $x\in U$ there exist a neighborhood $V=V(x)\subset U$ and an $O(3)$ action $\zeta$ such that $\zeta_{|{V\cap U'}}$ preserves all $S^2$-fibers and acts effectively and isometrically on them and, in addition: 
                   \begin{enumerate}
                       \item $V$ is $f$-invariant, i.e. $f(V)=V$;
                       \item $f(\zeta(\rho, y))=\zeta(\rho, f(y))$ for all $\rho\in O(3), y\in V$;
                   \end{enumerate}
                   \item all orbits on $V\setminus U'$ are not diffeomorphic to spheres.
               \end{enumerate}
        Any local action satisfying the above properties will be called compatible with the spherical structure in question.

        Furthermore, in this setting we will say that the metric $g$ is compatible with the spherical structure $\mS$ on $U$ if for every $x\in U$ there exist an $f$-invariant set and an $O(3)$-action defined there that is compatible with $\mS$ and isometric with respect to $g$. With slight abuse of language, we will also say, in this same case, that $\mS$ is compatible with $g$.       
                  \end{definition}

        In the setting of the previous definition we will call $U$ the domain of such a spherical structure, henceforth denoted by $\text{domain}(\mS)$, any fiber in $U'$ a regular fiber of $\mS$, and any fiber in $U\setminus U'$ a singular fiber of $\mS$ instead.

        It is not difficult to write down the most general form of a Riemannian metric $g$ that is compatible with the standard spherical structure on the open cylinder $S^2\times (u,v)$. In the setting of Definition \ref{def:S-structure}, so with an isometric involution, we have the following counterparts, where for the second one we set $f_0(x)=(x_1, x_2, -x_3)$.
        
        \begin{lemma}\label{lem:NormalForm}
        \begin{enumerate}
        \item{Let $r_0>0$ and consider the standard spherical structure $\mS_0$ on $M=S^2\times (-r_0,r_0)$, and the map $f(x,r)=(x,-r)$.  If a Riemannian metric $g$ is such that $(M,g,f)$ is a reflexive manifold, then $g$ is compatible with $\mS_0$ if and only if it takes the form
        \[
        g= a^2(r)g_{S^2}+b^2(r)dr^2+\sum_{i=1}^3 c_i(r)(dr\otimes (dx^i)^{\#}+(dx^i)^{\#}\otimes dr)
        \]
        for smooth functions $a, b, c_1, c_2, c_3 \in C^{\infty}(-r_0, r_0)$ with $a, b$ positive and even, and $c_1, c_2, c_3$ odd.}
        \item{Let $r_1<r_2$ and consider the standard spherical structure $\mS_0$ on $M=S^2\times (r_1,r_2)$, and the map $f(x,r)=(f_0(x),r)$.  If a Riemannian metric $g$ is such that $(M,g,f)$ is a reflexive manifold, then $g$ is compatible with $\mS_0$ if and only if it takes the form
        \[
        g= a^2(r)g_{S^2}+b^2(r)dr^2+\sum_{i=1}^2 c_i(r)(dr\otimes (dx^i)^{\#}+(dx^i)^{\#}\otimes dr)
        \]
        for smooth functions $a, b, c_1, c_2 \in C^{\infty}(-r_0, r_0)$ with $a, b$ positive.}
        \end{enumerate}
           \end{lemma}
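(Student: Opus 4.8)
The statement is an equivalence, and in both directions the analysis localises to a standard tube: on a cylinder the domain of $\mS_0$ has no singular fibers, so $U'=U=M$ and every fiber is regular. The plan is to treat the two implications separately, in each case passing to a sub-tube $S^2\times J$ on which a compatible, $g$-isometric local $O(3)$-action of Definition~\ref{def:S-structure} is available, and then to read off the parity constraints from $f^{\ast}g=g$ at the very end.

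For the implication ``the stated form $\Rightarrow$ compatibility'' I would exhibit, near an arbitrary point, the required local $O(3)$-action. If all the $c_i$ vanish this is just the product action $\rho\cdot(x,r)=(\rho x,r)$; in general one first removes the cross terms. Since the fiber one-forms $r\mapsto g(\cdot,\partial_r)|_{S^2\times\{r\}}$ are constant multiples of rotational one-forms, there is a smooth path $\Omega_r\in\mathfrak{so}(3)$ encoding them; solving $\dot A_r=\Omega_rA_r$ with $A_{r_0}=\mathrm{id}$ makes $\Phi(x,r)=(A_rx,r)$ a bundle automorphism acting isometrically on each round fiber, so that $\Phi^{\ast}g$ is invariant under the product action, and conjugating that action back by $\Phi$ gives the sought compatible action. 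The $f$-equivariance requirements (1a)--(1b) of Definition~\ref{def:S-structure} then follow: in case (1) from the parities of $a,b,c_i$ (which force $A_{-r}=A_r$), and in case (2) from $c_3\equiv0$ together with the manifest $f_0$-equivariance of $\Phi$.

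For the converse, fix $g$ compatible with $\mS_0$ and work near $p_0=(x_0,r_0)$. The local action $\zeta$ from Definition~\ref{def:S-structure} acts faithfully and isometrically on each round $S^2$-fiber, hence transitively there (a faithful isometric $O(3)$-action on the round sphere exhausts $\mathrm{Isom}(S^2)$), so its domain is saturated by fibers; after shrinking we may take it to be a $g$-isometric $O(3)$-action on a tube $S^2\times J$ preserving every fiber. Transitivity and isometry of $\zeta$ on each fiber force $g|_{S^2\times\{r\}}=a^2(r)\,g_{S^2}$ with $a>0$ smooth, and since $a^2$ is the fiberwise-constant ratio of two smooth tensors and the fibers are connected, $a$ is a function of $r$ alone. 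To handle the remaining components I would straighten $\zeta$, much as in the equivariant normal-form analysis of Dinkelbach--Leeb \cite{DinLee09}: $\zeta$ restricted to the fiber over $r$ is conjugation by some $A_r\in O(3)$ smooth in $r$ with $A_{r_0}=\mathrm{id}$, and $\Phi(x,r)=(A_r^{-1}x,r)$ conjugates $\zeta$ to the product action, so $(\Phi^{-1})^{\ast}g$ is product-$O(3)$-invariant, hence a genuine warped product $a^2(r)g_{S^2}+b^2(r)\,dr^2$ (there are no $SO(3)$-invariant one-forms on $S^2$, and the product action fixes $\partial_r$). Unwinding the pull-back by $\Phi$, together with the precise content of compatibility, should then identify $g$ with a metric $a^2(r)g_{S^2}+b^2(r)\,dr^2+\sum_i c_i(r)\bigl(dr\otimes(dx^i)^{\#}+(dx^i)^{\#}\otimes dr\bigr)$ with smooth coefficients, the $c_i$ being read off from $\partial_rA_r$, and connectedness of the fibers makes $b,c_i$ functions of $r$. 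Finally one imposes $f^{\ast}g=g$: in case (1), $f(x,r)=(x,-r)$ reverses $dr$ while fixing $g_{S^2}$ and each $dx^i$, so $a,b$ are even and $c_1,c_2,c_3$ are odd; in case (2), $f(x,r)=(f_0(x),r)$ fixes $dr,g_{S^2},dx^1,dx^2$ and reverses $dx^3$, forcing $c_3\equiv0$ and leaving $a,b,c_1,c_2$ unconstrained. Positivity of $a,b$ is automatic because $g$ is Riemannian.

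The delicate step is the straightening in the converse: passing from the mere existence of \emph{some} $g$-isometric compatible local $O(3)$-action to the conclusion that the off-fiber part of $g$ is exactly the warped-plus-(rotational-)cross normal form, with \emph{all} coefficients depending on $r$ only, rather than a more general twisted product. Carrying this through requires using what ``compatible with $\mS_0$'' entails beyond fiberwise roundness — in particular the behaviour under the local action of the transverse data carried by the spherical structure — so that the $r$-dependent conjugation $\Phi$ genuinely returns a product-symmetric metric and does not reintroduce $x$-dependence into the $dr^2$-coefficient. Once that is secured, the patching of the local coefficients to global functions of $r$, the $\Leftarrow$ direction, and the parity bookkeeping dictated by $f^{\ast}g=g$ are all routine.
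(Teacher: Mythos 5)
Your argument is correct, and its decisive step --- writing $g$ in the non-equivariant normal form for metrics compatible with the standard spherical structure on a cylinder and then imposing $f^{\ast}g=g$ to read off the parities of $a,b,c_i$ in case (1) and $c_3\equiv 0$ in case (2) --- is exactly the paper's proof, which simply cites Lemma 5.5 of \cite{BamKle19} for that normal form instead of re-deriving it. The ``delicate step'' you flag (straightening the local $O(3)$-action so that the off-fiber coefficients depend on $r$ alone) is precisely the content of that cited lemma, so nothing further is needed.
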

           
           \begin{proof}
              It suffices to look at the most general form in the non-equivariant case (cf. Lemma 5.5 in \cite{BamKle19}) and single out the subclasses that are invariant under the action of the involution $f$. 
           \end{proof}
      
      \begin{remark}\label{rem:PropertiesSstruct}  
        We briefly collect a few basic facts and notions about (general, i.e. non-equivariant) spherical structures:
        \begin{enumerate}
            \item any singular fiber is either a point or diffeomorphic to $\mathbb{R}\mathbb{P}^2$;
            \item if $M$ has dimension 3, then $\zeta$ is locally conjugate to one of the following models equipped with the standard $O(3)$ action: $S^2\times (-1,1)$,  $S^2\times (-1,1)/\mathbb{Z}_2$, $B^3$, $S^2\times [0,1)$;
            \item if $M$ has dimension 3, then spherical structures $\mS$ such that $\text{domain}(\mS)=M$ have been fully classified in Lemma 6.1 of \cite{BamKle19}.
         \end{enumerate}
        We further explicitly note that it follows from item (2) above that \emph{singular} fibers are isolated, i.e. they cannot accumulate.
               \end{remark}

        Now, it is clear that the requirement of having a \emph{reflexive} spherical structure places additional restrictions, and thus limits the actual number of local models for the action $\zeta$, and the corresponding list of cases to be contemplated in item (3) of Remark \ref{rem:PropertiesSstruct}.
        
        To formalise this comment, we first recall from \cite{CarLi19} the classification of totally geodesic surfaces in a product cylinder $S^2\times I$, which can only be either slices of the form $S^2\times \left\{t\right\}$ or product submanifolds of the form $S^1\times I$ (see Section 4 of \cite{SouVan12}). In fact, this conclusion holds true in greater generality. If $M$ is a manifold diffeomorphic to $S^2\times I$, $f$ is an
    involutive diffeomorphism of $M$ satisfying the assumption $(\star_{\text{sep}})$, for which $\mS_0$, the standard spherical structure, is a reflexive spherical structure, and $g$ is a metric compatible with $\mS_0$, then we can find a diffeomorphism $\phi: S^2\times [-r_0,r_0] \to M$ such that the pull-back of $g$ is a metric of the form $h^4(g_{S^2}+dr^2)$ for some smooth, positive function $h$: hence, since the results in \cite{SouVan12} actually provide a classification of \emph{totally umbilic} hypersurfaces, and the class of such hypersurfaces is invariant under conformal diffeomorphisms, we can again conclude that $\Fix(f)$ is either horizontal or vertical (in the sense above), whence the involution $f$ is uniquely determined by its fixed point set as per Corollary 4.8 in \cite{CarLi19}. 
        Thus, exploiting this fact and the list of (non-equivariant) local models for the action allows to derive the three following conclusions.

  \begin{lemma}\label{lemma.SingFiberReflexive}
   Suppose $(M,g,f)$ is a reflexive 3-manifold and let $\mS$ be a reflexive spherical structure compatible with $g$. Suppose $\mS$ contains a singular leaf $\Gamma$ diffeomorphic to $\mathbb{R}\mathbb{P}^2$. Then $\Gamma\cap \Fix(f)=\emptyset$. 
  \end{lemma}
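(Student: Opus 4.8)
The plan is to argue by contradiction: suppose $\Gamma\cap\Fix(f)\neq\emptyset$, fix $p$ in this intersection, and exploit the clash between $\Fix(f)$ being a two‑sided (hence, as recorded after Definition \ref{def:ReflexTriple}, orientable) totally geodesic hypersurface and $\Gamma\cong\mathbb{R}\mathbb{P}^2$ being a non‑orientable singular leaf. First I would check that $f(\Gamma)=\Gamma$: near $p$ pick an $f$‑invariant neighbourhood $V$ with a compatible, $g$‑isometric $O(3)$‑action $\zeta$ satisfying $f(\zeta(\rho,y))=\zeta(\rho,f(y))$; then $f$ carries $\zeta$‑orbits to $\zeta$‑orbits, hence regular (spherical) fibres to regular fibres and singular fibres to singular fibres, so $f(\Gamma)$ is a singular leaf sharing the point $p$ with $\Gamma$. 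Since distinct leaves of a spherical structure are disjoint (the singular ones being moreover isolated, by Remark \ref{rem:PropertiesSstruct}), this forces $f(\Gamma)=\Gamma$, so $f|_\Gamma$ is an isometric involution of $(\Gamma,g|_\Gamma)$. Moreover $g|_\Gamma$ is round up to a constant: by the list of $3$‑dimensional local models in Remark \ref{rem:PropertiesSstruct} an $\mathbb{R}\mathbb{P}^2$ singular fibre can occur only in the model $S^2\times(-1,1)/\mathbb{Z}_2$ with its standard $O(3)$‑action, under which the induced action of $O(3)$ on $\Gamma$ is transitive and isometric (it factors through $\mathrm{Isom}(\mathbb{R}\mathbb{P}^2)\cong SO(3)$), and a metric invariant under such an action is round up to scale.

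Next I would use the classification of isometric involutions of the round $\mathbb{R}\mathbb{P}^2$: since $\mathrm{Isom}(\mathbb{R}\mathbb{P}^2)\cong SO(3)$, the involution $f|_\Gamma$ is either the identity or conjugate to a rotation by $\pi$. If $f|_\Gamma=\mathrm{id}$, then $\Gamma\subseteq\Fix(f)$; as $\Gamma$ is a closed connected surface without boundary lying in the interior of $M$ and $\Fix(f)$ is a $2$‑manifold, $\Gamma$ must be a connected component of $\Fix(f)$, contradicting orientability of $\Fix(f)$. If instead $f|_\Gamma$ is conjugate to a $\pi$‑rotation, its fixed locus on $\mathbb{R}\mathbb{P}^2$ contains an isolated point $q$, at which $d(f|_\Gamma)_q=-\mathrm{id}$ on $T_q\Gamma$. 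But $q\in\Fix(f)$ and $\Fix(f)$ is a totally geodesic hypersurface, so $T_q\Fix(f)$ is the $(+1)$‑eigenspace of the linear involution $df_q$ on $T_qM$ and is $2$‑dimensional; hence the $(-1)$‑eigenspace of $df_q$ is $1$‑dimensional, yet it must contain the $2$‑dimensional subspace $T_q\Gamma$ — a contradiction. Either way, $\Gamma\cap\Fix(f)=\emptyset$.

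The dimension counts in the last step and the computation $\mathrm{Isom}(\mathbb{R}\mathbb{P}^2)\cong SO(3)$ are routine; the step deserving genuine care — the main obstacle — is the first one, namely verifying that the \emph{reflexive} spherical structure near a point of $\Gamma$ is indeed (conjugate to) the $\mathbb{Z}_2$‑quotient model, so that $\Gamma$ is an honest $\mathbb{R}\mathbb{P}^2$ leaf carrying an $O(3)$‑invariant (hence round) induced metric and is preserved by $f$. This is obtained by feeding the non‑equivariant local classification of Remark \ref{rem:PropertiesSstruct} through the compatibility constraint on $f$, exactly in the spirit of the proof of Lemma \ref{lem:NormalForm}; once that identification is secured, everything else follows formally.
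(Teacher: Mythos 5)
Your proof is correct, but it follows a genuinely different route from the one in the paper. The paper works with the local model $\bigl(S^2\times(-1,1)\bigr)/\Z_2$ and invokes the classification of totally geodesic (totally umbilic) surfaces in such cylinders from \cite{SouVan12}, recalled just before the lemma: $\Fix(f)$ must locally be either a horizontal slice $\bigl(S^2\times\{t\}\bigr)/\Z_2$ or a vertical piece $\bigl(S^1\times(-1,1)\bigr)/\Z_2$; the horizontal slice through $t=0$ is excluded because $\mathbb{R}\mathbb{P}^2$ cannot separate an orientable $3$-manifold, and the vertical piece is excluded because it is not separating, so $(\star_{\text{sep}})$ fails. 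You instead push the analysis onto the leaf itself: after checking $f(\Gamma)=\Gamma$ and that the induced metric is round, you classify the involution $f|_\Gamma\in\mathrm{Isom}(\mathbb{R}\mathbb{P}^2)\cong SO(3)$ and kill the identity case by orientability of $\Fix(f)$ and the $\pi$-rotation case by counting eigenvalues of $df_q$ at the isolated fixed point (the $(-1)$-eigenspace is at most one-dimensional since $\Fix(f)$ is a hypersurface, yet would have to contain $T_q\Gamma$). Your version is more self-contained — it needs only the non-equivariant local model from Remark \ref{rem:PropertiesSstruct}, the orientability/hypersurface properties of $\Fix(f)$, and linear algebra, and it does not use the separation axiom beyond its consequence that $\Fix(f)$ is orientable — whereas the paper's version reuses machinery (the classification of $\Fix(f)$ inside spherical models) that it needs anyway for Lemma \ref{lem:ReflSstructClassification}. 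Two small points worth recording if you write this up: the fixed locus of the $\pi$-rotation involution on $\mathbb{R}\mathbb{P}^2$ is a point \emph{together with} a circle $\mathbb{R}\mathbb{P}^1$, so you should say explicitly that you are working at the isolated point (a point on the circle would not yield the eigenvalue contradiction); and the claim $f(\Gamma)=\Gamma$ uses that $f$ commutes with the compatible local $O(3)$-actions and hence permutes orbits, preserving the (isolated) singular ones — which is exactly what Definition \ref{def:S-structure} provides.
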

  \begin{proof}
   Since $\Gamma$ is diffeomorphic to $\mathbb{R}\mathbb{P}^2$, a local neighborhood of $\Gamma$ is $\left (S^2\times (-1,1)\right)/\Z_2$, equipped with the standard $O(3)$ action. Note here that the $\Z_2$ action is defined by $(x,t)\to (-x,-t)$, and $\Gamma$ is obtained by $\left(S^2\times \{0\}\right)/\mathbb{Z}_2$. Thanks to the above discussion, $\Fix(f)$ can either be $\left(S^2\times \{t\}\right)/\Z_2$, or $\left(S^1\times (-1,1)\right)/\Z_2$, where $S^1$ stands for a equator in $S^2$. In the first case, we see that $t\ne 0$, as $\mathbb{R}\mathbb{P}^2$ cannot be a separating surface in an orientable 3-manifold (since it is not orientable). The second case cannot happen either, as $\left(S^1\times (-1,1)\right)/\Z_2$ is not separating.
  \end{proof}
  
  So, in short, when one needs to deal with singular fibers diffeomorphic to $\mathbb{R}\mathbb{P}^2$ those are away from the fixed locus of the symmetry, which allows to typically perform most/all constructions as in the non-equivariant case. Concerning the singular fibers that are points the above conclusion, of Lemma \ref{lemma.SingFiberReflexive}, cannot possibly hold true, yet we have the following replacement.
  
  \begin{lemma}\label{lemma.SingFiberReflexivePointCase}
   Suppose $(M,g,f)$ is a reflexive $PSC$ 3-manifold and $\mS$ a reflexive spherical structure compatible with $g$. Suppose $\mS$ contains a singular leaf that is a point, $\Gamma=\left\{x\right\}$. Then either $\Gamma\cap \Fix(f)=\emptyset$ or, instead,  there exists an open neighborhood $U$ of $x$, diffeomorphic to $B^3$, such that $U\setminus \left\{x\right\}$ is foliated by (regular) spherical fibers, each meeting $\Fix(f)$ orthogonally along a circle.
  \end{lemma}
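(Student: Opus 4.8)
The plan is to localise around $x$ and reduce to an explicit, rotationally symmetric model. Since $\Gamma=\{x\}$ is a point singular fibre, Remark~\ref{rem:PropertiesSstruct} shows that near $x$ the spherical structure is modelled on $B^3$ equipped with its linear $O(3)$-action; by Definition~\ref{def:S-structure} the modelling neighbourhood $V\ni x$ may be chosen $f$-invariant, and one is then supplied with an $O(3)$-action $\zeta$ on $V$ which is conjugate to the linear one, commutes with $f$, and—because $g$ is compatible with $\mS$—is isometric for $g$. After identifying $V\cong B^3$, the point $x$ becomes the centre, $g$ is $\zeta$-invariant (hence a rotationally symmetric warped product $dr^2+\psi(r)^2 g_{S^2}$ after reparametrising the radial coordinate), the regular fibres of $\mS$ in $V$ are the metric spheres $S^2_r$ centred at $x$, and every geodesic of $g$ issuing from $x$ is a radial segment.

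Next I would split according to whether $x\in\Fix(f)$. If $x\notin\Fix(f)$ we are in the first alternative and there is nothing more to prove. If instead $x\in\Fix(f)$, then the component of $\Fix(f)$ through $x$—being the totally geodesic fixed locus of an isometry—passes through the centre of $(V,g)$ and is swept out by radial geodesics, hence is the cone over a great circle of the model sphere, i.e.\ a ``great disc'' $D$, and $f|_V$ is the reflection across $D$ (which, by Corollary~4.8 in \cite{CarLi19}, is determined by $D$). For every $r$ the metric sphere $S^2_r$ meets $D$ exactly along the great circle $S^2_r\cap D$, and it does so orthogonally, since $D$ contains the radial geodesics while $S^2_r$ is orthogonal to all of them; as $r\to 0$ these circles collapse to $x$. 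Thus, with $U=V\cong B^3$, the set $U\setminus\{x\}$ is foliated by the regular spherical fibres $S^2_r$, each meeting $\Fix(f)$ orthogonally along a circle, which is the second alternative.

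The crux is the reduction carried out in the first paragraph. On the one hand, one must know that near a point singular fibre the metric genuinely has the controlled, rounded rotationally symmetric form inherited from the ambient $\mR$-structure: this is exactly where positive scalar curvature is used, since it places us in the Bamler--Kleiner $\kappa$-solution regime where these local normal forms, and the compatibility of the equivariant $O(3)$-action, are available—much as the cylindrical normal form and the umbilic classification of \cite{SouVan12} were invoked in the proof of Lemma~\ref{lemma.SingFiberReflexive}. On the other hand, the $f$-equivariance built into a \emph{reflexive} spherical structure forces $\Fix(f)$ to be invariant under the full $O(3)$-action $\zeta$, which strongly constrains—and in the borderline case must be reconciled with—the great-disc picture above; making this interaction precise, so as to conclude that when $x\in\Fix(f)$ the only admissible local configuration is the equatorial foliation just described, is the main obstacle. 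The remaining verifications—that the union of the great circles $S^2_r\cap D$ with the point $x$ assembles into a smooth disc, and the orthogonality claim—are routine from the rotational symmetry and the normal form recorded in Lemma~\ref{lem:NormalForm}.
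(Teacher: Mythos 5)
Your constructive argument is correct and, in fact, somewhat more self-contained than the paper's. The paper does not prove this lemma separately: it is covered by the discussion preceding Lemma \ref{lemma.SingFiberReflexive}, which passes to the punctured neighbourhood $U\setminus\{x\}$, writes the compatible metric there conformally as $h^4(g_{S^2}+dr^2)$ on a cylinder, and invokes the Souam--Van der Veken classification of totally umbilic surfaces (umbilicity being conformally invariant) to conclude that $\Fix(f)$ restricted there is either a horizontal sphere or a vertical $S^1\times I$; since it limits onto the puncture it must be vertical, i.e.\ your great disc. Your route --- exponential coordinates at $x$, the observation that an isometric $O(3)$-action fixing $x$ linearises to the standard representation so that $g$ becomes a genuine warped product (the $O(2)$-stabiliser of each ray kills any cross terms) with radial geodesics, and the fact that a totally geodesic surface through $x$ is $\exp_x$ of a $2$-plane --- reaches the same picture without citing \cite{SouVan12}, and the orthogonality and smoothness checks are indeed routine from there.

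Two corrections. First, the ``main obstacle'' you flag is not an obstacle but an artefact of reading Definition \ref{def:S-structure}(1)(b) literally. If $f$ really commuted with every $\zeta_\rho$, then at a fixed point $x$ the differential $df_x$ would commute with the irreducible $O(3)$-representation on $T_xM$ and hence be $\pm\mathrm{id}$, never a hyperplane reflection; the second alternative of the lemma would then be vacuous, contradicting cases (2)(d),(e) of Lemma \ref{lem:ReflSstructClassification} as well as the vertical case $\Fix(f)=S^1\times I$ in (2)(a). The condition is evidently intended to say that $f$ normalises the local action ($f\circ\zeta_\rho\circ f^{-1}=\zeta_{\rho'}$, e.g.\ $\rho'=f_0\rho f_0$ in your model), under which $\Fix(f)$ need only be invariant under the stabiliser of the reflection, and the great-disc configuration is perfectly consistent. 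Second, positive scalar curvature plays no role here: the local model $B^3$ with its linear $O(3)$-action near a point singular fibre is part of the general, non-equivariant theory of spherical structures (Remark \ref{rem:PropertiesSstruct}(2)) and is not a consequence of being in a $\kappa$-solution regime.
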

  
  We can then derive the classification of equivariant local actions, and of \emph{orientable} reflexive 3-manifolds that admit spherical structure whose domain is the whole manifold itself. 
        
        \begin{lemma}\label{lem:ReflSstructClassification}
            Let $(M,g,f)$ be as in Definition \ref{def:S-structure}, and assume $M$ to be connected, orientable and have dimension equal to 3. Then:
            \begin{enumerate}
                \item if $\Fix(f)\cap V\neq\emptyset$ (for $V$ connected and $f$-invariant) then $\zeta$ is locally conjugate to one of the following models equipped with the standard $O(3)$ action: $S^2\times (-1,1)$, $B^3$, $S^2\times [0,1)$;
                \item (if $\Fix(f)\neq\emptyset$ and) the spherical structure $\mS$ satisfies $\text{domain}(\mS)=M$ then one of the following cases holds:
                \begin{enumerate}
                    \item $\mS$ only has regular fibers, $M$ is diffeomorphic to $S^2\times (0,1)$ (respectively: $S^2\times [0,1]$) and $\Fix(f)= S^2\times \left\{1/2\right\}$ or $\Fix(f)= S^1\times (0,1)$ (respectively: $\Fix(f)= S^2\times \left\{1/2\right\}$ or $\Fix(f)= S^1\times [0,1]$;)
                    \item $\mS$ only has regular fibers, $M$ is diffeomorphic to $S^2\times [0,1)$ and $\Fix(f)= S^1\times [0,1)$;
                    \item $\mS$ only has regular fibers, $M$ is diffeomorphic to $S^2\times S^1$ and $\Fix(f)= S^1\times S^1$;
                    \item $\mS$ has exactly one singular fiber, which is a point, $M$ is diffeomorphic to $B^3$ (respectively: $D^3$) and $\Fix(f)=B^2$ (respectively: $D^2$);
                    \item $\mS$ has exactly two singular fibers, which are points, $M$ is diffeomorphic to $S^3$ and $\Fix(f)=S^2$.
                \end{enumerate}
            \end{enumerate}
        \end{lemma}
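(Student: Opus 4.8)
The plan is to reduce everything to the purely non-equivariant classifications --- the list of local models in Remark \ref{rem:PropertiesSstruct}(2) for part (1), and the classification of full-domain spherical structures in Lemma 6.1 of \cite{BamKle19} for part (2) --- and then to discard, one by one, the configurations that cannot support a reflexive involution $f$ with $\Fix(f)\neq\emptyset$. The three inputs doing this work will be Lemma \ref{lemma.SingFiberReflexive} (an $\mathbb{RP}^2$ singular fiber avoids $\Fix(f)$), Lemma \ref{lemma.SingFiberReflexivePointCase} (the shape of $\Fix(f)$ near a point singular fiber it meets), and the dichotomy recalled above the statement: after conformally changing a compatible metric to the round product, a connected closed piece of $\Fix(f)$ sitting inside a region diffeomorphic to $S^2\times I$ is either a union of slices $S^2\times\{t_i\}$ (``horizontal'') or a single product cylinder $S^1\times I$ (``vertical''), with $f$ determined by $\Fix(f)$ in either case.

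For part (1) I would start from the four local models $S^2\times(-1,1)$, $(S^2\times(-1,1))/\Z_2$, $B^3$, $S^2\times[0,1)$ and only need to rule out the quotient model when $\Fix(f)\cap V\neq\emptyset$. So suppose $V\cong(S^2\times(-1,1))/\Z_2$, with unique singular fiber $\Gamma\cong\mathbb{RP}^2$. Since $f$ preserves $\mS$ we have $f(\Gamma)=\Gamma$, and Lemma \ref{lemma.SingFiberReflexive} gives $\Fix(f)\cap\Gamma=\emptyset$; hence $\Fix(f)\cap V$ is a nonempty closed hypersurface inside the open cylinder $V\setminus\Gamma\cong S^2\times(0,1)$, so by the dichotomy it is horizontal or vertical. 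If vertical, the closure in $V$ of the cylinder $S^1\times(0,1)$ meets $\Gamma$ along a projective line $\cong\mathbb{RP}^1$, contradicting $\Fix(f)\cap\Gamma=\emptyset$. If horizontal, let $F$ be the fixed regular fiber nearest $\Gamma$: it cuts off an inner region $W$, a tubular neighbourhood of $\Gamma$ diffeomorphic to the twisted $I$-bundle over $\mathbb{RP}^2$ (so $\pi_1(W)=\Z_2$), from an outer region which is a product cylinder and hence simply connected; but $(\star_{\text{sep}})$ forces $f$ to exchange the two local sides of $F$, i.e. to carry $\overline W$ homeomorphically onto the outer side --- impossible by the fundamental-group mismatch. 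This settles (1), and in passing shows that any singular fiber of $\mS$ met by $\Fix(f)$ is a point.

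For part (2), with $\Fix(f)\neq\emptyset$ and $\text{domain}(\mS)=M$, I would go through the list of full-domain spherical structures from Lemma 6.1 of \cite{BamKle19}: orientability of $M$ rules out the non-orientable types, and part (1) keeps $\Fix(f)$ off any $\mathbb{RP}^2$ fiber and its $\Z_2$-quotient neighbourhood. If $\mS$ has only regular fibers, then $M$ is the total space of an $S^2$-bundle over a connected $1$-manifold, and since the only nontrivial such bundle (the twisted $S^2$-bundle over $S^1$) is non-orientable, $M\in\{S^2\times(0,1),\,S^2\times[0,1],\,S^2\times[0,1),\,S^2\times S^1\}$; the dichotomy forces $\Fix(f)$ to be either a single horizontal slice (normalisable to $t=1/2$; two fixed slices are incompatible with $f$ reflecting across each) or a vertical cylinder, and the requirement that $\Fix(f)$ separate $M$ into two pieces interchanged by $f$ eliminates the horizontal option exactly for $S^2\times S^1$ (a slice does not separate) and for $S^2\times[0,1)$ (the two sides are not homeomorphic) --- producing cases (a), (b), (c). If $\mS$ has a point singular fiber, I would first argue that no $\mathbb{RP}^2$ fiber can also occur: by Lemma 6.1 such a fiber forces $M$ to be one of a few $\mathbb{RP}^2$- or $\mathbb{RP}^3$-type manifolds (e.g. $\mathbb{RP}^3$, the twisted $I$-bundle over $\mathbb{RP}^2$, or $\mathbb{RP}^3\#\mathbb{RP}^3$), and in each of them orientability, the separating property of $\Fix(f)$, part (1), and --- where needed --- Lemma \ref{lemma.SingFiberReflexivePointCase} show that no reflexive involution with nonempty fixed set exists. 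Hence the singular fibers are $0$, $1$, or $2$ isolated points: one gives $M\cong B^3$ or $D^3$, two give $M\cong S^3$. In the $B^3/D^3$ case a horizontal slice would cut $M$ into a ball and a non-homeomorphic collar, so $\Fix(f)$ must be vertical and, by Lemma \ref{lemma.SingFiberReflexivePointCase}, it passes through the point fiber as a flat $B^2$ (resp. $D^2$) --- case (d); in the $S^3$ case, whether $f$ fixes or swaps the two point fibers, $\Fix(f)$ comes out as an $S^2$ through or separating the two ``poles'' --- case (e).

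The step I expect to be the genuine obstacle is the last one: excluding the $\mathbb{RP}^2$-fiber and other ``non-orientable-flavoured'' candidates from the Bamler--Kleiner list, and carrying out the attendant bookkeeping for every remaining type --- namely deciding exactly which horizontal slices are allowed as $\Fix(f)$, which means checking whether such a slice separates $M$ and, if so, whether it splits $M$ into two homeomorphic halves, since $(\star_{\text{sep}})$ obliges $f$ to interchange them. The rest should be a routine synthesis of the non-equivariant classifications with the totally-umbilic-hypersurface dichotomy.
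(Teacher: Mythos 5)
Your overall strategy --- feeding the non-equivariant classifications (the local models of Remark \ref{rem:PropertiesSstruct} and Lemma 6.1 of \cite{BamKle19}) into the horizontal/vertical dichotomy for $\Fix(f)$ and the separation axiom $(\star_{\text{sep}})$ --- is exactly the derivation the paper intends; note that the paper supplies no written proof of this lemma, so you are filling in an omitted argument rather than competing with one. Part (1) and the regular-fiber and point-fiber branches of part (2) are essentially sound, although in the horizontal subcase of (1) the claim that $f$ carries $\overline W$ homeomorphically onto the outer product region of $V$ is not justified ($f(\overline W)$ need not stay inside $V$). The cleaner contradiction, which you should use throughout, is this: an $f$-invariant \emph{connected} fiber $\Gamma$ that is disjoint from the closed set $\Fix(f)$ and not pointwise fixed places some $y\in\Gamma$ and $f(y)\in\Gamma$ in the \emph{same} component of $M\setminus\Fix(f)$, violating $(\star_{\text{sep}})$. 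Combined with Lemma \ref{lemma.SingFiberReflexive} (which handles $\Gamma\cap\Fix(f)\neq\emptyset$) and orientability (which forbids $\Gamma\subset\Fix(f)$), this immediately kills every candidate whose $\mathbb{R}\mathbb{P}^2$ singular fiber is unique and hence $f$-invariant ($\mathbb{R}\mathbb{P}^3$, the twisted $I$-bundle over $\mathbb{R}\mathbb{P}^2$, and so on), with no need for the fundamental-group bookkeeping.

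The genuine gap is precisely the step you defer: the blanket assertion that no $\mathbb{R}\mathbb{P}^2$-fiber item of the Bamler--Kleiner list admits a reflexive involution with nonempty fixed set fails for $M=\mathbb{R}\mathbb{P}^3\#\mathbb{R}\mathbb{P}^3$. Write $M$ as the double of $N=\mathbb{R}\mathbb{P}^3\setminus B^3$ (the orientable twisted $I$-bundle over $\mathbb{R}\mathbb{P}^2$, i.e. $(S^2\times[-1,1])/\Z_2$) along its $S^2$ boundary, and let $f$ be the doubling involution. Then $M$ is connected and orientable, $\Fix(f)$ is the central regular $S^2$ fiber, which separates $M$ into two open copies of $N$ interchanged by $f$, so $(\star_{\text{sep}})$ holds; the full-domain spherical structure has two $\mathbb{R}\mathbb{P}^2$ singular fibers which are \emph{swapped}, not preserved, by $f$, so neither Lemma \ref{lemma.SingFiberReflexive} nor the invariant-fiber argument above applies, and the equivariant local $O(3)$ actions required by Definition \ref{def:S-structure} exist (take $V=V_1\sqcup f(V_1)$ around the pair of singular fibers, transporting the action by $f$). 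This configuration satisfies every hypothesis of the lemma yet appears nowhere in list (a)--(e), and it does arise in the paper's setting, since $\mathbb{R}\mathbb{P}^3\setminus B^3$ is an admissible $X$ by Remark \ref{rem:TopClassification}. So your exclusion step cannot be completed as claimed: the classification requires an additional case (two $\mathbb{R}\mathbb{P}^2$ fibers exchanged by $f$, with $\Fix(f)$ a regular $S^2$ fiber), and the statement of the lemma itself appears to be missing it --- a point worth raising with the authors rather than papering over.
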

    We note that, in each of the cases above, also the reflection map $f$ is completely determined, although we limited ourselves to indicate $\Fix(f)$ for the sake of expository convenience, leaving the (very intuitive) additional specifications to the reader. Also note that the orientability assumption is totally inessential (i.e. one could equally well present a classification covering the non-orientable cases), but we have decided to stick here to the setup of our main theorems.

            \begin{definition}\label{def:FlowPres}
            Let $U$ be an open subset of a (smooth) manifold with boundary $M$, and let $\mS$ be a spherical structure on $U$. We will say that a (smooth) vector field $X$ preserves the spherical structure $\mS$, or equivalently that $\mS$ is preserved by $X$, if the flow of such a vector field sends fibers to fibers, sends regular (respectively: singular) fibers to regular (respectively: singular) fibers and preserves the fiberwise metric on the regular fibers (as understood in Definition 5.1 of \cite{BamKle19} for the non-equivariant case, and as per Definition \ref{def:S-structure} for the reflexive case).
            \end{definition}
            
            \begin{remark}\label{rem:EquivCharPreservingFlow}
            It follows from Definition 5.4 of \cite{BamKle19} and Definition \ref{def:S-structure} that a vector field $X$ preserving a spherical structure $\mS$ acts on the fibers endowed with a compatible metric $g$ on $U\subset M$ as a homothety: if $\mO_1$ and $\mO_2$ are regular fibers then $\Phi^*(g_{|\mO_2})=\lambda^2 g_{|\mO_1}$, for a suitable $\lambda=\lambda_{12}>0$, and where $\Phi$ denotes the flow of the vector field $X$.
            \end{remark}

     At this stage we observe that one can easily refer the notions above to the case a singular Ricci flow $\mM$, essentially by looking at the level sets of the time function, $\mM_t$, and at the sub-bundle $\ker(d\mt)$. We can then proceed and enrich the previous structure, by embedding it into a more refined one.
        
        \begin{definition}\label{def:PrelimR} We shall define an $\mR$-structure on a singular Ricci flow $\mM$ as a tuple $\mR=(g',\partial'_{\mt}, U_{S2}, U_{S3}, \mS)$ consisting of a smooth metric $g'$ on $\ker d\mt \subset T\mM$, a vector field $\partial_{\mt'}$ on $\mM$ with $\partial_{\mt'}\mt=1$, open subsets $U_{S2}, U_{S3}$ in $\mM$ with $U_{S3}\setminus U_{S2}$ open, and a spherical structure $\mS$ on $U_{S2}$ such that for all $t\geq 0$:
        \begin{enumerate}
            \item $g'_t$ is compatible with $\mS$;
            \item $\partial'_{\mt}$ preserves $\mS$ (in the sense of Definition \ref{def:FlowPres});
            \item $U_{S2}\cap \mM_t$ is a union of (regular and singular) fibers of $\mS$;
            \item 
            \begin{enumerate}
            \item $U_{S3}\cap \mM_t$ is a union of compact components of $\mS_t$ where $g'_t$ has constant curvature;
            \item $U_{S3}$ is invariant under the forward flow of the vector field $\partial'_{\mt}$, and such a flow, when restricted to every component of $U_{S3}\cap \mM_t$, is a homothety with respect to $g'$, whenever defined.
            \end{enumerate}
        \end{enumerate}
        In the setting above, we will say that $U_{S2}\cup U_{S3}$ is the support of the $\mR$-structure in question.
        \end{definition}
        
        \begin{definition}\label{def:R-structure}
                        Given a reflexive singular Ricci flow $\mM$ (as per Definition \ref{def:ReflexiveRicciFlow}) we shall say that an $\mR$ structure $(g',\partial'_{\mt}, U_{S2}, U_{S3}, \mS)$ is a \emph{reflexive $\mR$-structure} if the following properties are true:
               \begin{enumerate}
                   \item $(f_{t})^*g'_t=g'_t$ for all $t\geq 0$;
                   \item $(f_t)_*\partial'_{\mt}=\partial'_{\mt}$ for all $t\geq 0$;
                   \item $f(U_{S2})=U_{S2}$;
                   \item $f(U_{S3})=U_{S3}$;
                   \item $\mS$ is a reflexive spherical structure in the sense of Definition \ref{def:S-structure}.
               \end{enumerate}
                   \end{definition}
                   
                     Recall, finally, that a suitable notion of (transverse) continuity has been defined both for spherical structures and, then, for $\mR$-structures in Section 5 of \cite{BamKle19}, which obviously applies (as a special case) when additional equivariance constraints are also in play. That being said, we will prove in Section \ref{sec:Rstruct} the existence of an $\mR$-structure associated to any given reflexive singular Ricci flow, and in fact that such an association can be made for families of flows and determines a transversely continuous correspondence for the objects in question. The reader is referred to Theorem \ref{thm:ExistR-struct} for a precise statement. 
                     
                     Giving that for granted, we proceed in our discussion and discuss how such reflexive $\mR$-structures will be employed in the proof of Theorem \ref{thm:MainHomotopyConfVersion}. Basically, like we anticipated above, the preliminary construction of a reflexive $\mR$-structure for a reflexive singular Ricci flow allows to design partial homotopies, which in particular will yield an actual homotopy emanating from the initial data for the flow and ending at positive scalar curvature metrics, while preserving such a subspace of metrics along the deformation. To formalise these concepts, it is convenient to introduce two more definitions.

        \begin{definition}\label{def:MetricDeformation}
              Let $K$ be a topological space. A reflexive metric deformation over $K$ is a triple $(Z, g_{s,t}, f)$ consisting of reflexive 3-manifolds with  boundary, and such that the boundary components are spheres,
              where $(g_{s,t})_{s\in K, t\in [0,1]}$ is a continuous family of Riemannian metrics and such that for all $s\in K$ the Riemannian manifold $(Z, g_{s,1}, f)$ is 
              reflexively PSC-conformal.
        \end{definition}
        

        \begin{definition}\label{def:PartialHom}
        \emph{First part: setup.}
        Let $K$ be (the geometric realisation of) a finite-dimensional simplex $\mK$. Consider a fiber bundle $\pi: E\to K$ whose fibers are smooth, compact Riemannian 3-manifolds, which we shall regard as a (transversely) continuous family of reflexive Riemannian 3-manifolds $(M^s,g^s, f^s)_{s\in K}$.
        Considered the continuous family of reflexive singular Ricci flows $(\mM^s, \mt, g^s,\partial^s_{\mt}, f^s)_{s\in K}$ whose time-0-slices are isometric to $(M^s, g^s, f^s)$, let $(\mR^s_{s\in K}$ be an associated, transversely continuous family of reflexive $\mR$-structures, which we denote
        \[
        \mR^s=(g'^{,s},\partial'^{,s}_{\mt}, U^{s}_{S2},  U^{s}_{S3},\mS^s)_{s\in K}.
        \]

        \emph{Second part: core.}
        For every simplex $\sigma\in\mK$ consider a reflexive metric deformation \[
        (Z^{\sigma}, (g^{\sigma}_{s,t})_{s\in\sigma, t\in [0,1]}, f^{\sigma})
        \]and, for given $T\geq 0$, a transversely continuous family of reflexive embeddings $(\psi^{\sigma}_s: Z^{\sigma}\to \mM^{s}_T)_{s\in\sigma}$ in the sense that $\psi^{\sigma}_s\circ f^{\sigma}= f^s \circ \psi^{\sigma}_s$. We will call 
        \[
        \left\{(Z^{\sigma}, (g^{\sigma}_{s,t})_{s\in\sigma, t\in [0,1]}, f^{\sigma}, (\psi^{\sigma}_s)_{s\in\sigma})\right\}_{\sigma\in \mK}
        \]
        a reflexive partial homotopy (at time $T$, for the family of $\mR$-structures $(\mR^s)_{s\in K}$) if the following properties hold:
        \begin{enumerate}
            \item for all $s\in\sigma\in \mK$ we have $(\psi_s^{\sigma})^{\ast}g'^{,s}_T=g^{\sigma}_{s,0}$;
            \item for all $s\in\tau\subset \sigma$ for $\tau,\sigma\in \mK$ we have $\psi_s^{\sigma}(Z^{\sigma})\subset \psi_s^{\tau}(Z^{\tau})$
            \item for all $s\in\tau\subset \sigma$ for $\tau,\sigma\in \mK$ we have and all $t\in [0,1]$ we have $((\psi_s^{\tau})^{-1}\circ \psi_s^{\sigma})^{\ast}g^{\sigma}_{s,t}=g^{\sigma}_{s,t}$;
            \item for all $s\in\tau\subset \sigma$ for $\tau,\sigma\in \mK$ and for the closure $\mC$ of each component of $Z^{\tau}\setminus ((\psi_s^{\tau})^{-1}\circ \psi_s^{\sigma})(Z^{\sigma})$ one or both of the following statements are true:
            \begin{enumerate}
                \item  $\psi^{\tau}_s(\mC)\subset U^s_{S2}$ and $\psi^{\tau}_s(\mC)$ is a union of spherical fibers, invariant (as a set) under the action of $f^s$; in addition, for every $t\in [0,1]$ the metric $(\psi^{\tau}_s)_{\ast}g^{\tau}_{s,t}$ restricted to $\psi^{\tau}_{s}(\mC)$ is compatible with the restricted spherical structure;
                \item $\partial \mC=\emptyset, \psi^{\tau}_s(\mC)\subset U^{s}_{S3}$ and for every $s'\in\tau$ in a suitable neighborhood of $s$ the metric $g^{\tau}_{s',t}$ restricted to $\mC$ is a multiple of $g^{\tau}_{s',0}$ for all $t\in [0,1]$. 
            \end{enumerate}
                  \item for every $\sigma\in \mK$ and every component $\sigma$ of $\partial Z^{\sigma}$ the image $\psi^{\sigma}_s(\Sigma)$ is a regular fiber of $\mS^s$ for all $s\in\sigma$; moreover, there is an $\e>0$, independent of $s$, such that for all $t\in[0,1]$ the metric $(\psi^{\sigma}_s)_{\ast}g^{\sigma}_{s,t}$ is actually compatible with $\mS^s$ on an $\e$-collar neighborhood of $\psi^{\sigma}_s(\Sigma)$ inside $\psi^{\sigma}_s(Z^{\sigma})$.
        \end{enumerate}
        We say that the partial homotopy in question is \emph{PSC-conformal over $s\in K$} if for any simplex $\sigma\in \mK$ containing $s$ and for any $t\in [0,1]$ the reflexive manifold $(Z^{\sigma},g^{\sigma}_{s,t},f^{\sigma})$ is reflexively PSC-conformal. Lastly, if $Z^{\sigma}=\emptyset$ for all $\sigma\in \mK$, then the partial homotopy is called \emph{trivial}.
                   \end{definition}
                   
                   We note that a (reflexive) partial homotopy allows to derive an actual (reflexive) homotopy whenever  
                   \[
                   \psi^{\sigma}_s \  \text{is surjective for any simplex} \ \ \sigma \hspace{7mm} \ (\ast)
                   \]
                   as precisely explained in Proposition 7.5 in \cite{BamKle19} (whose straightforward argument can be transplanted unmodified to our equivariant setting). As a convenient yet imprecise terminology, we will call \emph{surjective} a partial homotopy for which property $(\ast)$ holds true. Basically, one can just set 
                   \[
                   h^s_t=(\psi^{\sigma}_s)_{\ast} g^{\sigma}_{s,t}
                   \]
                   where $\sigma\in\mK$ is just \emph{any} simplex contaning $s\in K$, thanks to the consistency conditions (2) and (3) above. 
                   
                   Roughly speaking, by considering the singular Ricci flow spacetime generated by given data (cf. statement of Theorem \ref{thm:MainHomotopy}), and in particular by `moving backward in time' so starting after the extinction time $T_{ext}$ and then considering the portion of the flow happening between times $T$ and $T-\Delta T$ we will be able to build partial homotopies at times 
                   \[
                   T, T-\Delta T, \ldots, T-m\Delta T, 0
                   \]
                   that happen on (suitably regular) subdomains of $\mM_T, \mM_{T-\Delta T}, \ldots,\mM_{T-m\Delta T}, \mM_{0}=M$ which depend, in a rather non-trivial fashion, both from $j=0,1,\ldots, m$ and from $s$, but essentially only miss certain high curvature regions, at worst. In particular, the construction can be designed so that the partial homotopy at time $0$ is surjective (in the sense we just defined) and this in fact it allows to obtain a surjective reflexive partial homotopy, which is what we actually need.

        Hence, we have reduced our initial question to the construction of reflexive partial homotopies, which is the object of Section \ref{sec:PartialHom}. Roughly speaking, the domains of the partial homotopies vary, as we transition from time $T-j
        \Delta T$ to time $T-(j+1)
        \Delta T$ through \emph{two} types of moves: either by an equivariant enlargement of a neck-type domain (an operation which we study in Appendix \ref{app:EquivExtHom}) or by equivariant removal of a 3-disk in a region close to a Bryant soliton near its tip (which is the object of Appendix \ref{app:EquivDiskRemov} instead).

        This outline being given, we will now get somewhat more technical about some of the notions introduced above, starting with the rounding process.

        \section{Construction of reflexive round structures}\label{sec:Rstruct}

        
        In order to state the existence result for families of reflexive $\mR$-structures we need to recall here, for the first time, some basic definitions of essential \emph{scales} in the study of Ricci flows:
        \begin{itemize}
            \item (curvature scale) given a manifold $M$ and a Riemannian metric $g$ on $M$ we shall say that $\rho: M\to (0,\infty]$ is a curvature scale function if there exists a constant $C>0$ such that
            \begin{equation}\label{eq:rhoCurvScale}
            C^{-1}\rho^{-2}\leq |\text{Rm}_g|\leq C\rho^{-2}
            \end{equation}
            on all of $(M,g)$; given a Ricci flow spacetime $\mM$ we will similarly say that $\rho:\mM\to (0,\infty]$ is a curvature scale function if \eqref{eq:rhoCurvScale} holds on all of $\mM$ and (consistently with Definition \ref{def:SingularRicciFlow}) $\text{Rm}$ always refers to the metric $g$ on the horizontal sub-bundle, i.e. we work on the time-t-slices. Following Section 3.4 in \cite{BamKle19} we agree, throughout this article, to make a special choice of the curvature scale function, given by
            \begin{equation}\label{eq:rhoCurvScaleSpecification}
            \rho(x)=\min \left\{(R_g(x))^{-1/2}_{+}, \ 10|\text{Rm}_g(x)|^{-1/2}\right\}
            \end{equation}
            where the factor $10$ is conveniently introduced so to ensure that if $x\in \mM$ satisfies the $\e$-canonical neighborhood assumption for $\e$ small enough (that is to say: for $\e<\e_0$ where $\e_0$ is a universal constant), then $\rho(x)=(R_g(x))^{-1/2}$.  
            \item (initial scale) given a manifold $M$ and a Riemannian metric $g$ on $M$ we shall define the initial condition scale $r_{\text{initial}}(M,g)$ as follows
            \begin{equation}\label{eq:InitialScale}
              r_{\text{initial}}(M,g)=\min \left\{\inf_{M}|\text{Rm}_g|^{-1/2}, \inf_{M}|\nabla \text{Rm}_g|^{-1/3}, \ \text{inj rad}(M,g) \right\}  
            \end{equation}
            where $\text{inj rad}(M,g)$ stands for the injectivity radius of $(M,g)$; if $t\geq 0$ and $\mM$ is a Ricci flow spacetime, then we will in particular write $r_{\text{initial}}(\mM_t,g_t)$ in order to refer to the time-t-slice of the spacetime in question. We recall, parenthetically, that this quantity naturally comes up for the following purpose: given any $\e>0$ there exists a smooth function $r_{\text{can},\e}: \mathbb{R}_{+}\times [0,\infty)\to\mathbb{R}_{+}$ such that for any $T\geq 0$ and any singular Ricci flow $\mM$ it holds that $\mM$ satisfies the $\e$-canonical neighbrhood assumption below scale
            \[
            r_{\text{can},\e}(r_{\text{initial}}(\mM_0,g_0), T) \ \text{on the time interval} \ [0,T]
            \]
            (so this can be regarded, a posteriori, as a specification of part (3) in Definition \ref{def:SingularRicciFlow}). With slight notational abuse, it will be convenient to regard $r_{\text{can},\e}:\cup_{s}\mM^s\to\mathbb{R}$ i.e. if $x\in \mM^s_t$ we will from now onwards write $r_{\text{can},\e}(x)$ in order to mean $r_{\text{can},\e}(r_{\text{initial}}(\mM^s_0,g^s_0),t)$.
        \end{itemize}
        
        We can then proceed with the key statement of this section, where (as in Theorem \ref{thm:ShortTime}) $K$ stands for an arbitrary topological space.
        
        \begin{theorem}\label{thm:ExistR-struct}  
        For any $\delta>0$ there is a constant $C=C(\delta)$ and a continuous, non-increasing function $r_{\text{rot},\delta}:\mathbb{R}_{+}\times [0,\infty)\to\mathbb{R}_{+}$ such that the following holds.
        
        Consider a transversely continuous family $(\mM^s)_{s\in K}$ of \emph{reflexive} singular Ricci flows, as per Definition \ref{def:ReflexiveRicciFlow}.
        Then there is a transversely continuous family $(\mR^s)_{s\in K}$ of \emph{reflexive} $\mR-$structures, as per Definition \ref{def:R-structure} (with $\mR^s$ an $\mR$-structure for $\mM^s$, for any $s\in K$), such that for any $s\in K$ the following assertions hold true:
        \begin{enumerate}
            \item (large curvature regions) $\mR^s$ is supported on the set 
            \[
            \left\{x\in \mM^s \ : \ \rho_{g'^{,s}}(x)<r_{\text{rot},\delta}(r_{\text{initial}}(\mM^s_0,g^s_0),\mt(x)) \right\};
            \]
            \item (bounded curvature regions) $g'^{,s}= g^s$ and $\partial'^{,s}_{\mt}=\partial^s_{\mt}$ on the set 
            \[
            \left\{x\in \mM^s \ : \ \rho_{g'^{,s}}(x)>C r_{\text{rot},\delta}(r_{\text{initial}}(\mM^s_0,g^s_0),\mt(x)) \right\}\supset \mM^s_0;
            \]
            \item (closedness) for any $m_1, m_2\in \left\{0,1,\ldots , [\delta^{-1}]\right\}$ we have 
            \[
            |\nabla^m\partial^{m_2}_{\mt}(g'^{,s}-g^s)|\leq\delta \rho^{-m_1-2m_2}, \ |\nabla^m\partial^{m_2}_{\mt}(\partial'^{,s}_{\mt}-\partial^s_{\mt})|\leq\delta \rho^{1-m_1-2m_2};
            \]
            \item (normalisation) if $(\mM^s_0, g^s_0)$ is homothetic to a quotient of the round sphere or the round cylinder, then $g'^{,s}=g^s$ and $\partial'^{,s}_{\mt}=\partial^s_{\mt}$ on all of $\mM^s$.
            \item (scaling property of the rounding radius) for all $a, r_0>0$ and $t\geq 0$
            \[
            r_{\text{rot},\delta}(a r_0, a^2 t)= a r_{\text{rot},\delta}(r_0,t).
            \]
        \end{enumerate}
        \end{theorem}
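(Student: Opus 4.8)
\emph{The plan} is to follow the proof of the non-equivariant existence theorem for $\mR$-structures (Section 5 of \cite{BamKle19}), applied to $(\mM^s)_{s\in K}$ regarded merely as a transversely continuous family of singular Ricci flows, while carrying out every local modification and every choice appearing in that construction $f$-equivariantly. The function $r_{\text{rot},\delta}$ and the constant $C(\delta)$ are to be taken exactly those of \cite{BamKle19}, so clause (5) (the parabolic scaling identity) and the scaling implicit in clauses (1)--(2) are inherited verbatim; and clause (4) is immediate, since when $(\mM^s_0,g^s_0)$ is homothetic to a quotient of the round sphere or cylinder the flow is the explicit self-similar solution, on which the Bamler--Kleiner procedure already returns $g'^{,s}=g^s$ and $\partial'^{,s}_{\mt}=\partial^s_{\mt}$ --- data which is $f$-invariant, since $f$ is then an isometry of a round model. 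So the whole content is to re-run the construction equivariantly and verify that transverse continuity survives.

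\emph{Regions disjoint from the fixed locus.} Since $(\mM^s,\mt,g^s,\partial^s_{\mt},f)$ is a \emph{reflexive} singular Ricci flow (Definition \ref{def:ReflexiveRicciFlow}), $f=f^s$ is a time-preserving isometry of each $\mM^s$, hence maps high-curvature regions to high-curvature regions and $\kappa$-solution models to $\kappa$-solution models. Wherever a region $W$ on which \cite{BamKle19} performs a rounding or rescaling, or defines a local piece of the spherical structure, is disjoint from $\Fix(f)$, assumption $(\star_{\text{sep}})$ forces $W$ and $f(W)$ to lie in different components of the complement of $\Fix(f)$, hence to be disjoint; one then performs the modification on $W$ by the non-equivariant recipe and transports it to $f(W)$ via $f$. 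By Lemma \ref{lemma.SingFiberReflexive} this already covers every singular spherical fibre diffeomorphic to $\mathbb{R}\mathbb{P}^2$, so no compatibility issue along $\Fix(f)$ arises there, and it yields data satisfying the symmetry conditions $(f_t)^*g'_t=g'_t$, $(f_t)_*\partial'_{\mt}=\partial'_{\mt}$, $f(U_{S2})=U_{S2}$, $f(U_{S3})=U_{S3}$ of Definition \ref{def:R-structure} over such regions.

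\emph{Regions meeting the fixed locus --- the main obstacle.} The genuine work is near $\Fix(f)$. There the $\e$-canonical neighbourhood assumption gives a local model $\e$-close to a pointed $\kappa$-solution; feeding this into the classification of reflexive local models (Lemma \ref{lem:ReflSstructClassification}) together with Lemma \ref{lemma.SingFiberReflexivePointCase}, the possibilities are: (a) a neck $S^2\times(-r_0,r_0)$ with $f$ reflecting the axial coordinate and $\Fix(f)$ a round slice; (b) a neck with $f$ acting as an equatorial reflection on each sphere and $\Fix(f)=S^1\times I$; (c) a Bryant-soliton-like cap $B^3$ with $\Fix(f)$ an equatorial disk through the tip; (d) a cap abutting a nearly round $S^3$. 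In cases (a)--(b), Lemma \ref{lem:NormalForm} tells us that \emph{any} $f$-compatible metric has a prescribed normal form (with $a,b$ positive and, in case (a), even); the rounding operation of \cite{BamKle19} produces a metric of exactly that shape, and any residual asymmetry can be removed by a final averaging $g'\mapsto\tfrac12(g'+f_t^*g')$, which here is harmless since both summands already agree outside the common modified cylinder; the standard spherical structure $\mS_0$ is then $f$-invariant and preserved by the equivariant $\partial'_{\mt}$. In cases (c)--(d) the underlying $\kappa$-solution is rotationally symmetric and the reflection $f$ is realised inside the $O(3)$ acting on it, resp.\ commutes with the rotational symmetry of the Bryant soliton; so the local $O(3)$-action required by Definition \ref{def:S-structure} can be chosen to commute with $f$, which is exactly the ``regular fibres meeting $\Fix(f)$ orthogonally along circles'' configuration of Lemma \ref{lemma.SingFiberReflexivePointCase}, and the metric rounding near the tip can be performed rotationally symmetrically, hence $f$-equivariantly. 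Finally the various local pieces are glued using an $f$-invariant partition of unity, which is legitimate because the overlaps are necks, on which the spherical structure is rigid up to isotopy.

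\emph{Transverse continuity, and the hard step.} It then remains to check that the resulting family $(\mR^s)_{s\in K}$ is transversely continuous in the sense of \cite{BamKle19}: this follows because the non-equivariant construction is, because the local model identifications and the averaging and cutoffs near $\Fix(f)$ can be arranged to depend continuously on $s$, and because $f^s=f$ is fixed. I expect the main difficulty to be case (c): making the rounding near a Bryant-soliton tip sitting on $\Fix(f)$, together with the equivariant $3$-disk removal of Appendix \ref{app:EquivDiskRemov}, genuinely commute with $f$ \emph{and} vary transversely continuously across the parameter values at which such caps are created or destroyed --- this is where one must be most careful that the equivariant choice and the continuity requirement do not conflict.
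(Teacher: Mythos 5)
Your overall strategy --- re-running the six-step Bamler--Kleiner construction, transporting all modifications across the symmetry by $f$ away from $\Fix(f)$, and invoking the classification of reflexive local models near it --- is the one the paper follows, and your reading of clauses (1), (2), (4), (5) as inherited from the non-equivariant statement is also correct. The genuine gap is in your mechanism for enforcing equivariance near $\Fix(f)$. The symmetrization $g'\mapsto\tfrac12(g'+f_t^{*}g')$ only applies to tensorial data, whereas an $\mR$-structure also comprises a spherical structure $\mS$ (a fibre bundle with fibrewise metrics and local $O(3)$-actions) and a modified time vector field $\partial'_{\mt}$, neither of which can be averaged; and even for the metric the trick is circular, since $f_t^{*}g'$ is compatible with $\mS$ only if $f_t$ already permutes the fibres of $\mS$, which is precisely what needs to be proved. (Likewise, the non-equivariant rounding has no reason to output a metric already in the symmetric normal form of Lemma \ref{lem:NormalForm}.) The paper avoids all of this by making every choice in the construction \emph{canonical}, so that equivariance is automatic: the fibres in neck regions are the unique CMC spheres of $g'^{,s}_2$ through each point, hence satisfy $f^s_t(\Gamma_y)=\Gamma_{f^s_t(y)}$; the frames transplanting the $O(3)$-action at a Bryant tip on $\Fix(f^s_t)$ are chosen to carry $\mathbb{R}^2\times\{0\}$ onto $T\Fix(f^s_t)$; the rounding operators $RD^2$, $RD^3$ commute with isometries; and the corrections to $\partial_{\mt}$ are unique minimizers of an $L^2$-norm over a convex class of admissible vector fields, so strict convexity forces them to be $f$-equivariant.

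Beyond this, two substantial pieces of the construction are missing from your plan: the equivariant modification of the time vector field (steps four and six of the paper's proof, where the minimal-norm argument is essential), and the entire treatment of almost-extinct components --- the cutoff functions $\eta_i$, the extension of compatibility up to near-extinction, and the $U_{S3}$ part of the structure where whole components must be rounded to constant curvature with homothetic flow; these are steps one, five and six and are not subsumed by your cases (a)--(d). Finally, the equivariant disk removal of Appendix \ref{app:EquivDiskRemov} belongs to the partial-homotopy machinery of Section \ref{sec:PartialHom}, not to this theorem, so the "hard step" you flag at the end is not actually part of this proof.
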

        
        \begin{remark}\label{rem:WellDefRstruct}
        We explicitly note that the five properties in the statement above are all part of the non-equivariant existence result for families of $\mR$-structures (Theorem 5.12 in \cite{BamKle19}), so all we need to do is to check that, if the input is a family of \emph{reflexive} singular Ricci flows then the whole construction can be suitably adapted so to
        produce, as an output, a family of \emph{reflexive} $\mR$-structures. As indicated above, this amount to taking care of those steps that are not canonical.
        \end{remark}

We introduce a monotone non-decreasing function $\nu\in C^{\infty}(\mathbb{R},\mathbb{R})$ such that 	\[
	\nu(x)=
	\begin{cases}
	0 & \text{if} \ x\leq 1/10, \\
	1 & \text{if} \ x\geq 1- 1/10,
	\end{cases}
	\]
	which will be employed multiple times in step-by-step successive refinements of the construction of $\mR$-structures and partial homotopies.

        \begin{proof}
        Keeping well in mind the content of Remark \ref{rem:WellDefRstruct}, we will now revisit the \emph{six} steps in the construction, indicating the necessary modifications (if any) or simply discussing why such a construction actually determines a reflexive $\mR$-structure in the case in question. In particular, we stress that all constants displayed below are defined exactly as in Section 5 of \cite{BamKle19} (thus \emph{we have decided not to redefine them here}), unless otherwise stated. In particular, all objects defined at the $k$-step in the construction are systematically assigned an index $k$ (as for, e.g., $\eta_k$ or $g'^{,s}_{k}$), and for the sake of clarity we convene to even update the indices of those objects that are left unmodified in a given step of the argument. 
        
        \
        
        \emph{First step: a cutoff function singling out the almost extinct components.} Here one can follow Section 5.7 of \cite{BamKle19} without modifications. Indeed, note that (relying on Theorem \ref{thm:UniquenessEquivariance}) for any $D'>0$ we have that a connected component $\mC\subset \mM^s_t$ is $D'$-small (in the sense that $\text{diam}(\mC)<D'\rho$ and $\rho<r_{\text{can},\e}/10$ at all points of $\mC$)
          if and only if $f^s_t(\mC)$ is so, and then further observe that the ancillary functions $\eta^*_1, \eta^{**}_1$ and thus also $\eta'_1, \eta_1$ (each being constant on any given connected component of $\mM^s_t$) attain the same value on $\mC$ and $f^s_t(\mC)$. 
        
        \
         
        \emph{Second step: modifications in regions that are close to Bryant solitons.} One can construct the spherical structures $\mS^s_2$, the compatible metrics $g'^{,s}_2$ and the local spatial vector fields $Y^s$ as follows. Recalling the fact that the tip of a Bryant soliton can be characterised as the (unique) global maximum point of the scalar curvature, given any $s\in K$ we consider the set $E^s\subset \mM^s$ of points $x'\in \mM^s_{t}$ (as one varies $t\geq 0$) such that all these requirements are met:
        \begin{enumerate}
            \item $\eta_1(x')<1$ (i.e. points not already selected in the almost-extinct components);
            \item $\rho(x')<10^{-1}r_{\text{can},\e}(x')$ (i.e. points of high curvature);
            \item $\nabla R_{g^s_t}(x')=0$ (i.e. points that are critical for the scalar curvature);
            \item $(\mM^s_t, g^s_t, x')$ is $\delta^{\ast}$-close to $(M_{\text{Bry}}, g_{\text{Bry}}, x_{\text{Bry}})$ at some scale;
            \item the diameter of the component of $\mM^s_t$ containing $x'$ is larger than $100 D_0\rho(x')$.
        \end{enumerate}
        
        Claim 5.21 of \cite{BamKle19} then ensures, among other things, that $E^s\subset \mM^s$ is a 1-dimensional submanifold, which is transversely continuous as one varies $s$, and (most importantly) 
        \begin{multline}\label{eq:TipsSeparation}
        \text{the balls} \ \left\{B_{g^s_t}(x', 10 D_0 \rho(x')), \ x'\in E^s\cap \mM^s_{t} \right\} \ \text{are pairwise disjoint}, \\
        \text{and} \ \text{inj}_{(\mM^s_t, g^s_t)}(x')>10 D_0 \rho(x').
        \end{multline}
        That being said, note that we can write $E^s= E^s_{I}\sqcup E^{s}_{II}$ where 
        \[
        E^s_{I}:=E^s \cap \Fix(f^s) 
        \]
        while of course $E^s_{II}$ is the complementary set, consisting of those points of $E^s$ that are not fixed by the map $f^s$ of the reflexive singular Ricci flow we are considering. 
        Since, as already mentioned (part c) of Claim 5.21 in \cite{BamKle19}), the set $E^s$ is transversely continuous as one varies $s$, it follows that the sets $E^s_{I}, E^s_{II}$  will also enjoy that property.
        Furthermore, one can perform a transversely continuous choice of a subset $E^s_{II,M}$ such that
        $E^s_{II,M} \cap f^s(E^s_{II,M})=\emptyset$ and $E^s_{II,M} \cup f^s(E^s_{II,M})=E^s_{II}$.
        
        For any $x'\in E^s_{II,M}\cap \mM^s_t$ we rely on the construction defined in Section 5.8 of \cite{BamKle19}: if $\hat{x}'_{s'}(t')$ is a local description of $E^s$ near $x'$ (as we vary $s'$ near $s$ and $t'$ near $t$) we fix a continuous, parallel in $t$, family of isometries $\chi$ such that $\chi_{(s',t')}: \mathbb{R}^3\to T_{\hat{x}'_{s'}(t')}\mM^{s'}_{t'}$ and employ it to define a family of exponential normal coordinates which allow to transplant the standard action of $O(3)$ on the metric ball $B_{g^s_t}(\hat{x}'_{s'}(t'), 10 D_0 \rho(\hat{x}'_{s'}(t')))$; at a local level, namely on $B_{g^s_t}(\hat{x}'_{s'}(t'), 10 D_0 \rho(\hat{x}'_{s'}(t')))$, the spherical structure $\mS'^{,s}$ is directly determined by the (orbits of the) group action, while the metric $g''$ and the vector field $\partial^{s'}_{\mt}+Y^{s'}$ are defined  (again locally on that same ball) by averaging the corresponding quantities $g$ and $\partial^{s'}_{\mt}$, respectively. Then one can use a cutoff function to interpolate between $g''$ and $g$ (between, say, radii $2D_0\rho(x')$ and $3D_0\rho(x')$) so to obtain a globally defined metric $g'''$. Hence, if $E^s_{II} \ni x''=f^s(x')$ we simply employ the map $f^s$ to replicate the construction around $x''$ so that the requirements in Definition \ref{def:R-structure} will be tautologically true by construction. Here we explicitly note that condition \eqref{eq:TipsSeparation}) ensures, in particular, that if $x'\in E^s_{II}$ then  $B_{g^s_t}(\hat{x}'_{s'}(t'), 10 D_0 \rho(\hat{x}'_{s'}(t')))$ does not intersect $\Fix(f)$.
        
        On the other hand, if instead $x'\in E^s_{I}$ so if we work around a point on the fixed locus $\Sigma^s_t=\Fix(f^s_t)$, for some $t\geq 0$, then we observe that one can choose the family of isometries $\chi$ in a way that (with the notation exactly as in the previous paragraph) $\mathbb{R}^2\times \left\{0\right\} \cong T_{\hat{x}'_{s'}(t')}\Sigma^{s'}_{t'}\subset T_{\hat{x}'_{s'}(t')}\mM^{s'}_{t'}$: if we then follow the very same averaging/interpolation construction we will obtain, on $B_{g^s_t}(\hat{x}'_{s'}(t'), 10 D_0 \rho(\hat{x}'_{s'}(t')))$, locally defined objects with the desired equivariance properties. 
        
        At that stage, one can define the desired metric by setting
        \[
        (g'^{,s}_2)_x:=g^s_x+\nu\left(\frac{r_{\text{can},\e}(x)}{10^2\rho(x)}\right)\cdot \nu(2-2\eta_1(x))\cdot ((g'''^{,s})_x-(g^s)_x),
        \]
        the spherical structure $\mS_2$ as the restriction of $\mS'$ (wherever defined via the local construction) to $\left\{\rho_{g'_2}<r_{\text{can},\e}/200\right\}\cap \left\{\eta_1<1/2\right\}$ and, correspondingly, for any spherical fiber $\mO$, the locally defined vector fields $(Y^{s'}_{\mO})$, which by construction have the property that $\partial^{s'}_{\mt}+Y^{s'}_{\mO}$ shall preserve $\mathcal{S}^{s'}_{2}$. Lastly, it is necessary to modify the cutoff function above by letting $\eta_2(s)=\nu(2\eta_1(x))$.
        
        \
        
        \emph{Third step: modifications in regions that are close to round cylinders.}
         Let us first remark that, after the first two steps in the construction we have (cf. Lemma 5.20 in \cite{BamKle19}, item c)) that for every point $x\in \mM^s_t$ with $\eta_2(x)<1$ at least one of the following assertions hold true:
        \begin{enumerate}
            \item [(c1)] {$\rho(x)>\alpha r_{\text{can},\e}(x)$;}
            \item [(c2)] {$x$ is the center of a $\delta^{\ast}$-neck with respect to $g'^{,s}_2$;}
            \item [(c3)] {there is an open neighborhood of $x$ that admits a two-fold cover in which a lift of $x$ is a center of a $\delta^*$-neck with respect to $g'^{,s}_2$;}
            \item [(c4)] {$x\in \text{domain}(\mS^s_2)$;}
        \end{enumerate}
        where $\delta^{\ast}>0$ is arbitrary and $\alpha\leq \overline{\alpha}(\delta^*)$ (without loss of generality $\alpha\leq 10^{-3})$. The scope of the present step is to construct $\mR$-structures around points satisfying either condition (c2) or (c3); as a result, we will have that the metric $g'^{,s}_3$ on $\mM^s$ is rounded (i.e. rotationally symmetric) everywhere except in regions where the curvature scale is large, or on almost-extinct components.
        For our purposes, it is convenient to start by only dealing with the case (c2). Recall that (by the very definition of $\delta$-neck, cf. Definition 3.25 of \cite{BamKle19}) the set of points $N_C$ satisfying (c2), for fixed $\delta^*$ is open in $\mM^s_t$ and, by a perturbative (implicit-function theorem) argument, if $\delta^*$ is small enough then for any such point $x$ there exists a \emph{unique} constant mean curvature sphere $\Gamma_x$ in metric $g'^{,s}_{2,t}$ passing through $x$; furthermore the diameter of $\Gamma_x$ is bounded from above by $100\rho(x)$ and the metric is close to that of a round sphere.

        Now, if $x\in N_C\cap \Fix(f^s_t)$ then by the uniqueness statement above one has $f^s_t(\Gamma_x)=\Gamma_x$ and, more generally, for any $y\in N_C$ we have $\Gamma_{f^s_t(y)}=f^s_t(\Gamma_y)$; at the level of metrics, since any map $f^s_t$ is an isometry (cf. Definition \ref{def:ReflexiveRicciFlow}) we will have
        \begin{equation}\label{eq:SymLeaf}
        (f^s_t)^*(g'^{,s}_{2,t})_{|\Gamma_{f^s_t(y)}}=((g'^{,s}_{2,t})_{|\Gamma_y}).
        \end{equation}
        
        In order to construct the spherical structure $\mS'^{,s}$ extending $\mS^{s}_2$, and to define the metric $g''^{,s}$ we proceed as in the proof of Lemma 5.24 in \cite{BamKle19}. To ensure the equivariance of the resulting output, we need to note that if $x\in N_C\cap \Fix(f^s_t)$ then (by virtue of lemma 5.13 item (d) of \cite{BamKle19}) 
        \begin{equation}\label{eq:SymCyl1}
        (f^{s}_t)^{\ast}(RD^2 (g'^{,s}_{2,t})_{|\Gamma_x})=RD^2 ((g'^{,s}_{2,t})_{|\Gamma_x})
        \end{equation}
        where $RD^2$ is the rounding operator on two-dimensional spheres, and for any $y\in N_C$
        \begin{equation}\label{eq:SymCyl2}
        (f^s_t)^*(RD^2(g'^{,s}_{2,t})_{|\Gamma_{f^s_t(y)}})=RD^2((g'^{,s}_{2,t})_{|\Gamma_y}).
        \end{equation}
        
        Concerning the definition of the metric along the `transverse' directions, the uniqueness of a vector field $Z_{\Gamma, V}$ of minimal $L^2$-norm, under the given constraints of preserving the spherical structure and having unit integral in the $g'^{,s}_2$-normal direction determined by the unit normal $V$, implies at once that
        \begin{equation}\label{eq:SymCyl3}
         Z_{\Gamma, V}(y)=Z_{f^s_t(\Gamma), (f^s_t)_\ast V}(f^s_t(y)) \ \ \text{for all} \ y\in \Gamma.
        \end{equation}
        (Observe that such claimed uniqueness result descends at once from the strict convexity of the functional.)
        
        It thus follows, as a result of \eqref{eq:SymLeaf}, \eqref{eq:SymCyl2} and \eqref{eq:SymCyl3}, that not only there exists a reflexive spherical structure $\mS'^{,s}$ (cf. Definition \ref{def:S-structure}) on all points satisfying the condition (c2) above but, in addition, one can define the compatible metric $g''^{,s}$ by imposing for $x\in \mM^s_t$ the orthogonal decomposition of $T_x \mM^s_t$ as $T\Gamma_x\oplus_{\perp}\langle Z_{\Gamma,V}\rangle$, where we take $g''^{,s}$ to equal $RD^2((g'^{,s}_{2,t})_{|\Gamma_y})$ in the top $2\times 2$ block, and this metric
        satisfies (on its domain) $(f^s)^*(g''^{,s})=g''^{,s}$.
        
        Let us now consider a point $x\in \mM^s_t$ satisfying condition (c3) instead. By passing to a local two-fold cover we see that there exists a unique constant mean curvature $\mathbb{R}\mathbb{P}^2$, henceforth denoted by $\Gamma_x$, passing through $x$. At this stage, we note that set of points satisfying such a condition (c3) must be disjoint from $\Fix(f^s_t)$. Indeed, suppose on the contrary $x\in \Fix(f^s_t)$ to be such a point: like we noticed above, necessarily $f(\Gamma_x)=\Gamma_x$.  Let us recall that, by explicit classification of totally geodesic surfaces in a round cylinder (cf. Remark 4.13 in \cite{CarLi19}, based on \cite{SouVan12}), at the level of the two-fold cover there are only two possible local pictures.
         In the first case $\Gamma_x$ is a connected component of $\Fix(f^s_t)$, which is impossible since $\Fix(f^s_t)$ is two-sided hence orientable given that $\mM^s_t$ is (cf. Definition \ref{def:ReflexTriple},  and Definition \ref{def:ReflexiveRicciFlow}). On the other hand, it could happen that the lift of $\Gamma_x$ meets the lift of $\Fix(f^s_t)$ orthogonally along a circle, which (once we project back to $\mM^s_t$) is incompatible with the assumption $(\star_{\text{sep}})$, i.e. with the requirement that for $y\in\mM^s_t$ it cannot happen that $y$ and $f(y)$ are in the same connected component of $\mM^s_t\setminus\Fix(f^s_t)$. As a result, that claim being verified, case (c3) can be handled exactly as in the non-equivariant case (as there is `decoupling'), with no modifications whatsoever. 
        
        \
        
        We then proceed with the construction of the vector fields preserving the spherical structure. Building again on the aforementioned classification of totally geodesic surfaces in round cylinders, case (c2) above may correspond to two different geometric pictures. We will here start with the second possible picture: locally near a point $x\in \mM^s_t$ with $x\in N_C$ we have that $\Gamma_x$ intersects $\Fix(f^s_t)$ orthogonally. 
        Informally speaking, one only needs to follow the trajectory $x(t')$ for $t'$ close to $t$. More precisely, if we consider $\Gamma_x$ as above (thus, by definition, a fiber of $\mS'^{,s}$ that is not a fiber of $\mS^s_2$, the spherical structure we had defined in the second step and that has just be extended to one with larger support) for any $y\in \Gamma_x$ and two smooth families of orthonormal vector fields $v_{i,t}$ (for $i=1,2$) along the curve $y(t')$ that remain tangent to the fiber $\Gamma_{y(t')}$ as $t'$ varies (near $t$), and subject to the additional property that $dv_{i,t'}/dt$ is normal to $v_{j,t'}$ ($j\neq i$) along the evolution as well as the `equivariance constraint' that $v_{1,t'}$ is tangent to the intersection circle, there exists a unique family of homothetic embeddings $\beta_{t'}: S^2\to\mM^s_t$ parametrising the path $\Gamma_{y(t')}$, such that:
        \begin{itemize}
            \item $\beta_{t'}(y_0)=y(t')$ for $y_0=(1,0,0)\in S^2$;
            \item $d\beta_{t'}(u_1)=\kappa(t')v_{1,t'}$ and $d\beta_{t'}(u_2)=\kappa(t')v_{2,t'}$ for some positive factor $\kappa(t')$, and fixed tangent vectors $u_1=(0,1,0), u_2=(0,0,1)\in T_{y_0}S^2$;
            \item  is equivariant in the sense that 
        \[
        f^{s'}_{t'}\circ \beta_{t'}(x_0)=\beta_{t'}(f_0(x_0)), \ \text{for all} \ x_0\in S^2,
        \]
        where we recall that $f_0$ denotes the restriction to $S^2$ of the standard reflection with respect to the third coordinate plane in $\R^3$.
        \end{itemize}
         (Of course, it is understood that $\beta_{t'}$ is a \emph{homothetic embedding} when one considers the \emph{rounded} metrics on the target.)
        
        The construction above can now be performed `in families', depending on the parameter $s$ also, and determines charts (in the sense of Section 4 of \cite{BamKle19}) of the type $\chi=(\chi^{s'}_{t'})$ with (for any fixed $s')$ $\chi^{s'}_{t'}: S^2\times (-a^{s'},a^{s'})\to\mM^{s'}_{t'}$ satisfying
        \begin{equation}\label{eq:EquivEmbed}
        f^{s'}_{t'}\circ \chi^{s'}_{t'}(x_0,r_0)=\chi^{s'}_{t'}(f_0(x_0),r_0), \ \text{for all} \ x_0\in S^2, r_0\in (-a^{s'},a^{s'}).
        \end{equation}
           and an associated transversely continuous family of $O(3)$-actions as in the definition of reflexive spherical structure (cf. Definition \ref{def:S-structure}). In particular, thanks to \eqref{eq:EquivEmbed} the action satisfies, in turn, the identity that  $f^{s'}_{t'}(\zeta(\rho, z))=\zeta(\rho, f^{s'}_{t'}(y))$ for all $\rho\in O(3), z\in V$. Now, the vector field $\partial^{s'}_{\mt}+Y^{s'}_{\chi}$ is obtained by averaging $\partial_{\mt}$ along the orbits of the action (exactly as we had done, in the previous step, near the tip of regions close to Bryant solitons), and is therefore equivariant under $(f^{s'}_{t'})_{\ast}$.
        
        In the first possible local picture for case (c2) we have that, given $x\in \mM^{s}_t\cap N_c$, $\Gamma_x$ coincides with a connected component of $\Fix(f^s_t)$. In this case, we recall (straight from condition (2) in Definition \ref{def:ReflexiveRicciFlow}) that, since $\Fix(f^s_t)$ is preserved by the flow of $\partial_{\mt}$, necessarily the previous construction gives $Y^{s}_{\chi}=0$ along $\Gamma_x$. More generally, when dealing with a leaf that is not necessarily the central one, we follow the line of the previous construction, but now imposing the `equivariance constraint' by taking $v_{1,t'}, v_{2,t'}$ subject to the (different) requirement that 
        \[
        (f^{s'}_{t'})_{\ast} v^{s'}_{i,t'}(y(t'))= v^{s'}_{i,t'}((f^{s'}_{t'}(y(t'))).
        \]
        From there, in an open $f^{s'}_{t'}$-invariant neighborhood of the central leaf, we obtain an equivariant chart map $\chi$ satisfying
        \[
        f^{s'}_{t'}\circ \chi^{s'}_{t'}((x_0, r_0))= \chi^{s'}_{t'}(((x_0, -r_0))), \ \text{for all} \ x_0\in S^2, r_0\in (-a^{s'},a^{s'})
        \]
        whence the corresponding equivariance property of the action $\zeta$ and, next, the fact that
        the vector field $Y^{s'}$ will satisfy, by construction, the equation
        \begin{equation}\label{eq:VectorField}
        (f^{s}_t)_{\ast} Y^s_{\chi^s_t}(x)= Y^{s}_{\chi^s_t}(f^{s}_t(x)).    
            \end{equation}
            
        The case of open sets of points that lift to centers of $\delta$-necks, so corresponding to condition (c3), causes no troubles in view of the discussion above, and, more specifically, because of the fact that such regions are away from $\Fix(f^s_t)$: one works at the level of the two-fold cover, constructs the vector field there, and employs the maps $f^s_t$ to `reflect through' $\Fix(f^s_t)$, so to ensure the equivariance of the resulting vector field.   
        
        \
        
        Lastly, like in step two, one needs to perform some final adjustments and interpolations so to define the metric $g'^{,s}_3$ globally on $\mM^s$. Specifically, we shall set $\eta_3(x):=\nu(2\eta_2(x))$, then for $x\in\text{domain}(\mS'^{,s})$
 \[
        (g'^{,s}_3)_x:=(g'^{,s}_2)_x+\nu\left(\frac{\alpha r_{\text{can},\e}(x)}{10^2\rho(x)}\right)\cdot \nu(2-2\eta_2(x))\cdot ((g''^{,s})_x-(g'^{,s}_2)_x),
        \]
        else if $x\notin\text{domain}(\mS'^{,s})$ we just leave the metric unchanged i.e. we set $g'^{,s}_3=g'^{,s}_2$.
        The spherical structure $\mS^s_3$ is declared to be the restriction of $\mS'^{,s}$ (wherever defined via the local construction) to $\left\{\rho_{g'_3}<\alpha r_{\text{can},\e}/400\right\}\cap \left\{\eta_2<1/2\right\}$; correspondingly, for any spherical fiber $\mO$, we have the locally defined vector fields $(Y^{s'}_{\mO})$, which by construction shall preserve $\mathcal{S}^{s'}_{3}$. All of these definitions are given in terms of geometric quantities only, namely in terms of $\eta_1, r_{\text{can},\e}, \rho, \rho_{g'_3}$ so, as such, and in view of Theorem \ref{thm:UniquenessEquivariance}, they determine a reflexive $\mR$-structure, as desired.       
       
       \
       
        \emph{Fourth step: modification of the time vector field.} As a next step, we refine the constructions given in two previous ones so to obtain a transversely continuous family of smooth vector fields $(\partial'^{,s}_{\mt,4})$, satisfying $\partial'^{,s}_{\mt,4}\mt=1$, and modify all related structures (in particular obtaining a transversely continuous family of spherical structures $(\mS^s_4)$) that are preserved by the flow of such vector fields.
        
        As an \emph{Ansatz}, we will take
        \begin{equation}\label{eq:AnsatzVectField}
            \partial'^{,s}_{\mt,4}=\partial^{s}_{\mt}+\eta^* Z^s
        \end{equation}
        where $Z^s$ is a vector field defined on a suitable subset $U^s$ of $\text{domain}(\mS^s_3)$, and $\eta^*:\cup_{s}\mM^s\to [0,1]$ is a cutoff function.
        
        More specifically, (keeping in mind the outcome of the third step) for any $s\in K$ we set
        \begin{equation}\label{eq:Domain}
            U^s:=\left\{\eta_3<1\right\} \cap \left\{\rho_{g'_3}<\alpha r_{\text{can},\epsilon}/2\right\} \cap \mM^s \subset \text{domain}(\mS^s_3)
            \end{equation}
            and we define $Z^s$ as follows: for any spherical fiber $\mO$ in $U^s\cap \mM^s_t$ (for any $t\geq 0$) $Z^s_{|\mO}=Z'_{\mO}$ where $Z'_{\mO}$ is the element of minimal $L^2$-norm in the class of \emph{admissible} vector fields; a vector field $Z'$ along $\mO$ is defined admissible if $\partial^s_{\mt}+Z'$ can be extended to a vector field defined in an open set containing $\mO$ (inside $\mM^s$) and preserving $\mS^s_3$ therein.
        
        Then, arguing exactly as in the non-equivariant case one shows that $Z^s$ is a well-defined, smooth and transversely continuous vector field on $\cup_s U_s$; also, the very same local comparison formula proven there, which reads
        \begin{equation}\label{eq:RepTheory}
            Y^{s'}_{\mO}-Z^{s'}=\frac{1}{|O(3)|}\int_{O(3)}(1+3\text{tr} A)(\zeta^{s'}_A)_{\ast}Y^{s'}_{\mO}dA,
        \end{equation}
        (where $(\zeta^{s'})$ denotes a transversely continuous family of local $O(3)$ actions, that for each $s'$ are compatible with $\mS^{s'}_3$ and isometric with respect to $g'^{,s'}_3$, and the averaged integral to be understood with respect to the natural Haar measure)
        allows one to show that the vector field $Z^s$ satisfies the equation
        \[
        (f^{s}_t)_{\ast} Z^s_t(x)= Z^{s}_t(f^{s}_t(x))
        \]
        since this is already known to be the case for the other two terms in \eqref{eq:RepTheory}, namely those involving $Y^{s'}_{\mO}$, as a consequence of the previous two steps.
        
        That being said, the aforementioned cutoff function $\eta^*$ shall be defined by letting
        \[
        \eta^*(x)=\nu\left(\frac{\alpha r_{\text{can},\e}(x)}{10^2 \rho_{g'_3}(x)}\right)\cdot \nu(2-2\eta_3(x)).
        \] 
        
        Indeed, notice that
        $\text{supp} (\eta^*) \subset \left\{\eta_3<19/20 \right\} \cap \left\{\rho_{g'_3}<\alpha r_{\text{can},\e}/10\right\}$
which in particular implies $\text{supp} (\eta^{\ast}) \subset \cup_s U^s$; in addition, we have that  $\eta^*=1$ identically on $\left\{\eta_3< 11/20\right\} \cap \left\{\rho_{g'_3}<\alpha r_{\text{can},\e}/90\right\}$ so one will define $\mS^s_4$ to be the restriction of $\mS^s_3$ to $U'\cap \mM^s$
where it is set $U'=\left\{\eta_3< 11/20\right\} \cap \left\{\rho_{g'_3}<\alpha r_{\text{can},\e}/10^2\right\}$. Lastly, as in any of the previous steps, we will set $\eta_4(x)=\nu(2\eta_3(x))$.

\

\emph{Fifth step: refinements for the almost-extinct components.} The scope of this step is to suitably extend the spherical structure $\mS_4$ to $\mS_5$ and modify the metric $g'^{,s}_4$ to  $g'^{,s}_5$ so that, in addition to all other properties, one can ensure that  $g'^{,s}_5$ remains compatible with $\mS_5$ for an a priori longer time and, in fact, till the stage where the universal cover of any connected component for which $\eta_4>0$ (a posteriori $\eta_5>0$) is $\delta$-close to the round sphere, modulo rescaling. The precise statement is Lemma 5.29 in \cite{BamKle19}. It is simple to check that the argument presented by Bamler-Kleiner for this step carries over \emph{verbatim} to our setting.
Indeed, those components for which $\eta_4>0$ (a condition which is more restrictive than $\eta_1>0$, so that Lemma 5.15 therein applies) are already known to be compact, to have positive sectional curvature and bounded normalised diameter hence they become extinct in finite time and (for any given $s$) the evolution of each of them occurs in a product domain $U_{\mC}\subset \mM^s$ (where the original singular Ricci flow is nothing but a classical Ricci flow). The construction is then performed one component at a time. In our case there can be two possible situations depending on whether, for any such $\mC\subset \mM^s_t$, one had $\mC\cap \Fix(f^s_t)\neq\emptyset$ or instead  $\mC\cap \Fix(f^s_t)=\emptyset$ (in which case we will have a `twin' connected component $\mC'$). Either way, all operations in the construction (rescaling via $\hat{\rho}$, pulling-back, evolving by volume-normalised Ricci flow) do preserve the pre-existing symmetries of the reflexive Ricci flow $(\mM, \mt, \partial_\mt,g, f)$. The function $\eta_5$, that is constant on each connected component of $\mM^s_t$ (for any fixed $s\in K$ and $t\geq 0$) equals $\eta_4$ where $\eta_4=0$ and is modified where $\eta_4>0$ consistently with the previous construction, is also formally defined exactly as in the non-equivariant case.

\

\emph{Sixth step: trivialisation of the metric on almost-extinct components and conclusion.} The construction is further refined, and then completed, in order to accomodate condition (4) of Definition \ref{def:PrelimR}: one needs to actually round the metric on those connected components where $\eta_5>0$, then modify the time vector field and the spherical structures accordingly, as we are about to describe. 

Like in the previous step, given any parameter $s\in K$ and $t\geq 0$ the subset $\left\{\eta_5>0\right\}\cap \mM^s_t$ consists of compact connected components and one can perform the needed modifications one component at a time. So, since (the universal cover of) any such $(\mathcal{C}, g'^{,s}_t)$ is quantitatively close to a round sphere modulo scaling, one can first apply the rounding operator and set
$(g''^{,s}_{t})_{|{\mC}}=\text{RD}^3((g'^{,s}_{5,t})_{|{\mC}})$.
Thereby one defines a smooth metric on $\left\{\eta_5>0\right\}\cap \mM^s$, that has the desired equivariance properties because of \cite[Lemma 5.13 item (d)]{BamKle19} (this last statement obviously refers to those components of $\mM^s_t$ intersecting $\Fix(f^s_t)$, for else the conclusion is trivial). Then one defines $\partial''^{,s}_{\mt}=\partial'^{,s}_{\mt}+Z_{\mC}$ by selecting $Z_{\mC}$ as the element of minimal $L^2$-norm (with respect to $g''^{,s}_{\mt}$) in the class of those vector fields $Z''$ such that $\partial'^{,s}_{\mt}+Z''$ determines a flow by homotheties. As recalled above, uniqueness of the minimiser (which follows from the convexity of the functional in question, that is actually minimised on an affine finite-dimensional subspace) forces the equivariance of the resulting vector field. Thereby, we have a smooth vector field $Z^s$ on $\left\{\eta_5>0\right\}\cap \mM^s$, again with the desired equivariance properties. In addition, as already explicitly remarked in \cite{BamKle19}, the invariance of both constructions (i.e. that of the metric, and that of the vector field) under isometries implies that $g''^{,s}$ is still compatible with $\mS^s_5$, and that $\partial''^{,s}_{\mt}$ still preserves $\mS^s_5$. Passing from such connected components to globally defined objects is made by standard interpolation; specifically one sets 
for any $x\in \mM^s_t$
\[
(g'^{,s}_t)_{x}:=(g'^{,s}_{5,t})_x+\nu(2\eta_5(x))\cdot ((g''^{,s}_{t})_x - (g'^{,s}_{5,t})_x)
\]
and
\[
(\partial'^{,s}_{\mt})_{x}:=(\partial'^{,s}_{5,\mt})_x+\nu(2\eta_5(x))\cdot ((\partial''^{,s}_{\mt})_x - (\partial'^{,s}_{5,\mt})_x).
\]
At this very stage, the definition of the sets $U_{S3}, U_{S2}$ and of the spherical structure $\mS$ carries through unmodified from \cite{BamKle19}. Indeed all geometric functions that are employed in these definitions ($\eta_5$, $\hat{\rho}, r_{\text{can},\e}$) are constant as one varies $x$ in any given connected component of $\mM^s_t$ (for any fixed $s,t$) and all related arguments only involve those components where 
$\partial'^{,s}_{\mt}$ determines a flow of homotheties with respect to the metric $g'^{,s}$. Thus, these definitions automatically satisfy the additional equivariance constraints, and the conclusion of Theorem \ref{thm:ExistR-struct} follows.
\end{proof}

        \section{Reflexive partial homotopies and backward-in-time induction}\label{sec:PartialHom}
        
        The purpose of this section is the construction of reflexive partial homotopies: given a continuous family of initial data, and considered the transversely continuous family of singular Ricci flows it generates, we shall employ the associated reflexive $\mR$-structures constructed in the previous section to design a backward induction scheme whose ultimate scope is to prove Proposition \ref{prop:SurjPartHom} below for $T=0$. In order to conveniently state such a result we first need to wrap-up some important properties of reflexive $\mR$-structures, that mostly follow from the previous discussion.
        
        \
        
       We are given a finite-dimensional simplicial complex $\mK$ whose dimension shall henceforth be denoted by $n\geq 0$ and for which we fix a geometric realisation $K$. Considered a family of compact reflexive 3-manifolds $(M, g^s, f)_{s\in K}$ we let $(\mM^s)_{s\in K}$ denote, in short, the associated reflexive singular Ricci flows, which we assume to be all extinct for $T\geq T_{ext}$ (which happens, for instance, under the topological assumptions on $M$ given in Theorem \ref{thm:MainHomotopyConfVersion}).
      Furthermore, we let $(\mR^s)_{s\in K}$ denote the reflexive $\mR$-structures 
        \[ 
        \mR^s=(g'^{,s},\partial'^{,s}_{\mt}, U^{s}_{S2}, U^{s}_{S3}, \mS^s)
        \]
        provided by Theorem \ref{thm:ExistR-struct} for a parameter $\delta>0$. We fix, once and for all, a numerical constant $\Lambda>100$ and convene that all scales, in particular $\rho$, refer to the output of the rounding process, so that for instance $\rho=\rho_{g'^{,s}_t}$ unless otherwise stated.
        
        \begin{lemma}\label{lem:PreparSurjPartHom}
          There exist positive thresholds $\underline{C}_{0}(\Lambda)>10$ and $\overline{\delta}(\Lambda)$ such that, for any $C_0\geq \underline{C}_{0}(\Lambda)$ and $0<\delta\leq \overline{\delta}(\Lambda)$, if
          \begin{equation}\label{eq:LargeScales}
              r_{\text{initial}}(M^s, g^s), r_{\text{can},\delta}(r_{\text{initial}}(M^s, g^s), t), r_{\text{rot},\delta}(r_{\text{initial}}(M^s, g^s), t)> C_0 \ \ \ \forall \ s\in K, t\geq 0 \ \text{if} \ \mM^s_t\neq\emptyset 
          \end{equation}
              then the following assertions hold true:
          \begin{enumerate}
              \item For any $r>0$ there exists $\theta=\theta(r)\in (0,r^2]$ such that for any $s\in K$ and $t_1, t_2\geq 0, |t_1-t_2|\leq \theta(r)$:
              \begin{enumerate}
                  \item if $x\in \mM^s_{t_2}$ with $\rho(x)>r/10$ then $x$ survives till time $t_1$ (that is to say: $x(t_1)$ is well-defined) and $|\rho(x)-\rho(x(t_1))|< 10^{-3}\rho(x)$;
                  \item if, in addition to what is stated in the previous item, $t_1\leq t_2$ and $\rho(x(t_1))\leq \rho(x)\leq 10$, then there are $f$-invariant embedded disks $D'\subset D\subset U^s_{S2}\cap \mM^s_{t_2}$ with $x\in D'$ that are the union of spherical fibers for $\mS^s$, with $D'$ containing a singular fiber that is a point $x'$, and satisfying
                  \begin{equation}\label{eq:ScaleReq}
                  \begin{cases}
                  \rho>\frac{9}{10}\rho(x) & \ \text{on} \ D \\
                  \rho> 2\Lambda^3\rho(x) & \ \text{on} \ \partial D \\
                  \rho<2\rho(x) & \ \text{on} \ D' ;
                  \end{cases}
                  \end{equation}
              \end{enumerate}
              \item If $(M^s, g^s)$ has positive scalar curvature, then $g'^{,s}$ has positive scalar curvature for any $t\geq 0$ such that $\mM^s_t\neq\emptyset$;
              \item If $t\geq 0$ and $Y\subset \mM^s_t$ is a compact 3-dimensional submanifold, such that $(Y, g^s_t, f_{|Y})$ is a reflexive 3-manifold, and if $\partial Y$ consists of regular fibers of $\mS^s$ where $\rho\leq 1$, then $(Y, g'^{,s}_t, f_{|Y})$ is reflexively PSC-conformal.
                      \end{enumerate}
        \end{lemma}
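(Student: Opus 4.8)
The plan is to obtain each of the three assertions as the reflexive counterpart of a statement already established by Bamler--Kleiner in (the vicinity of) Section~7 of \cite{BamKle19}: in each case one invokes the non-equivariant result and then checks that reflexivity is inherited automatically, using the structural output of Theorem~\ref{thm:ExistR-struct} --- most importantly that $(f^s_t)^\ast g'^{,s}_t=g'^{,s}_t$, that $(f^s_t)_\ast\partial'^{,s}_{\mt}=\partial'^{,s}_{\mt}$, that $\mS^s$ is a reflexive spherical structure, and the closeness bounds $|\nabla^m\partial^{m_2}_{\mt}(g'^{,s}-g^s)|\le\delta\rho^{-m_1-2m_2}$ --- together with the classification of reflexive spherical structures supplied by Lemmas~\ref{lemma.SingFiberReflexive}, \ref{lemma.SingFiberReflexivePointCase} and \ref{lem:ReflSstructClassification}. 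Consequently I would take $\underline C_0(\Lambda)$ and $\overline\delta(\Lambda)$ to be the worst of the thresholds entering those non-equivariant statements, the role of \eqref{eq:LargeScales} being, just as in \cite{BamKle19}, to render all estimates uniform over $s\in K$ and to guarantee that every region in which $\rho\lesssim_\Lambda1$ lies well inside the scales where the $\e$-canonical neighbourhood assumption and the rounding of Theorem~\ref{thm:ExistR-struct} are active. With this in place, assertion (1)(a) involves nothing equivariant: by Theorem~\ref{thm:ExistR-struct}(3) the scales $\rho_{g'^{,s}_t}$ and $\rho_{g^s_t}$ agree to within a factor close to $1$ wherever either exceeds $r/100$, so $\rho(x)>r/10$ forces $\rho_{g^s_t}(x)\gtrsim r$; the standard a priori estimates for singular Ricci flows (finite-time extension criterion together with the curvature and derivative bounds furnished by the canonical neighbourhood assumption, cf.\ \cite{BamKle19,KleLot17}) then show that $x$ survives on any interval of length $\le\theta(r):=cr^2$ and that $|\partial_tR|\le CR^2$ along its worldline, whence $|\rho(x)-\rho(x(t_1))|<10^{-3}\rho(x)$ for $c$ small enough, uniformly in $s$ by \eqref{eq:LargeScales}.

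For assertion (1)(b), the hypothesis $\rho(x(t_1))\le\rho(x)\le10$ places $x$ --- exactly as in the non-equivariant situation --- in a region modelled at scale $\rho(x)$ on a cap of a $\kappa$-solution; since on the relevant set $\rho\lesssim_\Lambda1\ll\min\{r_{\text{can},\delta},r_{\text{rot},\delta}\}$ by \eqref{eq:LargeScales}, Theorem~\ref{thm:ExistR-struct} ensures this region lies in $U^s_{S2}$ and has already been rounded, so the construction of \cite{BamKle19} produces embedded disks $\widehat D'\subset\widehat D$, each a union of spherical fibres of $\mS^s$, with $x\in\widehat D'$, with $\widehat D'$ containing a point singular fibre $x'$, and satisfying \eqref{eq:ScaleReq}. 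To upgrade these to $f$-invariant disks I would split according to Lemma~\ref{lemma.SingFiberReflexivePointCase} (applicable since $g'^{,s}_{t_2}$ is rounded and locally modelled on a $\kappa$-solution cap, hence of positive scalar curvature near $x'$): if $x'\notin\Fix(f^s_{t_2})$, then $f^s_{t_2}(x')$ is a different point singular fibre and, by the isolation of singular fibres (Remark~\ref{rem:PropertiesSstruct}), the disks $\widehat D,\widehat D'$ are disjoint from their $f$-images, so I set $D'=\widehat D'\sqcup f^s_{t_2}(\widehat D')$ and $D=\widehat D\sqcup f^s_{t_2}(\widehat D)$; if instead $x'\in\Fix(f^s_{t_2})$, then a neighbourhood of $x'$ is foliated by spherical fibres each meeting $\Fix(f^s_{t_2})$ orthogonally, and since $f^s_{t_2}$ is a $g'^{,s}_{t_2}$-isometry preserving $\mS^s$ and fixing $x'$ it sends each such fibre to itself, so $\widehat D'$ and $\widehat D$ are already $f$-invariant. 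In either case $f^s_{t_2}$ preserves $\rho$ as well as the fibration, so membership in $U^s_{S2}$, the property of being a union of spherical fibres, and the bounds \eqref{eq:ScaleReq} all persist.

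Assertion (2) I would deduce by first observing that positivity of $R_{g^s}$ is propagated by the flow (parabolic maximum principle on each compact time-slice, which is legitimate since the singular Ricci flow is a smooth Ricci flow on every compact component, the limiting procedure only discarding high-curvature pieces), and then that the rounding retains it: where $\rho_{g'^{,s}_t}>Cr_{\text{rot},\delta}$ one has $g'^{,s}=g^s$ by Theorem~\ref{thm:ExistR-struct}(2), while on the complementary, high-curvature region the metric is $\delta^{\ast}$-close to a $\kappa$-solution --- hence of positive curvature operator --- and the averaging and rounding operations of the proof of Theorem~\ref{thm:ExistR-struct} are, in $\rho$-scale, $C^2$-controlled perturbations by Theorem~\ref{thm:ExistR-struct}(3) that only improve positivity; taking $\overline\delta(\Lambda)$ small makes the overlap interpolation errors negligible against the lower bound $R_{g'^{,s}_t}\gtrsim\rho^{-2}$, so the conclusion follows from \cite{BamKle19} and is insensitive to $f$, being pointwise in $R_{g'^{,s}_t}$. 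For assertion (3), near $\partial Y$ we have $\rho\le1\ll r_{\text{rot},\delta}$, so $g'^{,s}_t$ is compatible with $\mS^s$ there and, by the normal form of Lemma~\ref{lem:NormalForm}, is a spherical warped product on a collar of each boundary fibre; a conformal change by an $f$-invariant factor $w_1$ supported in that collar and depending only on the ($f$-invariant) fibration parameter makes every boundary component a totally geodesic unit round $S^2$ while keeping $R>0$ there, since $\rho\lesssim1$ forces $R_{g'^{,s}_t}\gtrsim1$, which dominates the conformal error. Capping off the resulting round totally geodesic boundary spheres with round hemispheres turns $Y$ into a closed orientable $3$--manifold assembled from pieces of $\mM^s_t$, hence a connected sum of spherical space forms and copies of $S^2\times S^1$ (the time-slices of the flow inherit this topology from $M$; cf.\ \cite{BamKle17-Diff}, \cite{KleLot17} and Remark~\ref{rem:TopClassification}), so it carries metrics of positive scalar curvature; the equivariant PSC-conformal results collected in Appendix~\ref{sec:PSCconf} then supply an $f$-invariant positive function $w_2\equiv1$ near $\partial Y$ with $(w_1w_2)^4g'^{,s}_t$ of positive scalar curvature, so that $w=w_1w_2$ exhibits $(Y,g'^{,s}_t,f_{|Y})$ as reflexively PSC-conformal in the sense of Definition~\ref{def:ReflexivePSCconf}.

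I expect the principal obstacle to be the careful bookkeeping in assertion (1)(b): one must arrange that the non-equivariant disk construction of \cite{BamKle19} can be run so that its output is either $f$-invariant on the nose (when the relevant point singular fibre lies on $\Fix(f)$) or comes equipped with a disjoint mirror copy (when it does not), which is exactly the dichotomy that Lemma~\ref{lemma.SingFiberReflexivePointCase} and the isolation of singular fibres are designed to deliver. A secondary delicate point is the $f$-equivariance of the conformal factor in assertion (3), and this is precisely why Appendix~\ref{sec:PSCconf} is developed in the symmetric category from the outset; everything else is a transcription, \emph{mutatis mutandis}, of the corresponding arguments in \cite{BamKle19}.
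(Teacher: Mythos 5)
Your overall architecture coincides with the paper's: (1)(a) and (2) are taken over essentially verbatim from the non-equivariant Lemma 8.4 of \cite{BamKle19}, (3) is referred to the reflexive PSC-conformal machinery of Appendix \ref{sec:PSCconf}, and the genuine work is the equivariance of the disks in (1)(b), organised around the dichotomy $x'\in\Fix(f^s_{t_2})$ versus $x'\notin\Fix(f^s_{t_2})$ (with the case $x'\in\Fix(f^s_{t_2})$ handled as you do, via Lemma \ref{lemma.SingFiberReflexivePointCase}). The first gap is in the complementary case: taking $D=\widehat D\sqcup f^s_{t_2}(\widehat D)$ does not produce an embedded disk, and a connected disk containing $x$ and disjoint from the separating hypersurface $\Fix(f^s_{t_2})$ can never be literally $f$-invariant; what the downstream construction (Lemma \ref{lem:BackwInductPreparation}, item (4)(a)) actually requires is a \emph{single} disk lying in one component of $\mM^s_{t_2}\setminus\Fix(f^s_{t_2})$, to be paired later with its disjoint mirror image. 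Moreover your justification that $\widehat D$ misses its $f$-image --- ``by the isolation of singular fibres'' (Remark \ref{rem:PropertiesSstruct}) --- does not suffice: isolation only separates the two singular \emph{points} $x'$ and $f^s_{t_2}(x')$, while $\widehat D$ has definite size of order $\Lambda^3\rho(x)$ and could a priori meet $\Fix(f^s_{t_2})$. The paper closes this quantitatively: by Bryant-soliton closeness the scalar curvature of $g'^{,s}$ has a \emph{unique} maximum in $B_{g'^{,s}_{t_2}}(x,\Lambda'\rho(x))$, so if $B_{g'^{,s}_{t_2}}(x',\Lambda'\rho(x))$ met $\Fix(f^s_{t_2})$ there would be a second maximum at $f^s_{t_2}(x')$ in that ball, a contradiction; running the construction with $\Lambda'>13\Lambda^6C_{\text{Bry}}$ then yields $D'\subset D$ entirely inside one component of the complement of the fixed locus and still satisfying \eqref{eq:ScaleReq}.

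The second, more serious gap is in your treatment of (3). Reflexive PSC-conformality of $(Y,g'^{,s}_t,f_{|Y})$ is a statement about the \emph{given conformal class}: one must produce an $f$-equivariant $w>0$ with $w^4g'^{,s}_t$ of positive scalar curvature and the prescribed boundary behaviour, i.e.\ one must show a Yamabe-type invariant of $[g'^{,s}_t]$ on $Y$ is positive. Capping off and observing that the resulting closed manifold is a connected sum of spherical space forms and copies of $S^2\times S^1$ only shows that the underlying manifold admits \emph{some} PSC metric; it gives no control on the sign of the first eigenvalue of the conformal Laplacian of $g'^{,s}_t$ itself, and no statement in Appendix \ref{sec:PSCconf} supplies the function $w_2$ you invoke under the hypotheses you have verified. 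The correct route --- and the one the paper takes --- is to apply Lemma \ref{lemma.PSCconform.criter} (the reflexive analogue of Lemma 6.7 in \cite{BamKle19}), whose proof modifies the conformal factor only in the neck regions near $\partial Y$ and crucially feeds on positivity of the scalar curvature on the bulk of $Y$ together with the $\eps$-canonical neighbourhood property on $\{\rho<r\}\cap Y$; this is where the smallness of $\delta$ and the largeness of $C_0$ enter for assertion (3).
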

        
        \begin{remark}\label{rem:NormalisationByMult}
        We note that, due to the compactness of the parameter space $K$ and the scaling properties of $r_{\text{rot},\delta}$ we may, and shall in fact assume that, possibly by multiplying the initially assigned metrics $(g^s)_{s\in K}$ by a large constant, condition \eqref{eq:LargeScales} is met. Hence, from property (2) in Theorem \ref{thm:ExistR-struct}, we can without loss of generality assume that $\rho>1$ on $\mM^s_0$ for all $s\in K$. This is a simple, convenient normalisation: as described in Section \ref{sec:OutlineParab}, the partial homotopies we design (say at time $T\geq 0$) will not necessarily be supported on all of $\mM^s_T$, but may in fact miss regions of high curvature, where the threshold defining this notion will be from now onwards be unitary.
         Also, it is appropriate to remark that (by property (1) stated in Theorem \ref{thm:ExistR-struct}) that, with such a normalisation, the set $\left\{\rho=\rho_{g'^{,s}}<C_0\right\}$ is included in the support of $\mR^s$ (which by definition is $U^s_{S2}\cup U^s_{S3}$) for all $s\in K$. 
            \end{remark}
            
            \begin{proof}
             First of all, assertions (1)(a) and (2) follow exactly as in the non-equivariant case (see Lemma 8.4 in \cite{BamKle19}), the latter for $\delta>0$ small enough (in relation to item (3) of Theorem \ref{thm:ExistR-struct}). Assertion (3) only needs one to appeal to Lemma \ref{lemma.PSCconform.criter} in lieu of Lemma 6.7 in \cite{BamKle19}, and requires $\delta>0$ small enough and $C_0$ large enough, instead. So, one is then left with checking that the sets $D, D'$ in assertion (1)(b) can be designed to be $f$-invariant (and that the claimed estimates still hold true). To that aim, let us first recall that (as in the non-equivariant case, so utilising Lemma 3.32 in \cite{BamKle19}) given any $\delta'>0$ one can pick $\delta>0$ small enough and $C_0$ large enough so that the pointed manifold $(M, g^{s}_{t_2},x)$ is $\delta'$-close to the pointed standard Bryant soliton $(M_{\text{Bry}}, g_{\text{Bry}},x_{\text{Bry}})$ at scale $\rho(x)$, where $x_{\text{Bry}}$ stands for the tip of the soliton in question (i.e. the origin in $\R^3$). Hence, given any $\Lambda'>0$ for $\delta'$ small enough it follows that the scalar curvature of $g'^{,s}$ attains a unique maximum in the metric ball of radius $\Lambda'\rho(x)$, at a point $x'$. Now, there are two possible cases. If $x'\in \Fix(f^s_{t_2})$ then we consider the length-minimising geodesic connecting $x$ to $x'$ and take $D'$ to be the union of all spherical fibers of $\mS^s$ intersecting such an arc: since $\mS^s$ has, by construction (cf. Definition \ref{def:R-structure}) the desired equivariance property, it follows that $D'$ is $f$-invariant, and one can then also define a larger disc $D$ based on the scale requirements only (for which we recall that there exists a constant $C_{\text{Bry}}>1$ such that 
             \[
             C^{-1}_{\text{Bry}} d^{-1}_{g_{\text{Bry}}}(x_{\text{Bry}},\cdot) \leq R_{g_{\text{Bry}}}\leq C_{\text{Bry}}d^{-1}_{g_{\text{Bry}}}(x_{\text{Bry}},\cdot)
             \] cf. Section 3 and Theorem 1 in \cite{Bry05}).
             If instead $x'\notin \Fix(f^s_{t_2})$ then (again, by possibly taking $\delta$ even smaller and $C_0$ even larger) one can assume, without loss of generality, that $B_{g'^{,s}_{t_2}}(x',\Lambda'\rho(x))\cap \Fix(f^{s}_{t_2})=\emptyset$ (if not, $x'$ would belong to one of the two connected components of $M\setminus  \Fix(f^{s}_{t_2})$ and there would be a second maximum point of the scalar curvature in the same metric ball centered at $x$ and of radius $\Lambda'\rho(x)$, a contradiction). Thus, in this case, running the argument above for given $\Lambda'>13\Lambda^6 C_{\text{Bry}}$ one can pick $D'$ and $D$ away from $\Fix(f^{s}_t)$ (more precisely: in one of the two open connected components of $M\setminus \Fix(f^s_{t_2})$) and satisfying the curvature requirements in the statement. Thereby, the conclusion follows.
            \end{proof}

        We henceforth convene to choose positive constants $C_0$ and $\delta$ so that all conclusions in the statement of Lemma \ref{lem:PreparSurjPartHom} hold true. For any $k\in\left\{0,1,\ldots, n\right\}$ we further set $r_k:=\Lambda^{-4n+4k-4}$. Finally, let $K'$ denote a closed subset of $K$ with the property that, whenever $s\in K'$ the metric $g^s$ has positive scalar curvature.

        \begin{proposition}\label{prop:SurjPartHom}
        In the setting above, for any $T\geq 0$, by possibly passing to a simplicial refinement of $\mK$ (which we shall not rename) there exists a reflexive partial homotopy at time $T$ for the reflexive $\mR$-structures $(\mR^s)_{s\in K}$
        \[
        \left\{(Z^{\sigma}, (g^{\sigma}_{s,t})_{s\in \sigma, t\in [0,1]}, f^{\sigma}, (\psi^{\sigma}_s)_{s\in\sigma} \right\},
        \]
        in the sense of Definition \ref{def:PartialHom}, such that the following assertions hold true for any $k$-simplex $\sigma$ of $\mK$ and all $s\in\sigma$:
        \begin{enumerate}
            \item $\left\{\rho>1 \right\}\cap \mM^s_T\subset \psi^{\sigma}_s(Z^{\sigma})\subset \left\{\rho>r_k\right\}$;
            \item Every connected component of $\psi^{\sigma}_s(Z^{\sigma})$ contains points with $\rho>\Lambda^2 r_k$;
            \item $\rho>\lambda r_k$ on $\psi^{\sigma}_s(\partial Z^{\sigma})$;
            \item $\left\{(Z^{\sigma}, (g^{\sigma}_{s,t})_{s\in \sigma, t\in [0,1]}, f^{\sigma}, (\psi^{\sigma}_s)_{s\in\sigma} \right\}$ is reflexively $PSC$-conformal over every $s\in K'$ (for every simplex containing it).
        \end{enumerate}
        \end{proposition}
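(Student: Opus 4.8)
The plan is to run the backward-in-time induction of Section~8 of \cite{BamKle19} in the equivariant category, feeding in at each stage the reflexive structures of the previous section and the equivariant moves of Appendices~\ref{app:EquivExtHom} and~\ref{app:EquivDiskRemov} so as to keep every construction compatible with the involutions $f^s$. Fix the target time $T\geq 0$; using the compactness of $K$ and the scaling property of $r_{\text{rot},\delta}$ in Theorem~\ref{thm:ExistR-struct}, choose a uniform step size $\Delta T>0$, small in terms of $\Lambda$, $n$ and the function $\theta$ of Lemma~\ref{lem:PreparSurjPartHom}(1), together with an integer $N$ with $T+N\Delta T\geq T_{\text{ext}}$. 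We establish the assertion for the times $T+N\Delta T, T+(N-1)\Delta T,\ldots,T$ by descending induction. At the top, $\mM^s_{T+N\Delta T}=\emptyset$ for all $s$, so the trivial partial homotopy ($Z^\sigma=\emptyset$ for every $\sigma\in\mK$) satisfies (1)--(4) vacuously.

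For the inductive step, assume a reflexive partial homotopy obeying (1)--(4) has been produced at time $T'':=T'+\Delta T$ (after some simplicial refinement of $\mK$, which we do not rename); we build one at time $T'$ in three stages. \textbf{(A) Flowing the domains backward.} By (1), each $\psi^\sigma_s(Z^\sigma)\subset\mM^s_{T''}$ lies in $\{\rho>r_k\}$, so Lemma~\ref{lem:PreparSurjPartHom}(1)(a) makes the backward flow $\Phi=\Phi(T'',T')$ of $\partial'^{,s}_{\mt}$ defined there with $\rho$-distortion at most $1\pm 10^{-3}$; since $\partial'^{,s}_{\mt}$ preserves $\mS^s$ and commutes with $f^s$ (Definition~\ref{def:R-structure}(2), Definition~\ref{def:ReflexiveRicciFlow}), composing the $\psi^\sigma_s$ with $\Phi$ gives reflexive embeddings into $\mM^s_{T'}$, the metric deformations $g^\sigma_{s,t}$ being carried along unchanged. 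This yields a reflexive partial homotopy at time $T'$ that still satisfies (2), (3), (4) and the right-hand inclusion in (1) up to the harmless loss $r_k\mapsto(1-10^{-3})r_k$, but generally not the left-hand inclusion: $\mM^s_{T'}$ may contain bounded-curvature regions (fattened necks, caps near Bryant tips) and whole components that became high-curvature or extinct between $T'$ and $T''$ and are not yet covered.

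\textbf{(B) Enlarging the domains.} As in \cite[Section~8]{BamKle19}, the left-hand inclusion is repaired by two elementary moves, performed here equivariantly. \emph{(i) Equivariant neck enlargement}: when a component of $\{\rho>1\}\cap\mM^s_{T'}$ abuts a flowed-back boundary fiber, extend the embedding across the intervening neck via Appendix~\ref{app:EquivExtHom}; the neck being a union of spherical fibers of $\mS^s$, the classification of totally geodesic surfaces in round cylinders recalled before Lemma~\ref{lem:NormalForm} together with Lemma~\ref{lemma.SingFiberReflexive} shows it is $f^s$-invariant or swapped with a twin neck, so the extension descends to a reflexive one. \emph{(ii) Equivariant disk removal near Bryant tips}: when a flowed-back boundary fiber sits on a Bryant region whose tip has fallen below the curvature threshold, insert (resp.\ delete, for the nesting conditions) a $3$-disk around the tip via Appendix~\ref{app:EquivDiskRemov}, choosing by Lemma~\ref{lem:PreparSurjPartHom}(1)(b) the disks $D'\subset D$ to be $f^s$-invariant (with $D'$ containing the point fiber $x'\in\Fix(f^s)$) when the tip lies on $\Fix(f^s)$, and otherwise in one component of $\mM^s_{T'}\setminus\Fix(f^s)$ together with its $f^s$-mirror. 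Newly-appearing components are round near their future extinction and hence lie in $U^s_{S3}$; they receive domains equal to themselves with the constant metric deformation, which is reflexively PSC-conformal. Since $E^s$ is one-dimensional, transversely continuous, and the fibers near it are isolated and compactly separated (Remark~\ref{rem:PropertiesSstruct}, \eqref{eq:TipsSeparation}), only finitely many moves occur; a finite simplicial refinement of $\mK$ then arranges that the domains of nested simplices $\tau\subset\sigma$ differ by precisely such necks and disks, so the nesting axioms (2)--(5) of Definition~\ref{def:PartialHom} hold and each $Z^\sigma$ remains a reflexive $3$-manifold with spherical boundary. \textbf{(C) PSC-conformality over $K'$.} For $s\in K'$, Lemma~\ref{lem:PreparSurjPartHom}(2) gives that $g'^{,s}_{T'}$ has positive scalar curvature, whence Lemma~\ref{lem:PreparSurjPartHom}(3) makes each $Z^\sigma$ reflexively PSC-conformal; as the deformations inherited from time $T''$ were already so over $s$, condition (4) persists. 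The bookkeeping is dictated by $r_k=\Lambda^{-4n+4k-4}$: each enlargement accompanying a rise in simplex dimension loses at most a fixed power of $\Lambda$ in the lower scale bound, exactly the slack built into the exponents, while the buffers $\Lambda^2$ in (2) and $\Lambda$ in (3) are spent by one such step.

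The principal difficulty, as in the non-equivariant theory, is Stage~(B): organising the neck-enlargement and disk-removal moves into a single globally coherent partial homotopy requires a simplicial refinement adapted to the way the curvature-scale stratification of $\mM^s_{T'}$ varies with $s$, after which the nesting conditions of Definition~\ref{def:PartialHom} must be verified for all pairs of simplices. The genuinely new point relative to \cite{BamKle19} is that each of these operations admits an $f^s$-equivariant realisation; this rests on the reflexivity of the $\mR$-structure (Theorem~\ref{thm:ExistR-struct}), which in particular forces $\Fix(f^s)$ to be either a union of regular spherical fibers or to cross the fibers orthogonally along circles (Lemma~\ref{lemma.SingFiberReflexive}, Lemma~\ref{lemma.SingFiberReflexivePointCase}), and on the equivariant neck-extension and disk-removal homotopies of Appendices~\ref{app:EquivExtHom} and~\ref{app:EquivDiskRemov}. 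Granting these, the induction closes, and reading the conclusion at the bottom of the chain ($T'=T$, $T''=T+\Delta T$) gives the proposition.
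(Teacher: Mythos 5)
Your overall strategy -- descending induction in time, seeded by the trivial partial homotopy past $T_{\text{ext}}$, with equivariant neck-extension and disk-removal moves and a final appeal to Lemma \ref{lem:PreparSurjPartHom}(2)--(3) for PSC-conformality over $K'$ -- is the right one and matches the paper's. But the order of operations in your inductive step is reversed relative to the actual argument, and this is where a genuine gap appears. You flow the partial homotopy backward from $T''$ to $T'$ \emph{first} (Stage (A)) and only then repair the domains (Stage (B)). The paper instead first \emph{prepares} the partial homotopy at time $T''$ -- passing to the refinement and auxiliary data of Lemma \ref{lem:BackwInductPreparation} (which you never invoke), then applying Theorem \ref{thm:ExtPartHom} and Theorem \ref{thm:DiskRemPartHom} dimension by dimension -- and only afterwards applies Proposition \ref{prop:BackwardFlow}. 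The reason is that Proposition \ref{prop:BackwardFlow} is not unconditional: besides survival of every point of every $\psi^\sigma_s(Z^\sigma)$, it requires hypothesis (2), namely that for nested simplices $\tau\subsetneq\sigma$ the difference regions $\psi^\tau_s(Z^\tau)\setminus\psi^\sigma_s(Z^\sigma)$ remain inside $U^s_{S2}\cup U^s_{S3}$ under the flow of $\partial'^{,s}_{\mt}$ for \emph{all} intermediate times $t'\in[T',T'']$. The unprepared partial homotopy inherited from time $T''$ satisfies the fiber/roundness conditions (4)(a)--(b) of Definition \ref{def:PartialHom} only at time $T''$, and there is no a priori reason these regions stay in the support of the $\mR$-structure throughout $[T',T'']$; arranging this (together with the quantitative scale conditions (i)--(viii) of Lemma \ref{lem:BackwInductPreparation}, which guarantee in particular that the sets $X^\sigma_s$ obtained after excising the disks $\nu^\sigma_{s,j}(B^3)$ survive and land in the correct scale ranges at time $T'$) is precisely the content of the preparation step. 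Your Stage (A) simply asserts that composition with the backward flow ``yields a reflexive partial homotopy at time $T'$''; as written this is unjustified.

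Two further concrete inaccuracies. First, the metric deformations are \emph{not} ``carried along unchanged'' by the backward flow: by \eqref{def:BackwardMetric} the new deformation must begin at $(\psi^\sigma_s)^*g'^{,s}_{T'}$ (axiom (1) of Definition \ref{def:PartialHom}), so one prepends the reparametrized spacetime track $t\mapsto(\psi^\sigma_s)^*_{T'+2t(T''-T')}g'^{,s}_{T'+2t(T''-T')}$ on $t\in[0,\tfrac12]$ before concatenating with the old deformation; this is also why the PSC-conformality over $K'$ in Stage (C) must be checked along the whole prepended segment via the last clause of Proposition \ref{prop:BackwardFlow}, not merely at $t=0$. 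Second, the hypotheses of Theorem \ref{thm:DiskRemPartHom} (the $\nu^\sigma_{s,j}$ pull back $\mS^s$ to the standard spherical structure, land in $\psi^\sigma_s(\mathrm{Int}(Z^\sigma))\cap U^s_{S2}$, are disjoint from the domains of strictly larger simplices, and cover all singular point-fibers with $\rho<4r_k$) are verified at time $T''$ through Lemma \ref{lem:BackwInductPreparation}; performing the removal ``after flowing back'' would require re-establishing all of this at time $T'$, where the relevant Bryant-type regions need not even be organized by the spherical structure in the same way. To close the argument you should restore the paper's order -- refine and prepare at time $T''$, excise the disks, verify conditions (1)--(2) of Proposition \ref{prop:BackwardFlow} for the prepared sets $X^\sigma_s$, and only then flow back -- keeping your (correct) equivariant justifications for why each move respects $f^s$.
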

        
        Like it has been anticipated above, the proof of this statement follows by backward induction in time, starting past the extinction time $T_{ext}$, where one can just take $Z^{\sigma}=\emptyset$ for any simplex $\sigma$ (for $\sigma\in \mK$) and then showing, in the inductive step, that one can derive from the conclusion at time $T$ the conclusion at time $T-\Delta T$ for any choice of $\Delta T\in (0, \min\left\{T, \theta(r_0)\right\}$. (Note that here $r_0$ is the constant defined above, i.e. $r_0=\Lambda^{-4n-4}$ for $n$ the dimension of the simplicial complex $\mK$). In particular, if we can reach $T=0$ we obtain, from the item (1) above, that 
        \[
        \left\{\rho>1 \right\}\cap \mM^s_0 = \mM^s_0 \subset \psi^{\sigma}_s(Z^{\sigma})
        \]
        that is to say we would have built a \emph{surjective} reflexive partial homotopy (a wording introduced in Section \ref{sec:OutlineParab}), which implies Theorem \ref{thm:MainHomotopyConfVersion} hence Theorem \ref{thm:MainHomotopy} and, in turn, Theorem \ref{thm:ContractDoubling}.
        
        
        The most basic tool to implement the backward induction scheme that has just been described is the following intuitive result describing how $\mR$-structures can be employed to build a partial homotopy at an earlier time given a partial homotopy at a later one.
        
        \begin{proposition}\label{prop:BackwardFlow}
        Given a transversely continous family of reflexive singular Ricci flows $(\mM^s)_{s\in K}$, let $(\mR^s)_{s\in K}$ be an associated, transversely continuous, family of $\mR$-structures. Consider a reflexive partial homotopy $\left\{(Z^{\sigma}, (g^{\sigma}_{s,t})_{s\in \sigma, t\in [0,1]}, f^{\sigma}, (\psi^{\sigma}_s)_{s\in\sigma}) \right\}_{\sigma\in \mK}$ at time $T$ and let $T'\in [0,T]$. Suppose that:
        \begin{enumerate}
            \item for all $s\in\sigma\in \mK$ all points of $\psi^{\sigma}_s(Z^{\sigma})$ survive until time $T'$ with respect to the flow of the vector field $\partial'^{,s}_{\mt}$;
            \item for all $s\in \tau\subsetneq \sigma\in \mK$ and for all $t'\in [T',T]$ the following inclusion holds:
            \begin{equation}\label{eq:InductiveInclusion}
                (\psi^{\tau}(Z^{\tau})-\psi^{\sigma}(Z^{\sigma}))^{\partial'^{,s}_{\mt}}(t')\subset U^s_{S2}\cup U^{s}_{S3},
            \end{equation}
            where this notation refers to the image of the set in question through the flow of the vector field $\partial'^{,s}_{\mt}$, up to time $t'$.
        \end{enumerate}
        
        Then letting
        \begin{equation}\label{def:BackwardMap}
            (\psi^{\sigma}_s)_{t'}(z):=(\psi^{\sigma}_s(z))^{\partial'^{,s}_{\mt}}(t') \ t'\in [T',T] \ \ \ \ \overline{\psi}^{\sigma}_s:=(\psi^{\sigma}_s)_{T'}
        \end{equation}
        and
         \begin{equation}\label{def:BackwardMetric}
            \overline{g}^{\sigma}_{s,t}:=
            \begin{cases}
               (\psi^{\sigma}_s)^*_{T'+2t(T-T')}g'^{,s}_{T'+2t(T-T')}  & \text{if} \ t\in [0,\frac{1}{2}] \\   
                g^{\sigma}_{s,2t-1} & \text{if} \ t\in [\frac{1}{2},1]    
            \end{cases}
        \end{equation}
        
        defines a reflexive partial homotopy $\left\{(Z^{\sigma}, (\overline{g}^{\sigma}_{s,t})_{s\in \sigma, t\in [0,1]}, f^{\sigma}, (\overline{\psi}^{\sigma}_s)_{s\in\sigma})\right\}_{\sigma\in \mK}$ at time $T'$ such that the following property holds:
        if $\left\{(Z^{\sigma}, (g^{\sigma}_{s,t})_{s\in \sigma, t\in [0,1]}, f^{\sigma}, (\psi^{\sigma}_s)_{s\in\sigma})\right\}_{\sigma\in \mK}$
        is reflexively $PSC$-conformal over $s$ 
        and if $((\psi^{\sigma}(Z^{\sigma}))^{\partial'^{,s}_{\mt}}(t'),g'^{,s}_{t'},f^s_{t'})$ is reflexively PSC-conformal for all $t'\in [T',T]$ 
        then it holds that $\left\{(Z^{\sigma}, (\overline{g}^{\sigma}_{s,t})_{s\in \sigma, t\in [0,1]}, f^{\sigma}, (\overline{\psi}^{\sigma}_s)_{s\in\sigma})\right\}_{\sigma\in \mK}$ is also reflexively $PSC$-conformal over $s$. 
        \end{proposition}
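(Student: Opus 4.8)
The plan is to first handle the non-equivariant skeleton by invoking the corresponding statement of Bamler-Kleiner (Proposition 8.5 in \cite{BamKle19}), and then to carry out, term by term, the verification that the equivariance data $(f^s)$ are respected by the flow-back construction. The starting point is that, by the definition of a reflexive $\mR$-structure (Definition \ref{def:R-structure}), the vector field $\partial'^{,s}_{\mt}$ satisfies $(f^s_t)_*\partial'^{,s}_{\mt} = \partial'^{,s}_{\mt}$; hence its flow $\Phi$ commutes with the reflections $f^s$, i.e. $f^s_{t'} \circ \Phi(T,t') = \Phi(T,t') \circ f^s_T$ wherever defined (this is just the equivariant naturality of flows of invariant vector fields, combined with the commutation of $\Phi$ with the time-vector-field flow guaranteed by Definition \ref{def:ReflexiveRicciFlow}(2)). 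In particular, for any $z\in Z^\sigma$, since $\psi^\sigma_s$ is reflexive, $\psi^\sigma_s \circ f^\sigma = f^s \circ \psi^\sigma_s$, and then $(\psi^\sigma_s)_{t'} \circ f^\sigma = (f^s_{t'}) \circ (\psi^\sigma_s)_{t'}$ by applying the commutation of $\Phi$; in particular $\overline\psi^\sigma_s = (\psi^\sigma_s)_{T'}$ is again a reflexive embedding. Note that hypothesis (1) is needed precisely so that these flowed maps are everywhere defined, and hypothesis (2) ensures the flowed-out annular regions land in the support of the $\mR$-structure so that Definition \ref{def:PartialHom}(4) can still be checked.

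Next I would verify the defining properties (1)--(5) of Definition \ref{def:PartialHom} for the new data $\{(Z^\sigma, (\overline g^\sigma_{s,t})_{s\in\sigma,t\in[0,1]}, f^\sigma, (\overline\psi^\sigma_s)_{s\in\sigma})\}$. Property (1) is immediate: at $t=0$ the formula \eqref{def:BackwardMetric} gives $\overline g^\sigma_{s,0} = (\overline\psi^\sigma_s)^* g'^{,s}_{T'}$ by construction. Property (2) follows because $(\psi^\tau_s)_{t'}(Z^\tau) \supset (\psi^\sigma_s)_{t'}(Z^\sigma)$ is preserved by flowing (the inclusion $\psi^\sigma_s(Z^\sigma)\subset\psi^\tau_s(Z^\tau)$ is transported by the same flow map $\Phi$). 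Property (3) is a pointwise identity: $((\overline\psi^\tau_s)^{-1}\circ\overline\psi^\sigma_s)^* \overline g^\sigma_{s,t}$ equals, on $[1/2,1]$, exactly the old compatibility condition pulled back, and on $[0,1/2]$ it reduces to $((\psi^\tau_s)_{t'})^*$ applied to the metric $g'^{,s}_{t'}$, which is a Riemannian metric on the slice and hence trivially invariant under the transition diffeomorphism. Property (4) is where the two geometric alternatives (4)(a) and (4)(b) must be re-examined: one uses that $\partial'^{,s}_{\mt}$ preserves the spherical structure $\mS^s$ (Definition \ref{def:PrelimR}(2)) and is a homothety on each $U_{S3}$-component (Definition \ref{def:PrelimR}(4)(b)), so that a component of $Z^\tau\setminus((\psi^\tau_s)^{-1}\circ\psi^\sigma_s)(Z^\sigma)$ whose image lay in $U^s_{S2}$ (resp. $U^s_{S3}$) at time $T$ continues to have this property along the whole flow path, by \eqref{eq:InductiveInclusion}; the $f^s$-invariance of these images is inherited from the $f^\sigma$-invariance of $\mathcal C$ together with the equivariance of $(\psi^\tau_s)_{t'}$ established above, and invariance of $U^s_{S2}$, $U^s_{S3}$ under $f$ (Definition \ref{def:R-structure}(3)--(4)). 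Property (5) (collar compatibility near $\partial Z^\sigma$) is likewise propagated by the flow because $\partial'^{,s}_{\mt}$ preserves the spherical structure; the uniform $\e>0$ survives since the flow is over the compact time interval $[T',T]$ and the parameter space $K$ is compact. In all these checks, the non-equivariant content is exactly Proposition 8.5 of \cite{BamKle19}, and the only genuinely new verifications are the equivariance identities, all of which reduce to the single fact that flows of $f$-invariant vector fields commute with $f$.

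Finally, for the PSC-conformality propagation statement, I would argue as follows. Suppose the incoming partial homotopy is reflexively PSC-conformal over $s$ and $((\psi^\sigma(Z^\sigma))^{\partial'^{,s}_{\mt}}(t'), g'^{,s}_{t'}, f^s_{t'})$ is reflexively PSC-conformal for all $t'\in[T',T]$. For $t\in[1/2,1]$, $\overline g^\sigma_{s,t}$ is a reparametrized copy of the old $g^\sigma_{s,\cdot}$, so the conclusion is inherited directly from the incoming hypothesis. For $t\in[0,1/2]$, $\overline g^\sigma_{s,t} = (\psi^\sigma_s)^*_{t'} g'^{,s}_{t'}$ with $t'=T'+2t(T-T')$, and since $(\psi^\sigma_s)_{t'}$ is a reflexive embedding onto $(\psi^\sigma(Z^\sigma))^{\partial'^{,s}_{\mt}}(t')$, pulling back the reflexively PSC-conformal structure on that image (which exists by the second hypothesis) gives a reflexively PSC-conformal structure on $(Z^\sigma, \overline g^\sigma_{s,t}, f^\sigma)$ — here one uses that the defining conditions (0)--(3) of Definition \ref{def:ReflexivePSCconf} are preserved under reflexive isometries, the boundary conditions going through because $\partial Z^\sigma$ is mapped to regular fibers (Definition \ref{def:PartialHom}(5)) and the conformal factor's $f$-equivariance is pulled back along an equivariant map. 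The main obstacle, as I see it, is not any one of these verifications individually but rather keeping the bookkeeping of the two alternatives in Property (4) consistent along the whole flow path $[T',T]$: one must be sure that the same alternative (a) or (b) that held at time $T$ continues to hold at all earlier times, which is where hypothesis \eqref{eq:InductiveInclusion} and the homothety property of $\partial'^{,s}_{\mt}$ on $U_{S3}$-components are essential, and where a careful reading of how the collar neighborhoods in Property (5) evolve is required. Everything else is a routine transplantation of \cite[Proposition 8.5]{BamKle19} with the reflection maps carried along for the ride.
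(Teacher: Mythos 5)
Your proposal is correct and follows essentially the same route as the paper, which likewise reduces everything to the corresponding non-equivariant statement in Bamler--Kleiner (Proposition 7.6 there, not 8.5) and observes that the equivariance of $\overline{\psi}^{\sigma}_s$ and $\overline{g}^{\sigma}_{s,t}$ is inherited from that of the $\mR$-structure, the embeddings and the metrics, the key point being exactly yours: the flow of the $f$-invariant vector field $\partial'^{,s}_{\mt}$ commutes with the reflections. Your write-up simply makes explicit the property-by-property verification that the paper leaves implicit.
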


        \begin{proof}
         All claims follow, in a rather straightforward fashion, from the properties defining $\mR$-structures, see Definition \ref{def:R-structure}, exactly as in the proof of Proposition 7.6 in \cite{BamKle19}; the equivariance properties of $(g'^{,s},\partial'^{,s}_{\mt}, U^{s}_{S2}, U^{s}_{S3}, \mS^s)$ together with those of $\psi^{\sigma}_s$ and $g^{\sigma}_{s,t}$ imply the resulting equivariance properties of $\overline{\psi}^{\sigma}_s$ and $\overline{g}^{\sigma}_{s,t}$.  
        \end{proof}
        
        In order to fruitfully exploit the result above so to prove Proposition \ref{prop:SurjPartHom} we need to introduce certain operations on partial homotopies. First of all, we note that there is a very natural way of `passing to a simplicial refinement'.
        
        \begin{lemma}\label{lem:SimplicialRefine}
          Let $\mK$ be a simplicial complex and let $\tilde{\mK}$ be a simplicial refinement thereof; let us convene to identify their geometric realisations $K$ and $\tilde{K}$ respectively.  Given a transversely continous family of reflexive singular Ricci flows $(\mM^s)_{s\in K}$, consider a reflexive partial homotopy $\left\{(Z^{\sigma}, (g^{\sigma}_{s,t})_{s\in \sigma, t\in [0,1]}, f^{\sigma}, (\psi^{\sigma}_s)_{s\in\sigma}) \right\}_{\sigma\in \mK}$ at time $T$. Then, for any $\tilde{\sigma}\in\tilde{\mK}$ let $\sigma_{\tilde{\sigma}}$ denote the simplex of $\mK$ having least dimension among those containing $\tilde{\sigma}$. Letting
          \begin{equation}\label{eq:RefinementRecipe}
              \overline{Z}^{\tilde{\sigma}}:=Z^{\sigma_{\tilde{\sigma}}}, \ \ \overline{\psi}^{\tilde{\sigma}}_s:=\psi^{\sigma_{\tilde{\sigma}}}_s, \ \ \overline{f}^{\tilde{\sigma}}=f^{\sigma_{\tilde{\sigma}}}, \ \  \overline{g}^{\tilde{\sigma}}_{s,t}:=g^{\sigma_{\tilde{\sigma}}}_{s,t}
          \end{equation}
          defines a reflexive partial homotopy $\left\{(\overline{Z}^{\tilde{\sigma}}, (\overline{g}^{\tilde{\sigma}}_{s,t})_{s\in \tilde{\sigma}, t\in [0,1]}, \overline{f}^{\tilde{\sigma}}, (\overline{\psi}^{\tilde{\sigma}}_s)_{s\in\tilde{\sigma}} \right\}_{\tilde{\sigma}\in \tilde{\mK}}$
            at time $T$ satisfying the following property: if $\left\{(Z^{\sigma}, (g^{\sigma}_{s,t})_{s\in \sigma, t\in [0,1]},f^{\sigma} ,(\psi^{\sigma}_s)_{s\in\sigma}) \right\}_{\sigma\in \mK}$ is reflexively PSC-conformal over some $s\in K$ then so is $\left\{(\overline{Z}^{\tilde{\sigma}}, (\overline{g}^{\tilde{\sigma}}_{s,t})_{s\in \tilde{\sigma}, t\in [0,1]},\overline{f}^{\tilde{\sigma}},(\overline{\psi}^{\tilde{\sigma}}_s)_{s\in\tilde{\sigma}})\right\}_{\tilde{\sigma}\in \tilde{\mK}}$.
        \end{lemma}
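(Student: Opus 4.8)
The plan is to verify directly that the tuple defined by the recipe \eqref{eq:RefinementRecipe} satisfies all the clauses of Definition \ref{def:PartialHom}, exploiting the combinatorial fact that the assignment $\tilde\sigma\mapsto\sigma_{\tilde\sigma}$ is order-preserving for inclusion. The key observation is this: if $\tilde\tau\subset\tilde\sigma$ are simplices of $\tilde{\mK}$, then every point of $\tilde\tau$ lies in $\tilde\sigma$, hence in $\sigma_{\tilde\sigma}$; thus $\sigma_{\tilde\sigma}$ is one of the simplices of $\mK$ containing $\tilde\tau$, and therefore $\sigma_{\tilde\tau}\subseteq\sigma_{\tilde\sigma}$ (since $\sigma_{\tilde\tau}$ is the one of least dimension among those containing $\tilde\tau$, and any two simplices of a simplicial complex both containing a common face satisfy an inclusion once we know one contains the other — more carefully: $\sigma_{\tilde\tau}$ and $\sigma_{\tilde\sigma}$ both contain $\tilde\tau$, and since $\tilde{\mK}$ refines $\mK$ the simplex $\sigma_{\tilde\tau}$ is contained in \emph{some} simplex of $\mK$; by minimality of dimension and the fact that the simplices of $\mK$ containing a fixed face are totally ordered by inclusion, we get $\sigma_{\tilde\tau}\subseteq\sigma_{\tilde\sigma}$). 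So pairs $\tilde\tau\subset\tilde\sigma$ in $\tilde{\mK}$ are carried to pairs $\sigma_{\tilde\tau}\subseteq\sigma_{\tilde\sigma}$ in $\mK$, on which the hypotheses of the original partial homotopy already hold.

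With that in hand, I would run through the conditions of Definition \ref{def:PartialHom} one at a time. Condition (1), $(\overline\psi^{\tilde\sigma}_s)^\ast g'^{,s}_T=\overline g^{\tilde\sigma}_{s,0}$, is immediate since both sides are the corresponding data for $\sigma_{\tilde\sigma}$ and $s\in\tilde\sigma\subseteq\sigma_{\tilde\sigma}$. Condition (2), the nesting $\overline\psi^{\tilde\sigma}_s(\overline Z^{\tilde\sigma})\subset\overline\psi^{\tilde\tau}_s(\overline Z^{\tilde\tau})$ for $\tilde\tau\subset\tilde\sigma$: if $\sigma_{\tilde\tau}=\sigma_{\tilde\sigma}$ this is an equality of sets, and otherwise it is exactly condition (2) of the original partial homotopy applied to the pair $\sigma_{\tilde\tau}\subsetneq\sigma_{\tilde\sigma}$. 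Conditions (3) and (4) are handled the same way: for pairs $\tilde\tau\subset\tilde\sigma$ with $\sigma_{\tilde\tau}=\sigma_{\tilde\sigma}$ the relevant transition maps $(\overline\psi^{\tilde\tau}_s)^{-1}\circ\overline\psi^{\tilde\sigma}_s$ are the identity, so the compatibility/invariance statements and the structure of $Z^{\tilde\tau}\setminus(\cdots)(Z^{\tilde\sigma})$ (which is then empty) hold trivially; for pairs with $\sigma_{\tilde\tau}\subsetneq\sigma_{\tilde\sigma}$ they are quoted verbatim from the original. Condition (5), about boundary components mapping to regular fibers of $\mS^s$ and the metric being compatible on an $\e$-collar, depends only on $\tilde\sigma$ (not on a pair) and is inherited directly from $\sigma_{\tilde\sigma}$; the constant $\e$ can be taken to be the minimum of the finitely many constants $\e(\sigma)$, $\sigma\in\mK$. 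The equivariance conditions ($\psi^{\tilde\sigma}_s\circ\overline f^{\tilde\sigma}=f^s\circ\psi^{\tilde\sigma}_s$, reflexivity of the $Z^{\tilde\sigma}$, transverse continuity of the embeddings) are likewise inherited since we have literally copied the data.

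Finally, the PSC-conformality clause: if the original partial homotopy is reflexively PSC-conformal over $s\in K$, then by definition $(Z^\sigma,g^\sigma_{s,t},f^\sigma)$ is reflexively PSC-conformal for every $\sigma\in\mK$ containing $s$ and every $t$. For the refined homotopy, a simplex $\tilde\sigma\in\tilde{\mK}$ contains $s$ iff $s\in\tilde\sigma$, in which case $s\in\sigma_{\tilde\sigma}$, so $(\overline Z^{\tilde\sigma},\overline g^{\tilde\sigma}_{s,t},\overline f^{\tilde\sigma})=(Z^{\sigma_{\tilde\sigma}},g^{\sigma_{\tilde\sigma}}_{s,t},f^{\sigma_{\tilde\sigma}})$ is reflexively PSC-conformal for all $t$, as required. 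The only genuinely non-bookkeeping point — and the one I would be most careful about — is the claim that $\sigma_{\tilde\tau}\subseteq\sigma_{\tilde\sigma}$ whenever $\tilde\tau\subset\tilde\sigma$; this rests on the standard fact that in a simplicial complex the set of simplices containing a fixed simplex, when all contained in a common ambient simplex (here supplied by $\mK$ refining $\tilde{\mK}$ so that $\tilde\sigma\subseteq\sigma_{\tilde\sigma}$ and $\tilde\tau\subseteq\tilde\sigma$), is linearly ordered by inclusion, so the minimal-dimensional one is contained in all others. Everything else is a mechanical transcription, and since $\mK$ is finite all the uniform constants ($\e$, and any thresholds implicit in the definitions) survive by taking minima over the finitely many simplices. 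I do not expect any real obstacle; the content of the lemma is precisely that refinement is compatible with the defining conditions, which are all "local to a simplex or to a comparable pair of simplices".
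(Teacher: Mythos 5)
Your proof is correct and takes the same route as the paper, which simply records this verification as ``straightforward from the definition''; the substance is exactly the observation that $\tilde\tau\subset\tilde\sigma$ forces $\sigma_{\tilde\tau}\subseteq\sigma_{\tilde\sigma}$, after which every clause of Definition \ref{def:PartialHom} is either trivial (when $\sigma_{\tilde\tau}=\sigma_{\tilde\sigma}$) or quoted from the original homotopy. One small caveat: the simplices of $\mK$ containing a fixed simplex are \emph{not} in general totally ordered by inclusion (two top-dimensional simplices sharing a face are incomparable), but the conclusion you need still holds because $\sigma_{\tilde\tau}$ is the carrier of any relative-interior point of $\tilde\tau$, hence a face of every simplex of $\mK$ containing $\tilde\tau$, in particular of $\sigma_{\tilde\sigma}$.
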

        
           \begin{proof}
         Straightforward from the definition.
        \end{proof}
        
        Two more operations, of fundamental importance but rather technical character, are the object of Appendix \ref{app:EquivExtHom} (devoted to the equivariant extension of a partial homotopy) and of Appendix \ref{app:EquivDiskRemov} (devoted to the equivariant removal of 3-dimensional disks from a partial homotopy). For the purposes of this discussion, the reader may just appeal to a rather heuristic notion of these operations and then refer to such sections for all details.
        

           Now, the crucial lemma needed to realise the backward-in-time induction is the following statement, which collects some preparatory constructions that are employed when concretely implementing the inductive step.

 \begin{lemma}\label{lem:BackwInductPreparation}
 In the setting of Proposition \ref{prop:SurjPartHom}, there exists a simplicial refinement $\tilde{\mK}$ of $\mK$ (whose geometric realisation we identify, i.e. $K=\tilde{K}$) such that, if we adapt the given reflexive partial homotopy 
 
  \[
        \left\{(Z^{\sigma}, (g^{\sigma}_{s,t})_{s\in \sigma, t\in [0,1]}, f^{\sigma}, (\psi^{\sigma}_s)_{s\in\sigma} \right\},
        \]
that is postulated to satisfy conditions (1), (2), (3), (4) of Proposition \ref{prop:BackwardFlow} ,       
    to $\tilde{\mK}$ in the sense of Lemma \ref{lem:SimplicialRefine}, we may assume for any simplex $\sigma\in \tilde{\mK}$ to have the following additional data:
\begin{enumerate}
    \item a compact manifold with boundary $\hat{Z}^{\sigma}$ and an embedding $\iota^{\sigma}:Z^{\sigma}\to \hat{Z}^{\sigma}$,
    \item an involutive diffeomorphism $\hat{f}^{\sigma}: \hat{Z}^{\sigma}\to \hat{Z}^{\sigma}$ satisfying the assumption $(\star_{\text{sep}})$ ;
    \item a continuous family of embeddings $(\hat{\psi}^{\sigma}_s: \hat{Z}^{\sigma}\to \mM^s_T)_{s\in\sigma}$ satisfying $\hat{\psi}^{\sigma}_s \circ \hat{f}^{\sigma}=f^s \circ\hat{\psi}^{\sigma}_s$ for all $s\in\sigma$;
    \item continuous family of embeddings $(\nu^{\sigma}_{s,j}:D^3\to \mM^s_T)_{s\in\sigma, j=1,\ldots, m_{\sigma}}$ for some non-negative integer $m_{\sigma}\in\mathbb{N}$, so that there exists $p_{\sigma}, q_{\sigma} \in\mathbb{N}$ with $2p_{\sigma}+q_{\sigma}=m_{\sigma}$ and:
    \begin{enumerate}
    \item if $j\leq 2p_{\sigma}$ then $\nu^{\sigma}_{s,j}(D^3)\cap \Fix(f^s)=\emptyset$ and $\nu^{\sigma}_{s,j+p_{\sigma}}=f^{s}_T\circ \nu^{\sigma}_{s,j+p_{\sigma}}$ for all $s\in\sigma$; 
    \item if $2p_{\sigma}+1\leq j\leq m_{\sigma}$ then $\nu^{\sigma}_{s,j}(D^3)\cap \Fix(f^s)\neq\emptyset$ and
     $\nu^{\sigma}_{s,j}\circ f_0 = f^s \circ \nu^{\sigma}_{s,j}$;
    \end{enumerate}
\end{enumerate}
and that, for all $\sigma\in\tilde{K}$ and $s\in \sigma$, 
all these conditions are met ($k\geq 0$ denoting the dimension of $\sigma$):
\begin{enumerate}[label=(\roman*)]
    \item $\iota^{\sigma}(Z^{\sigma})$ is $\hat{f}^{\sigma} $-invariant and $\iota^{\sigma}\circ f^{\sigma}=\hat{f}^{\sigma}\circ \iota^{\sigma}$,
    \item $\hat{\psi}^{\sigma}_s(\hat{Z}^{\sigma})$ is $f^s_T$-invariant and $\hat{\psi}^{\sigma}_s\circ \hat{f}^{\sigma}=f^s_{T}\circ \hat{\psi}^{\sigma}_s$, 
    \item $\psi^{\sigma}_s = \hat{\psi}^{\sigma}_s \circ\iota^{\sigma}$,
    \item the closure $Y$ of any connected component of $\hat{Z}^{\sigma}\setminus \iota^{\sigma}(Z^{\sigma})$ satisfies either of the following two properties, uniformly in $s\in\sigma$:
    \begin{enumerate}
        \item $\hat{\psi}^{\sigma}_s(Y)$ is a union of (possibly singular) fibers of $\mS^s$,
        \item $\partial Y=\emptyset$, $\hat{\psi}^{\sigma}_s(Y)\subset U^s_{S3}$ and $(\hat{\psi}^{\sigma}_s)^{\ast}(g'^{,s}_T)$ is an $s$-dependent multiple of the same constant curvature metric, and if $Y\cap \Fix(\hat{f}^{\sigma})\neq\emptyset$ then $(Y, (\hat{\psi}^{\sigma}_s)^{\ast}(g'^{,s}_T), \hat{f}^{\sigma})$ is a reflexive 3-manifold,
    \end{enumerate}
    \item $\hat{\psi}^{\sigma}_s(\hat{Z}^{\sigma})\setminus \psi^{\sigma}_s(Z^{\sigma})\subset\left\{\rho>2r_k\right\}$, $\hat{\psi}^{\sigma}_s(\partial \hat{Z}^{\sigma})\setminus \psi^{\sigma}_s(\partial Z^{\sigma})\subset\left\{\rho>2\Lambda r_k\right\}$, and every component of $\hat{\psi}^{\sigma}_s(\hat{Z}^{\sigma})$ that does not contain a component of $\psi^{\sigma}_s(Z^{\sigma})$ contains a point where $\rho>2\Lambda^2 r_k$;
    \item $\left\{\rho>\Lambda^3 r_k/3 \right\}\cap \mM^s_T\subset \hat{\psi}^{\sigma}_s(\hat{Z}^{\sigma})$, and in every component of $\mM^s_T\setminus \hat{\psi}^{\sigma}_s(\text{Int} (\hat{Z}^{\sigma}))$ that intersects $\hat{\psi}^{\sigma}_s(\hat{Z}^{\sigma})$ there is a point where $\rho<4 r_k$;
    \item the images of the embeddings $(\nu^{\sigma}_{s,j})_{j=1,\ldots, m_{\sigma}}$ are pairwise disjoint, satisfy $\nu_{s,j}(D^3)\subset \psi^{\sigma}_s(\text{Int}(Z^{\sigma}))\cap U^s_{S2}$, and, in addition, for all $j=1,\ldots, m_{\sigma}$ the pull-back through the embedding $\nu_{s,j}$ of $\mS^s$ coincides with the standard spherical structure of $D^3$,
    \item for all $j=1,\ldots, m_{\sigma}$ one has $\nu^{\sigma}_{s,j}(D^3)\subset \left\{\rho<4\Lambda r_k\right\}$,  $\nu^{\sigma}_{s,j}(\partial D^3)\subset \left\{\rho>2\Lambda r_k\right\}$, and
       $\nu^{\sigma}_{s,1}(D^3)\cup\ldots\cup\nu^{\sigma}_{s,m_{\sigma}}(D^3)$ contains all fibers of $\psi^{\sigma}_s(Z^{\sigma})\cap U^s_{S2}$ that are points where $\rho<4r_k$.
\end{enumerate}
\end{lemma}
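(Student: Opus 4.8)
The plan is to run the argument in parallel with the non-equivariant inductive-step preparation of \cite{BamKle19} (Section 8 therein), checking at each stage that every choice can be made $f$-equivariantly, transversely continuously in $s$, and compatibly along the faces of a common simplicial refinement $\tilde{\mK}$ (to which the input partial homotopy is adapted by Lemma \ref{lem:SimplicialRefine}). The point is that all the data entering the construction are \emph{already} equivariant: the curvature scale $\rho$ is a geometric function, the reflexive $\mR$-structure $\mR^s=(g'^{,s},\partial'^{,s}_{\mt},U^s_{S2},U^s_{S3},\mS^s)$ of Theorem \ref{thm:ExistR-struct} satisfies $f^s(U^s_{S2})=U^s_{S2}$, $f^s(U^s_{S3})=U^s_{S3}$ and has $\mS^s$ reflexive, and the partial homotopy satisfies $\psi^\sigma_s\circ f^\sigma=f^s_T\circ\psi^\sigma_s$ with $Z^\sigma$ being $f^\sigma$-invariant. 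Hence only the recipes producing $\hat Z^\sigma$, $\hat f^\sigma$, $\hat\psi^\sigma_s$ and the disks $\nu^\sigma_{s,j}$ require attention.

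First I would build the enlarged domains. Exactly as in \cite{BamKle19}, for each boundary sphere of $\psi^\sigma_s(Z^\sigma)$ --- which is a regular fiber of $\mS^s$ on which $\rho$ is large --- one flows outward along the fibers of $\mS^s$ until $\rho$ drops to a threshold of order $r_k$, and one adjoins every compact component of $U^s_{S3}$ near $\psi^\sigma_s(Z^\sigma)$ where $\rho$ is small; a compactness argument, as in \cite{BamKle19}, produces a refinement $\tilde{\mK}$ over each simplex of which these choices are transversely continuous. Since $Z^\sigma$, $\mS^s$ and $U^s_{S3}$ are invariant under the relevant involutions, the resulting set $\hat\psi^\sigma_s(\hat Z^\sigma)$ is $f^s_T$-invariant, and I define $\hat f^\sigma$ by transporting $f^s_T$ through $\hat\psi^\sigma_s$ (which is $s$-independent within a simplex by transverse continuity). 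That $\hat f^\sigma$ satisfies $(\star_{\text{sep}})$, and that $(\hat Z^\sigma,(\hat\psi^\sigma_s)^*g'^{,s}_T,\hat f^\sigma)$ is reflexive on the newly adjoined pieces, follows from Lemma \ref{lem:ReflSstructClassification} and the normal forms of Lemma \ref{lem:NormalForm} for the fibered collars (this gives case (iv)(a)) and from the classification of reflexive spherical space forms for the $U^s_{S3}$-pieces (this gives case (iv)(b)); properties (i)--(iv) then follow at once, and the scale estimates (v)--(vi) are copied from their non-equivariant analogues using the quantitative control in Theorem \ref{thm:ExistR-struct} and Lemma \ref{lem:PreparSurjPartHom}.

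Next I would construct the disks $\nu^\sigma_{s,j}\colon D^3\to\mM^s_T$. Wherever a fiber of $\psi^\sigma_s(Z^\sigma)\cap U^s_{S2}$ is a point $x'$ where $\rho$ is small, $x'$ is the tip of a Bryant-soliton region and Lemma \ref{lem:PreparSurjPartHom}(1)(b) supplies $f^s$-invariant disks $D'\subset D$ that are unions of spherical fibers and satisfy the curvature sandwich \eqref{eq:ScaleReq}. When $x'\in\Fix(f^s_T)$ I take $\nu^\sigma_{s,j}$ to be a reparametrisation of such a disk; by Lemma \ref{lemma.SingFiberReflexivePointCase} the punctured disk is foliated by spherical fibers each meeting $\Fix(f^s_T)$ orthogonally along a circle, so by Lemma \ref{lem:NormalForm}(1) one may reparametrise it so that $\nu^\sigma_{s,j}\circ f_0=f^s_T\circ\nu^\sigma_{s,j}$; these disks receive the indices $2p_\sigma+1\le j\le m_\sigma$. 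When $x'\notin\Fix(f^s_T)$ the relevant ball around $x'$ is disjoint from $\Fix(f^s_T)$ (as established in the proof of Lemma \ref{lem:PreparSurjPartHom}), so the disk and its $f^s_T$-image are disjoint and I pair them as $\nu^\sigma_{s,j}$ and $\nu^\sigma_{s,j+p_\sigma}=f^s_T\circ\nu^\sigma_{s,j}$ with $j\le p_\sigma$. Because the Bryant-tip locus $E^s$ splits transversely continuously into $E^s_I=E^s\cap\Fix(f^s)$ and a part admitting a continuous choice of ``half'' $E^s_{II,M}$ (cf.\ the second step of the proof of Theorem \ref{thm:ExistR-struct}), after a further refinement the integers $p_\sigma,q_\sigma$ and the whole family $(\nu^\sigma_{s,j})$ are transversely continuous over each simplex; properties (vii), (viii) are then the non-equivariant statements of \cite{BamKle19} together with the tautological equivariance in (4)(a), (4)(b).

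The hard part is the bookkeeping: merging the refinements required for the enlargement and for the disks into a single subdivision $\tilde{\mK}$, and --- the truly delicate point --- ensuring that the extension of $f^\sigma$ to $\hat f^\sigma$ and the placement of disks near tips on $\Fix(f^s)$ never create a spherical fiber meeting the fixed locus in a way incompatible with Lemma \ref{lem:NormalForm}, e.g.\ an $\mathbb{R}\mathbb{P}^2$-fiber lying on $\Fix(f^s)$ or a ``vertical'' $S^1\times I$ intersection pattern. These configurations are precisely what Lemma \ref{lemma.SingFiberReflexive} and Lemma \ref{lemma.SingFiberReflexivePointCase} rule out; once they are excluded, all remaining verifications reduce verbatim to the corresponding passages in \cite{BamKle19}.
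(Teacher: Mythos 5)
Your overall strategy is the same as the paper's: run the non-equivariant preparation of \cite{BamKle19} verbatim, observe that every input (the curvature scale, the reflexive $\mR$-structure from Theorem \ref{thm:ExistR-struct}, the partial homotopy itself) is already equivariant, and isolate the one genuinely new point, namely the dichotomy for the disks $\nu^{\sigma}_{s,j}$ according to whether the central singular fiber lies on $\Fix(f^s)$ or not. Your treatment of the on-fixed-locus case (via Lemma \ref{lemma.SingFiberReflexivePointCase} and the normal form of Lemma \ref{lem:NormalForm}) and of the enlargement/pairing bookkeeping matches the paper's.

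There is, however, one step where your justification does not go through as written: the claim that, when the tip $x'$ is not on $\Fix(f^s)$, the disk to be removed is disjoint from $\Fix(f^s)$. The object in question is not a metric ball of radius comparable to $\rho(x')$; it is the \emph{maximal} open $D^3$ whose punctured interior is a union of spherical fibers on which $\rho\le 3\Lambda r_k$, and nothing in the proof of Lemma \ref{lem:PreparSurjPartHom} bounds its diameter in terms of $\rho(x')$. So the ball-disjointness established there does not imply that the whole disk $Y$ misses $\Fix(f^s)$. The paper's argument is global and different: if $Y$ met $\Fix(f^s)$, then running the same maximal construction at $f^s(x')$ would produce $Y'=f^s(Y)$ overlapping $Y$, and $\overline{Y\cup Y'}$ would exhaust the entire connected component containing $x'$; since $\rho\le 3\Lambda r_k<\Lambda^2 r_k$ on that closure, this contradicts condition (2) of Proposition \ref{prop:SurjPartHom}, which guarantees every component of $\psi^{\sigma}_s(Z^{\sigma})$ contains a point with $\rho>\Lambda^2 r_k$. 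You should replace your appeal to Lemma \ref{lem:PreparSurjPartHom} at this point with that argument (or an equivalent one); with that substitution the proposal is complete.
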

 
 For the proof of this preparatory result one can follow the procedure described in the non-equivariant case (pages 99-101 in \cite{BamKle19}, which will indeed provide data with the desired equivariance property. We shall however remark what follows. In defining the map $\nu^{\sigma}_{s_0,j}$ one first looks at the set of points in $\psi^{\sigma}_{s_0}(Z^{\sigma})$ that are singular points for the spherical structure $\mS^{s_0}$ and where $\rho\leq 4 r_k$, and defines $Y$ to be the union of all open $D^3$s such that $X\setminus \left\{x\right\}$ is a union of spherical fibers where $\rho\leq 3\Lambda r_k$.

 Now, keeping in mind our preliminary discussion in Section \ref{sec:OutlineParab} (see in particular Lemma \ref{lemma.SingFiberReflexivePointCase}) then one, and only one, of these two situations will occur:
 \begin{enumerate}
     \item \emph{either} $x\in \Fix(f^{s_0})$ and so all spherical fibers of $Y$ will meet $\Fix(f^{s_0})$ orthogonally along a circle,
     \item \emph{or} $x\notin \Fix(f^{s_0})$, in which case $Y$ must be disjoint from $\Fix(f^{s_0})$ (for, if not, one can perform the same construction for $x'=f^s(s)$, thereby obtaining an open $D^3$, denoted by $Y'$, such that necessarily the closure of $Y\cup Y'$ would equal the whole connected component containing $x, x'$, which however contradicts condition (2), which is an \emph{open} condition, in the statement of Proposition \ref{prop:SurjPartHom}).
 \end{enumerate}

 We can finally proceed with the proof of Proposition \ref{prop:SurjPartHom}, which completes this section.
     
     \begin{proof}
     We assume to be given a reflexive partial homotopy   \begin{equation}\label{eq:AssignedPartHom}
        \left\{(Z^{\sigma}, (g^{\sigma}_{s,t})_{s\in \sigma, t\in [0,1]}, f^{\sigma}, (\psi^{\sigma}_s)_{s\in\sigma} \right\},
        \end{equation}
        at time $T$, satisfying the four conditions stated in Proposition \ref{prop:SurjPartHom}. We then consider the simplicial refinement $\tilde{\mK}$ and the additional data constructed in Lemma \ref{lem:BackwInductPreparation}, which we employ as follows. For any $k\geq 0$ we consider the collection of all simplices of $\tilde{\mK}$ having dimension \emph{exactly equal to} $k$ and proceed, in finitely many steps, starting from the given homotopy (above) and invoking, at each step, first Theorem \ref{thm:ExtPartHom} and then Theorem \ref{thm:DiskRemPartHom} where the input data we employ are only those corresponding to the simplices of dimension $k$. Thus, one verifies as in the non-equivariant case (cf. Lemma 8.13 in \cite{BamKle19}) that, after we have accounted for all simplices of $\tilde{\mK}$ we obtain a reflexive partial homotopy 
        \begin{equation}\label{eq:PreparedPartHom}\left\{(\tilde{Z}^{\sigma}, (\tilde{g}^{\sigma}_{s,t})_{s\in \sigma, t\in [0,1]}, \tilde{f}^{\sigma}, (\tilde{\psi}^{\sigma}_s)_{s\in\sigma} \right\}
        \end{equation}
     (still at time $T$, for the very same reflexive $\mR$-structures) such that
        such that all sets $X^{\sigma}_s:=\tilde{\psi}^{\sigma}_s(\tilde{Z}^{\sigma})\setminus (\nu^{\sigma}_{s,1}(B^3)\cup\ldots\cup\nu^{\sigma}_{s,m_{\sigma}}(B^3))$ survive until time $T-\Delta T$, and denoted by $X^{\sigma}_s(T-\Delta T)$ its image under the backward flow, for time $0<\Delta T\leq \min\left\{T,\theta(r_0)\right\}$,  
           the following assertions hold true for any $k$-simplex $\sigma$ of $\tilde{\mK}$ and all $s\in\sigma$:
        \begin{enumerate}
            \item $\left\{\rho>\Lambda^3 r_k \right\}\cap \mM^s_{T-\Delta T}\subset X^{\sigma}_s(T-\Delta T) \subset \left\{\rho>r_k\right\}$;
            \item Every component of $X^{\sigma}_s$ contains a point $x$ such that $\rho(x(T-\Delta T))>\Lambda^2 r_k$;
            \item $\rho>\Lambda r_k$ on $\partial X^{\sigma}_s(T-\Delta T)$;
            \item the partial homotopy \eqref{eq:PreparedPartHom} is reflexively $PSC$-conformal over every $s\in K'$.
        \end{enumerate}
      This is checked exactly as for Lemma 8.9 in \cite{BamKle19}; note that the desired equivariance properties for this (preparatory) partial homotopy follow from the corresponding properties of the pre-assigned homotopy \eqref{eq:AssignedPartHom} as well as of the `input data' $\hat{Z}^{\sigma}, \hat{f}^{\sigma}, \hat{\psi}^{\sigma}_s, \nu^{\sigma}_{s,j}$.
      
      At that stage, we apply Proposition \ref{prop:BackwardFlow} to such a reflexive partial homotopy, obtaining a reflexive partial homotopy at time $T-\Delta T$ for the singular Ricci flow and the reflexive $\mR$-structures in question. As indicated above, we already know that properties (1), (2) and (3) in the statement of Proposition \ref{prop:SurjPartHom} hold true, while (4) is ensured by the very statement of Proposition \ref{prop:BackwardFlow} thanks to item (3) in Lemma \ref{lem:BackwInductPreparation}.
     \end{proof}


        \section{Applications to initial data sets in general relativity}\label{sec:GR}

    In this section we derive some applications of Theorem \ref{thm:MainOpen} to mathematical general relativity. We fix a background manifold $X_\infty$ that is diffeomorphic to $\R^3\setminus \left(\sqcup_{j=1}^{\ell} P_{\gamma_j}\right)$, where $\{P_{\gamma_j}\}_{j=1}^\ell$ is a \emph{separated} collection of handlebodies of genus $\gamma_j\ge 0$, with the \emph{standard embedding} into $\R^3$.
    Although we restrict our attentions in dimension $3$, many auxiliary results in this section hold for general dimensions.
    
    \begin{remark}
        Recall that $\{P_{\gamma_j}\}_{j=1}^\ell$ is separated, if there exist disjoint 3-balls $B_j\subset \R^3$ such that $P_{\gamma_j}\subset B_j$. We also recall that an embedding $P_{\gamma_j}\hookrightarrow \R^3$ is standard, if it is given by the standard boundary connected sum of $j$ unknotted solid tori in $\R^3$. 
    \end{remark}

    We work with the weighted Sobolev spaces $W^{k,p}_\beta$ defined by the norm
    \begin{equation*}
        \|u\|_{k,p,\beta}=\sum_{0\le i\le k}  \|\rho^{i-\beta-\tfrac 3p}\nabla^i u\|_{L^p(X_\infty)}.
    \end{equation*}
    Here $\rho=(1+|x|^2)^{\tfrac12}$. Fix an integer $k\ge 2$, $p> \tfrac 3 k$ and $\beta\in (-1,0)$. In particular, $W^{k,p}_\beta$ embeds into $C^{0,\alpha}_\beta$ for some $\alpha\in (0,1)$, and $W^{k,p}_\beta$ is closed under multiplication. We refer the readers to \cite[Section 2, Lemma 1]{Maxwell2005solutions} and the reference therein for standard embedding and structural theorems on the weighted Sobolev spaces. 
    
    The space of asymptotically flat metrics is here defined as:
    \[\metaf := \left\{\text{metrics } g \text{ on } X_\infty: g_{ij}-\delta_{ij}\in W^{k,p}_\beta\right\}.\]
    We are interested in the following subspaces of metrics:
    \[\metaf_{R=0, H=0}:=\left\{\text{metrics }g \text{ on } X_\infty: g_{ij}-\delta_{ij}\in W^{k,p}_\beta, R_g=0, H_g=0\right\},\]
    \[\metaf_{R\ge 0, H=0}:=\left\{\text{metrics }g \text{ on }X_\infty: g_{ij}-\delta_{ij}\in W^{k,p}_\beta, R_g\ge 0, H_g=0\right\},\]
    and the spaces of metrics $\metaf_{R=0, H\ge 0}, \metaf_{R\ge 0, H\ge 0}$ defined similarly.
    
    \begin{remark}
        For time symmetric initial data in $\metaf_{R\ge 0, H=0}$ and $\metaf_{R\ge 0, H\ge 0}$, a natural extra assumption is $R_g\in L^1$. This is to guarantee that the ADM mass is well-defined. We note here that although Theorem \ref{thm:GR constractible} does not need this assumption, the same conclusion holds for the spaces of metrics with the additional assumption that $R_g\in L^1$.
    \end{remark}
    
    We start with a few additional facts on these weighted Sobolev spaces.
    
    \begin{proposition}[{\cite[Section 3, Proposition 1]{Maxwell2005solutions}}]\label{proposition.weighted.sobolev.fredholm}
        Let $(X_\infty,g)$ be asymptotically flat of class $W^{k,p}_\beta$, $k\ge 2$, $k>3/p$, and $\beta\in (-1,0)$. Then for any $A\in W^{k-2,p}_{\beta-2}(X_\infty), B\in W^{k-1-\tfrac 1p, p}(\partial X)$, the operator
        \begin{equation}\label{equation.operator.gr}
            \left(-\Delta_g + A, (\partial_V+B)|_{\partial X_\infty}\right): W^{k,p}_\beta(X_\infty)\to W^{k-2,p}_{\beta-2}(X_\infty)\times W^{k-1-\tfrac 1p, p}(\partial X_\infty)
        \end{equation}
        is Fredholm of index 0. Moreover, if $A, B \ge 0$ then it is an isomorphism.
    \end{proposition}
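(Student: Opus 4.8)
The plan is to treat the two assertions of the proposition separately: first establish that \eqref{equation.operator.gr} is Fredholm with vanishing index by the standard weighted elliptic theory for asymptotically flat manifolds with boundary, and then, once the index is known to be $0$, deduce the isomorphism claim under the sign hypotheses $A,B\ge 0$ from an energy identity. Throughout, the arithmetic fact that matters is that $\beta\in(-1,0)$ is \emph{not} an indicial root of the flat Laplacian in dimension $3$ (the indicial roots being the integers $\ge 0$ and $\le 2-n=-1$).

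For the Fredholm property I would patch three parametrices: an interior parametrix on a compact piece of $X_\infty$ containing $\partial X_\infty$, where the operator is a uniformly elliptic boundary value problem (the Robin-type condition $\partial_V+B$ trivially satisfies the Shapiro--Lopatinskii complementing condition); and a model parametrix near infinity, where the operator is a decaying perturbation of $-\Delta_{\mathrm{euc}}$ on $\R^3$, which is Fredholm on $W^{k,p}_\beta\to W^{k-2,p}_{\beta-2}$ precisely because $\beta$ is non-exceptional. The patching errors, the zeroth-order term $A$ (which gains decay since $Au\in W^{k-2,p}_{2\beta-2}$ with $\beta<0$), and the boundary coefficient $B$ (using compactness of $\partial X_\infty$ and of the trace into a slightly worse boundary space) are all relatively compact by the weighted Rellich lemma, so \eqref{equation.operator.gr} is Fredholm. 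For the index I would invoke homotopy invariance: deform $(A,B)$ linearly to $(0,0)$ and deform $g$ to a metric that is exactly flat outside a compact set — all through Fredholm operators, since the deformations only move relatively compact terms — reducing to $(-\Delta_{\mathrm{euc}},\partial_V)$ on the flat exterior domain $X_\infty$. That model operator is an isomorphism for $\beta\in(2-n,0)$ (its kernel is trivial because a harmonic function that is $o(1)$ on an exterior domain is $O(|x|^{2-n})$ and then an energy integration by parts forces it to vanish; its cokernel is trivial by weighted $L^2$-duality, the range $(2-n,0)$ being self-dual), hence it has index $0$, and so does \eqref{equation.operator.gr}.

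For the isomorphism when $A,B\ge 0$, since the index is $0$ it suffices to show injectivity. Let $u\in W^{k,p}_\beta$ satisfy $-\Delta_g u+Au=0$ on $X_\infty$ and $\partial_V u=-Bu$ on $\partial X_\infty$. Because $Au\in W^{k-2,p}_{2\beta-2}$ decays strictly faster than $|x|^{\beta-2}$ and $\beta$ is non-exceptional, the standard asymptotic expansion for solutions of $\Delta_g u=\psi$ (in the spirit of Bartnik) upgrades the decay to $u=O(|x|^{2-n})=O(|x|^{-1})$ with $\nabla u=O(|x|^{-2})$, so $\int_{\partial B_R}|u|\,|\nabla u|\,dA=O(R^{-1})\to 0$ and integration by parts over $X_\infty$ is legitimate:
\[
0=\int_{X_\infty} u\left(-\Delta_g u+Au\right)\,dV_g=\int_{X_\infty}\left(|\nabla u|^2+A\,u^2\right)\,dV_g+\int_{\partial X_\infty} B\,u^2\,dA_g,
\]
where the boundary integral carries the sign coming from $\partial_V u=-Bu$ with $V$ the outward unit normal. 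As $A\ge 0$ and $B\ge 0$, each term is nonnegative, hence all vanish; in particular $\nabla u\equiv 0$, so $u$ is constant on the connected manifold $X_\infty$, and $u\to 0$ at infinity forces $u\equiv 0$. An index-$0$ Fredholm operator with trivial kernel is an isomorphism, which completes the argument.

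The one genuinely delicate point is the decay-improvement step underpinning the integration by parts: the a priori weight $\beta\in(-1,0)$ does not by itself kill the flux term $\int_{\partial B_R}u\,\partial_\nu u$ (that would require $\beta<-1/2$), and one must use the absence of indicial roots in $(-1,0)$ to push $u$ down to the decay rate of the next root $2-n$. Everything else — the parametrix construction, relative compactness of the lower-order and boundary terms, and the homotopy computation of the index — is routine once the relevant weighted Sobolev embeddings and Rellich-type lemmas (quoted from \cite{Maxwell2005solutions}) are in hand.
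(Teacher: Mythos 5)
This proposition is not proved in the paper at all: it is imported verbatim from \cite{Maxwell2005solutions}*{Section 3, Proposition 1}, so there is no internal argument to measure yours against; what you have written is essentially a reconstruction of the standard (and, in substance, Maxwell's) proof, and it is correct. The architecture — parametrix patching (interior Lopatinskii--Shapiro problem plus non-exceptional exterior model) for the Fredholm property, homotopy invariance of the index down to the flat exterior Neumann problem, and injectivity for $A,B\ge 0$ via the energy identity after upgrading the decay of a kernel element to the harmonic rate $O(|x|^{-1})$ — is sound, and you correctly identify the decay-improvement step as the genuinely delicate point: with only $u=O(|x|^{\beta})$, $\beta\in(-1,0)$, the flux at infinity need not vanish. (Note that the bootstrap may require finitely many iterations when $\beta$ is close to $0$, since $Au$ and $(\Delta_g-\Delta_{\mathrm{euc}})u$ a priori lie only in $W^{k-2,p}_{2\beta-2}$, which for $\beta$ near $0$ is barely better than $W^{k-2,p}_{\beta-2}$.) Two points of hygiene. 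First, deforming $g$ to a metric that is flat outside a compact set does \emph{not} move the operator by a relatively compact term — the difference of the two Laplacians is a second-order operator with decaying but non-compactly-supported coefficients; what you actually need, and do have from the first part of your argument, is that the whole path consists of Fredholm operators and is norm-continuous, which suffices for constancy of the index. Second, the multiplication and compact-trace estimates you invoke (e.g.\ $W^{k,p}_\beta\cdot W^{k-2,p}_{\beta-2}\subset W^{k-2,p}_{2\beta-2}$, and compactness of $u\mapsto Bu|_{\partial X_\infty}$) use the full strength of $k\ge 2$, $k>3/p$ and are exactly the structural lemmas of Maxwell's Section 2 that the paper also quotes; they should be cited rather than treated as free. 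Finally, an alternative route to injectivity when $A,B\ge 0$ is to apply the weak maximum principle (the paper's Proposition \ref{proposition.gr.max.principle}) to $\pm u$, though its proof rests on essentially the same integration by parts, so nothing is gained beyond brevity.
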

    


    We would also need a suitable version of the maximum principle for the operator \eqref{equation.operator.gr}.
    
    \begin{proposition}[{\cite[Section 3, Lemma 2]{Maxwell2005solutions}}]\label{proposition.gr.max.principle}
        Suppose $(X_\infty,g), A, B$ satisfy the conditions in Proposition \ref{proposition.weighted.sobolev.fredholm}, and assume in addition that $A\ge 0$, $B\ge 0$. Then any $u\in W^{k,p}_\beta$ with
        \[
        \begin{cases}
            -\Delta_g u + A u\le 0,\\
            \hspace{4mm} \partial_V u + B u\le 0
        \end{cases}
        \]
        satisfies $u\le 0$.
    \end{proposition}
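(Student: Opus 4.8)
The plan is to prove this by a weak/strong maximum principle argument, using the asymptotic decay built into the weighted spaces to reduce to a standard compact mixed boundary value problem. First I would record the regularity input: under the standing hypotheses $k\ge 2$ and $kp>3$, Sobolev embedding gives $u\in W^{k,p}_\beta\hookrightarrow C^{0,\alpha}_\beta(X_\infty)$, so $u$ is continuous on $X_\infty$ and $u(x)\to 0$ as $|x|\to\infty$; moreover, on the open set $\{u>0\}$ one has $-\Delta_g u\le -Au\le 0$ (the last inequality because $A\ge 0$ and $u>0$ there), so $u$ is a $W^{2,p}$ subsolution of $\Delta_g$ on $\{u>0\}\cap\operatorname{int}(X_\infty)$. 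It then suffices to show $M:=\sup_{X_\infty}u\le 0$.

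Suppose for contradiction that $M>0$. Since $u\to 0$ at infinity and $\partial X_\infty$ is compact, the supremum is attained at some $p\in X_\infty$. If $p$ lies in the interior, then on the connected component of $\{u>0\}\cap\operatorname{int}(X_\infty)$ containing $p$ the function $u$ is a subsolution attaining an interior maximum, hence constant there by the strong maximum principle; since $\operatorname{int}(X_\infty)=\R^3\setminus\bigl(\sqcup_j P_{\gamma_j}\bigr)$ is connected, this ``constant'' region is relatively closed in $\operatorname{int}(X_\infty)$ (its relative boundary would have to lie in $\partial X_\infty$), hence equals all of $\operatorname{int}(X_\infty)$, forcing $u\equiv M>0$ and contradicting $u\to 0$. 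If instead $p\in\partial X_\infty$, then (the interior case being excluded) $u<M$ throughout a neighborhood of $p$ inside $\operatorname{int}(X_\infty)$, where $u$ is still a positive subsolution; the domain $X_\infty$ satisfies an interior sphere condition at $p$ since $\partial X_\infty$ is smooth, so Hopf's boundary point lemma yields $\partial_V u(p)>0$ for the outward unit normal $V$. But the oblique inequality gives $\partial_V u(p)\le -B(p)u(p)=-B(p)M\le 0$, since $B\ge 0$ and $M>0$, a contradiction. Hence $M\le 0$, i.e. $u\le 0$.

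There is no genuine obstacle here, but two points need the usual care. The first is that the strong maximum principle and Hopf's lemma must be used at $W^{2,p}$ regularity rather than for classical $C^2$ solutions; this is available from the Aleksandrov--Bakelman--Pucci estimates (cf.\ Gilbarg--Trudinger), and $kp>3$ is exactly what makes the traces $u|_{\partial X_\infty}$ and $\partial_V u|_{\partial X_\infty}$ meaningful so that the oblique boundary inequality can be read pointwise (or in the trace sense) at $p$. The second is the role of the weight: it is precisely $\beta<0$ that forces $u\to 0$ at infinity and thereby lets us treat $X_\infty$ as though it were compact --- equivalently, one may run the argument on the exhausting domains $X_\infty\cap B_R$, where $u|_{\partial B_R}\to 0$, apply the classical mixed Dirichlet--Robin maximum principle on each, and let $R\to\infty$. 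One could alternatively test the inequalities against $\chi_R^2 u_+$ and combine Kato's inequality $-\Delta_g u_+\le -Au_+\le 0$ with the boundary term $\int_{\partial X_\infty}B u_+^2\ge 0$ to deduce $\int|\nabla_g u_+|^2=0$; however this energy route only closes when the cutoff error can be absorbed, which needs stronger decay (e.g.\ $\beta<-\tfrac12$), whereas the pointwise argument above covers all $\beta\in(-1,0)$ and is the cleaner one to present.
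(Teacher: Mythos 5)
The paper does not prove this proposition at all: it is imported verbatim from Maxwell's work (the cited Lemma 2 of Section 3 there), so there is no internal argument to compare against and your proposal has to stand on its own. Its interior half does: on $\{u>0\}$ one has $-\Delta_g u\le -Au\le 0$, the strong maximum principle forces $u\equiv M$ on the component of $\{u>0\}\cap \operatorname{int}(X_\infty)$ containing an interior maximum point, and your open‑and‑closed argument plus the decay $u\to 0$ gives the contradiction. (A minor caveat: for $k=2$ and $3/2<p<3$ you only have $u\in W^{2,p}_{\mathrm{loc}}$, not $W^{2,3}_{\mathrm{loc}}$, so the ABP/strong‑solution form of the maximum principle you invoke does not directly apply; one should instead note that $u\in W^{1,q}_{\mathrm{loc}}$ for some $q>3$ and use the divergence‑form strong maximum principle for weak subsolutions, which is available since $g$ is $C^{0,\alpha}_{\mathrm{loc}}$.)

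The boundary step, however, has a genuine gap in part of the stated generality. The hypothesis $\partial_V u+Bu\le 0$ is an inequality between elements of $W^{k-1-\frac1p,p}(\partial X_\infty)$, hence holds only almost everywhere on $\partial X_\infty$; this space embeds into $C^0(\partial X_\infty)$ only when $(k-1-\tfrac1p)p>2$, i.e. $(k-1)p>3$, whereas the standing assumptions only guarantee $kp>3$. Thus for $k=2$ and $3/2<p\le 3$ (a range the proposition must cover) neither $\partial_V u$ nor $B$ has a well‑defined value at the single point $p$ produced by Hopf's lemma, and $p$ may perfectly well lie in the exceptional null set; your assertion that ``$kp>3$ is exactly what makes the traces readable pointwise at $p$'' is precisely where the argument breaks, since that condition makes $u$ continuous but not $\partial_V u$. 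Even where $B$ happens to be continuous, Hopf's lemma controls the one‑sided difference quotient $\liminf_{t\to 0^+}t^{-1}\left(u(p)-u(p-tV)\right)$, and identifying this with the Sobolev trace of $\partial_V u$ at $p$ requires $u\in C^1$ up to the boundary near $p$, which again fails for small $(k,p)$. The standard repair is to merge the two hypotheses into the single variational inequality $\int_{X_\infty}\langle\nabla u,\nabla\varphi\rangle_g+\int_{X_\infty}Au\,\varphi+\int_{\partial X_\infty}Bu\,\varphi\le 0$ for all nonnegative $\varphi\in C^\infty_c(\overline{X_\infty})$ (legitimate because $u\in W^{1,q}_{\mathrm{loc}}$ with $q>3$ and $\partial_V u,\,Bu\in L^1(\partial X_\infty)$), and then run a De Giorgi--Nash--Moser maximum principle up to the boundary for the mixed problem; this dispenses with Hopf's lemma and with any pointwise reading of the boundary condition. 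Since this variational step is only needed locally near the compact boundary, it does not reintroduce the decay obstruction at infinity that you correctly identify for the global energy method when $\beta\ge-\tfrac12$.
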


    In the first theorem of this section, we prove the first part of Theorem \ref{thm:GR constractible}.
    
    \begin{theorem}\label{thm.gr.all.R.H}
        The spaces $\metaf_{R=0,H=0}, \metaf_{R\ge 0, H=0}, \metaf_{R=0, H\ge 0}, \metaf_{R\ge 0, H\ge 0}$ are (not empty and) contractible.    
    \end{theorem}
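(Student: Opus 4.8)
## Proof proposal

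The plan is to reduce the statement to the compact case (Theorem \ref{thm:MainOpen}) via a blow-up/compactification argument, following the strategy of Section 9 of \cite{Mar12} and \cite{Sch84}, and then to upgrade weak contractibility to genuine contractibility using the ANR machinery of Section \ref{sec:ANR}. First I would fix one ``reference'' asymptotically flat metric $g_{\mathrm{ref}}$ realizing each of the four curvature conditions — for the time-symmetric setting the Riemannian Schwarzschild metric (suitably arranged to have $\ell$ horizon-type boundary spheres, or modified near each handlebody) serves as the base point, and its existence already shows the spaces are nonempty. The core of the argument is then to produce, for any continuous family $(g^s)_{s\in D^k}$ of metrics in $\metaf_{\star}$, a homotopy contracting it to $g_{\mathrm{ref}}$ within $\metaf_{\star}$.

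The key technical device is a \emph{conformal compactification}: given $g\in\metaf_{\star}$, one uses Proposition \ref{proposition.weighted.sobolev.fredholm} (with $A=0$, and the appropriate $B$ built from $H_g$) to solve for a positive harmonic-type function $\phi_g$ with $\phi_g\to 1$ at infinity, so that $\phi_g^{4}g$ extends (after adding the point at infinity, and using Lemma \ref{lemma.diffeom.exponential.map} to control the geometry near $\partial X_\infty$) to a metric on a \emph{compact} manifold with boundary $X$ — which by the separated-handlebody hypothesis is a connected sum of handlebodies and hence lies in the class for which $\met_{R>0,H>0}$ (or the weakly-mean-convex / minimal variant) is non-empty. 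The sign conditions are preserved because the conformal Laplacian in dimension $3$ with the chosen boundary operator has non-negative leading term, so by Proposition \ref{proposition.gr.max.principle} one has $\phi_g>0$ and the transplanted metric satisfies $R\ge 0$ (resp. $=0$) and $H\ge 0$ (resp. $=0$); moreover the map $g\mapsto \phi_g$ is continuous in the weighted Sobolev topology by the isomorphism statement in Proposition \ref{proposition.weighted.sobolev.fredholm}. This yields a continuous map $\metaf_{\star}\to \met_{\star}(X)$ (landing in a neighborhood of doubling-type metrics after a small further deformation as in Section \ref{sec:Elliptic}), and one checks it has a continuous one-sided inverse up to homotopy, using that decompactifying a metric on $X$ by a conformal factor blowing up at the puncture returns an element of $\metaf_{\star}$ asymptotic to the flat end. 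Thus $\metaf_{\star}$ is weakly homotopy equivalent to $\met_{\star}(X)$, which is weakly contractible by Theorem \ref{thm:MainOpen}.

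Finally, to conclude actual contractibility (not merely vanishing of homotopy groups), I would verify that $\metaf_{\star}$ is an ANR by the same argument used for Theorem \ref{thm:ANRstructure}: $\metaf$ is an open cone inside the Banach space $\{h: h_{ij}-\delta_{ij}\in W^{k,p}_\beta\}$ hence an ANR by Theorem \ref{thm:PalaisCriterion}, and $\metaf_{R\ge 0,H\ge 0}$ etc.\ are then cut out by the \emph{ad hoc} retraction constructions of Section \ref{sec:ANR} — one replays the $\met_1$, $\met_2$ retractions of the compact-boundary case, now using the weighted maximum principle Proposition \ref{proposition.gr.max.principle} to control the conformal factor $w=e^{\frac14\varepsilon\eta\f(t/\eta)}$ near the compact boundary $\partial X_\infty$, while leaving the metric untouched on the asymptotically flat end. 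Then Corollary \ref{cor:ContractibilityTopCrit} upgrades weak contractibility to contractibility.

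I expect the main obstacle to be the compactification step: one must show the conformal factor $\phi_g$ has exactly the right asymptotics so that $\phi_g^4 g$ genuinely compactifies \emph{smoothly} (not merely continuously) across the added point, track the regularity through the weighted Sobolev scale $W^{k,p}_\beta$, and — crucially — verify the construction is \emph{uniform} over a compact parameter family $s\in D^k$ and commutes (up to homotopy) with decompactification, so that the composite two maps are homotopic to the identity on each side. The interplay between the non-open curvature conditions ($R=0$, $H=0$) and the Fredholm theory, already flagged as subtle in the text, is where the delicate \emph{ad hoc} arguments of Section \ref{sec:ANR} and the construction in Lemma \ref{lemma.diffeom.exponential.map} will have to be invoked carefully.
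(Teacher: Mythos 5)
Your overall strategy — conformal compactification to reduce to Theorem \ref{thm:MainOpen}, then the ANR machinery of Section \ref{sec:ANR} plus Corollary \ref{cor:ContractibilityTopCrit} — is exactly the paper's. But the compactification step as you describe it does not work: a conformal factor $\phi_g\to 1$ at infinity leaves $\phi_g^4 g$ asymptotically flat and compactifies nothing. One needs a factor decaying like $|z|^{-1}$, and, more importantly, the metric must first be put into a form for which the compactified metric is genuinely smooth across the added point. The paper devotes a separate preliminary step (Proposition \ref{prop.gr.step1}) to deforming the given family, within $\metaf_{R=0,H=0}$, so that each metric is smooth and \emph{conformally flat outside a compact set} (cut off against $\delta$ at large radius, then correct back to $R=0$, $H=0$ by a linear conformal solve via Proposition \ref{proposition.weighted.sobolev.fredholm}); only then does Proposition \ref{prop.gr.step2} compactify, writing $g=\varphi^*(G^4\overline g)$ with $G$ the Green's function of the conformal Laplacian with Neumann condition. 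You flag "the right asymptotics" as the expected obstacle but do not supply the mechanism that resolves it. Note also that the paper does not compactify each of the four classes separately: it first retracts $\metaf_{R\ge0,H\ge0}$, $\metaf_{R\ge0,H=0}$, $\metaf_{R=0,H\ge0}$ onto $\metaf_{R=0,H=0}$ by one more linear conformal solve (Proposition \ref{proposition.gr.other.curvature.conditions}), which is cleaner than trying to preserve inequalities through a compactification.

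The second genuine gap is in your ANR argument. Replaying the $\met_1$, $\met_2$ retractions of Section \ref{sec:ANR} — which modify the metric only in a collar of the compact boundary and leave it "untouched on the asymptotically flat end" — cannot work here: those constructions rely on the scalar curvature condition being the \emph{open} condition $R>0$, so that it holds automatically on a neighborhood and only the boundary condition needs repairing. For $\metaf_{R=0,H=0}$ (and the $R\ge0$ variants) the scalar curvature constraint is closed and global, and no retraction supported near $\partial X_\infty$ can restore $R=0$ for a nearby metric violating it in the interior or near infinity. The paper's retraction is instead global and conformal: for $\tilde g$ in a suitable open neighborhood one solves
\[
\Bigl(\Delta_{\tilde g}-\tfrac18 R_{\tilde g},\ \partial_V+\tfrac14 H_{\tilde g}\Bigr)u=\Bigl(\tfrac18 R_{\tilde g},\ -\tfrac14 H_{\tilde g}\Bigr)
\]
in $W^{k,p}_\beta$, using the isomorphism of Proposition \ref{proposition.weighted.sobolev.fredholm}, and sets $F(\tilde g)=(1+u)^4\tilde g$. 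This is the ingredient your proposal is missing; once it is in place, the rest of your outline (including the use of Lemma \ref{lemma.diffeom.exponential.map} to fix the chart near the puncture when decompactifying the extended family over $D^{\ell+1}$, which you correctly anticipate) matches the paper.
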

    The proof is divided into several steps. We first show that $\metaf_{R=0,H=0}$ is nonempty. Denote by $\varphi: X_\infty\to S^3, \infty\mapsto x_{\infty}$ the inverse of the stereographic projection, and $X=\im(\varphi)$ a compact 3-manifold that is diffeomorphic to $S^3\setminus \left(\sqcup_{j=1}^\ell P_{\gamma_j}\right)$. In analogy with Theorems \ref{thm:ReductionToDouble}, \ref{thm:ContractDoubling} and \ref{thm:ANRstructure}, we then prove that these spaces of metrics are weakly contractible, and finally that each of them is an ANR. 
    
    \begin{lemma}\label{lemma.gr.nonempty}
        Suppose $X_\infty = \R^3\setminus \left(\sqcup_{j=1}^{\ell} P_{\gamma_j}\right)$, and $\{P_{\gamma_j}\}_{j=1}^\ell$ are separated genus $\gamma_j$ handlebodies with the standard embeddings into $\R^3$. Then there exists an asymptotically flat metric $g$ on $X_\infty$ with $R_g=0$ and $H_g=0$.
    \end{lemma}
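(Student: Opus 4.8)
\medskip
\noindent\textit{Proof strategy.}
The plan is to realise $g$ as a conformal blow-up of a positive scalar curvature metric with totally geodesic boundary on the compact manifold $X$, exactly in the spirit of the resolution of the Yamabe problem (\cite{Sch84}) and of the blow-up argument in \cite[Section~9]{Mar12}. The first step is purely topological: I would show that, because the $P_{\gamma_j}$ are separated and standardly embedded, the compact manifold $X$ (which carries $x_\infty=\varphi(\infty)$ as an interior point and is diffeomorphic to $S^3\setminus\bigl(\sqcup_{j=1}^\ell \mathrm{int}\,P_{\gamma_j}\bigr)$) is diffeomorphic to a finite interior connected sum of handlebodies. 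This rests on the classical fact that the complement in $S^3$ of a standardly embedded genus-$\gamma$ handlebody is again a genus-$\gamma$ handlebody (the standard genus-$\gamma$ Heegaard splitting of $S^3$); removing the $P_{\gamma_j}$ one at a time from within their pairwise disjoint balls and iterating, together with $D^3=P_0$ and the evident identity $W_1\cup_{S^2}W_2\cong W_1\#W_2$ for two manifolds glued along a common boundary sphere, one identifies $X$ with $P_{\gamma_1}\#\cdots\#P_{\gamma_\ell}$ connected-summed with finitely many copies of $D^3$. In particular $X$ lies in the topological class singled out by Theorem~1.1 of \cite{CarLi19}, so (see also Lemma~\ref{lem:BasicEquiv}) the space $\met_{R>0,H=0}(X)$ is nonempty; I would fix a metric $\bar g$ in it, so that $R_{\bar g}>0$ on $X$ and $\partial X$ is $\bar g$-totally geodesic.

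Next I would set up the associated linear problem on $(X,\bar g)$. Let $L_{\bar g}=-8\Delta_{\bar g}+R_{\bar g}$ denote the conformal Laplacian in dimension three, equipped with the Neumann-type boundary condition $\partial_\nu u=0$ on $\partial X$ --- the boundary operator that governs the conformal change of the mean curvature when $H_{\bar g}=0$, through $H_{u^4\bar g}=u^{-3}(H_{\bar g}u+4\,\partial_\nu u)$. Since $R_{\bar g}>0$ and $H_{\bar g}=0$, the associated quadratic form has first eigenvalue at least $\min_X R_{\bar g}>0$, so $(L_{\bar g},\partial_\nu)$ is invertible and positivity-preserving on $X$. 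I would then take $G$ to be the Green's function of this operator with pole at $p:=x_\infty$, i.e.\ $L_{\bar g}G=\delta_p$ in $X$ and $\partial_\nu G=0$ on $\partial X$: it exists and is unique, it is strictly positive on $X\setminus\{p\}$ by the strong maximum principle, and, using the smoothness of $\bar g$, it admits near $p$ an expansion $G=\tfrac1{32\pi}\bigl(r^{-1}+\psi\bigr)$ with $r=d_{\bar g}(\cdot,p)$ and $\psi$ bounded together with its derivatives of every order after the appropriate rescaling.

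The final step is to verify that $g:=G^4\bar g$ on $X_\infty\cong X\setminus\{p\}$ has all the required properties. The conformal identities $R_{G^4\bar g}=G^{-5}L_{\bar g}G$ and $H_{G^4\bar g}=G^{-3}\bigl(H_{\bar g}G+4\,\partial_\nu G\bigr)$ give at once $R_g\equiv 0$ on the interior and $H_g\equiv 0$ on $\partial X_\infty$. For the asymptotics, I would pass to the inverted coordinates $y$ near $p$; composing with $\varphi^{-1}$ this yields, on a neighbourhood of infinity in $X_\infty\subset\R^3$, a chart in which the Euclidean coordinates and the new ones differ by a diffeomorphism that is $C^\infty$-close to a rigid motion, and in which $g_{ij}=\delta_{ij}+O(|y|^{-1})$ with $\partial^m(g_{ij}-\delta_{ij})=O(|y|^{-1-|m|})$ --- the standard computation underlying the conformal reformulation of the positive mass theorem. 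This decay places $g_{ij}-\delta_{ij}$ in $W^{k,p}_\beta$ for the fixed parameters $k\ge 2$, $p>3/k$, $\beta\in(-1,0)$, so $g\in\metaf_{R=0,H=0}$. Since moreover $R_g=0\ge 0$ and $H_g=0\ge 0$, the same $g$ simultaneously witnesses the nonemptiness of $\metaf_{R\ge 0,H=0}$, $\metaf_{R=0,H\ge 0}$ and $\metaf_{R\ge 0,H\ge 0}$, as needed for Theorem~\ref{thm:GR constractible}(1).

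The one genuinely nontrivial point, on which I would spend the most care, is the topological identification of $X$ with a connected sum of handlebodies: it is exactly here that the separation and standard-embedding hypotheses are used, and it is this identification that licenses the appeal to the compact theory of \cite{CarLi19} to supply the background $\bar g$ with positive scalar curvature and totally geodesic boundary. Everything downstream is routine --- the conformal identities are algebraic, the existence and positivity of $G$ are standard elliptic facts given $R_{\bar g}>0$, and the weighted decay of $g$ near infinity follows from the smoothness of $\bar g$ and the Green's function expansion by the classical Yamabe-type blow-up analysis.
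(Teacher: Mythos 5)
Your proposal is correct and follows essentially the same route as the paper: identify $X$ topologically as a connected sum of handlebodies (using separation and the standard Heegaard-splitting fact that the complement of a standardly embedded handlebody in $S^3$ is again a handlebody), invoke Theorem 1.1 of \cite{CarLi19} to obtain $\bar g$ with $R_{\bar g}>0$ and $H_{\bar g}=0$, and blow up conformally via the positive Neumann Green's function of the conformal Laplacian at $x_\infty$. The only cosmetic slip is calling the boundary of $\bar g$ ``totally geodesic'' when \cite{CarLi19} supplies only $H_{\bar g}=0$, which is all your conformal identities actually use.
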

    
    \begin{proof}
        Since $P_{\gamma_j}$ are separated, $X$ is diffeomorphic to $\left(S^3\setminus P_{\gamma_1}\right)\#\cdots\#\left(S^3\setminus P_{\gamma_\ell}\right)$. Since each $P_{\gamma_j}$ is a standard embedding, $S^3\setminus P_{\gamma_j}$ is diffeomorphic to $P_{\gamma_j}$. By \cite[Theorem 1.1]{CarLi19}, $X$ supports a metric $g$ with $R_g>0$ and $H_g=0$.
        
        Let $G$ be the Green's function for the conformal Laplacian with Neumann boundary condition on $(X,g)$, with pole at $x_{\infty}$; it is readily checked that (since $R_g>0$) this must be a positive function. Then the metric $g_{AF}=\phi^*(G^4 g)$ is an asymptotically flat metric of class $W^{k,p}_\beta$ on $X_\infty$, for any $k,p$ and $\beta\in (-1,0)$, with $R_{g_{AF}}=0$ and $H_{g_{AF}}=0$.
    \end{proof}

    \begin{proposition}\label{prop.gr.R=0.H=0}
        The space $\metaf_{R=0,H=0}$ is weakly contractible.
    \end{proposition}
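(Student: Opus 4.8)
The plan is to show that $\metaf_{R=0,H=0}$ is weakly contractible by reducing it to the compact situation already handled, namely to Theorem \ref{thm:MainOpen} applied to the compact manifold $X = S^3\setminus(\sqcup_{j=1}^\ell P_{\gamma_j})$, which supports metrics of positive scalar curvature and minimal boundary. First I would set up, for any given map $\gamma\colon S^k\to \metaf_{R=0,H=0}$ (or more conveniently any map $(D^k,\partial D^k)\to (\metaf_{R=0,H=0}, \{g_0\})$ with $g_0$ a fixed basepoint as in Lemma \ref{lemma.gr.nonempty}), the "compactification" operation: each asymptotically flat metric $g$ on $X_\infty$ with $R_g=0$, $H_g=0$ corresponds, via the inverse stereographic embedding $\varphi$ and a suitable conformal factor, to a metric on $X$ with $R>0$, $H=0$. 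Concretely, given $g\in\metaf_{R=0,H=0}$, one conformally compactifies across $x_\infty$: because $R_g=0$ and the metric is asymptotically flat with $\beta\in(-1,0)$, the metric $\bar g$ on $X=\im(\varphi)$ obtained by the standard conformal change (multiplying by the appropriate power of a function behaving like $|x|^{-2}$ near $x_\infty$, i.e. the reciprocal of the asymptotic Green-type factor) extends to a $C^{k,\alpha}$ (or $W^{k,p}$) metric on $X$ with a point $x_\infty$ where one needs to check regularity, and $R_{\bar g}\ge 0$, $H_{\bar g}=0$. The key analytic input is that this compactified metric can be further conformally deformed, using the Green's function of the conformal Laplacian with Neumann boundary condition (positive since the Yamabe-type invariant is positive — here Proposition \ref{proposition.weighted.sobolev.fredholm} and Proposition \ref{proposition.gr.max.principle} guarantee solvability and positivity), to land in $\met_{R>0,H=0}$ on $X$; and this whole procedure is \emph{continuous} in $g$ and preserves families.

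Next I would invoke Theorem \ref{thm:MainOpen}: since $X$ is a compact $3$-manifold with boundary supporting $\met_{R>0,H=0}$, that space is contractible. Hence the image family of metrics on $X$ can be contracted to a point through $\met_{R>0,H=0}$. The remaining, and most delicate, step is to \emph{transport this contraction back} to a contraction of the original family in $\metaf_{R=0,H=0}$. This requires an inverse operation: from a metric on $X$ with $R>0$, $H=0$, one removes the point $x_\infty$ and conformally blows up using the Green's function $G$ of the conformal Laplacian with pole at $x_\infty$ (as in Lemma \ref{lemma.gr.nonempty}), obtaining an asymptotically flat, scalar-flat, minimal-boundary metric on $X_\infty$. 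The scalar-flatness is exactly what makes $G^4\bar g$ have $R=0$; $H=0$ is preserved because $G$ is constant (hence $\partial_V$-harmonic contributes nothing new) — actually one must be slightly careful and use that $G$ restricted to $\partial X$ together with the Neumann condition yields $H=0$ after the conformal change. The composition (compactify, contract in the compact space, blow back up) is \emph{not} the identity on $\metaf_{R=0,H=0}$, so one does not immediately get a deformation retraction; instead one argues at the level of homotopy groups: the composite "blow up $\circ$ compactify" is homotopic to the identity on $\metaf_{R=0,H=0}$ through maps into $\metaf_{R\ge 0, H=0}$ (by interpolating the conformal factors, exactly the partition-of-unity/convex-combination trick used in the proof of Theorem \ref{thm:MainHomotopy}), and within $\metaf_{R\ge 0, H=0}$ one has a deformation onto $\metaf_{R=0, H=0}$ obtained by conformal rescaling solving $-\Delta_g u = \tfrac18 R_g u$ with Neumann condition (solvable and with $u>0$ by Propositions \ref{proposition.weighted.sobolev.fredholm} and \ref{proposition.gr.max.principle}, since $R_g\ge 0$). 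Chaining these identifications shows $\pi_k(\metaf_{R=0,H=0}) \cong \pi_k(\met_{R>0,H=0}(X)) = 0$ for all $k$, i.e. weak contractibility.

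The main obstacle I anticipate is the \emph{regularity and continuity of the conformal compactification across the point at infinity}: one must verify that the compactified metric genuinely lies in an appropriate Banach manifold of metrics on $X$ (with the right Hölder/Sobolev regularity at $x_\infty$, given only $g_{ij}-\delta_{ij}\in W^{k,p}_\beta$ and $R_g=0$), that the assignment $g\mapsto \bar g$ is continuous in these topologies, and that it works uniformly over a compact family (a disk's worth of metrics). This is precisely the kind of "partly subtle technical work" the authors warn about in the introduction, and is likely where Lemma \ref{lemma.diffeom.exponential.map} and the careful weighted-space bookkeeping enter. A secondary subtlety is ensuring that the Green's function $G$ with pole at $x_\infty$ depends continuously (in families) on the underlying metric on $X$ and stays strictly positive — this follows from the positivity of the first Neumann eigenvalue of the conformal Laplacian, which is open and is guaranteed along the whole contraction provided by Theorem \ref{thm:MainOpen} since every metric there has $R>0$.
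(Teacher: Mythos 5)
Your overall architecture — compactify to the compact manifold $X$, exploit the contractibility of $\met_{R>0,H=0}$ from Theorem \ref{thm:MainOpen}, then blow back up via the Green's function of the conformal Laplacian — is exactly the paper's strategy. However, there are two genuine gaps, both located precisely at the steps you flag as "anticipated obstacles" without resolving them. First, the compactification $g\mapsto \bar g$ is not well defined (let alone continuous in families) for a general $g\in\metaf_{R=0,H=0}$ with only $g_{ij}-\delta_{ij}\in W^{k,p}_\beta$: the metric $|x|^{-4}$-rescaled and pushed to $X$ need not extend with any useful regularity across $x_\infty$. The paper's fix is a \emph{preliminary homotopy inside} $\metaf_{R=0,H=0}$ (Proposition \ref{prop.gr.step1}) that makes every metric in the family smooth and exactly conformally flat outside a compact set, by cutting off against $\delta$ at a large radius and correcting back to $R=0$, $H=0$ with the linear isomorphism of Proposition \ref{proposition.weighted.sobolev.fredholm}. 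Only after this reduction does the compactification of Proposition \ref{prop.gr.step2} make sense; moreover it is arranged there so that $G$ is \emph{exactly} the conformal factor with $g(\xi)=\varphi^*(G^4\overline g(\xi))$ and $\psi^*\overline g(\xi)=\delta$ near $x_\infty$, so that "blow up $\circ$ compactify" is the identity on the nose on $S^\ell$. Your proposed detour — arguing that the composite is merely homotopic to the identity through $\metaf_{R\ge 0,H=0}$ and then retracting — is therefore unnecessary if the compactification is set up correctly, and if it is not set up correctly the interpolation of conformal factors you invoke has no well-defined meaning.

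Second, and independently, the blow-up of the \emph{extended} family over the interior of $D^{\ell+1}$ is not automatic. For $\xi$ in the interior, $\overline g(\xi)$ is no longer flat near $x_\infty$, and $\varphi^*(G(\xi)^4\overline g(\xi))$ in the original chart need not have the $W^{k,p}_\beta$ asymptotics required to lie in $\metaf$; the correct decay of $G^4\overline g$ is only guaranteed in \emph{geodesic normal coordinates} at the pole (via the expansion of the Green's function, \cite{Sch84}). This is the entire purpose of Lemma \ref{lemma.diffeom.exponential.map}: it produces a continuous family of diffeomorphisms agreeing with $\exp_{x_\infty,\overline g(\xi)}$ near the pole, equal to the identity away from it, and equal to the identity whenever $\overline g(\xi)=\delta$ there (so that nothing changes for $\xi\in S^\ell$). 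Without this ingredient the map $\xi\mapsto\tilde g(\xi)$ does not land in $\metaf_{R=0,H=0}$, and your argument does not close.
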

    
    \begin{remark*}
        In particular, $\metaf_{R=0,H=0}$ is path connected. This extends \cite[Theorem 1.3]{HirschLesourd2021moduli} to manifolds with general topology.
    \end{remark*}

    For any nonnegative integer $\ell$, fix a continuous map $g: S^\ell\to \metaf_{R=0,H=0}$. We will prove that this map extends to a continuous map $D^{\ell+1}\to \metaf_{R=0,H=0}$, and thus $\pi_\ell(\metaf_{R=0,H=0})=0$. As in \cite{Mar12,HirschLesourd2021moduli}, we divide the proof into 3 steps.
    
    \begin{proposition}\label{prop.gr.step1}
        $g:S^\ell \to \metaf_{R=0,H=0}$ extends to a homotopy $g: S^\ell\times [0,1]\to \metaf_{R=0,H=0}$ such that each $g(\xi,1)$ is smooth and conformally flat outside a compact set.
    \end{proposition}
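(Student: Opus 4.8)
The plan is to run the standard three-step scheme (as in Marques \cite{Mar12} and Hirsch--Lesourd \cite{HirschLesourd2021moduli}), adapted to the present setting with boundary. The first step produces, for each fixed parameter $\xi\in S^\ell$, a deformation of the asymptotically flat, scalar-flat, minimal-boundary metric $g(\xi)$ to one that is \emph{conformally flat near infinity} while preserving the constraints $R=0$, $H=0$ throughout; then one checks that the construction can be carried out continuously (indeed smoothly) in $\xi$, which is automatic once the building blocks depend continuously on the input metric.

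First I would pass to the compactified picture. Using the inverse stereographic projection $\varphi\colon X_\infty\to S^3$ with $\varphi(\infty)=x_\infty$ as in Lemma \ref{lemma.gr.nonempty}, an asymptotically flat metric $g(\xi)$ on $X_\infty$ of class $W^{k,p}_\beta$ corresponds (after multiplying by a suitable power of the Green's function of the conformal Laplacian, or equivalently by a fixed conformal factor) to a metric on the compact manifold $X=\operatorname{im}(\varphi)\cong S^3\setminus(\sqcup_j P_{\gamma_j})$, with a controlled (only $C^{0,\alpha}_\beta$-type) regularity at the puncture point $x_\infty$. The key classical fact, going back to Schoen--Yau and used in exactly this form in \cite{Mar12, Sch84}, is that near $x_\infty$ one can perturb the metric, within the scalar-flat class, to make it \emph{exactly flat} (hence conformally flat) on a small punctured ball around $x_\infty$; concretely, one interpolates between $g(\xi)$ and a Euclidean model in a shrinking annulus and then solves the Yamabe-type (linear, since we want to stay scalar-flat) equation $-\Delta u + \tfrac18 R u = 0$ with the appropriate Neumann condition $\partial_V u + \tfrac12 H u = 0$ on $\partial X$ to correct the resulting metric back to $R=0$, $H=0$. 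Here one invokes Proposition \ref{proposition.weighted.sobolev.fredholm} to guarantee solvability (the relevant operator is an isomorphism once the zeroth-order terms are $\ge 0$; in the scalar-flat, minimal case they vanish, and a standard argument using the positive mass / maximum principle rules out the kernel, or one simply works with the perturbed metric whose scalar curvature is slightly positive and then renormalizes), and Proposition \ref{proposition.gr.max.principle} to ensure the conformal factor $u$ stays close to $1$ and positive, so that $u^4 g(\xi)$ is still asymptotically flat of the same class. Undoing the compactification, $u^4 g(\xi)$ is an asymptotically flat metric with $R=0$, $H=0$ that is conformally flat outside a compact set.

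Next I would assemble this into a homotopy. The cleanest way is to build the interpolation and the conformal correction depending on a cutoff parameter $t\in[0,1]$: at $t=0$ one has $g(\xi)$ itself, and at $t=1$ one has the conformally-flat-near-infinity metric just described, with the path in between consisting of metrics obtained by the same linear solve (hence staying in $\metaf_{R=0,H=0}$) applied to the $t$-interpolated metrics. Since the Fredholm operator in Proposition \ref{proposition.weighted.sobolev.fredholm}, its inverse, and the Green's function all depend continuously on the underlying metric in the relevant topologies (this is where the closedness of $W^{k,p}_\beta$ under multiplication and the embedding $W^{k,p}_\beta\hookrightarrow C^{0,\alpha}_\beta$ are used), and since $S^\ell$ is compact so that all the smallness thresholds can be chosen uniformly in $\xi$, the resulting map $g\colon S^\ell\times[0,1]\to\metaf_{R=0,H=0}$ is continuous; one further arranges, by an additional small mollification in the region where the metric is already Euclidean, that $g(\xi,1)$ is genuinely smooth (and conformally flat outside a compact set).

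The main obstacle I anticipate is \emph{uniformity at the puncture}: the compactified metric is only Hölder-continuous (of weighted type) at $x_\infty$, so the near-infinity perturbation to make the metric flat, together with the subsequent conformal solve, must be performed with all constants controlled uniformly over the compact family $\{g(\xi)\}_{\xi\in S^\ell}$ — including the uniform invertibility of the boundary-value operator of Proposition \ref{proposition.weighted.sobolev.fredholm} and the uniform closeness of the conformal factor to $1$ from Proposition \ref{proposition.gr.max.principle}. A second, more technical point is ensuring that the interpolation between $g(\xi)$ and the flat model in the annulus does not create a metric whose conformal Laplacian has a kernel; this is handled either by working with strictly positive scalar curvature in the transition region (then normalizing) or by appealing to the positive mass theorem with corners / the weighted maximum principle, exactly as in \cite{Mar12, Sch84}. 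Modulo these uniformity checks, which are routine given the machinery already recorded in this section, the proposition follows.
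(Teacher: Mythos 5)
Your core mechanism is the right one: interpolate the metric to the flat model far out, then restore $R=0$, $H=0$ by a conformal correction whose existence comes from the linear solvability of the conformal Laplacian with the oblique boundary condition (Propositions \ref{proposition.weighted.sobolev.fredholm} and \ref{proposition.gr.max.principle}). But you route the construction through the compactification at $x_\infty$, and that detour is both unnecessary for this step and the source of every difficulty you then flag as an ``obstacle'': the loss of regularity at the puncture, the possible kernel of the conformal Laplacian of the interpolated metric, and the appeal to positive-mass-type input. None of that machinery is set up in this section (the elliptic theory recorded here lives on $X_\infty$ in weighted Sobolev spaces, not on the punctured compact manifold), so as written the proposal defers the actual content of the proof to ``routine uniformity checks'' that are not routine in the compactified picture. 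The compactification is deliberately postponed to Proposition \ref{prop.gr.step2}, precisely because after Step 1 the metric is conformally flat near infinity and the compactified metric is then smooth at $x_\infty$.

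The intended argument stays entirely in the asymptotically flat picture. Set $g_R(\xi)=\eta_R\, g(\xi)+(1-\eta_R)\delta$ with a cutoff at radius $R$, smooth it to $g_R'(\xi)$, and take the straight-line homotopy $g_R(\xi,\mu)=(1-\mu)g(\xi)+\mu\, g_R'(\xi)$ as the parameter of the deformation. The key quantitative point you do not isolate is that $\|g_R(\xi,\mu)-g(\xi)\|_{k,p,\beta}$ can be made arbitrarily small by taking $R$ large, because $g(\xi)-\delta\in W^{k,p}_\beta$ with $\beta<0$, so the modification is supported where $g(\xi)$ is already close to $\delta$ in the weighted norm. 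Since $R_{g(\xi)}=0$ and $H_{g(\xi)}=0$, the operator at $g(\xi)$ is literally $(\Delta_{g(\xi)},\partial_V)$, which is an isomorphism by the clause ``$A,B\ge 0$'' of Proposition \ref{proposition.weighted.sobolev.fredholm}; invertibility is an open condition, so the operator for $g_R(\xi,\mu)$ is also an isomorphism, the kernel issue never arises, and solving
\begin{equation*}
\Bigl(\Delta_{g_R(\xi,\mu)}-\tfrac18 R_{g_R(\xi,\mu)},\ \partial_V+\tfrac14 H_{g_R(\xi,\mu)}\Bigr)(v)=\Bigl(\tfrac18 R_{g_R(\xi,\mu)},\ -\tfrac14 H_{g_R(\xi,\mu)}\Bigr)
\end{equation*}
gives $g(\xi,\mu)=(1+v)^4 g_R(\xi,\mu)\in\metaf_{R=0,H=0}$, smooth and conformally flat outside $B_{4R}$ at $\mu=1$. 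Compactness of $S^\ell$ gives the uniform choice of $R$ and of the smallness thresholds. (Also note the boundary coefficient in dimension $3$ is $\tfrac14 H$, not $\tfrac12 H$.)
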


  \begin{proof}
        Fix a smooth cutoff function $0\le \eta\le 1$ such that $\eta(t)=1$ for $t\le 1$ and $\eta(t)=0$ for $t\ge 2$. Set $\eta_R(t)=\eta(t/R)$. Take $R_0>0$ such that $B_{R_0}(0)$ enclosed all the compact boundary components of $X_\infty$. For $R>R_0$ and $\xi \in S^\ell$, define $g_R(\xi)=\eta_R g(\xi) + (1-\eta_R) \delta$, and approximate $g_R(\xi)$ by a smooth metric $g_R'(\xi)$ such that $g_R':S^\ell\to \metaf$ is continuous, $\supp (g_R-g_R')\subset B_{4R}(0)$, and $\|g_R-g_R'\|_{k,p,\beta}$ is sufficiently small for all $\xi$. For $\mu \in [0,1]$, define $g_R(\xi, \mu)=(1-\mu)g(\xi)+ \mu g_R'(\xi)$. 
        
        Note that 
        \[(\Delta_{g(\xi)} -\tfrac 18 R_{g(\xi)}, \partial_V +\tfrac14 H_{g(\xi)})=(\Delta_{g_\xi}, \partial_V):W^{k,p}_\beta (X_\infty) \to W^{k-2,p}_{\beta-2}(X_\infty)\times W^{k-1-\tfrac 1p, p}(\partial X_\infty)\]
        is an isomorphism by Proposition \ref{proposition.weighted.sobolev.fredholm}. Hence there exists $\eps(g(\xi))>0$ such that the analogous operator is an isomorphism for all $\tilde g$ with $\|\tilde g  - g(\xi)\|_{k,p,\beta}< \eps (g(\xi))$. Thus, by choosing $R$ sufficiently large and then $g_R'(\xi)$ sufficiently close to $g_R(\xi)$, such that $\|g_R'(\xi)-g(\xi)\|_{k,p,\beta}<\eps(g(\xi))$, we have that $\|g_R(\xi,\mu)-g_R(\xi)\|_{k,p,\beta}<\eps(g(\xi))$. Therefore the operator
        \[\left(\Delta_{g_R(\xi,\mu)}-\tfrac18 R_{g_R(\xi,\mu)}, \partial_V+\tfrac14 H_{g_R(\xi,\mu)}\right): W^{k,p}_\beta(X_\infty)\to W^{k-2,p}_{\beta-2}(X_\infty)\times W^{k-1-\tfrac1p, p}(\partial X_\infty)\]
        is also an isomorphism. Let $v_R(\xi,\mu)\in W^{k,p}_\beta(X_\infty)$ be the solution to
        \[\left(\Delta_{g_R(\xi,\mu)}-\tfrac18 R_{g_R(\xi,\mu)}, \partial_V+\tfrac14 H_{g_R(\xi,\mu)}\right)(v)=\left(\tfrac18 R_{g_R(\xi,\mu)}, -\tfrac14 H_{g_R(\xi,\mu)} \right).\]
        Then $g(\xi,\mu):=(1+v_R(\xi,\mu))^4 g_R(\xi,\mu)$ is the desired homotopy.
      
    \end{proof}
    
    Thus, without loss of generality we may assume $g:S^\ell\to \metaf_{R=0,H=0}$ is a continuous family of smooth metrics that are conformally flat outside a compact set. To proceed, recall that $\varphi: X_\infty \to S^3$, $\infty\mapsto x_{\infty}$ is the inverse of the stereographic projection, and $X=\im(\varphi)$. Moreover, we fix $I(z)=\tfrac{z}{|z|^2}$ the inversion with respect to the unit sphere in $\R^3$, and $\psi: B_1(0)\to S^3$ be such that $\psi^{-1}(\varphi(z))=I(z)$.
    
    \begin{proposition}\label{prop.gr.step2}
        Let $g:S^\ell\to \metaf_{R=0,H=0}$ be a continuous family of smooth metrics that are conformally flat outside a compact set. Then there exists a continuous family of metrics $\overline g: S^\ell \to \met(X)$ of positive Yamabe type and minimal boundary, such that:
        \begin{enumerate}
            \item $\exp_{x_{\infty},\overline{g}(\xi)}^{-1} (\varphi(z))=I(z)$ outside a compact set;
            \item $g(\xi)=\varphi^*(G^4\overline{g}(\xi))$, where $G$ is the Green's function of the conformal Laplacian $L_{\overline{g}(\xi)}$ with Neumann boundary condition and a pole at $x_{\infty}$, i.e. the distributional solution to 
            \[L_{\overline{g}(\xi)}G=-4\pi \delta_{x_{\infty}},\quad \partial_V G=0.\]
        \end{enumerate}
    \end{proposition}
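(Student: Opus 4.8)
The plan is to reverse the construction of Lemma \ref{lemma.gr.nonempty}: there we started with a metric of positive scalar curvature on $X$ and produced an asymptotically flat, scalar-flat, minimal-boundary metric on $X_\infty$ via the Green's function of the conformal Laplacian with Neumann boundary condition; now we must go backwards, continuously in $\xi$. First I would compactify. Given $g(\xi)$ on $X_\infty$, which is conformally flat outside a compact set, there is a positive function $u(\xi)$, equal to $|z|^{-2}$ (i.e. the Kelvin factor) near infinity, such that $(I^{-1})^*(u(\xi)^4 g(\xi))$ extends across $x_\infty$ to a smooth metric on a neighbourhood of $x_\infty$ in $S^3$; gluing this with $g(\xi)$ on the rest of $X$ (using the diffeomorphism $\varphi$) produces a smooth metric $h(\xi)$ on the \emph{closed-up} manifold $X$, depending continuously on $\xi$, with $H_{h(\xi)}=0$ along $\partial X$ (the boundary sits in the region where we did not touch the metric). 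Because $g(\xi)$ is scalar-flat and the conformal factor near $x_\infty$ is exactly the one for which the conformal Laplacian has $\delta_{x_\infty}$ as source, $(X,h(\xi))$ has $R_{h(\xi)}\ge 0$ away from $x_\infty$ and, crucially, first conformal eigenvalue (with the Neumann boundary condition) strictly positive — so $h(\xi)$ is of positive Yamabe type with minimal boundary. This handles the existence of $\overline g(\xi)$ up to the normalisation (1).

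Next I would arrange the normalisation $\exp^{-1}_{x_\infty,\overline g(\xi)}(\varphi(z))=I(z)$ outside a compact set. The metric $h(\xi)$ constructed above already agrees, near $x_\infty$, with $(I^{-1})^*(u(\xi)^4 g(\xi))$, which is conformally flat; but being conformally flat is weaker than being \emph{flat in geodesic normal coordinates centred at} $x_\infty$. The fix is to post-compose with a further conformal change supported in a small ball around $x_\infty$: one solves, on a fixed small geodesic ball $B$ around $x_\infty$, the boundary value problem for the conformal Laplacian forcing the new metric to be exactly the pulled-back Euclidean metric $\psi_*\delta$ on a slightly smaller ball, with the right matching conditions on $\partial B$, and interpolates. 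This is precisely the kind of local conformal surgery carried out in \cite{Sch84} and Section 9 of \cite{Mar12}; the continuity in $\xi$ follows from continuous dependence of solutions of the (uniformly invertible, by positivity of the Yamabe type) boundary value problem on the coefficients. One then defines $\overline g(\xi)$ to be the resulting metric, which still has $R\ge 0$, $H=0$, positive Yamabe type, and now satisfies (1). I would record here that the construction can be done so that $\overline g(\xi)$ depends continuously on $\xi\in S^\ell$ into $\met(X)$.

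For (2) I would argue as follows. Since $(X,\overline g(\xi))$ has positive Yamabe type, the conformal Laplacian $L_{\overline g(\xi)}$ with Neumann boundary condition is an isomorphism, hence has a positive Green's function $G=G_\xi$ with pole at $x_\infty$, satisfying $L_{\overline g(\xi)}G=-4\pi\delta_{x_\infty}$, $\partial_V G=0$; near $x_\infty$, in the $\overline g(\xi)$-normal coordinates which by (1) are identified with $I(z)$, one has $G_\xi = |I(z)|^{-1}(1+O(|I(z)|)) = |z|(1+o(1))$, so that $G_\xi^4\overline g(\xi)$, pulled back by $\varphi$ and expressed in the $z$ coordinate, is asymptotically flat of class $W^{k,p}_\beta$. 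It remains to check that $\varphi^*(G_\xi^4\overline g(\xi))$ equals the \emph{original} $g(\xi)$ and not merely a metric conformal to it. This is where the normalisations bite: both metrics are scalar-flat on $X_\infty$ and conformally flat near infinity with the standard Euclidean asymptotics (no $m/|x|$ term subtlety need be resolved since we only need the conformal \emph{factor} relating them to be harmonic, positive, going to $1$ at infinity, with Neumann data — hence identically $1$ by the maximum principle, Proposition \ref{proposition.gr.max.principle}). More precisely, write $\varphi^*(G_\xi^4\overline g(\xi)) = w^4 g(\xi)$ for a positive $w$; tracing through the construction, $w-1\in W^{k,p}_\beta$ solves a linear elliptic equation of the form $(\Delta_{g(\xi)}-\tfrac18 R_{g(\xi)})(w-1)=0$, $(\partial_V+\tfrac14 H_{g(\xi)})(w-1)=0$, i.e. $\Delta_{g(\xi)}(w-1)=0$, $\partial_V(w-1)=0$, and by the maximum principle (applied to $w-1$ and to $1-w$) we conclude $w\equiv 1$. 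Hence $g(\xi)=\varphi^*(G_\xi^4\overline g(\xi))$ as required.

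The main obstacle, I expect, is the normalisation step (1): making the compactified metric \emph{exactly} flat in $\overline g(\xi)$-normal coordinates near $x_\infty$, continuously in $\xi$, while keeping $R\ge 0$ and $H=0$ and not disturbing positivity of the Yamabe type. The scalar-curvature sign is the delicate point, because a conformal change that flattens the metric could a priori introduce negative scalar curvature; one must choose the interpolation so that the conformal factor stays close to $1$ in $C^2$, which is possible because $g(\xi)$ was already conformally flat there, so only a small, controllable perturbation is needed. The rest — the Green's function asymptotics, the $W^{k,p}_\beta$ membership, and the uniqueness-via-maximum-principle identification in (2) — is routine given Propositions \ref{proposition.weighted.sobolev.fredholm} and \ref{proposition.gr.max.principle} and the standard theory of weighted spaces.
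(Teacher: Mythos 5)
Your overall strategy --- compactify by dividing out a Kelvin-type factor, then identify the Green's function --- is the right one and is essentially the paper's, but two steps as written have genuine gaps. First, the compactification. You multiply $g(\xi)$ by the fourth power of the bare Kelvin factor ($|z|^{-2}$ near infinity; note the correct power in dimension $3$ is $|z|^{-1}$, since the conformal exponent is $4$ and the Green's function blows up like $|x|^{-1}$). Writing $g(\xi)=u(\xi)^4\delta$ for $|z|\ge R$, the resulting tensor near $x_\infty$ is $\bigl(u(\xi)\circ I\bigr)^4\delta$ in the chart $\psi$, and since $u(\xi)-1$ decays only like $|z|^{\beta}$ (indeed like $a/|z|$ in the harmonically flat case), the composition $u(\xi)\circ I$ is in general merely Lipschitz at the origin: the compactified tensor is \emph{not} a smooth metric on $X$, so one cannot yet speak of $\exp_{x_\infty,\overline g(\xi)}$ or of the Green's function of $L_{\overline g(\xi)}$. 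The paper avoids this entirely by taking the conformal factor to be $v(\xi)=|z|\,u(\xi)$ (extended positively, equal to $1$ near $\partial X_\infty$), so that $v^{-4}g(\xi)=|z|^{-4}\delta$ \emph{exactly} outside a compact set; its push-forward under $\varphi=\psi\circ I$ is exactly $\psi_*\delta$ near $x_\infty$, whence $\psi=\exp_{x_\infty,\overline g(\xi)}$ and item (1) holds with no further work. In particular the second conformal surgery you describe --- which you yourself single out as the main obstacle, and which as described would need the flattening factor to be $C^2$-close to $1$ (false here: it is not even $C^1$) --- is both problematic and unnecessary.

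Second, positive Yamabe type. You derive it from the claim that $R_{h(\xi)}\ge 0$ away from $x_\infty$; this is unjustified and false in general, because on the compact part the extension of the conformal factor is arbitrary and the transformation law gives $R_{v^{-4}g}=-8v^{5}\Delta_g (v^{-1})$, which has no sign there. The sign of $R_{\overline g}$ is in any case beside the point: the paper instead observes that $G:=v\circ\varphi^{-1}$ is an everywhere \emph{positive} solution of $L_{\overline g}G=-4\pi\delta_{x_\infty}$ with $\partial_V G=0$, and pairs it with the first Neumann eigenfunction $f>0$ to get $4\pi f(x_\infty)=\lambda_1\int_X fG$, hence $\lambda_1>0$. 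Note also that with this choice item (2) is true by construction, since $G^4\overline g=\varphi_*(v^4v^{-4}g)=\varphi_*g$; your alternative identification of $\varphi^*(G^4\overline g(\xi))$ with $g(\xi)$ via harmonicity, Neumann data and decay of the relating conformal factor is correct, but only becomes available once the earlier steps are repaired.
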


      \begin{proof}
      By compactness, there exists $R>0$ and a continuous family of smooth positive functions $u\in 1+W^{k,p}_{\beta}$ such that $g(\xi)(z)=u(\xi)^4(z)\delta$, if $|z|\ge R$. Let $v(\xi)$ be a smooth positive function on $X_{\infty}$ such that $v(\xi)(z)=|z|u(\xi)(z)$ for $|z|\ge R$, and $v(\xi)=1$ in a neighborhood of $\partial X_\infty$. Define $g(\xi)'=v^{-4}g(\xi)$, and $\overline g(\xi)=\varphi_*(g'(\xi))$. This defines a continuous family of metrics $g: S^\ell\to \met(X)$. For each $\xi\in S^\ell$, we observe:
      \begin{enumerate}[label=(\roman*)]
          \item $H_{g'}=H_g=0$ on $\partial X_\infty$, as $v=1$ near $\partial X_\infty$.
          \item Around $x_{\infty}$, $\overline{g}=\varphi_*(|z|^{-4}\delta)=\psi_*(\delta)$, hence $\psi=\exp_{p,\overline g}$.
          \item We set $G=v\circ\varphi^{-1}$. Since $R_{g}=0$ and $G^4 \overline g$ is isometric (via the mapping $\varphi$) to $v^4 g'=g$, we have $L_{\overline g}(G)=0$. Similarly, since both $\overline g$ and $g$ have minimal boundary, $\partial_V G=0$. Moreover, note that $G>0$ and $\lim_{x\to 0}|x|G(\psi(x))=1$. Therefore $G$ is the Green's function of the conformal Laplacian with the Neumann boundary condition and a pole at $x_{\infty}$.
          \item Since, by construction, $G$ is everywhere positive, the first Neumann eigenvalue of $L_{\overline g (\xi)}$ is positive. In fact, suppose $f>0$ is the first eigenfunction such that $L_{\overline g(\xi)} f = -\lambda_1 f$ and $\partial_V f=0$, then integration by parts gives:
          \[4\pi f(p)= -\int_X\left(L_{\overline g (\xi)}f\right)G = \lambda_1 \int_X fG.\]
          Hence $\lambda_1>0$. This implies that $\overline g(\xi)$ is of positive Yamabe type with minimal boundary.
      \end{enumerate}
    \end{proof}
    
    To prove Proposition \ref{prop.gr.R=0.H=0}, we will also need the following Lemma on the existence of diffeomorphisms on a continuous family of Riemannian metrics, extending the exponential map near a point. We remark that a slightly different result was proved in \cite[Lemma 4.5]{HirschLesourd2021moduli}.

    \begin{lemma}\label{lemma.diffeom.exponential.map}
     Let $K$ be a compact topological space, $\{g(\xi)\}_{\xi\in K}$ be a continuous family of smooth Riemannian metrics on $B_1(0)\subset \R^n$. Then there exist $\varepsilon>0$ and a continuous family of diffeomorphisms $F(\xi): B_1(0)\to B_1(0)$ such that:
     \begin{enumerate}
         \item $F(\xi)=\exp_{0,g(\xi)}$ in a geodesic ball of radius $\eps$.
         \item $F(\xi)$ is the identity map in $B_1(0)\setminus B_{2/3}(0)$.
     \end{enumerate}
     Moreover, if $g(\xi)=\delta$, then we can choose $F(\xi)$ to be the identity map.
    \end{lemma}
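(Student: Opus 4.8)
The plan is to construct $F(\xi)$ by interpolating, via a radial cutoff, between the exponential chart of $g(\xi)$ at the origin and the identity map of $B_1(0)$, all the delicate part being to make the interpolation scale uniform in $\xi$.

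First I would record the uniform estimates supplied by compactness of $K$. Since $\xi\mapsto g(\xi)$ is a continuous family of smooth metrics, a continuous-function-on-a-compact-space argument gives a radius $r_0\in(0,2/3)$, independent of $\xi$, such that for every $\xi\in K$ the map $\exp_{0,g(\xi)}$ is defined on the Euclidean ball $B_{r_0}(0)\subset T_0B_1(0)\cong\R^n$, is a diffeomorphism onto its image (a geodesic ball contained in $B_{1/2}(0)$), and the family $\xi\mapsto \exp_{0,g(\xi)}\big|_{B_{r_0}(0)}$ is continuous in the $C^\infty_{\mathrm{loc}}$ topology (continuous dependence of geodesics on the metric). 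Set $u_\xi(x):=\exp_{0,g(\xi)}(x)-x$ for $|x|<r_0$. Because $d(\exp_{0,g(\xi)})_0=\mathrm{id}$, we have $u_\xi(0)=0$ and $Du_\xi(0)=0$, so Taylor's theorem together with the uniform bound on $\|g(\xi)\|_{C^2(\overline{B_{r_0}(0)})}$ over $\xi\in K$ yields a constant $C>0$ with $|u_\xi(x)|\le C|x|^2$ and $|Du_\xi(x)|\le C|x|$ for all $|x|\le r_0$ and all $\xi\in K$.

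Next I would fix the cutoff and the scale. Choose $\eps>0$ with $2\eps<r_0$ (hence $2\eps<2/3$), to be shrunk below, and a smooth $\chi\colon[0,\infty)\to[0,1]$ with $\chi\equiv1$ on $[0,\eps]$, $\operatorname{supp}\chi\subset[0,2\eps)$, and $|\chi'|\le 2/\eps$. Define
\[
F(\xi)(x):=x+\chi(|x|)\,u_\xi(x).
\]
This is smooth on $B_1(0)$: near the origin it equals $\exp_{0,g(\xi)}$ since $\chi\equiv1$ there, and on $\{x\neq 0\}$ the factor $\chi(|x|)$ is smooth; moreover $F(\xi)$ is the identity on $B_1(0)\setminus B_{2\eps}(0)\supset B_1(0)\setminus B_{2/3}(0)$. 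On $B_{2\eps}(0)$ one has $DF(\xi)=\mathrm{id}+\chi'(|x|)\tfrac{x}{|x|}\otimes u_\xi(x)+\chi(|x|)Du_\xi(x)$, whose perturbation term has operator norm at most $\tfrac{2}{\eps}\,C(2\eps)^2+C(2\eps)\le 10\,C\eps$. Shrinking $\eps$ so that $10\,C\eps<1$, the map $x\mapsto\chi(|x|)u_\xi(x)$ is a $C^1$-small perturbation of $0$, supported in $B_{2\eps}(0)\Subset B_1(0)$; hence $F(\xi)$, extended by the identity, is a diffeomorphism of $\R^n$ (it is a proper local diffeomorphism, injective because its non-identity part is $1$-Lipschitz with small constant, thus a one-sheeted covering of $\R^n$), which equals the identity outside $B_{2\eps}(0)$ and therefore restricts to a diffeomorphism of $B_1(0)$ onto itself. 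Continuity of $\xi\mapsto F(\xi)$ is inherited from the $C^\infty_{\mathrm{loc}}$-continuity of $\xi\mapsto u_\xi$; by construction $F(\xi)=\exp_{0,g(\xi)}$ on $B_\eps(0)$, i.e. on the geodesic $\eps$-ball in its image; and if $g(\xi)=\delta$ then $\exp_{0,\delta}=\mathrm{id}$, so $u_\xi\equiv0$ and $F(\xi)=\mathrm{id}$. This establishes all the assertions.

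The step requiring genuine care is the uniformity over $\xi$: the lower bound $r_0$ on the common domain of the exponential charts and the constant $C$ in the quadratic estimate for $u_\xi$ both depend on $\xi$ a priori, and it is precisely compactness of $K$ together with the continuity of the family $\{g(\xi)\}_{\xi\in K}$ (in $C^\infty_{\mathrm{loc}}$) that makes them uniform. Once these bounds are in place, the remainder is the routine fact that a $C^1$-small, compactly supported perturbation of the identity of $\R^n$ is a diffeomorphism equal to the identity near the boundary of $B_1(0)$.
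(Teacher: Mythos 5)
Your proof is correct, and it takes a genuinely different (and in fact cleaner) route than the one in the paper. The paper defines a piecewise map $F_r$ -- the exponential map on $B_r(0)$, the identity outside $B_{1/2}(0)$, and a ``linear interpolation'' along radial segments in between -- and proves invertibility for small $r$ by a blow-up argument: after rescaling by $r^{-1}$ the maps converge to an interpolation between the linear map taking the round ball to the ellipsoid $\{ |v|_{g(\xi)(0)} \le 1\}$ and the identity near infinity, which is invertible; a final mollification across the two singular interfaces is then needed because $F_r$ is only piecewise smooth. You instead write $F(\xi)=\mathrm{id}+\chi(|\cdot|)\,u_\xi$ with $u_\xi=\exp_{0,g(\xi)}-\mathrm{id}$ and exploit the quantitative vanishing $u_\xi(0)=0$, $Du_\xi(0)=0$ (uniform over $K$ by compactness) to make the perturbation $C^1$-small after shrinking $\eps$; invertibility then follows from the standard fact that a compactly supported $C^1$-small perturbation of the identity is a diffeomorphism, and no smoothing step is required since your interpolation is smooth from the start. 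Both arguments hinge on compactness of $K$ for the uniform domain and Taylor constants; yours trades the soft blow-up limit for an explicit estimate and is shorter. Two cosmetic points: the derivative of $x\mapsto\chi(|x|)u_\xi(x)$ is $u_\xi(x)\otimes\chi'(|x|)\tfrac{x}{|x|}+\chi(|x|)Du_\xi(x)$ (your tensor factors are transposed, though the operator-norm bound is unchanged), and your conclusion gives agreement with $\exp_{0,g(\xi)}$ on the \emph{Euclidean} ball $B_\eps(0)$ rather than literally on a geodesic ball of radius $\eps$; since the metrics are uniformly comparable to $\delta$ on $\overline{B_{2/3}(0)}$, this contains a geodesic ball of uniform radius, so item (1) holds after renaming $\eps$.
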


        \begin{proof}
      For each $\xi\in K$, let $B^{(\xi)}(r), S^{\xi}(r)$ be the geodesic ball and geodesic sphere of radius $r$ centered at $0$, respectively. For each $g(\xi)$, consider the rescaled Riemannian manifolds $(B_r^{(\xi)}, g_r(\xi)):=r^{-1}(B^{(\xi)}(r), g(\xi)|_{B^{(\xi)}(r)})$. They converge smoothly in the unit ball to a flat Riemannian metric on $\R^n$, whose metric tensor at each point is given by $g(\xi)(0)$. We denote this limit by $(B^{(\xi)}, g_0(\xi))$. Then $B^{(\xi)}$ is an ellipsoid with respect to the standard Euclidean metric $\delta$. To proceed, we observe the following basic fact.
      
      \begin{fact}
          Let $\Omega$ be a open set in $\R^n$, $p$ be a point in $\Omega$ and $g_i$ a sequence of smooth Riemannian metrics on $\Omega$, assumed to converge locally smoothly to a Riemannian metric $g$. Then for $r<\min\{d_{g_i}(p,\partial \Omega)\}$, the exponential maps $\exp_{p,g_i}(r\cdot)\to \exp_{p,g}(r\cdot)$ on the unit tangent bundle at $p$.
      \end{fact}
      
      Using this fact, we see that for $r<\min\{\inj(g_i,p)\}$, the $g_i$-geodesic spheres of radius $r$ centered at $p$ (regular level sets of $d_{g_i}(p,\cdot)$) converge smoothly graphically as $r\to 0$. We also observe that $\exp_{p, g_r(\xi)}\to \exp_{p, g_0(\xi)}$ among all unit tangential vectors. 
      
      For a point $x\in B_1(0)$, $\exp_{0,g_0(\xi)}(x)=\|x\|_{g_0(\xi)}^{-1}x$, and hence $\left\|\exp_{0,g_r(\xi)}(x)-|x|_{g_0(\xi)}^{-1}x\right\|_{\delta}\to 0$ as $r\to 0$, here the distance is measured in $B_1(0)$ with respect to the Euclidean metric $\delta$. Consider the continuous map $F_r: B_1(0)\to B_1(0)$ defined as follows: for $x\in B_r(0)$, $F_r(x)=\exp_{0,g(\xi)}(x)$; for $x\in B_1(0)\setminus B_{1/2}(0)$, $F_r(x)=x$; for $x\in B_{1/2}(0)\setminus B_r(0)$, $F_r$, restricted to each line segment connecting $\tfrac{rx}{\|x\|_{\delta}}$ and $\tfrac{x}{2\|x\|_{\delta}}$, is a linear interpolation that maps to the Euclidean line segment connecting $\exp_{0,g(\xi)}\left(\tfrac{rx}{\|x\|_{\delta}}\right)$ and $\tfrac{x}{2\|x\|_{\delta}}$. As $r\to 0$, the rescaled map $\bar F_r: B_{1/r}(0)\to B_{1/r}(0)$, defined as $\bar F_r (r)=\tfrac{1}{r}  F_r(rx)$, converges to the linear interpolation between the map that takes the Euclidean unit ball to the ellipsoid $B^{(\xi)}$, and the identity map near infinity, and in particular is continuously invertible. Hence there exists $\eps(\xi)>0$ such that $ F_r$ is a continuous invertible map when $r\in (0,\eps(\xi))$. Since $K$ is compact, we can take $\inf_{\xi}\eps(\xi)>\eps>0$, and then the map $ F_\eps(\xi)$ is well-defined, continuous and has continuous inverse. 
      
      By definition $ F_\eps(\xi)$ depends continuously on $\xi$, and it is in fact the identity map when $g(\xi)=\delta$; also $F_\eps(\xi)=\exp_{0,g(\xi)}$ in $B^{(\xi)}(\eps)$. $F_\eps(\xi)$ is piecewise smooth, with two singular interfaces $S^{(\xi)}(\eps)$ and $\partial B_{1/2}(0)$, along which it is only continuous. We then take $ F(\xi)$ to be a smooth diffeomorphism by applying a standard mollification on $ F_\eps(\xi)$, in a small neighborhood of the interfaces whose size is proportional to $\|g_\eps(\xi)-\delta\|_{B_1(0)}$, such that $ F(\xi)= F_\eps(\xi)$ agree in $B_{\eps/2}(0)$ and $B_1(0)\setminus B_{2/3}(0)$.
      
    \end{proof}
    
    We are now ready to prove Proposition \ref{prop.gr.R=0.H=0}.
    
     \begin{proof}[Proof of Proposition \ref{prop.gr.R=0.H=0}]
      By Proposition \ref{prop.gr.step1}, we may assume, without loss of generality, that for each $\xi\in S^\ell$ the metric $g(\xi)$ is smooth and conformally flat outside a compact set. Applying Proposition \ref{prop.gr.step2}, we obtain a continuous family of smooth, Yamabe positive metrics $\overline g: S^\ell\to \met (X)$ with minimal boundary. Note that $g(\xi)=\varphi^*(G^4\overline g(\xi))$, where $G(\xi)$ is the Green's function to the conformal Laplacian on $(X,\overline g)$ with the Neumann boundary condition and a pole at $x_{\infty}$. 
      
      For each $\xi\in S^\ell$, let $u(\xi)$ be the first eigenfunction for the conformal Laplacian on $(X,\overline g(\xi))$ with Neumann boundary condition, normalized so that $\|u(\xi)\|_{(L^2(X,\overline g))}=1$. Then the map $\overline g:S^\ell\to \met(X)$ extends to a homotopy $S^\ell\times[0,1]\to \met(X)$, $\overline g(\xi,\mu)=(1-\mu +\mu u(\xi))^4 \overline g(\xi)$, such that $\overline g(\cdot, 1)\in \met_{R> 0, H=0}$. Note that each $\overline g(\xi,\mu)$ is in the conformal class $[g(\xi)]$, and hence is Yamabe positive with minimal boundary. Since $\met_{R>0 , H=0}$ is contractible (Theorem \ref{thm:MainOpen}), this map further extends to to a map $\overline g: D^{\ell+1}\to \met(X)$ where each $\overline g(\xi)$ is Yamabe positive with minimal boundary. 
      
      Since $D^{\ell+1}$ is compact, by choosing a sufficiently small $\tau>0$, we assume $\psi: (B_\tau(0),0)\to (S^3,x_{\infty})$ is a coordinate neighborhood of $x_\infty$ in each $(X,\overline g(\xi))$, that is contained in the geodesic normal neighborhood of $\overline g(\xi)$ near $x_{\infty}$. Moreover, when $\xi\in S^\ell$, $\psi^*(\overline g(\xi))=\delta$ in $B_\tau (0)$. Now we apply Lemma \ref{lemma.diffeom.exponential.map} to the metrics $\psi^*(\overline g(\xi))$ (in $B_\tau(0)$ instead) as one varies $\xi\in D^{\ell+1}$, and obtain a continuous family of diffeomorphisms $F(\xi): B_\tau(0)\to B_\tau(0)$. We then define $\tilde\psi = \psi\circ F: B_\tau(0)\rightarrow S^3$, and set
      \[\tilde \varphi=\begin{cases} \varphi \quad &\text{ in }B_{1/\tau}(0); \\ \tilde \psi\circ I \quad & \text{ in }\R^3\setminus B_{1/\tau}(0).\end{cases}\]
      Consequently, for $\xi\in D^{\ell+1}$, we further define 
      \[\tilde g(\xi)=\tilde \varphi^*(G(\xi)^4 \overline g(\xi)).\]
      Here, consistently with the notation we have introduced above, we still denote by $G(\xi)$ the Green's function of the conformal Laplacian on $(X,\overline g(\xi))$ with Neumann boundary condition on $(X,\overline g(\xi))$ and pole at $x_{\infty}$, for the extended map $\overline g: D^{\ell+1}\to \met(X)$. For $\xi \in S^\ell$, since $\psi^*(\overline g(\xi))=\delta$ in $B_\tau (0)$, $F(\xi)$ is the identity map, and so (keeping in mind part (2) of Proposition \ref{prop.gr.step2}) $\tilde g(\xi)=g(\xi)$ for such $\xi$. Moreover, since $F(\xi)=\exp_{0,\psi^* (\overline g(\xi))}$ near $0$, we have $\delta=F(\xi)^*(\psi^*(\overline g(\xi))) = (\psi\circ F(\xi))^*(\overline g)$ near $0$. Thus $\tilde \psi=\psi\circ F=\exp_{x_{\infty},\overline g(\xi)}$ in a neighborhood of $x_{\infty}$, which (together with the form of the expansion of the Green's function in normal coordinates, see \cite{Sch84}) ensures that indeed $\tilde{g}(\xi)$ is a smooth asymptotically flat metric in $\metaf_{R=0, H=0}$. 
    \end{proof}

    We proceed to consider asymptotically flat metrics with other boundary conditions.

    \begin{proposition}\label{proposition.gr.other.curvature.conditions}
        Consider $\metaf_\ast\in \{\metaf_{R=0, H\ge 0}, \metaf_{R\ge 0, H=0}, \metaf_{R\ge 0, H\ge 0}\}$, and a continuous map $g: S^\ell\to \metaf_\ast$. Then $g$ extends to a homotopy $g: S^\ell\times [0,1]\to \metaf_\ast$, such that each $g(\xi, 1)\in \metaf_{R=0, H=0}$. Consequently, the spaces $\metaf_{R=0, H\ge 0}, \metaf_{R\ge 0, H=0}, \metaf_{R\ge 0, H\ge 0}$ are weakly contractible.
    \end{proposition}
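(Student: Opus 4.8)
The plan is to deform $g$ through a conformal change that simultaneously drives the scalar curvature and the boundary mean curvature to zero, keeping the deformed metrics inside $\metaf_\ast$, and then to appeal to Proposition \ref{prop.gr.R=0.H=0}. Fix $\ell\ge 0$ and a continuous map $g\colon S^\ell\to\metaf_\ast$. For each $\xi$, since $R_{g(\xi)}\ge 0$ and $H_{g(\xi)}\ge 0$ (one of them being identically zero, according to which of the three spaces we are in), the operator
\[
\Big(-\Delta_{g(\xi)}+\tfrac18 R_{g(\xi)},\ \big(\partial_V+\tfrac14 H_{g(\xi)}\big)\big|_{\partial X_\infty}\Big)\colon W^{k,p}_\beta(X_\infty)\longrightarrow W^{k-2,p}_{\beta-2}(X_\infty)\times W^{k-1-\tfrac1p,p}(\partial X_\infty)
\]
satisfies the hypotheses of Proposition \ref{proposition.weighted.sobolev.fredholm} with $A=\tfrac18 R_{g(\xi)}\ge 0$ and $B=\tfrac14 H_{g(\xi)}\ge 0$ (note $R_{g(\xi)}\in W^{k-2,p}_{\beta-2}$ and $H_{g(\xi)}\in W^{k-1-\tfrac1p,p}(\partial X_\infty)$ by the usual structure of the curvature), hence is an isomorphism. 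I would then let $v(\xi)\in W^{k,p}_\beta$ be the unique solution of $(-\Delta_{g(\xi)}+\tfrac18 R_{g(\xi)})v=-\tfrac18 R_{g(\xi)}$ and $(\partial_V+\tfrac14 H_{g(\xi)})v=-\tfrac14 H_{g(\xi)}$, so that $u(\xi):=1+v(\xi)$ solves the homogeneous system and, by conformal covariance (equations of the type \eqref{eq:changeR}--\eqref{eq:changeH} with $n=3$), $u(\xi)^4 g(\xi)$ has $R=0$ and $H=0$. Since the coefficients depend continuously on $\xi$ and inversion is continuous on the open set of isomorphisms, $\xi\mapsto v(\xi)$ is continuous; and $v(\xi)\to 0$ at infinity because $W^{k,p}_\beta\hookrightarrow C^{0,\alpha}_\beta$ with $\beta<0$.

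Next I would verify that $u(\xi)$ is everywhere positive. Applying Proposition \ref{proposition.gr.max.principle} to $v(\xi)$, whose right-hand sides are $\le 0$, already gives $v(\xi)\le 0$, i.e. $u(\xi)\le 1$. For the lower bound I would argue by contradiction on a connected component $\Omega$ of the set $\{u(\xi)<0\}$, which is bounded since $u(\xi)\to 1$: there the equation forces $u(\xi)$ to be $g(\xi)$-superharmonic, so (being non-constant, as it vanishes on the interior part of $\partial\Omega$) its infimum is attained at a point $q$ of $\partial X_\infty\cap\partial\Omega$, and the Hopf lemma then gives $\partial_V u(\xi)(q)<0$, contradicting the Robin identity $\partial_V u(\xi)=-\tfrac14 H_{g(\xi)}u(\xi)\ge 0$ at $q$. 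Once $u(\xi)\ge 0$ is known, the same Hopf/strong-maximum-principle dichotomy — now using connectedness of $X_\infty$ and $u(\xi)\to 1\ne 0$ — rules out $u(\xi)=0$ at any interior or boundary point, so $u(\xi)>0$.

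With positivity in hand I would set $G(\xi,t):=(1+t\,v(\xi))^4 g(\xi)$ for $(\xi,t)\in S^\ell\times[0,1]$. Since $v(\xi)\le 0$ one has $0<u(\xi)\le 1+t\,v(\xi)\le 1$, so each $G(\xi,t)$ is a genuine Riemannian metric; moreover $(1+t\,v(\xi))^4 g(\xi)_{ij}-\delta_{ij}\in W^{k,p}_\beta$ because $W^{k,p}_\beta$ is closed under multiplication and $g(\xi)_{ij}-\delta_{ij}\in W^{k,p}_\beta$, so $G(\xi,t)\in\metaf$. Using the defining equations for $v(\xi)$ one obtains the identities
\[
R_{G(\xi,t)}=(1-t)(1+t\,v(\xi))^{-5}R_{g(\xi)},\qquad H_{G(\xi,t)}=(1-t)(1+t\,v(\xi))^{-3}H_{g(\xi)},
\]
which show that $G$ takes values in $\metaf_\ast$ (nonnegativity of $R$, resp.\ of $H$, is preserved along the deformation, and identical vanishing, when imposed, is preserved too), that $G(\xi,0)=g(\xi)$, and that $G(\xi,1)\in\metaf_{R=0,H=0}$.

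Finally, the homotopy $G$ exhibits $g$ as homotopic, inside $\metaf_\ast$, to $\tilde g:=G(\cdot,1)\colon S^\ell\to\metaf_{R=0,H=0}$; since that space is weakly contractible by Proposition \ref{prop.gr.R=0.H=0}, $\tilde g$ extends to a map $D^{\ell+1}\to\metaf_{R=0,H=0}\subset\metaf_\ast$, and gluing this extension to $G$ along a collar extends $g$ to $D^{\ell+1}\to\metaf_\ast$. Hence $\pi_\ell(\metaf_\ast)=0$ for all $\ell\ge 0$, and since $\metaf_{R=0,H=0}\subset\metaf_\ast$ is non-empty by Lemma \ref{lemma.gr.nonempty}, each of $\metaf_{R=0,H\ge 0}$, $\metaf_{R\ge 0,H=0}$, $\metaf_{R\ge 0,H\ge 0}$ is weakly contractible. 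I expect the step requiring the most care to be the strict positivity of the conformal factor $u(\xi)$: the Neumann/Robin boundary condition and the possibly vanishing potential $A$ preclude a one-line application of the maximum principle and make it necessary to combine Proposition \ref{proposition.gr.max.principle} with a Hopf-lemma argument along $\partial X_\infty$ and with the normalisation $u(\xi)\to 1$ at infinity; everything else amounts to routine bookkeeping on top of Proposition \ref{proposition.weighted.sobolev.fredholm} and Proposition \ref{prop.gr.R=0.H=0}.
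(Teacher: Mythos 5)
Your argument is essentially identical to the paper's: one solves the linear problem $\left(\Delta_{g(\xi)}-\tfrac18 R_{g(\xi)},\ \partial_V+\tfrac14 H_{g(\xi)}\right)u=\left(\tfrac18 R_{g(\xi)},-\tfrac14 H_{g(\xi)}\right)$ via Proposition \ref{proposition.weighted.sobolev.fredholm}, establishes $-1<u\le 0$ by the maximum principle, and deforms through $(1+\mu u(\xi))^4 g(\xi)$ before invoking Proposition \ref{prop.gr.R=0.H=0}. The only difference is that you spell out the Hopf-lemma step behind the strict lower bound on the conformal factor, which the paper compresses into ``a standard application of the strong maximum principle''; your verification of the identities $R_{G(\xi,t)}=(1-t)(1+tv)^{-5}R_{g(\xi)}$ and $H_{G(\xi,t)}=(1-t)(1+tv)^{-3}H_{g(\xi)}$ is correct and matches the paper's intent.
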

    
    \begin{proof}
      Fix a space of metrics $\metaf_\ast$ as above. For $\xi\in S^\ell$, since $R_{g(\xi)}\ge 0$, $H_{g(\xi)}\ge 0$, the operator
      \[\left(\Delta_{g(\xi)}-\tfrac18 R_{g(\xi)}, \partial_V  +\tfrac 14 H_{g(\xi)}\right): W^{k,p}_\beta(X_\infty)\to W^{k-2,p}_{\beta-2}(X_\infty)\times W^{k-1-\tfrac 1p, p}(\partial X_\infty)\]
      is an isomorphism. We take the unique solution $u(\xi)$ to
      \[\left(\Delta_{g(\xi)}-\tfrac18 R_{g(\xi)}, \partial_V  +\tfrac 14 H_{g(\xi)}\right)(u)=(\tfrac 18 R_{g(\xi)}, -\tfrac14 H_{g(\xi)}).\]
      By Proposition \ref{proposition.gr.max.principle} and a standard application of the strong maximum principle one obtains that $-1<u<0$. Then the mapping $g: S^\ell\times [0,1]$, defined by $g(\xi, \mu)= (1+\mu u(\xi))^4 g(\xi)$, is the desired homotopy.
    \end{proof}
    
    To finish the proof of Theorem \ref{thm.gr.all.R.H}, we show that the spaces $\metaf_{R=0,H=0}$, $\metaf_{R\ge 0, H=0}$, $\metaf_{R=0, H\ge 0}$, $\metaf_{R\ge 0, H\ge 0}$ are ANRs. The proof here is closely related to that of Theorem \ref{thm:ANRstructure}.
    
    \begin{proposition}
        For $\metaf_\ast\in \{\metaf_{R=0,H=0}, \metaf_{R\ge 0, H=0}, \metaf_{R=0, H\ge 0}, \metaf_{R\ge 0, H\ge 0}\}$, there exists an open subset  $\mU^{AF}\subset \metaf$ containing $\metaf_\ast$ and a retraction $F: \mU^{AF}\to \metaf_\ast$. Consequently, $\metaf_\ast$ is an ANR.
    \end{proposition}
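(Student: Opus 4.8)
The plan is to mirror the structure of Section~\ref{sec:ANR}. First, after the translation $g\mapsto g-\delta$, the set $\metaf$ becomes a convex (and, since $W^{k,p}_\beta\hookrightarrow C^{0,\alpha}_\beta$ and positive-definiteness is an open condition, also open) subset of the normed space $W^{k,p}_\beta$, hence an ANR by Theorem~\ref{thm:Dugundji}. By Proposition~\ref{prop:Criteria} an open subset of an ANR is an ANR and a retract of an ANR is an ANR, so it suffices, for each of the four spaces $\metaf_\ast$, to exhibit an open set $\mU^{AF}$ with $\metaf_\ast\subset\mU^{AF}\subset\metaf$ together with a retraction $F\colon\mU^{AF}\to\metaf_\ast$.

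I would construct $F$ by the conformal method, solving this time for an \emph{exact} correction rather than for a one-parameter family as in Propositions~\ref{prop.gr.step1} and~\ref{proposition.gr.other.curvature.conditions}. Write $L_g=\bigl(\Delta_g-\tfrac18 R_g,\ \partial_V+\tfrac14 H_g\bigr)$. By Proposition~\ref{proposition.weighted.sobolev.fredholm} this operator is Fredholm of index $0$ for every $g\in\metaf$ and is an isomorphism whenever $R_g\ge0$ and $H_g\ge0$; in particular it is an isomorphism at every $g\in\metaf_\ast$, since on $\metaf_\ast$ one has $\tfrac18 R_g\ge0$ and $\tfrac14 H_g\ge0$ in all four cases. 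As invertibility of $L_g$ is an open condition in $g$ and $g\mapsto L_g$ is continuous, there is an open set $\mU_0\supset\metaf_\ast$ on which $u_g:=L_g^{-1}(s_g,b_g)\in W^{k,p}_\beta$ is well defined, where the source $(s_g,b_g)$ is chosen according to $\ast$: we take $s_g=\tfrac18 R_g$ if the constraint on the scalar curvature is $R=0$ and $s_g=-\tfrac18 (R_g)_{-}$ if it is $R\ge0$, and, independently, $b_g=-\tfrac14 H_g$ if the boundary constraint is $H=0$ and $b_g$ a positive multiple of $(H_g)_{-}$ if it is $H\ge0$. Using the standard conformal transformation formulae for the scalar and mean curvature in dimension three, one checks by a direct computation that, as long as $1+u_g>0$, the metric $F(g):=(1+u_g)^{4}g$ has non-negative scalar curvature — identically zero in the cases where the constraint is $R=0$ — and non-negative boundary mean curvature, identically zero in the $H=0$ cases; and $F(g)-\delta\in W^{k,p}_\beta$ because $W^{k,p}_\beta$ is closed under multiplication. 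Hence $F(g)\in\metaf_\ast$.

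It remains to see that $F$ is a retraction defined on an open neighbourhood. On $\metaf_\ast$ the source $(s_g,b_g)$ vanishes identically (e.g.\ $(R_g)_{-}\equiv0$ when $R_g\ge0$), so $u_g=0$ by injectivity of $L_g$, and therefore $F|_{\metaf_\ast}=\mathrm{id}$. Since $g\mapsto(s_g,b_g)$ and $g\mapsto L_g^{-1}$ are continuous, $g\mapsto u_g$ is continuous into $W^{k,p}_\beta\hookrightarrow C^{0}$, hence $m(g):=\min_{X_\infty}(1+u_g)$ is a continuous function of $g$ which equals $1$ on $\metaf_\ast$; consequently $\mU^{AF}:=\{g\in\mU_0:m(g)>0\}$ is open, contains $\metaf_\ast$, and $F\colon\mU^{AF}\to\metaf_\ast$ is the desired retraction. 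Invoking Proposition~\ref{prop:Criteria} twice then gives that $\metaf_\ast$ is an ANR; as in Remark~\ref{rem:ExtANR}, the same argument covers the shifted spaces $\metaf_{R\ge R_0,H\ge H_0}$ and their variants.

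The point I expect to require real care is the regularity and continuous dependence of the corrected source terms. One must check that post-composition with the Lipschitz map $r\mapsto r_{-}$ sends $R_g\in W^{k-2,p}_{\beta-2}$ back into $W^{k-2,p}_{\beta-2}$ (and $H_g$ into the appropriate boundary trace space), depending continuously on $g$. For $k\le3$ this is immediate from the chain rule for Lipschitz compositions, since $\nabla (R_g)_{-}=-\mathbf 1_{\{R_g<0\}}\nabla R_g$; for larger $k$ one should either invoke Marcus--Mizel-type composition estimates, exploiting that $\nabla R_g=0$ almost everywhere on $\{R_g=0\}$, or first solve $L_g u_g=(s_g,b_g)$ at the $W^{2,p}$ level and then bootstrap by interior and boundary elliptic regularity in the open region $\{R_g\neq0\}$, where the source is as regular as $R_g$ itself, after shrinking $\mU^{AF}$ so that all relevant norms stay under control. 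Everything else — openness of $\mU_0$ and $\mU^{AF}$, the conformal computation, the asymptotic decay of $F(g)$, and the identity $F|_{\metaf_\ast}=\mathrm{id}$ — is routine and parallels the treatment of $\met_{R>0,H\ge0}$ and $\met_{R>0,H=0}$ for Theorem~\ref{thm:ANRstructure}, so I would present it briskly.
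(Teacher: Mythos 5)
Your construction is essentially the same as the paper's: solve the linear problem $\bigl(\Delta_g-\tfrac18 R_g,\ \partial_V+\tfrac14 H_g\bigr)u=(s_g,b_g)$ with a source that vanishes identically on $\metaf_\ast$, set $F(g)=(1+u_g)^4g$ on the open set where the operator stays invertible and $1+u_g>0$, and conclude with Proposition \ref{prop:Criteria}. The only differences are cosmetic and in your favour: your signs for the source terms in the $R\ge 0$ and $H\ge 0$ cases are the ones consistent with the stated convention for negative parts, and you explicitly address the continuity of $g\mapsto (R_g)_{-}$ into $W^{k-2,p}_{\beta-2}$ for large $k$, a point the paper passes over in silence.
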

    
    \begin{proof}
        We formulate the proof for $\metaf_\ast=\metaf_{R=0, H=0}$, as the argument for the other three spaces of metrics is very similar. For each $g\in \metaf_{R=0,H=0}$, the operator
        \[\left(\Delta_{g} - \tfrac18 R_{g}, \partial_V+\tfrac14 H_g\right)=(\Delta_g, \partial_V): W^{k,p}_\beta(X_\infty) \to W^{k-2,p}_\beta(X_\infty)\times W^{k-1-\tfrac 1p, p}(\partial X_\infty)\]
        is an isomorphism. Thus, there exists $\eps(g)>0$ such that when $\tilde g\in \metaf$, $\|\tilde g-g\|_{k,p,\beta}<\eps(g)$, the operator
        \[\left(\Delta_{\tilde g} - \tfrac18 R_{\tilde g}, \partial_V+\tfrac14 H_{\tilde g}\right): W^{k,p}_\beta(X_\infty) \to W^{k-2,p}_\beta(X_\infty)\times W^{k-1-\tfrac 1p, p}(\partial X_\infty)\]
        is also an isomorphism. Moreover, observe that $u=0$ is the unique solution to $-\Delta_g u =0$, $\partial_V u =0$. Thus, by shrinking $\eps(g)$ if necessary, the unique solution $u(\tilde g)$ to
        \begin{equation}\label{eq:MasterANRaf}\left(\Delta_{\tilde g} - \tfrac18 R_{\tilde g}, \partial_V+\tfrac14 H_{\tilde g}\right)u(\tilde g)=\left(\tfrac 18 R_{\tilde g}, -\tfrac14 H_{\tilde g}\right)
        \end{equation}
        satisfies $|u(\tilde g)|<\tfrac12$.
        
        We define an open neighborhood $U(g)$ as:
        \[U(g)=\left\{\tilde g\in \metaf : \|\tilde g-g\|_{k,p,\beta}<\eps(g)\right\},\]
        and then set
        \[\mU^{AF} = \bigcup_{g\in \metaf_\ast} U(g).\]
        We then define the map $F: \mU^{AF}\to \metaf_\ast$ as
        \[F(\tilde g)=\left(1+u(\tilde g)\right)^4 \tilde g.\]
        We verify that $F: \mU^{AF}\to \metaf$ has the properties we want. First, by definition $\mU^{AF}$ is open, $\metaf_\ast\subset \mU^{AF}$, and $F$ is a continuous map between the spaces. Also, when $\tilde g\in \metaf_\ast$, $R_{\tilde g}=0$ and $H_{\tilde g}=0$, and thus $u(\tilde g)=0$, hence $F(\tilde g)=\tilde g$. Finally, $R_{F(\tilde g)}=0$ and $H_{F(\tilde g)}=0$ for all $\tilde g\in \mU^{AF}$.
        
        When dealing with the other three spaces it suffices to replace in the right-hand side of \eqref{eq:MasterANRaf} the pair $\left(\tfrac 18 R_{\tilde g}, -\tfrac14 H_{\tilde g}\right)$ by means of 
        \[
        \left(\tfrac 18 (R_{\tilde g})_{-}, -\tfrac14 H_{\tilde g}\right), \ \left(\tfrac 18 R_{\tilde g}, -\tfrac14 (H_{\tilde g})_{-}\right), \ \left(\tfrac 18 (R_{\tilde g})_{-}, -\tfrac14 (H_{\tilde g})_{-}\right)
        \]
        respectively. The structure of the argument is unaffected, we omit the details.
    \end{proof}

    Lastly, we get to studying the space of asymptotically flat initial data sets on $X_\infty$, namely, the class of all pairs $(g,h)$ such that $g\in \metaf$ and $h$ is a symmetric $(0,2)$-tensor with $h_{ij}\in W^{k-1,p}_{\beta-1}$. The vacuum constraint equations on $X_\infty$ read as follows:
    \begin{equation*}
        \begin{cases}
            R_g + (\tr_g h)^2  - |h|_g^2 = 0,\\
        \div_g\left( h - (\tr_g h)g\right)=0.
        \end{cases}
    \end{equation*}

    On $\partial X_\infty$, the so-called convergence for the outgoing family of null geodesics is given by
    \[\theta_+=\tr_g h - h(V,V) - H_g.\]
    We pose the condition that $\partial X_\infty$ is marginally outer trapped, i.e. that $\theta_+=0$. 
    
    We shall be concerned with \textit{maximal} initial data sets, which by definition corresponds to requiring $\tr_g h =0$. We further pose the additional (rather standard) condition that $h(V,V)\le 0$, and note that, under this condition, $H_g\ge 0$. To sum up, we define the space of maximal solutions to the vacuum constraint equation with marginally outer trapped boundary as
    \begin{multline*}
        \metaf_{T}=\left\{(g,h): g\in \metaf, h \text{ is a symmetric }(0,2) \text{ tensor} \ \text{satisfying} \ h\in W^{k-1,p}_{\beta-1},\right. \\ 
        \left. \vphantom{W^{k,p}_\beta} R_g = |h|_g^2 , \ \tr_g h=0, \ \div_g h = 0, \text{ and }H_g= -h(V,V)\ge 0\right\}.
    \end{multline*}
    
    In order to prove that $\metaf_T$ is contractible, we first show that all of its homotopy groups actually vanish.
    
    \begin{proposition}\label{proposition.gr.metaf.weakly.contractible}
        For each $\ell\ge 0$, $\pi_\ell(\metaf_T)=0$.
    \end{proposition}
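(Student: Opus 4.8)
The plan is to exhibit a (strong) deformation retraction of $\metaf_T$ onto the subspace $A:=\{(g,0)\,:\,g\in\metaf_{R=0,H=0}\}$ and then invoke Theorem \ref{thm.gr.all.R.H}: since $A\cong\metaf_{R=0,H=0}$ is contractible and $\metaf_T$ is homotopy equivalent to $A$, all homotopy groups of $\metaf_T$ vanish. The retraction will progressively scale the second fundamental form to zero, correcting the metric within its conformal class so as to remain on the constraint set at every stage — this is the usual conformal (Lichnerowicz–York) method, here in maximal gauge, adapted to the asymptotically flat category via the Fredholm theory of Proposition \ref{proposition.weighted.sobolev.fredholm}.

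Concretely, given $(g,h)\in\metaf_T$ and $\mu\in[0,1]$, consider the seed data $(g,(1-\mu)h)$. Because $\tr_g h=0$ and $\div_g h=0$ are homogeneous and linear in $h$, the tensor $(1-\mu)h$ is again transverse–traceless with respect to $g$; moreover conformal changes in dimension three preserve both tracelessness and the divergence constraint. The only equation violated is the Hamiltonian one, which we restore by solving for a positive $\phi=\phi_\mu(g,h)\in 1+W^{k,p}_\beta$ the problem
\begin{equation*}
\begin{cases}
8\Delta_g\phi-R_g\phi+(1-\mu)^2|h|_g^2\,\phi^{-7}=0 & \text{in } X_\infty,\\
4\,\partial_V\phi+H_g\phi-(1-\mu)\,H_g\,\phi^{-3}=0 & \text{on } \partial X_\infty,
\end{cases}
\end{equation*}
where we have used $R_g=|h|_g^2$ and $h(V,V)=-H_g$, both valid since $(g,h)\in\metaf_T$. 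At $\mu=0$ the unique solution is $\phi\equiv 1$. One then sets $\Psi_\mu(g,h):=\big(\phi_\mu^4 g,\ \phi_\mu^{-2}(1-\mu)h\big)$, which lies in $\metaf$ (resp. $W^{k-1,p}_{\beta-1}$) since $W^{k,p}_\beta$ is closed under multiplication. A direct computation with the conformal transformation laws \eqref{eq:changeR}, \eqref{eq:changeH} and with the quantity $\theta_+=\tr_g h-h(V,V)-H_g$ shows $\Psi_\mu(g,h)\in\metaf_T$: the Hamiltonian constraint holds by the choice of $\phi_\mu$; the displayed boundary condition is exactly $\theta_+=0$ for the rescaled data; and since $H_g\ge 0$ one checks that the new mean curvature equals $(1-\mu)\phi_\mu^{-6}H_g\ge 0$ and the new $h(V,V)=-(1-\mu)\phi_\mu^{-6}H_g\le 0$. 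At $\mu=1$ the data is $(\phi_1^4 g,0)$ with vanishing scalar and mean curvature, hence lands in $A$; and on $A$ itself (where $h\equiv 0$, $R_g=H_g=0$) the equation forces $\phi_\mu\equiv 1$, so $\Psi_\mu|_A=\mathrm{id}_A$ and $\Psi_0=\mathrm{id}$. Thus $\Psi$ is a deformation retraction of $\metaf_T$ onto $A$, provided the assignment $(g,h,\mu)\mapsto\phi_\mu(g,h)$ is well defined and continuous.

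Existence of $\phi_\mu$ follows from the standard sub/super-solution method for the Lichnerowicz equation: the sign $R_g=|h|_g^2\ge 0$ together with the defocusing character of the singular nonlinearities (which also supplies an a priori lower bound for $\phi$) makes this a well-posed problem in the asymptotically flat setting, as in the work of Maxwell and Isenberg's method. Uniqueness and continuous dependence come from observing that the linearisation of the above system at a positive solution,
\[
\Big(8\Delta_g-|h|_g^2\big(1+7(1-\mu)^2\phi^{-8}\big),\ \ 4\,\partial_V+H_g\big(1+3(1-\mu)\phi^{-4}\big)\Big),
\]
has nonnegative zeroth-order coefficients both in the interior and on the boundary, hence is an isomorphism of the relevant weighted Sobolev spaces by Proposition \ref{proposition.weighted.sobolev.fredholm}; the implicit function theorem then yields local existence, uniqueness and smooth dependence, and the continuity method in $\mu$ (started from $\phi\equiv 1$ at $\mu=0$, with openness from the isomorphism and closedness from the a priori bounds) upgrades this to all $\mu\in[0,1]$. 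The main obstacle is precisely this analytic input: handling the \emph{singular boundary term} $\phi^{-3}$ in the super/sub-solution construction and obtaining bounds on $\phi$ uniform over the compact parameter family, together with the bookkeeping of the conformal scaling of $\theta_+$; everything else is formal. Granting this, $\Psi$ is continuous, $\metaf_T\simeq A\cong\metaf_{R=0,H=0}$, and Theorem \ref{thm.gr.all.R.H} gives $\pi_\ell(\metaf_T)=0$ for every $\ell\ge 0$.
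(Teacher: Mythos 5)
Your argument is correct, and the analytic engine is the same as the paper's: scale the second fundamental form linearly to $(1-\mu)h$ (which preserves the transverse--traceless conditions) and restore the Hamiltonian constraint and the marginally-outer-trapped boundary condition by solving the Lichnerowicz equation with oblique boundary condition, where $u\equiv 1$ is a supersolution precisely because $R_g=|h|_g^2\ge(1-\mu)^2|h|_g^2$ and $H_g\ge(1-\mu)H_g=-(1-\mu)h(V,V)$, the solution of the associated linear problem is a subsolution, and uniqueness/continuity follow from Propositions \ref{proposition.weighted.sobolev.fredholm} and \ref{proposition.gr.max.principle}. The packaging, however, differs. The paper introduces the enlarged space $\tilde \met^{AF}_T$ defined by the inequalities $R_g\ge|h|_g^2$ and $H_g\ge -h(V,V)\ge 0$, notes that $(g,h)\mapsto(g,(1-\mu)h)$ deforms it into $\metaf_{R\ge0,H\ge0}$ (so $\tilde \met^{AF}_T$ is weakly contractible by Proposition \ref{proposition.gr.other.curvature.conditions}), and then builds a single retraction $F:\tilde \met^{AF}_T\to\metaf_T$; surjectivity of $F_*$ on all homotopy groups then forces $\pi_\ell(\metaf_T)=0$, in the style of Marques. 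Your $\Psi_\mu(g,h)$ is exactly $F(g,(1-\mu)h)$, i.e.\ the composition of the paper's two maps, so you obtain the formally stronger conclusion that $\metaf_T$ deformation retracts onto $\{(g,0):g\in\metaf_{R=0,H=0}\}$ and is therefore homotopy equivalent to $\metaf_{R=0,H=0}$. The paper's factorisation spares one from checking that the intermediate data lie in $\metaf_T$ and reduces the topology to a surjectivity statement; yours gives a cleaner geometric picture at the cost of having to verify joint continuity of $(g,h,\mu)\mapsto\phi_\mu$, which is the same requirement as continuity of $F$. The analytic details you defer — the sub/supersolution construction handling the singular boundary term $\phi^{-3}$, uniqueness via the mean-value form of the difference equation, and continuity via the implicit function theorem at a linearisation with nonnegative zeroth-order coefficients — are precisely what the paper's proof (and, for the implicit function theorem step, the proof of Theorem \ref{thm.gr.maximal.solution.is.anr}) supplies, so there is no gap.
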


 \begin{proof}
        We exploit a nice trick by Marques (see \cite[Section 9]{Mar12}) and consider the following space of asymptotically flat data:
        \begin{multline*}
       \tilde \met^{AF}_T=\left\{(g,h): g\in \metaf, h \text{ is a symmetric }(0,2) \text{ tensor} \ \text{satisfying} \ h\in W^{k-1,p}_{\beta-1},\right. \\ 
        \left. \vphantom{W^{k,p}_\beta} R_g \geq |h|_g^2 , \ \tr_g h=0, \ \div_g h = 0, \text{ and }H_g \geq -h(V,V)\ge 0\right\}.
    \end{multline*}
        Observe that $\tilde \met^{AF}_T$ is weakly contractible. Indeed, any continuous map $(g,h): S^\ell\to \tilde\met^{AF}_T$ extends to a homotopy $(g,h): S^\ell \times [0,1]\to \tilde \met^{AF}_T$, defined by
        \[g(\xi,\mu)=g(\xi),\quad h(\xi,\mu)= (1-\mu)h(\xi).\]
        Note that this map is well defined, and $(g,h)(\cdot, 1)$ is a continuous map $S^\ell \to \metaf_{R\ge 0, H\ge 0}$, which is weakly contractible by Proposition \ref{proposition.gr.other.curvature.conditions}. 
        
        We will now construct a retraction $F: \tilde \met^{AF}_T\to \metaf_T$, that is, a continuous map $F$ such that $F=id$ on $\metaf_T$. Once we have this, take $i:\metaf_T\hookrightarrow \tilde \met^{AF}_T$ the inclusion map and consider the composition $F\circ i: \metaf_T\xhookrightarrow{i}\tilde \met^{AF}_T \xrightarrow{F} \metaf_T$. Since $F\circ i=id$, for each $\ell\ge 0$, $F_*: \pi_\ell(\tilde \met^{AF}_T)\to \pi_\ell (\metaf_T)$ is surjective. But $\pi_\ell(\tilde \met^{AF}_T)=0$, so $\pi_\ell(\metaf_T)=0$.
        
        For $(g,h)\in \tilde \met^{AF}_T$, we will construct $u\in 1+ W^{k,p}_\beta(X_\infty)$, $u>0$, such that $(\hat g,\hat h)=(u^4 g, u^{-2}h)\in \metaf_T$ and we will then define $F(g,h)=(\hat g,\hat h)$. Note that for any $u>0$, $\tr_{\hat g}\hat h=0$ and $\div_{\hat g}\hat h=0$. Under the conformal change, $R_{\hat g}= u^{-5}(-8\Delta_g u + R_g u)$, $H_{\hat g}=u^{-3}(4\partial_V u+ H_g u)$. Moreover, $|\hat h|_{\hat g}^2 = u^{-12} |h|_g^2$. If $V$ is the $g$-unit outward-pointing normal vector, then $u^{-2} V$ is the $\hat g$-unit outward-pointing normal vector, and $\hat h(u^{-2}V, u^{-2}V)=u^{-6} h(V,V)$. Therefore, $(\hat g,\hat h)\in \metaf_T$ if and only if $u$ solves the Lichnerowicz equation with oblique boundary condition
        \begin{equation}\label{eq.gr.lichnerowicz}
            \begin{cases}
                -\Delta_g u + \frac18 R_g u - \frac18 |h|_g^2 u^{-7}=0 \quad \text{ in }X_\infty,\\
                \partial_V u + \frac14 H_g u +\frac14 h(V,V) u^{-3}=0 \quad \text { on }\partial X_\infty.
            \end{cases}
        \end{equation}
        We solve the equation for $\phi=u-1\in W^{k,p}_\beta(X_\infty)$ using the method of sub- and supersolutions in the form given in \cite[Section 3, Proposition 2]{Maxwell2005solutions}. Since $R_g\ge |h|_g^2$ and $H_g\ge -h(V,V)$, $\phi_+=0$ is a supersolution. For a subsolution, we apply Proposition \ref{proposition.weighted.sobolev.fredholm} and take the unique solution $\phi_{-}\in W^{k,p}_\beta$ of
        \[\left( \Delta_g - \tfrac18 R_g, \partial_V +\tfrac14 H_g\right) \phi_{-} = \left(\tfrac18 R_g, -\tfrac14 H_g  \right).\]
        By the maximum principle, Proposition \ref{proposition.gr.max.principle} (and, again, the strong maximum principle), applied to both $\phi_{-}$ and $1+\phi_{-}$, $-1< \phi_{-}\le 0$. We have:
        \begin{align*}
            &-\Delta_g \phi_{-} + \frac18 R_g (1+\phi_{-}) - \frac18 |h|_g^2 (1+\phi_{-})^{-7}=-\frac18 |h|_g^2 (1+\phi_{-})^{-7}\le 0 \quad \text{ in }X_\infty,\\
                &\partial_V \phi_{-} + \frac14 H_g (1+\phi_{-}) +\frac14 h(V,V) (1+\phi_{-})^{-3}=\frac14 h(V,V)(1+\phi_{-})^{-3}\le 0 \quad \text { on }\partial X_\infty.
        \end{align*}
        Thus there exists a solution $u\in 1+W^{k,p}_\beta(X_\infty)$, $0< u\le 1$. Moreover, this solution is unique, for if $u_1,u_2\in 1+W^{k,p}_\beta(X_\infty)$ are two such solutions to \eqref{eq.gr.lichnerowicz}, then $u_1-u_2$ solves an equation in the form
        \[\left( - \Delta_g + \Lambda_1, \partial_V +\Lambda_2 \right)(u_1-u_2)=0,\]
        for  $\Lambda_1, \Lambda_2\ge 0$. Therefore $u_1=u_2$ by Proposition \ref{proposition.weighted.sobolev.fredholm}. Moreover, if $(g,h)\in \metaf_T$, then $u=1$ and hence $F(g,h)=(g,h)$.
           \end{proof}

    We next prove that, under the stronger condition that $k\ge 2$ and $k>\frac {3}{p}+1$, the space $\metaf_T$ is an ANR.
    
    \begin{theorem}\label{thm.gr.maximal.solution.is.anr}
        Let $(X_\infty,g)$ be an asymptotically flat manifold of class $W^{k,p}_\beta$, $k\ge 2$, $k>\frac 3p +1$, and $\beta\in (-1,0)$. Then the space of maximal solutions to the vacuum constraint equations with marginally outer trapped boundary $\metaf_T$ is an ANR.
    \end{theorem}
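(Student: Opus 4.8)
The plan is to mimic the ANR proof for $\metaf_{R=0,H=0}$ given in the previous proposition, constructing an open neighborhood $\mU^{AF}_T$ of $\metaf_T$ inside the space of all asymptotically flat initial data sets $(g,h)$ (the product space $\metaf \times \{h : h_{ij}\in W^{k-1,p}_{\beta-1}\}$, which is an open subset of a Fr\'echet space and hence an ANR by Theorem \ref{thm:PalaisCriterion} and Proposition \ref{prop:Criteria}), together with a retraction $F:\mU^{AF}_T\to\metaf_T$; the conclusion then follows from the second assertion in Proposition \ref{prop:Criteria}. The retraction must use a conformal change, since (as exploited in Proposition \ref{proposition.gr.metaf.weakly.contractible}) the map $(g,h)\mapsto (u^4 g, u^{-2}h)$ leaves the conditions $\tr_g h=0$ and $\div_g h=0$ invariant and converts the remaining constraints into the Lichnerowicz equation \eqref{eq.gr.lichnerowicz}. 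So the heart of the matter is to show that, for $(g,h)$ sufficiently close in the relevant norms to some fixed $(g_0,h_0)\in\metaf_T$, the Lichnerowicz equation \eqref{eq.gr.lichnerowicz} has a \emph{unique} positive solution $u\in 1+W^{k,p}_\beta$, and that this solution depends continuously on $(g,h)$.

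First I would fix $(g_0,h_0)\in\metaf_T$; then $u\equiv 1$ solves \eqref{eq.gr.lichnerowicz} for this data, and the linearisation of the Lichnerowicz operator at $u=1$ is
\[
\left(-\Delta_{g_0}+\tfrac18 R_{g_0}+\tfrac78|h_0|_{g_0}^2,\ \partial_V+\tfrac14 H_{g_0}-\tfrac34 h_0(V,V)\right).
\]
Since $(g_0,h_0)\in\metaf_T$ we have $R_{g_0}=|h_0|_{g_0}^2\ge 0$ and $H_{g_0}=-h_0(V,V)\ge 0$, so the zeroth-order interior coefficient $\tfrac18 R_{g_0}+\tfrac78|h_0|^2_{g_0}= |h_0|^2_{g_0}\ge 0$ and the boundary coefficient $\tfrac14 H_{g_0}-\tfrac34 h_0(V,V)=\tfrac14 H_{g_0}+\tfrac34 H_{g_0}=H_{g_0}\ge 0$ are both nonnegative; hence by Proposition \ref{proposition.weighted.sobolev.fredholm} this linearised operator is an isomorphism $W^{k,p}_\beta\to W^{k-2,p}_{\beta-2}\times W^{k-1-1/p,p}$. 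Here the stronger regularity hypothesis $k>3/p+1$ is what is needed: it guarantees that $W^{k,p}_\beta\hookrightarrow C^{1}_\beta$ (indeed $C^{1,\alpha}_\beta$) so that the nonlinear Lichnerowicz map $u\mapsto \big(-\Delta_g u+\tfrac18 R_g u-\tfrac18|h|^2_g u^{-7},\ \partial_V u+\tfrac14 H_g u+\tfrac14 h(V,V)u^{-3}\big)$ is a well-defined $C^1$ map between the weighted Sobolev spaces in a neighborhood of $u=1$ (the terms $u^{-7}, u^{-3}$ require $u$ to be bounded away from $0$, which $C^0$-control from the embedding provides, and the products land back in $W^{k-2,p}$ because this space is a module over $C^{1}$, again using $k>3/p+1$, $k\ge 2$). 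An application of the implicit function theorem in Banach spaces then yields, for each $(g_0,h_0)\in\metaf_T$, an $\eps(g_0,h_0)>0$ and a continuous (indeed $C^1$) solution map $(g,h)\mapsto u(g,h)\in 1+W^{k,p}_\beta$, defined on the ball $U(g_0,h_0)=\{(g,h): \|g-g_0\|_{k,p,\beta}+\|h-h_0\|_{k-1,p,\beta-1}<\eps(g_0,h_0)\}$, with $u(g_0,h_0)\equiv 1$ and $u>0$.

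I would then set $\mU^{AF}_T=\bigcup_{(g_0,h_0)\in\metaf_T}U(g_0,h_0)$, an open set containing $\metaf_T$, and define $F(g,h)=\big(u(g,h)^4 g,\ u(g,h)^{-2}h\big)$. Three things remain to check. Consistency/uniqueness: if $(g,h)$ lies in two balls $U(g_0,h_0)$ and $U(g_1,h_1)$, the two solutions produced must agree; this follows from the uniqueness of positive solutions of \eqref{eq.gr.lichnerowicz} in $1+W^{k,p}_\beta$, which is exactly the argument already run at the end of the proof of Proposition \ref{proposition.gr.metaf.weakly.contractible} (the difference of two solutions solves a linear problem $(-\Delta_g+\Lambda_1,\partial_V+\Lambda_2)(u_1-u_2)=0$ with $\Lambda_1,\Lambda_2\ge 0$, and Proposition \ref{proposition.weighted.sobolev.fredholm} forces $u_1=u_2$) — note the sign conditions on $\Lambda_1,\Lambda_2$ use the monotonicity of $t\mapsto t^{-7}$ and $t\mapsto t^{-3}$. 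That $F$ maps into $\metaf_T$: by construction $u(g,h)$ solves \eqref{eq.gr.lichnerowicz}, which is precisely equivalent to $(\hat g,\hat h)=(u^4g,u^{-2}h)$ satisfying $R_{\hat g}=|\hat h|^2_{\hat g}$, $\tr_{\hat g}\hat h=0$, $\div_{\hat g}\hat h=0$, $H_{\hat g}=-\hat h(V,V)$; and $H_{\hat g}\ge 0$ because one may run the sub/supersolution argument of Proposition \ref{proposition.gr.metaf.weakly.contractible} to see $0<u\le 1$ on the (slightly shrunk, if necessary) neighborhood so that $H_{\hat g}=-\hat h(V,V)= u^{-3}\big(4\partial_V u+H_g u\big)$ inherits nonnegativity, or more simply because $H_{\hat g}=-\hat h(V,V)$ and $\hat h(V,V)= u^{-6}h(V,V)\le 0$ once $h(V,V)\le 0$ is propagated (here one uses that the defining condition $H_g\ge -h(V,V)\ge 0$ together with $0<u\le 1$ and the max principle Proposition \ref{proposition.gr.max.principle} keeps $h(V,V)$ controlled; alternatively, closeness to $(g_0,h_0)$ with $h_0(V,V)\le 0$ combined with $C^1$-dependence of $u$ keeps $\hat h(V,V)$ nonpositive). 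Finally $F$ restricts to the identity on $\metaf_T$: if $(g,h)\in\metaf_T$ then $u\equiv 1$ solves \eqref{eq.gr.lichnerowicz}, so by uniqueness $u(g,h)=1$ and $F(g,h)=(g,h)$. The main obstacle, and the one point deserving care, is verifying that the nonlinear Lichnerowicz map is genuinely $C^1$ between the weighted Sobolev spaces with the prescribed indices — i.e. that $W^{k-1,p}_{\beta-1}$-regular $h$ and $W^{k,p}_\beta$-regular metric perturbations feed through the curvature operators $R_g, H_g$ and the algebraic terms $|h|^2_g u^{-7}$, $h(V,V)u^{-3}$ into $W^{k-2,p}_{\beta-2}$ and $W^{k-1-1/p,p}$ respectively, with continuous (Fréchet) dependence — and this is precisely where the hypothesis $k>\tfrac3p+1$ (rather than merely $k>\tfrac3p$) is indispensable.
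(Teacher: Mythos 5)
There is a genuine gap in your construction: your open neighborhood $\mU^{AF}_T$ lives in the full product space $\metaf\times W^{k-1,p}_{\beta-1}(\sym^2 X_\infty)$, but the retraction $F(g,h)=(u^4g,\,u^{-2}h)$ you define does \emph{not} land in $\metaf_T$ for a generic pair $(g,h)$ in that neighborhood. The conformal change preserves the conditions $\tr_g h=0$ and $\div_g h=0$ when they already hold, but it cannot create them: solving the Lichnerowicz equation \eqref{eq.gr.lichnerowicz} only arranges the Hamiltonian constraint $R_{\hat g}=|\hat h|^2_{\hat g}$ and the boundary condition $H_{\hat g}=-\hat h(V,V)$, and your claim that \eqref{eq.gr.lichnerowicz} is ``equivalent'' to all four defining conditions of $\metaf_T$ is false — the maximality and momentum constraints are separate, and for an arbitrary $(g,h)$ near $\metaf_T$ in the product topology the tensor $h$ is neither $g$-traceless nor $g$-divergence-free, so $F(g,h)\notin\metaf_T$. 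Consequently $F$ is not a retraction onto $\metaf_T$ and the appeal to Proposition \ref{prop:Criteria} does not go through.

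The paper repairs exactly this by a two-stage argument: first it proves (Proposition \ref{proposition.gr.traceless.transverse.anr}) that the space $\metaf_{TT}$ of pairs with $\tr_g h=0$, $\div_g h=0$ is itself an ANR, by exhibiting a retraction of the full product space onto $\metaf_{TT}$ via the York decomposition $h_0=h+KY$, where $Y$ solves the Neumann problem for the vector Laplacian $\Delta_K=K^*K$; this relies on Theorem \ref{thm.gr.vector.laplacian.isomorphism}, and it is \emph{here} — not in the $C^1$-regularity of the Lichnerowicz map, as you suggest — that the stronger hypothesis $k>3/p+1$ is indispensable (it rules out nontrivial conformal Killing fields decaying at infinity, making $(\Delta_K,B)$ an isomorphism rather than merely Fredholm of index zero). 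Only then does one take an open neighborhood $\metaf_1$ of $\metaf_T$ \emph{inside} $\metaf_{TT}$ and run your implicit-function-theorem/Lichnerowicz argument, which is correct in that restricted setting since transverse-tracelessness is now automatic and is preserved by the conformal rescaling. Your treatment of the linearised operator, its invertibility via $R_g=|h|^2_g$ and $H_g=-h(V,V)$, and the uniqueness/consistency of the solution map is otherwise sound and matches the paper's second stage.
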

    
    The proof of Theorem \ref{thm.gr.maximal.solution.is.anr} is divided into several steps. We first introduce some convenient notations and collect some fundamental facts. We denote by $W^{k,p}_\beta(\sym^2 X_\infty)$, $W^{k,p}_{\beta}(\Gamma X_\infty)$ and $W^{k,p}\left(\Lambda^1 (\partial X_\infty)\right)$ the space of symmetric $(0,2)$-tensors, vector fields on $X_\infty$ and $1$-forms on $\partial X_\infty$, respectively, of $W^{k,p}_\beta$ and $W^{k,p}$ regularity.
    
    Let $K$ denote the conformal Killing operator, defined as $K Y = \tfrac 12 \mL_Y g - \tfrac 1n (\div_g Y)g$, here $\mL$ is the Lie derivative, and $Y\in W^{2,p}_\beta(\Gamma X_\infty)$. One easily checks that for any $Y$, $\tr_g KY=0$, and $K: W^{k,p}_\beta(\Gamma X_\infty)\to W^{k-1,p}_{\beta-1}(\sym^2 X_\infty)$ is a bounded operator. It is well-known that the formal $L^2$ adjoint of $K$ is $\div_g$ (composed with the musical isomorphism $^{\#}$). We denote by $\Delta_K = \div_g K = K^*K$ the vector Laplacian, and consider the associated Neumann boundary operator $B:W^{k,p}_{\beta}(X_\infty)(\Gamma X_\infty)\to W^{k-1-\tfrac 1p,p}(\Lambda^1(\partial X_\infty))$, defined as $BY= KY(V,\cdot)$. A fundamental result in this direction was proved in \cite[Section 5, Theorem 3]{Maxwell2005solutions}.
    
    \begin{theorem}\label{thm.gr.vector.laplacian.isomorphism}
        Suppose $(X_\infty,g)$ is asymptotically flat of class $W^{k,p}_\beta$, $k\ge 2$, $k>3/p + 1$, $\beta\in (-1,0)$. Then the operator
        \[(\Delta_K, B): W^{k,p}_\beta(\Gamma X_\infty)\to W^{k-2,p}_{\beta-2}(\Gamma X_\infty)\times W^{k-1-\tfrac 1p,p}\left(\Lambda^1(\partial X_\infty)\right)\]
        is an isomorphism.
    \end{theorem}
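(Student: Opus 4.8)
The plan is to follow the strategy of \cite[Section 5, Theorem 3]{Maxwell2005solutions}, in three movements: first establish the Fredholm property with vanishing index, then prove injectivity by a variational (integration by parts) argument, and finally deduce surjectivity for free.

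\emph{Step 1: $(\Delta_K, B)$ is a Fredholm operator of index $0$.} The conformal Killing operator $K$ has injective principal symbol $\sigma_K(\xi)$ for $\xi\neq 0$ (the conformal Killing equation is an overdetermined elliptic system), so $\Delta_K=K^{\ast}K$ is a strongly elliptic second-order operator with positive-definite principal symbol $\sigma_K(\xi)^{\ast}\sigma_K(\xi)$. One then checks that the boundary operator $BY=KY(V,\cdot)$ — the natural ``Neumann-type'' datum dictated by the Green's identity for $K$ — complements $\Delta_K$ in the Lopatinski--Shapiro sense on the model half-space; equivalently, $Y\mapsto\int|KY|^2$ paired with the trace $BY$ is a coercive variational problem. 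Granting this, the general Fredholm theory on weighted Sobolev spaces over asymptotically flat manifolds — exactly as in the scalar case recorded in Proposition \ref{proposition.weighted.sobolev.fredholm} (see \cite[Section 3]{Maxwell2005solutions} and the references therein), extended to systems with boundary — applies: for $\beta\in(-1,0)$, which is free of exceptional (indicial) weights in dimension $3$, the operator $(\Delta_K,B)$ is Fredholm of index $0$. It is here that the hypothesis $k>\tfrac3p+1$ is used, to ensure that $W^{k-1,p}_{\beta-1}$-tensors and the relevant boundary traces lie in the function spaces for which the elliptic package and the pointwise algebra are available.

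\emph{Step 2: Injectivity.} Let $Y\in W^{k,p}_\beta(\Gamma X_\infty)$ satisfy $\Delta_K Y=0$ and $BY=0$. Since $\beta\in(-1,0)$ contains no exceptional weight, the asymptotic expansion of solutions of $\Delta_K Y=0$ (weighted elliptic regularity) improves the decay of $Y$ to $W^{k,p}_{\beta''}$ for some $\beta''\in(-1,-\tfrac12)$: the leading term of $Y$ at infinity must sit at the first homogeneity below $0$, namely $-1$. For such a weight the flux of $\langle KY,Y\rangle$ across a coordinate sphere of radius $R$ is $O(R^{2\beta''+1})\to 0$, so integration by parts against $Y$, combined with $BY=0$, is licit and yields $\int_{X_\infty}|KY|^2\,dV=0$. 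Hence $KY=0$, i.e.\ $Y$ is a conformal Killing field of $(X_\infty,g)$ decaying at infinity. But an asymptotically flat manifold admits no nonzero decaying conformal Killing field: near infinity $Y$ is asymptotic to a Euclidean conformal Killing field, all of which are non-decaying except $0$, and then unique continuation for the elliptic system $KY=0$ forces $Y\equiv 0$. Thus $(\Delta_K,B)$ is injective for every $\beta\in(-1,0)$.

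\emph{Step 3: Conclusion.} A Fredholm operator of index $0$ that is injective is surjective, hence an isomorphism. (Alternatively one can argue surjectivity directly: via the same Green's identity the cokernel at weight $\beta$ is identified with the kernel of $(\Delta_K,B)$ at the $L^2$-dual weight $-3-\beta\in(-3,-2)$, where the integration by parts of Step 2 is immediate because the weight is well below $-\tfrac12$, and that kernel is again trivial.) This proves the theorem. The genuinely non-formal points are the verification in Step 1 that $B$ is a complementing boundary condition for the system $\Delta_K$ — so that the weighted-space Fredholm theory applies with an honest index-$0$ computation — together with the accompanying regularity/decay-improvement statement that makes the variational identity in Step 2 usable at a weight where the flux at infinity vanishes; the ``no decaying conformal Killing field'' input, though standard, is the other place where the asymptotic geometry enters essentially.
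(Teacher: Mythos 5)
The paper does not actually prove this statement: it is quoted directly from Maxwell \cite[Section 5, Theorem 3]{Maxwell2005solutions}, and the remark immediately following it in the text records exactly the structure you use (Fredholm of index $0$ already for $k>n/p$, with the stronger hypothesis $k>n/p+1$ needed only to rule out non-trivial decaying conformal Killing fields). Your three-step reconstruction — Fredholm of index $0$ for $(\Delta_K,B)$, injectivity via the decay-improved integration by parts reducing to $KY=0$ together with the absence of decaying conformal Killing fields on an asymptotically flat manifold, then surjectivity for free — is a faithful outline of Maxwell's argument, so the proposal is correct and follows essentially the same route as the source the paper relies on.
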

    
    \begin{remark}
        In general, if we just assume $k>n/p$, then $(\Delta_K, B)$ is a Fredholm operator of index $0$, see \cite[Section 5, Proposition 6]{Maxwell2005solutions}. The stronger assumption $k>n/p+1$ is used to show that $\ker K=0$, i.e. there is no non-trivial conformal Killing vector field on $(X_\infty, g)$ vanishing at infinity, see \cite[Section 6]{Maxwell2005solutions}.
    \end{remark}
    
    \begin{proposition}\label{proposition.gr.traceless.transverse.anr}
        For $k\ge 2, k>3/p+1, \beta\in (-1,0)$, the space of asymptotically flat metrics $g$ and $g$-transverse, traceless symmetric $(0,2)$-tensors
        \[\metaf_{TT} := \left\{(g,h): g\in \metaf, h\in W^{k-1,p}_{\beta-1}(\sym^2 X_\infty), \tr_g h=0, \div_g h=0\right\}\]
        is an ANR.
    \end{proposition}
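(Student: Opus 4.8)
The plan is to realise $\metaf_{TT}$ as a retract of the ambient space
\[
\mathcal{A} := \metaf \times W^{k-1,p}_{\beta-1}(\sym^2 X_\infty),
\]
which is an open subset of the affine Banach space $\left(\delta + W^{k,p}_\beta(\sym^2 X_\infty)\right)\times W^{k-1,p}_{\beta-1}(\sym^2 X_\infty)$ (the openness of $\metaf$ being immediate from the Sobolev embedding $W^{k,p}_\beta\hookrightarrow C^{0,\alpha}_\beta$ recalled at the beginning of this section). Hence, by Theorem~\ref{thm:Dugundji} together with the first assertion of Proposition~\ref{prop:Criteria}, $\mathcal{A}$ is an ANR, and it suffices to produce a continuous retraction $F:\mathcal{A}\to\metaf_{TT}$; the conclusion then follows from the second assertion in Proposition~\ref{prop:Criteria}.

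The retraction is built from the York (conformal) decomposition. Given $(g,h)\in\mathcal{A}$, I would first pass to the $g$-traceless part $h_0 := h - \tfrac13(\tr_g h)\,g\in W^{k-1,p}_{\beta-1}(\sym^2 X_\infty)$. Since $(X_\infty,g)$ is asymptotically flat of class $W^{k,p}_\beta$ with $k\ge 2$, $k>3/p+1$ and $\beta\in(-1,0)$, Theorem~\ref{thm.gr.vector.laplacian.isomorphism} guarantees that $(\Delta_K,B):W^{k,p}_\beta(\Gamma X_\infty)\to W^{k-2,p}_{\beta-2}(\Gamma X_\infty)\times W^{k-1-\tfrac1p,p}(\Lambda^1(\partial X_\infty))$ is an isomorphism; I then let $Y=Y(g,h)\in W^{k,p}_\beta(\Gamma X_\infty)$ be the unique solution of
\[
\Delta_K Y = (\div_g h_0)^{\#}\ \ \text{in}\ X_\infty, \qquad BY = 0\ \ \text{on}\ \partial X_\infty,
\]
(here vector fields are identified with $1$-forms through $g$), and set $F(g,h):=\bigl(g,\ h_0 - KY\bigr)$. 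By construction $\tr_g(h_0-KY)=0$, as $\tr_g KY=0$ for every $Y$, while $\div_g(h_0-KY)=\div_g h_0-\Delta_K Y=0$; moreover $h_0-KY\in W^{k-1,p}_{\beta-1}(\sym^2 X_\infty)$ since $K$ is bounded from $W^{k,p}_\beta(\Gamma X_\infty)$ into $W^{k-1,p}_{\beta-1}(\sym^2 X_\infty)$. Thus $F(g,h)\in\metaf_{TT}$.

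It then remains to check that $F$ is a continuous retraction. If $(g,h)\in\metaf_{TT}$ then $\tr_g h=0$, so $h_0=h$, and $\div_g h_0=\div_g h=0$, whence $(\Delta_K Y,BY)=(0,0)$ and therefore $Y=0$ by injectivity of $(\Delta_K,B)$; consequently $F(g,h)=(g,h)$, i.e.\ $F$ restricts to the identity on $\metaf_{TT}$. For continuity, one observes that $(g,h)\mapsto h_0$, $(g,h)\mapsto (\div_g h_0)^{\#}$ and $g\mapsto (\Delta_K^g,B^g)$ are continuous maps into the relevant Banach spaces (respectively, into the Banach space of bounded operators), using that the Christoffel symbols of $g$ lie in $W^{k-1,p}_{\beta-1}$ and that the weighted Sobolev spaces involved are closed under multiplication; since inversion is continuous on the open set of isomorphisms, $(g,h)\mapsto Y(g,h)=(\Delta_K^g,B^g)^{-1}\bigl((\div_g h_0)^{\#},0\bigr)$ is continuous, and hence so is $F$.

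I expect the only real subtlety — hardly an obstacle — to be the bookkeeping behind the continuous dependence on $g$ (not merely smoothness) of the metric-dependent operators in the weighted Sobolev scale, in particular of $g\mapsto(\Delta_K^g,B^g)^{-1}$; this is handled once and for all by the multiplication and embedding properties of the spaces $W^{k,p}_\beta$ recalled at the start of this section. The conceptual content is entirely contained in the York decomposition combined with the isomorphism statement of Theorem~\ref{thm.gr.vector.laplacian.isomorphism}, and it is precisely the absence of nontrivial conformal Killing fields of $(X_\infty,g)$ vanishing at infinity (forced by $k>3/p+1$) that makes the auxiliary vector field $Y$, and hence the retraction $F$, well-defined.
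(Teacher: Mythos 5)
Your argument coincides with the paper's: both realise $\metaf_{TT}$ as a retract of the ambient product space via the York decomposition, solving $(\Delta_K,B)Y=\bigl(K^*(h-\tfrac13(\tr_g h)g),0\bigr)$ with Theorem~\ref{thm.gr.vector.laplacian.isomorphism} and subtracting $\tfrac13(\tr_g h)g+KY$. Your treatment is if anything slightly more careful than the paper's on the ANR status of the ambient space and on the continuity of $(g,h)\mapsto Y$, which the paper leaves implicit.
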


 \begin{proof}
        The proof here is inspired by the classical $L^2$ decomposition by York \cite{York1974decomposition}. Denote by
        \[\metaf_0 : = \left\{(g,h): g\in \metaf, h\in W^{k-1,p}_{\beta-1}(\sym^2 X_\infty)\right\}\]
        the space of all metrics and symmetric $(0,2)$-tensors. Then $\metaf_0$ is the product (direct sum) of two Banach spaces, and hence an ANR. We will now construct a retraction $F:\metaf_0\to \metaf_{TT}$, i.e. a continuous $F$ such that $F=id$ on $\metaf_{TT}$. 
        
        For each $(g,h)\in \metaf_0$, based on Theorem \ref{thm.gr.vector.laplacian.isomorphism} let $Y\in W^{k,p}_\beta(\Gamma X_\infty)$ be the unique solution to the elliptic problem
        \[\begin{cases}\Delta_K Y = K^*(h-\tfrac  1n (\tr_g h)g),\\ BY = 0,\end{cases}\]
        and define 
        \[F(g,h)=\left(g, h- \tfrac 1n (\tr_g h)g - KY\right).\]
        We verify that $F$ has our desired properties. First, if $(g,h)\in \metaf_{TT}$, then $\tr_g h=0$ and $K^*h =(\div_g h)^{\#}=0$. Thus $Y=0$ is the only solution to $(\Delta_K , B)Y=(0,0)$. Therefore $F=id$ on $\metaf_{TT}$. 
        Then, take a pair $(g,h)$: we observe that $\tr_g (h-\tfrac 1n (\tr_g h)g-KY)=0$, and $(\div_g (h-\tfrac1n (\tr_g h)g-KY))^{\#}=K^*(h-\tfrac 1n (\tr_g h )g)-\Delta_K Y=0$, thus $F(g,h)\in \metaf_{TT}$ which completes the proof. 
    \end{proof}
    
    \begin{remark}
        The argument above gives us, as a byproduct, the following York decomposition in the asymptotically flat setting. For any traceless symmetric $(0,2)$ tensor $h_0\in W^{k-1,p}_{\beta-1}$, there is a unique decomposition
        \[h_0 = h + KY,\]
        where $h$ is a traceless, transverse symmetric $(0,2)$ tensor in $W^{k-1,p}_{\beta-1}$, $Y$ is a $W^{k,p}_\beta$ vector field with the Neumann boundary condition $BY=0$, and moreover $h\perp KY$ in $L^2(X_\infty)$.
    \end{remark}
    We are now ready to prove Theorem \ref{thm.gr.maximal.solution.is.anr}
    \begin{proof}[Proof of Theorem \ref{thm.gr.maximal.solution.is.anr}]
        We will construct an open neighborhood $\metaf_1$ of $\metaf_T$ inside $\metaf_{TT}$, and a retraction $F: \metaf_1\to \metaf_T$. Then it follows from Proposition \ref{prop:Criteria} that $\metaf_T$ is an ANR, as desired.
        
        Take an element $(g,h)\in \metaf_T$. For $u\in 1+W^{k,p}_\beta(X_\infty)$, consider the functional 
        \[G:\metaf_{TT}\times \left(1+W^{k,p}_\beta(X_\infty)\right)\to W^{k-2,p}_{\beta-2}(X_\infty)\times W^{k-1-\tfrac 1p, p}(\partial X_\infty),\]
        defined by
        \[G(\hat g,\hat h,u)=\left(-\Delta_{\hat g} u +\frac 18 R_{\hat g} u -\frac 18 |\hat h|_{\hat g}^2 u^{-7}, \partial_V u+\frac 14 H_{\hat g} u +\frac 14 \hat h(V_{\hat g},V_{\hat g})u^{-3}\right),\]
        where it is understood (as above) that $V_{\hat g}$ denotes the outward-pointing unit normal in metric $\hat{g}$.
        The Fr\'echet derivative of $G$ with respect to $u$, evaluated at $(g,h,1)$,
        \[\frac{\partial G}{\partial u}\bigg|_{\hat g=g, \hat h=h, u=1}: W^{k,p}_{\beta}(X_\infty)\to W^{k-2,p}_{\beta-2}(X_\infty)\times W^{k-1-\tfrac 1p, p}(\partial X_\infty),\]
        is given by
        \[v\mapsto \left(-\Delta_g v + \tfrac 18 R_g v + \tfrac 78 |h|_g^2 v, \partial_V v + \tfrac 14 H_g v - \tfrac 34 h(V,V) v\right).\]
        Using the condition that $R_g= |h|_g^2$ and $H_g= -h(V,V)$, we then have
        \[\frac{\partial G}{\partial u}\bigg|_{(g,h,1)}(v) = \left(-\Delta_g v + R_g v, \partial_V v + H_g v\right). \]
        Since $R_g\ge 0, H_g\ge 0$, the operator $(-\Delta_g + R_g, \partial_V + H_g)$ is an isomorphism by Proposition \ref{proposition.weighted.sobolev.fredholm}. 
        
        By the implicit function theorem, there exists $\eps(g,h)>0$ and a continuous map $u:(\hat g,\hat h)\mapsto u(\hat g,\hat h)$, defined on $(\hat g,\hat h)$ satisfying $(\hat g,\hat h)\in \metaf_{TT}$, $\|g-\hat g\|_{k,p,\beta}< \eps$, $\|h-\hat h\|_{k-1,p,\beta-1}<\eps$, such that $u(g,h)=1$, $u(\hat g,\hat h)>0$, and $u=u(\hat g,\hat h)$ is the only solution to $G(\hat g, \hat h, u)=G(g,h,1)=0$.

        Hence, define
        \[\metaf_1 := \bigcup_{(g,h)\in \metaf_T} \{(\hat g,\hat h)\in \metaf_{TT}: \|g-\hat g\|_{k,p,\beta}< \eps(g,h), \|h-\hat h\|_{k-1,p,\beta-1}<\eps(g,h)\},\]
        and the map $F: \metaf_1\to \metaf_T$ as
        \[F(\hat g,\hat h)=\left(u^4(\hat g,\hat h) \hat g, u^{-2}(\hat g,\hat h)\hat h\right).\]
        Now, by construction $F=id$ on $\metaf_T$. Also, since $\tr_{\hat g}\hat h=0$ and $\div_{\hat g}\hat h=0$, and $u(\hat g,\hat h)$ is the unique solution to \eqref{eq.gr.lichnerowicz} (that is to say: to the Lichnerowitz equation with oblique boundary conditions), it follows that $F$ is indeed a well-defined continuous map to $\metaf_T$ and so we conclude that $\metaf_T$ is an ANR as claimed.
        \end{proof}

\appendix

\section{Normal injectivity radius}\label{sec:NormalInjRad}

Here we prove the following ancillary result, that was employed in Section \ref{sec:ANR} when showing that for any compact manifold with boundary $X$ the (sub-)spaces of Riemannian metrics $\met_{R>0, H\geq 0}$ and $\met_{R>0, H=0}$ are absolute neighborhood retracts.

\begin{proposition}\label{proposition.continuity.normal.inj}
    The map $g\to i(\partial X,g)$ is continuous with respect to the $C^2$ topology on the space of Riemannian metrics.
\end{proposition}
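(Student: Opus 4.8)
The plan is to follow the idea of Sakai \cite{Sakai1983continuity}, adapted to the boundary setting. First I would recall the two ingredients that govern the normal injectivity radius $i(\partial X, g)$: (a) a lower bound for the distance at which normal geodesics emanating from $\partial X$ first intersect each other (a "focal/conjugate" bound), which is controlled by a two-sided bound on the sectional curvature of $(X,g)$ together with a bound on the second fundamental form of $\partial X$, hence depends continuously on $g$ in the $C^2$ topology; and (b) the observation that if $i(\partial X, g)$ is \emph{not} continuous at $g_0$, then along a sequence $g_j \to g_0$ in $C^2$ with $i(\partial X, g_j) \to i_\infty < i(\partial X, g_0)$ one produces, for each $j$, either a geodesic loop based on $\partial X$ realizing the failure or a focal point at distance $i(\partial X, g_j)$; passing to the limit one obtains the same configuration for $g_0$ at distance $i_\infty$, contradicting the definition of $i(\partial X, g_0)$ (which would force $i(\partial X, g_0) \le i_\infty$). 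The key structural fact I would isolate is the standard dichotomy: for each $g$, either there is a pair of distinct points $p, q \in \partial X$ and a unit-speed geodesic of length $i(\partial X,g)$ running from $p$ normally to $\partial X$ and arriving at $q$ normally to $\partial X$ (a "double normal"), or $\exp_{\cdot}(-rV)$ fails to be an immersion at $r = i(\partial X,g)$, i.e. there is a focal point.

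\textbf{Key steps.} (1) Fix $g_0$ and a sequence $g_j \to g_0$ in $C^2$. Since $X$ is compact, $|\Rm_{g_j}|$, $|\nabla^{g_j}\Rm_{g_j}|$ and $|\II_{g_j}|$ are uniformly bounded, and the metrics are uniformly comparable; the normal exponential maps $E_j(p,r) := \exp^{g_j}_p(-rV_j)$ converge in $C^1_{\mathrm{loc}}$ (in fact $C^2_{\mathrm{loc}}$, since a $C^2$ bound on $g$ gives a $C^1$ bound on the ODE coefficients, hence $C^2$ on solutions) on $\partial X \times [0,\infty)$ to $E_0$. (2) Establish the uniform positive lower bound from focal-point comparison: there is $\rho_0 > 0$, depending only on the above bounds, such that $E_j(\cdot, \cdot)$ is a diffeomorphism onto its image on $\partial X \times [0, \rho_0]$ for all $j$; in particular $i(\partial X, g_j) \ge \rho_0$, so $i_\infty := \liminf_j i(\partial X, g_j) \ge \rho_0 > 0$ and we may pass to a subsequence with $i(\partial X, g_j) \to i_\infty$. (3) Lower semicontinuity, $i(\partial X, g_0) \le i_\infty$: if $i(\partial X, g_0) > i_\infty$ then $E_0$ is a diffeomorphism on $\partial X \times [0, i_\infty + \delta]$ for some $\delta > 0$; by $C^1$ convergence of $E_j$ and compactness, $E_j$ would also be a diffeomorphism on $\partial X \times [0, i_\infty + \delta/2]$ for $j$ large, contradicting $i(\partial X, g_j) \to i_\infty$. (Here one must be slightly careful: being an injective immersion on a compact set is an open condition, but injectivity up to the boundary of the parameter interval needs the standard argument that a nearby map to an injective proper immersion is still injective — this uses that $E_0$ restricted to the compact slab is a homeomorphism onto its image.) (4) Upper semicontinuity, $i(\partial X, g_0) \ge i_\infty$: for each $j$ apply the dichotomy at radius $r_j := i(\partial X, g_j)$. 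In the double-normal case, pick $p_j, q_j \in \partial X$ with the normal geodesic $\gamma_j$ of $g_j$-length $r_j$ from $p_j$ to $q_j$; by compactness of $\partial X$ extract $p_j \to p$, $q_j \to q$, and by $C^1$ convergence of $E_j$ the geodesics $\gamma_j$ converge to a $g_0$-geodesic $\gamma$ of length $i_\infty$ from $p$ normal to $\partial X$ to $q$ normal to $\partial X$; hence $i(\partial X, g_0) \le i_\infty$. In the focal-point case, there is a nonzero Jacobi field along $\gamma_j$ (a normal Jacobi field vanishing appropriately at $\partial X$) with $dE_j$ degenerate at $(p_j, r_j)$; again extracting limits and using $C^2$ convergence of $E_j$ and of the Jacobi equation coefficients, one gets degeneracy of $dE_0$ at $(p, i_\infty)$, so $i(\partial X, g_0) \le i_\infty$. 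Combining (3) and (4) gives $i(\partial X, g_0) = i_\infty = \lim_j i(\partial X, g_j)$, which is continuity.

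\textbf{Main obstacle.} The delicate point is step (4) in the double-normal/focal dichotomy together with the subsequence argument: one must ensure that the failure of $E_j$ to be an embedding at radius exactly $r_j$ is witnessed by a \emph{bounded} configuration (two points in the compact set $\partial X$, a geodesic of bounded length since $r_j \to i_\infty < \infty$) so that limits can be extracted, and that the limiting configuration is genuinely nondegenerate, i.e. really obstructs $E_0$ from being an embedding on a slightly larger slab. This is exactly the content of Sakai's lemma, and I would cite \cite{Sakai1983continuity} for the clean packaging of the dichotomy and merely indicate the modifications needed to replace "injectivity radius at a point" with "normal injectivity radius of a hypersurface" — principally, replacing conjugate points by focal points and geodesic loops by geodesics meeting $\partial X$ orthogonally at both ends, and checking that the relevant ODE comparison (the Riccati comparison already used in the proof of Lemma \ref{lemma.eta}) furnishes the uniform lower bound $\rho_0$ in step (2).
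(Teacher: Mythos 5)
Your overall architecture is reasonable and your step (3) is a legitimate (and arguably cleaner) alternative to part of the paper's argument, but as written the proof establishes only one of the two inequalities needed for continuity. Write $i_\infty=\lim_j i(\partial X,g_j)$ along a subsequence. Your step (3) rules out $i(\partial X,g_0)>i_\infty$, i.e.\ it proves $i(\partial X,g_0)\le i_\infty$ (lower semicontinuity of $g\mapsto i(\partial X,g)$). But your step (4), despite being labelled ``upper semicontinuity, $i(\partial X,g_0)\ge i_\infty$'', proves the \emph{same} inequality again: the dichotomy is applied to the metrics $g_j$ at radius $r_j$, the witnesses (double normals or focal Jacobi fields) converge to a witness for $g_0$ at radius $i_\infty$, and a witness at radius $i_\infty$ yields $i(\partial X,g_0)\le i_\infty$ --- exactly the conclusion you state at the end of both sub-cases. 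Combining (3) and (4) therefore gives only $i(\partial X,g_0)\le i_\infty$, and the possibility $i(\partial X,g_j)\to i_\infty>i(\partial X,g_0)$ is never excluded.

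That missing direction cannot be obtained by running either of your arguments in reverse: a witness for $g_0$ at radius $i(\partial X,g_0)$ (in particular a double normal, which is merely a critical point of the length functional with free endpoints on $\partial X$) need not perturb to a witness for nearby metrics, and a $C^1$-limit of the embeddings $E_j|_{\partial X\times[0,R]}$ need not be an embedding, so injectivity does not pass from the $E_j$ to $E_0$. The paper closes this direction differently: it characterises $i(\partial X,g)$ as $\min_p r(p)$, where $r(p)$ is the maximal time up to which the normal geodesic from $p$ minimises distance to $\partial X$, and observes that the relation $d_{g_j}(p,\exp^{g_j}_p(-r_j V_j(p)))=r_j$ passes to the limit because distances and normal geodesics converge when $g_j\to g_0$; hence $d_{g_0}(p,\exp^{g_0}_p(-i_\infty V(p)))=i_\infty$ for every $p$, which gives $i(\partial X,g_0)\ge i_\infty$. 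You should supply this (or an equivalent closedness-of-minimisation argument) as the actual content of step (4), keeping your dichotomy argument, if at all, only as an alternative to step (3). A further small slip: in the double-normal case the geodesic from $p_j$ to $q_j$ has length $2r_j$, not $r_j$, since the characterisation involves \emph{half} the length of a geodesic orthogonal to $\partial X$ at both endpoints.
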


We recall that $i(\partial X,g)$ is the notation we employ to denote the normal injectivity radius of $\partial X$ as a submanifold of $X$, with respect to the background Riemannian metric $g$.
Our proof is a simple extension of an elegant argument by Sakai \cite{Sakai1983continuity} on the continuity of the injectivity radius functions on closed Riemannian manifolds. In fact, that proof directly carries over to our setting, once we have a nice characterization of $i(\partial X, g)$ in terms of basic geometric quantities.

Suppose $p\in \partial X$, $q\in X$ and let $\gamma$ be a unit speed normal geodesic connecting $p$ and $q$, such that $\dot\gamma(0)$ is normal to $\partial X$. Recall that, by definition, $J$ is a normal Jacobi field along $\gamma$ if $J$ satisfies the Jacobi field equation $\nabla_{\dot \gamma}\nabla_{\dot \gamma} J + \text{Rm}_g (\dot\gamma, J)\dot\gamma=0$, and $\langle J,\dot\gamma\rangle=0$. We have the following lemma.

\begin{lemma}\label{lemma.charactering.normal.inj}
    Suppose $(X,g)$ is a compact Riemannian manifold with boundary. Then its normal injectivity radius $i(\partial X, g)$ equals the smallest of the following two quantities:
    \begin{enumerate}
        \item The minimum length $r$ of a unit speed geodesic $\gamma:[0,r]$ with $\gamma(0)\in \partial X$, $\dot\gamma(0)=-V$, such that there exists a non-trivial normal Jacobi field $J$ along $\gamma$ such that 
        \begin{equation}\label{eq.jacobi}
            \nabla_{\dot\gamma(0)} J(0) = -S(J(0)), \quad J(r)=0.
        \end{equation}
        Here $S$ is the shape operator on $\partial X$ with respect to $V$.
        \item half the length of a geodesic connecting two points on $\partial X$, that it is orthogonal to $\partial X$ at both such endpoints.
    \end{enumerate}
\end{lemma}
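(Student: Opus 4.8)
We aim to prove the equivalent statement $i(\partial X,g)=\min\{r_1,r_2\}$, where $r_1$ and $r_2$ are the two quantities described in items (1) and (2). Set up the normal exponential map $\Phi\colon\partial X\times[0,\infty)\to X$, $\Phi(p,t)=\exp_p(-tV_p)$, whose restriction to the slice $\{t=r\}$ is the map $\exp_\cdot(-rV)$, and recall that $i(\partial X,g)$ equals the tubular neighbourhood radius $\tau$ of $\partial X$, i.e.\ the supremum of the $r$ for which $\Phi|_{\partial X\times[0,r)}$ is a diffeomorphism onto an open collar of $\partial X$ (that the literal definition via the slices agrees with this is a routine monotonicity/cut-locus point that I set aside). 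The one preliminary computation we need is that any Jacobi field $J$ along a normal geodesic $\gamma_p=\Phi(p,\cdot)$ with $J(0)\in T_p\partial X$ and $\nabla_{\dot\gamma_p(0)}J(0)=-S(J(0))$ is automatically normal to $\gamma_p$: the function $\langle J,\dot\gamma_p\rangle$ is affine in $t$ and vanishes together with its first derivative at $t=0$ (the derivative because $\langle\nabla_wV,V\rangle=\tfrac12 w\langle V,V\rangle=0$ along $\partial X$). Consequently $d(\exp_\cdot(-rV))_p$ is singular exactly when $\gamma_p(r)$ is a focal point of $\partial X$ along $\gamma_p$, equivalently exactly when there is a nontrivial normal Jacobi field as in (1); hence the first radius at which $\Phi$ fails to be an immersion equals $r_1$, and $\tau=\min\{r_1,\beta\}$, where $\beta$ is the first radius at which $\Phi$ loses injectivity. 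It remains to prove $\min\{r_1,\beta\}=\min\{r_1,r_2\}$, for which it suffices to establish: (a) if $r_2<r_1$ then $\beta\le r_2$, and (b) if $\beta<r_1$ then $r_2\le\beta$.

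Claim (a) is the easy half: if $\sigma\colon[0,2r_2]\to X$ realises $r_2$ and meets $\partial X$ orthogonally at both endpoints, its two halves $t\mapsto\sigma(t)$ and $t\mapsto\sigma(2r_2-t)$ on $[0,r_2]$ are normal geodesics from the two endpoints of $\sigma$ — necessarily distinct, since if they coincided then uniqueness of geodesics together with the orthogonality would force $\sigma$ to leave $X$ across $\partial X$ near that point, which is impossible — and they collide at $\sigma(r_2)$, so $\Phi(\sigma(0),r_2)=\Phi(\sigma(2r_2),r_2)$ and $\beta\le r_2$. Claim (b) is the substantive, Klingenberg-type step, and is where the real work lies. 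At the critical radius $\beta<r_1$, a compactness argument produces $(p_1,\rho_1)\ne(p_2,\rho_2)$ with $\Phi(p_1,\rho_1)=\Phi(p_2,\rho_2)=:q$ and $\max\{\rho_1,\rho_2\}=\beta$, while $\Phi$ is a diffeomorphism onto the collar $\{\dist(\cdot,\partial X)<\beta\}$ below radius $\beta$. One first rules out $\rho_1\ne\rho_2$: if $\rho_1<\rho_2$ then $\gamma_2$ fails to minimise the distance from $q$ to $\partial X$, so $\partial X$ has a cut point along $\gamma_2$ at some radius $<\beta$, which — there being no focal point before $r_1$ — must be a double point of $\Phi$ at a radius $<\beta$, contradicting minimality of $\beta$. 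Thus $\rho_1=\rho_2=\beta$ and both $\gamma_i$ realise $\dist(q,\partial X)=\beta$.

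The heart of (b) is then to show $\dot\gamma_1(\beta)=-\dot\gamma_2(\beta)$. Each $\gamma_i$ is focal-point-free on $[0,\beta]$, so near $q$ the length of the normal geodesic from $\partial X$ lying close to $\gamma_i$ is a well-defined smooth function $f_i$ with $\nabla f_i(q)=\dot\gamma_i(\beta)$. If $\dot\gamma_1(\beta)\ne-\dot\gamma_2(\beta)$, these two unit gradients are distinct (else $\gamma_1=\gamma_2$ by ODE uniqueness), the bisector $\{f_1=f_2\}$ is a smooth hypersurface through $q$, and $w:=-\tfrac12(\dot\gamma_1(\beta)+\dot\gamma_2(\beta))$ is nonzero, tangent to the bisector (since $\langle\dot\gamma_1(\beta)+\dot\gamma_2(\beta),\dot\gamma_1(\beta)-\dot\gamma_2(\beta)\rangle=0$), and satisfies $\langle w,\dot\gamma_i(\beta)\rangle<0$; moving $q$ within the bisector in the direction $w$ therefore produces points $q(s)$ with $f_1(q(s))=f_2(q(s))<\beta$. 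But then $q(s)$ lies at distance $<\beta$ from $\partial X$ while being joined to $\partial X$ by two distinct normal geodesics of minimal length, contradicting the injectivity of $\Phi$ below radius $\beta$. Hence $\dot\gamma_1(\beta)=-\dot\gamma_2(\beta)$, the concatenation $\gamma_1*\bar\gamma_2$ is a smooth geodesic of length $2\beta$ meeting $\partial X$ orthogonally at both ends, and $r_2\le\beta$. Combining (a) and (b) across the cases $r_2<r_1$, $r_2=r_1$, $r_2>r_1$ yields $\min\{r_1,\beta\}=\min\{r_1,r_2\}=i(\partial X,g)$, and the continuity in Proposition \ref{proposition.continuity.normal.inj} follows by transcribing Sakai's argument \cite{Sakai1983continuity} with this characterisation in place of the closed-manifold one. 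The genuinely delicate ingredient is the perturbation argument for (b); the bookkeeping identifying the stated definition of $i(\partial X,g)$ with $\tau$, and the case analysis at the end, are routine.
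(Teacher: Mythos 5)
Your proof is correct, and it runs on the same Klingenberg-type engine as the paper's: at the critical radius the obstruction is either a focal point of $\partial X$ (equivalently, a nontrivial Jacobi field as in item (1)) or a collision of two minimizing normal geodesics, whose velocities at the collision point a first-variation perturbation forces to be opposite, producing the doubly orthogonal geodesic of item (2). The packaging differs. The paper picks a boundary point $p$ minimizing $r(p)$, takes the limit $\sigma$ of minimizing geodesics from $\partial X$ to $\gamma(r(p)+\eps_j)$, and splits into the cases $\sigma=\gamma$ (where it shows $dF$ is singular and extracts the Jacobi field) and $\sigma\neq\gamma$ (where it straightens the angle). You instead separate the first failure of immersion of the normal exponential map $\Phi$ (which is $r_1$ by the standard Jacobi-field characterization of focal points) from the first failure of injectivity $\beta$, and prove $\beta\le r_2$ together with $(\beta<r_1)\Rightarrow(r_2\le\beta)$; your bisector-hypersurface argument with the two local distance functions $f_1,f_2$ is a clean equivalent of the paper's first-variation step. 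Two remarks. First, your elimination of the case $\rho_1\neq\rho_2$ leans on the dichotomy at the cut point of $\partial X$ along $\gamma_2$ (focal point or second minimizer), a standard citable fact but one of the same nature as the lemma itself; the paper's organization avoids this case entirely, since its two competing geodesics both realize $d_g(q,\partial X)$ and hence automatically have equal length. Second, both you and the paper treat as routine the identification of the stated definition of $i(\partial X,g)$ (via the slice maps $\exp_\cdot(-rV)$) with the collar/cut radius actually used in the argument; you at least flag this explicitly, but it is the one place where ``routine'' hides a genuine (if standard) compactness step, namely that injectivity of $\Phi$ up to and including radius $r$ is what makes the restriction to $\partial X\times[0,r)$ a diffeomorphism onto its image.
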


\begin{proof}
    The proof here is relatively close, in its structure, to the classical argument by Klingenberg \cite{Klingenberg1959contribution}. 
    For a point $p\in \partial X$, there exists a maximal distance $r(p)>0$ such that the distance from $\exp_p(-rV(p))$ to $\partial X$ is equal to $r$, for $r\in [0,r(p))$. It is elementary to check that $r(p)$ is a continuous function on $\partial X$. The normal injectivity radius is given by $\min\{r(p): p\in \partial X\}$. Suppose $p\in \partial X$ is such that $r(p)=i(\partial X,g)$, $q=\exp_p(-r(p)V(p))$, and $\gamma$ is a minimizing geodesic connecting $p$ to $q$. For a sequence of positive numbers $\eps_i\to 0$, let $\sigma_j$ be a unit speed minimizing geodesic from $\partial X$ to $\gamma(r(p)+\eps_j)$. Then a subsequence of $\sigma_j$, which we still denote by $\sigma_j$, converges to a geodesic $\sigma$ from $\partial X$ to $q$. We separate two cases.

    Case 1: $\gamma=\sigma$. Consider the map $F: \partial X\to X$ defined by $F(p')=\exp_{p'}(-r(p)V(p'))$. We claim that $dF$ has a non-trivial kernel at $p$. Otherwise, since $\tfrac{d}{dr}\exp_{p}(-r V(p))$ is the parallel translate of $-V(p)$ along $\gamma$ (and hence nonzero), we have that $d\tilde F$ is nonsingular at $(p,r(p))$, where $\tilde F$ is the map defined $\tilde F(p',r)=\exp_{p'}(-rV(p'))$ (here we used the fact that $d\tilde F$ orthogonally decomposes in $T_p(\partial X)$ and $\R$). Thus, there exists an open neighborhood $U$ of $(p,r(p))\in \partial X\times \R$, such that $F|_U$ is a diffeomorphism. Consider the minimizing geodesic $\sigma_i$, and denote its two endpoints by $p_i\in \partial X$ and $q_i=\gamma(r(p)+\eps_i)$. Note that, denoted by $|\sigma_i|$ the length of the curve $\sigma_i$ with respect to the metric $g$, we have  $|\sigma_i| = d_g(p_i,q_i)\ge d_g(p,q)-d_g(p, p_i)-d_g(q,q_i)$, and trivially $|\sigma_i|\le r(p)+\eps_i$. Since $p_i\to p$ and $\eps_i\to 0$, it follows that $(p_i, |\sigma_i|) \in  U$ for sufficiently large $i$. This is a contradiction, as $\tilde F(p_i, |\sigma_i|)=\tilde F(p, r(p)+\eps_i)$. 
    
    We now consider the map $f(s,t)=\exp_{\alpha(s)}(-tV(\alpha(s)))$, where $\alpha(s)$ is a unit speed curve on $\partial X$ with $\alpha(0)=p$, and such that $\dot\alpha(0)\in \ker dF$. Then $\frac{\partial f}{\partial s}(0,t)$ is a Jacobi field $J(t)$, and $J(r)=0$; also note that $\dot\alpha(0)=J(0)$. We compute 
    \[\frac{\partial^2 f}{\partial t\partial s}(0,0)=\frac{\partial^2 f}{\partial s\partial t}(0,0)=-\nabla_{\left(\dot\alpha(0)\right)}{V(0)}.\]
    This shows $\nabla_{\dot\gamma(0)}J(0)=-S(J(0))$. Therefore \eqref{eq.jacobi} holds.

    Case 2: $\gamma\ne \sigma$. We claim that the velocity vectors $\dot\gamma$ and $\dot\sigma$, at the point $q$, are lying on the same line and point to opposite directions, and hence the concatenated path $(\sigma^{-1})*\gamma$ is a geodesic orthogonal to $\partial X$ at both endpoints. Otherwise, let $\beta\in (0,\pi)$ be the angle formed by such two vectors, and let $v\in T_q X$ be the unit tangential vector lying on the plane spanned by them, and forming an angle $\tfrac{\beta}{2}$ with each of them. By the first variation of length, moving in the direction of $\tfrac{\beta}{2}$ would decrease the distance to $\partial X$, thereby contradicting the fact that $p$ has been selected as a minimizer of the function $r:\partial X\to\R$. 
\end{proof}

In particular, by standard ODE comparison theory, one derives from the previous lemma that if the sectional curvatures of $X$ and the length of the second fundamental form of $\partial X$ are bounded by $\Lambda$, then there is a lower bound $C(\Lambda)$ for the length of geodesic in case (1) of Lemma \ref{lemma.charactering.normal.inj}.

\begin{proof}[Proof of Proposition \ref{proposition.continuity.normal.inj}]
    Fix a compact smooth manifold with boundary $X$. Let $\{g_k\}_{k=1}^\infty$ be a sequence of Riemannian metrics and $g_k\to g$ in the $C^2$ topology. Let $r_k=i(\partial X, g_k)$. 
    
    We first prove $\limsup r_k\le i(\partial X,g)$. Denote by $r=\limsup r_k$ and suppose, without loss of generality, that $r_k\to r$. It suffices to prove that for any $p\in \partial X$, $d_g(p, \exp_p(-r V(p)))=r$. Let $q=\exp_p(-rV(p))$. Clearly $d_g(p,q)\le r$, so we only need to prove that $d_g(p,q)\ge r$. For each $k$, let $V_k(p)$ be the outward $g_k$-unit normal vector at $p$, and let $q_k=\exp_{p,g_k}(-r_k V_k(p))$. As $k\to \infty$, $g_k\to g, r_k\to r, V_k\to V$. Thus $q_k\to q$. By definition, $d_{g_k}(p,q_k)=r_k$, and hence $d_{g_k}(p,q)\ge d_{g_k}(p,q_k)-d_{g_k}(q,q_k)=r_k - d_{g_k}(q,q_k)$. Taking limits as $k\to \infty$, we have $d_g(p,q)\ge \limsup r_k=r$, as desired.
    
    We then prove $\liminf r_k \ge i(\partial X,g)$. Let $r=\liminf r_k$. We observe first that $r>0$. Indeed, since $g_k\to g$ in $C^2$, they satisfy a uniform upper bound on the sectional curvature and the length of the second fundamental form of the boundary. Therefore any $g_k$-normal geodesic with a Jacobi field satisfying \eqref{eq.jacobi} has a uniformly positive lower bound on its length. Also, a sequence of $g_k$-normal geodesic that is $g_k$-normal to $\partial X$ at both endpoints subsequentially converges to a $g$-normal geodesic, posing a uniform lower bound on their length. Therefore $r>0$. 
    
To proceed, we distinguish two cases depending on whether \emph{for infinitely many values of $k$} the normal injectivity radius $i(\partial X, g_k)$ is realised by geodesic segments with a Jacobi field, or instead by a geodesic that is normal at both its endpoints (as specified by Lemma \ref{lemma.charactering.normal.inj}).
In the first case, take points $p_k\in \partial X$, $q_k\in X$ such that $d_{g_k}(p_k,q_k)=r_k$; by possibly extracting a subsequence, we assume that $p_k\to p$, $q_k\to q$, and a minimizing normal geodesic $\gamma_k$ from $p_k$ to $q_k$ converges to $\gamma$. 
     For infinitely many $k$, there exists a normal Jacobi field $J_k$ on $\gamma_k$ satisfying \eqref{eq.jacobi}, thus by scaling if necessary (note that the Jacobi equation and \eqref{eq.jacobi} are scaling invariant), we assume $J_k(p_k)$ is a unit vector. Then by possibly extracting another subsequence, the vectors $J_k(0)$ (and hence $\nabla_{\dot\gamma_k(0)}J_k(0)$, by \eqref{eq.jacobi}) converge as $k\to \infty$. Using the $C^2$ convergence of metrics, we conclude that $J_k$ converges to a (non-trivial) vector field $J$ on $\gamma([0,r])$ satisfying the Jacobi equation such that \eqref{eq.jacobi} holds. This implies $i(\partial X, g)\leq r$.
    
    On the other hand, if for infinitely many $k$, there exists a $g_k$ geodesic $\gamma_k$, $g_k$-normal to $\partial X$ at its endpoints, then by possibly taking another subsequence, $\gamma_k$ converges to a $g$-geodesic $\gamma$ of length $2 \lim r_k=2r$, $g$-normal to $\partial X$. Thus, once again, $i(\partial X,g)\leq r$. This finishes the proof.
\end{proof}

\section{The notion of PSC-conformal metrics for reflexive manifolds}\label{sec:PSCconf}

  In this appendix we discuss some properties of reflexive manifolds that are PSC-conformal (in the sense of Definition \ref{def:ReflexivePSCconf}). The properties of general PSC-conformal manifolds (i.e. with no equivariance conditions) are discussed in \cite[Section 6.2]{BamKle19}. We present the necessary extensions of their arguments in order to adapt the theory to our reflexive setting. We begin with the remark that condition (1) in Definition \ref{def:ReflexivePSCconf} is equivalent to $8\Delta_g w - R_g w <0$ on $M$, (2) $\Leftrightarrow$ $w^4|_{\partial M}$ is equal to the constant sectional curvature of the induced metric on $\partial M$, and (3) $\Leftrightarrow$ $A_g = -V(w^4) g$, where $A_g$ is the second fundamental form of $\partial M$ with respect to the outward unit normal.

  \begin{lemma}\label{lemma.PSCconform.extension}
    Let $(M,g,f)$ be a compact reflexive 3-manifold with boundary, $Z\subset M$ an $f$-invariant, compact three-dimensional submanifold and $\mS$ be a reflexive spherical structure on $M$. Suppose that:
    \begin{enumerate}
        \item $g$ is compatible with $\mS$.
        \item $(Z,g,f)$ is reflexively PSC-conformal.
        \item $\overline{M\setminus Z}$ is a union of (possibly singular) fibers of $\mS$.
    \end{enumerate}
    Then $(M,g,f)$ is also reflexively PSC-conformal.
  \end{lemma}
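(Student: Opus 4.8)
The strategy is to extend the conformal factor $w$ defined on $Z$ to an $f$-equivariant positive function $\tilde w$ on all of $M$ that still satisfies $8\Delta_g \tilde w - R_g \tilde w < 0$ and that respects the boundary conditions (2) and (3) of Definition \ref{def:ReflexivePSCconf}. The key geometric input is assumption (3): the complement $\overline{M\setminus Z}$ consists of fibers of the reflexive spherical structure $\mS$, and $g$ is compatible with $\mS$. First I would invoke the non-equivariant version of this statement (essentially Lemma 6.7, or the relevant extension lemma, in \cite[Section 6.2]{BamKle19}), which produces an extension $\tilde w$ on $M$ with the desired PSC-conformal properties but without any symmetry control. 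The point of the present lemma is precisely to upgrade this to an equivariant statement.

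To obtain equivariance, I would exploit the uniqueness (or canonicity) built into the Bamler--Kleiner construction: their extension of $w$ across a union of spherical fibers is performed fiber-by-fiber and is determined by geometric data (the values and normal derivatives of $w$ on the boundary spheres of $Z$, together with the compatible metric on the fibers). Since $(Z,g,f)$ is reflexively PSC-conformal, $w$ is already $f$-equivariant on $Z$ (condition (0) of Definition \ref{def:ReflexivePSCconf}), and since $\mS$ is a reflexive spherical structure, the map $f$ permutes the fibers of $\mS$ and sends $\overline{M\setminus Z}$ to itself. Therefore applying the fiberwise construction and then averaging (or, equivalently, applying the construction to $f^{\ast}$-pushed-forward data and invoking uniqueness of the minimizer/solution used in the construction) yields $\tilde w \circ f = \tilde w$. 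Concretely, if the construction solves, on each fiber or collar, a linear elliptic problem with a strictly convex variational characterization — which is the case for the relevant interpolation/extension in \cite{BamKle19} — then $\tilde w \circ f$ solves the $f$-transported problem, which by $f$-invariance of all the data is the same problem, so $\tilde w \circ f = \tilde w$ by uniqueness. This is the same averaging/uniqueness mechanism used repeatedly in Section \ref{sec:Rstruct} (cf. the treatment of the vector fields $Y^s$, $Z^s$ and the rounding operators there).

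Once the equivariant extension $\tilde w$ is in hand, the conclusion is immediate: $\tilde w^4 g$ has positive scalar curvature on $M$ because $8\Delta_g \tilde w - R_g \tilde w < 0$ there (the inequality holds on $Z$ by hypothesis and on $\overline{M\setminus Z}$ by the extension construction, and the two agree to sufficient order on the overlap); conditions (2) and (3) of Definition \ref{def:ReflexivePSCconf} are preserved since they concern only $\partial M$, which (being a union of regular fibers, or in any case disjoint from the interfaces where $\tilde w$ is modified) sees the unchanged $w$; and $\tilde w \circ f = \tilde w$ gives condition (0). Hence $(M,g,f)$ is reflexively PSC-conformal.

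\textbf{Main obstacle.} The crux is not the existence of the extension — that is \cite{BamKle19} — but verifying that the extension procedure is genuinely \emph{canonical} (invariant under isometries of the data, in particular under $f$), so that the symmetry is automatically inherited. The potential difficulty is that Bamler--Kleiner's construction may involve auxiliary choices (cutoff functions, parametrizations of fibers, basepoints on the spheres) that are not a priori $f$-invariant; one must either check these choices can be made $f$-equivariantly — using that $\mS$ is reflexive and that $f$ preserves $Z$, $\overline{M\setminus Z}$, and all the curvature scales involved — or replace the construction by its $f$-average, checking that averaging preserves the three defining properties (positivity of the PSC condition, constancy of $w$ on boundary components, and the totally-geodesic/round-sphere boundary condition). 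The structure of this verification parallels, essentially step-by-step, the equivariance checks carried out in the proof of Theorem \ref{thm:ExistR-struct}, and I expect it to go through with only notational changes.
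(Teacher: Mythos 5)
Your proposal is essentially correct, but only via the second of the two mechanisms you offer, and it takes a genuinely different route from the paper. Your lead suggestion --- that the Bamler--Kleiner extension is ``canonical'' and hence automatically $f$-equivariant by uniqueness --- does not hold up: their construction of the extended conformal factor involves non-canonical choices (collar neighborhoods of the interfaces, parametrizations of the cylindrical pieces, smoothings of a $C^{1,1}$ interpolant), and there is no uniqueness statement to invoke. The paper accordingly does \emph{not} argue by canonicity; it reruns the explicit construction making each choice equivariant by hand: it isolates the singular fibers and boundary components lying in $M\setminus Z$ (pairing the $\mathbb{RP}^2$-fibers via Lemma \ref{lemma.SingFiberReflexive}), reduces to components of $\overline{M\setminus Z}$ diffeomorphic to $S^2\times[-1,1]$, treats components disjoint from $\Fix(f)$ by pushing the non-equivariant factor forward through $f$ to the twin component, and on $f$-invariant components uses an $f$-invariant parametrization in which $g=h^4(g_{S^2}+dr^2)$ and sets the extension equal to $1/h(r)$ (manifestly equivariant), followed by an equivariant smoothing.

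Your averaging fallback, by contrast, does work and is arguably slicker: take the non-equivariant extension $\tilde w$ from \cite{BamKle19} and set $\hat w=\tfrac12(\tilde w+\tilde w\circ f)$. Since $f$ is an isometry, $8\Delta_g(\tilde w\circ f)-R_g(\tilde w\circ f)=(8\Delta_g\tilde w-R_g\tilde w)\circ f<0$, and the PSC-conformality inequality is linear in the conformal factor, so it survives averaging; on a boundary component $\Sigma$ the unit-round-sphere normalization forces $\tilde w|_\Sigma=\tilde w|_{f(\Sigma)}$, after which the constancy and totally-geodesic conditions for $\hat w$ follow from those for $\tilde w$ and $\tilde w\circ f$. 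You should, however, correct one claim: the boundary conditions are \emph{not} automatic because ``$\partial M$ sees the unchanged $w$'' --- boundary components of $M$ may lie entirely in $M\setminus Z$ (the paper's proof devotes its first paragraph to exactly this case), so you must rely on the non-equivariant extension satisfying the boundary conditions there and then verify they persist under averaging, as sketched above. With that verification written out, your argument is a legitimate and shorter alternative to the paper's; what the paper's explicit route buys in exchange is an extension that visibly agrees with the given factor $w$ away from small collars and fits the template reused in the other equivariant constructions (Theorem \ref{thm:ExistR-struct}, Appendix \ref{app:EquivDiskRemov}).
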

  
  \begin{proof}
   We proceed as in \cite[Lemma 6.6]{BamKle19} with a few notable modifications. We first discuss the case where $M\setminus Z$ contains singular fibers of $\mS$ or boundary components of $M$. Let $\{\Sigma_1,\cdots,\Sigma_m\}$ be the set of all singular fibers or boundary components of $\partial M$ that are contained in $M\setminus Z$ (note, in particular, that some of these fibers may consist of a single point). By Lemma \ref{lemma.SingFiberReflexive}, singular fibers that are diffeomorphic to $\mathbb{R}\mathbb{P}^2$ appear in pairs, related by means of the map $f$. Choose pairwise disjoint open neighborhoods $V_j$ of $\Sigma_j$, such that for each $j$ the closure $\overline{V}_j$ is the union of spherical fibers, and $\overline{V}_1\sqcup\cdots \sqcup \overline{V}_m$ is $f$-invariant. If a certain $V_j$ satisfies $V_j=f(V_j)$, then $\overline{V}_j$ is diffeomorphic to $D^3$ or $S^2\times [-1,1]$, and thus is conformal, via an $f$-invariant factor, to a positive scalar curvature metric on $D^3$ which is cylindrical near the boundary, or the product metric on $S^2\times [-1,1]$. 
   
   Thus, by possibly taking $Z'=Z\cup_{j=1}^m V_j$, it suffices to consider the case where $M\setminus Z$ contains only regular spherical fibers and is disjoint from $\partial M$. Then each connected component $V$ of $\overline{M\setminus Z}$ is diffeomorphic to $S^2\times [-1,1]$. Since it follows from the definition of reflexive spherical structure, by virtue of assumption (3) above, that $M\setminus Z$ is $f$-invariant, either $\overline{V}$ is $f$-invariant, or $\overline{V}$ and $f(\overline{V})$ are disjoint and contained in $\overline{M\setminus Z}$. In the second case, one may simply take the same conformal factor $w^*$ on $V$ as in \cite[Lemma 6.6]{BamKle19} and $w^*\circ f$ on $f(V)$. In the first case, note that by taking $\eps>0$ sufficiently small, $V'$ is contained in the domain of $\mS$, and thus we can take an $f$-invariant parametrization $\phi:S^2\times (-L-\eps, L+\eps) \to V'$ of an open neighborhood $V'$ of $\overline{V}$, such that $\overline{V}=\phi(S^2\times [-L,L])$ and the metric on $V'$, when pulled-back through $\phi$, can be written as $h^4(g_{S^2}+dr^2)$ for some $f$-invariant function $h\in C^\infty(V')$. Denote by $p\in S^2, r\in (-L-\eps, L+\eps)$ the local coordinates. Since $g$ is assumed to be compatible with $\mS$, $h$ only depends on the variable $r$ on $(-L-\eps,L+\eps)$. 
   
   By Lemma \ref{lem:ReflSstructClassification}, $f$ acts by reflection across either $S^1\times (-L-\eps, L+\eps)$ for some equator $S^1\subset S^2$, or $S^2\times \{0\}$. In the latter case, the conformal factor $h$ is an even function in $r$. Let $w$ be the (pull-back of the) $f$-equivariant conformal factor on $Z$ such that the conditions in Definition \ref{def:ReflexivePSCconf} hold. In local coordinates $(p,r)$, we have $(wh)(\pm L)=1$ and $\partial_r(wh)|_{r=\pm L}=0$. Consider the function $\tilde w$ defined by
   \[\tilde w(p,r)= \begin{cases} w(p,r) \quad & r\in (-L-\eps, -L]\cup [L, L+\eps),\\ \frac{1}{h(r)} \quad & r\in [-L,L].\end{cases}\]
   Then $\tilde w$ is an $f$-equivariant $C^{1,1}$ function, $\tilde w$ is smooth from both sides to $\phi(S^2\times \{-L\})$ and $\phi(S^2\times \{L\})$, and satisfies $8\Delta_{\phi^{
   \ast}g} \tilde w - R_{\phi^{\ast}g}\tilde w<0$ whenever it is smooth. We can thus take a smoothing $w^*$ of $\tilde w$, defined on $\phi(S^2\times (-L-\eps, L+\eps))$ that exactly coincides with $w$ near $\pm (L+\eps)$, such that $8 \Delta_{\phi^{
   \ast}g} w^* - R_{\phi^{
   \ast}g}w^*<0$ holds on $Z$. 
  \end{proof}

  We next establish a Lemma that extends \cite[Lemma 6.7]{BamKle19} in the reflexive setting. 
  
  \begin{lemma}\label{lemma.PSCconform.criter}
    There are constants $\eps,c\in (0,1)$ such that the following holds. Suppose $(M,g,f)$ is a reflexive 3-manifold, and let $\rho$ be the curvature scale defined by \eqref{eq:rhoCurvScaleSpecification} (cf. \eqref{eq:rhoCurvScale}). Assume, for some $r>0$, the existence of a reflexive spherical structure $\mS$ such that $\{\rho<r\}\subset \text{domain} (\mS)$, $g$ is compatible with it, and, in addition, one is given $Z\subset M$ a compact 3-dimensional $f$-invariant submanifold such that:
    \begin{enumerate}
        \item $\partial Z$ is a union of regular fibers.
        \item $(Z,g)$ has positive scalar curvature.
        \item Every point of $Z\cap\{\rho<r\}$ satisfies the $\eps$-canonical neighborhood property. 
        \item $\rho\le cr$ on $\partial Z$.
    \end{enumerate}
    Then $Z$ is reflexively PSC-conformal.
  \end{lemma}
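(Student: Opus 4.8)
The plan is to mimic the non-equivariant argument of \cite[Lemma 6.7]{BamKle19}, taking advantage of the fact that the quantitative control afforded by the $\eps$-canonical neighborhood assumption near the high-curvature set is \emph{local}, and then carefully keeping track of the $f$-equivariance at each stage. The key geometric input is that under (3), every point of $Z\cap\{\rho<r\}$ has a canonical neighborhood of one of the standard types ($\eps$-neck, $\eps$-cap, closed component of positive curvature), and by shrinking $c$ relative to $\eps$ one forces the set $\{\rho\le cr\}\cap Z$, which contains $\partial Z$, to sit strictly inside regions modelled on round necks (or their $\mathbb{Z}_2$-quotients, which by Lemma \ref{lemma.SingFiberReflexive} occur away from $\Fix(f)$ so cause no equivariant difficulty). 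In particular $\partial Z$ consists of regular spherical fibers where $\rho$ is comparably small, so there is an $f$-invariant collar $\partial Z\times[0,1]$ inside $\{\rho<r\}\subset\text{domain}(\mS)$ on which $g$ is compatible with $\mS$ and, after the rounding already built into the curvature-scale hypotheses, quantitatively close to a round cylinder metric.

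First I would reduce to the doubled closed picture: pass to the double $\widehat Z$ of $Z$ across $\partial Z$, which inherits an involution from $f$ together with the reflection swapping the two copies; the spherical structure $\mS$ restricted to the collar extends, and the problem becomes that of producing an $f$-equivariant conformal factor $w$ on $\widehat Z$ with $8\Delta_g w-R_g w<0$, constant on each former boundary sphere, and realizing each such sphere as a totally geodesic unit round $S^2$ in $w^4g$ — equivalently, verifying conditions (0)--(3) of Definition \ref{def:ReflexivePSCconf} directly on $Z$. On the cylindrical collar one writes, using Lemma \ref{lem:NormalForm}, the metric in the normal form $a^2(r)g_{S^2}+b^2(r)dr^2+(\text{cross terms})$ with the appropriate parity of the coefficient functions (even/odd according to whether $\Fix(f)$ is a slice or a vertical $S^1\times I$, which is exactly the dichotomy of Lemma \ref{lem:ReflSstructClassification}); the cross terms are controlled by the rounding estimates and one first deforms them away by an equivariant conformal/gauge adjustment. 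Then one solves the ODE for a radial (hence automatically $\mS$-compatible, hence $O(3)$-invariant, hence $f$-equivariant when the parity is respected) conformal factor $w=w(r)$ on the collar that interpolates between $1/a(r)$ near the boundary fiber — so that $w^4g$ restricts to the unit round sphere there and the boundary is totally geodesic — and the constant $1$ on the interior edge of the collar, all while keeping $8w''+(\text{lower order})-R_g w<0$; this is where the smallness of $\eps$ and $c$ is used, exactly as in \cite[Lemma 6.7]{BamKle19}, to guarantee the relevant coefficients have the right sign. Because $w$ depends only on $r$ and the normal form has the symmetric parity dictated by $\Fix(f)$, the resulting $w$ is $f$-equivariant on the collar, and we extend it by $1$ over the interior of $Z$, where $(Z,g)$ already has positive scalar curvature by (2).

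The remaining point is to make this construction global and genuinely equivariant rather than collar-by-collar. Here I would invoke Lemma \ref{lemma.PSCconform.extension}: having produced an $f$-invariant compact submanifold $Z'\subset Z$ (obtained by removing from $Z$ a union of $\mS$-saturated collars of its boundary fibers) on which $(Z',g,f)$ is reflexively PSC-conformal, and observing that $\overline{Z\setminus Z'}$ is a union of regular fibers of $\mS$ with $g$ compatible with $\mS$ there, Lemma \ref{lemma.PSCconform.extension} upgrades this to reflexive PSC-conformality of all of $(Z,g,f)$. The one subtlety — and the step I expect to be the main obstacle — is precisely the equivariant matching on a collar that is itself $f$-invariant with $\Fix(f)$ running \emph{vertically} through it (i.e. $\Fix(f)=S^1\times[-L,L]$ in the model of Lemma \ref{lem:ReflSstructClassification}(2)(a)): there one must check that the radial conformal factor $w(r)$, which is not constrained to be even in $r$ in that case, can nonetheless be chosen with $w\circ f=w$; this works because in that model $f$ acts fiberwise through $f_0\in O(3)$ and leaves $r$ fixed, so any radial function is automatically $f$-equivariant — one only has to ensure the preliminary gauge removing the cross terms $c_i(r)$ respects this, which it does since those terms are odd-type and the equivariant normalization of Lemma \ref{lem:NormalForm}(2) already accommodates them. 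Assembling: set $w=1$ on $Z'$, $w=w(r)$ on the collars, smooth the interface equivariantly keeping the differential inequality (possible since the inequality is strict and open), and conclude by Lemma \ref{lemma.PSCconform.extension} together with the observation that conditions (0)--(3) of Definition \ref{def:ReflexivePSCconf} hold by construction. $\qed$
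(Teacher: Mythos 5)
Your proposal is correct and follows essentially the same route as the paper's proof: retract $Z$ to an $f$-invariant subdomain by removing $\mS$-saturated neck collars of the boundary fibers (the paper cuts along a generic level set $\{\rho=\lambda r\}$, $\lambda\in(\sqrt c/2,\sqrt c)$), use the $\eps$-canonical neighborhood property to put the metric near each boundary fiber in the $f$-invariant cylindrical normal form, construct a radial conformal factor $C^2$-close to $1$ normalizing the fiber to a totally geodesic unit round $S^2$ (with equivariance supplied, exactly as you argue, by the horizontal/vertical dichotomy for $\Fix(f)$ in a reflexive neck), and conclude via Lemma \ref{lemma.PSCconform.extension}. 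The only cosmetic points are that the doubling step and the final appeal to the extension lemma are redundant once $w$ is defined on all of $Z$, and in the normal form $a^2(r)g_{S^2}+\cdots$ the boundary normalization requires $w=a^{-1/2}$ rather than $1/a$; neither affects the argument.
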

  
  \begin{proof}
   We closely follow the proof of \cite[Lemma 6.7]{BamKle19}. Without loss of generality assume $Z$ is connected, $\partial Z\ne \emptyset$ and $Z$ is not the union of spherical fibers (for else one would simply appeal to Lemma \ref{lemma.PSCconform.extension}). Fix $\lambda\in (\sqrt c/2, \sqrt c)$ such that $\rho\ne \lambda r$ on any singular spherical fiber. Define $Z_0=\{\rho\ge \lambda r\}\cap Z$, and take $Z_1$ to be the union of $Z_0$ and all components of $Z\setminus Z_0$ disjoint from $\partial Z$. By Lemma \ref{lemma.PSCconform.extension}, it suffices to prove that $(Z_1,g)$ is reflexively PSC-conformal. Denote by $\Sigma$ a connected component of $\partial Z_1$. Then $\Sigma$ is diffeomorphic to $S^2$, and $\rho=\lambda r<r$ on $\Sigma$. 
   
   Through the limit argument presented in \cite[Claim 6.8]{BamKle19}, by taking $\eps$ and $c$ sufficiently small, every point on $\Sigma$ has a canonical neighborhood that is $\eps$ close to the round cylinder in $C^{[1/\eps]}$ topology after rescaling by $\rho^{-1}$. In other words, for any point $x\in \Sigma$, $\rho^{-1}(\Sigma-x)$ has an open neighborhood $U$ that is $\eps$ close to $S^2\times (-1/\eps,1/\eps)$ in the $C^{[1/\eps]}$ topology. We work with this rescaled manifold. As in the proof of Lemma \ref{lemma.PSCconform.extension}, we fix an $f$-invariant parametrization $\phi: S^2\times (-1/\eps,1/\eps)\to U$ such that $\Sigma=\phi(S^2\times \{0\})$ (which determines local coordinates $p\in S^2, r\in (-1/\eps, 1/\eps)$) and, since $g$ is compatible with $\mS$, $g=h(r)^4(g_{S^2}+dr^2)$ for $h\in C^\infty(-1/\eps, 1/\eps)$, $\|h-1\|_{C^{[1/\eps]}(-1/\eps, 1/\eps)}<\eps$. By a standard interpolation argument, there exists a function $u\in C^\infty(-1/\eps,1/\eps)$ such that:
   \begin{itemize}
       \item $(uh)(0)=1$, $(uh)'(0)=0$.
       \item $u=1$ in $(-1/\eps, -10]\cup [10,1/\eps)$.
       \item $\|u-1\|_{C^2(-1/\eps,1/\eps)}<2\eps$.
   \end{itemize}
   Moreover, $u$ can also be constructed as $f$-invariant, thanks to the classification of reflexive necks. We therefore conclude that the manifold $(Z_1,u^4 g)$ has positive scalar curvature, $\Sigma$ is totally geodesic, and $\Sigma$ is the unit round sphere with the induced metric.
  \end{proof}

  We proceed to observe that the construction in \cite[Lemma 6.9]{BamKle19} works in the reflexive setting.
  
  \begin{lemma}\label{lemma.PSCconform.modify}
    Let $(M,g,f)$ be a reflexively PSC-conformal 3-manifold and let $p$ be a point in the interior of $M$ (i.e. $p\in M\setminus\partial M$). Then there exist a constant $a$ and a positive function $w\in C^\infty(M)$ satisfying:
    \begin{enumerate}
        \item $w\circ f=w$ on $M$,
        \item $8\Delta_g w-R_g w<0$ on $M$,
        \item $2w^4=R_g$ on $\partial M$,
        \item $A_g=-V(w^4)g$ on $\partial M$,
    \end{enumerate}
 such that $w=w(p)-a\cdot d^2(p,\cdot)$ in an open neighborhood of $p$.
  \end{lemma}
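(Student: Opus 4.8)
The plan is to adapt the non-equivariant construction of \cite[Lemma 6.9]{BamKle19} to the reflexive category; the only genuinely new point is to keep every step $f$-equivariant, and this costs essentially nothing, because the distance function from $p$ is automatically $f$-invariant when $p\in\Fix(f)$, while $(\star_{\mathrm{sep}})$ separates $p$ from $f(p)$ when $p\notin\Fix(f)$. First I would reduce to the case of positive scalar curvature: by hypothesis there is a smooth positive $f$-equivariant $w_0$ with $(M,w_0^4g)$ reflexively PSC-conformal, so $\hat g:=w_0^4g$ has $R_{\hat g}>0$, hence $R_{\hat g}\ge\kappa>0$ on the compact manifold $M$. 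Recall that in dimension three $R_{w^4g}=w^{-5}(R_gw-8\Delta_gw)$, so that, writing $L_g:=R_g-8\Delta_g$, condition (1) for $w^4g$ is exactly $L_gw>0$, and that this formula composes to give $L_g(w_0v)=w_0^5\,L_{\hat g}(v)$. Thus producing the desired $w$ is equivalent to producing a smooth positive $f$-equivariant $\omega$ on $(M,\hat g)$ with $L_{\hat g}\omega>0$ on $M$, with $\omega\equiv1$ outside an arbitrarily small neighbourhood of $p$, and with a prescribed quadratic normal form near $p$: then $w:=\omega w_0$ satisfies (1) (from $\omega\circ f=\omega$ and $w_0\circ f=w_0$) and (2), and since $p$ is interior and the modification is supported away from $\partial M$, the boundary identities (3)--(4) are inherited verbatim from $w_0$.

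For the local modification, fix $\rho_2$ below the $\hat g$-injectivity radius at $p$ and below $d_{\hat g}(p,\partial M)$, write $r:=d_{\hat g}(p,\cdot)$ --- smooth on $B_{\hat g}(p,\rho_2)\setminus\{p\}$, with $\Delta_{\hat g}r^2=6+O(r^2)$ and $\Delta_{\hat g}r>0$ --- and choose a radial profile $P\colon[0,\rho_2]\to(0,\infty)$ which is a smooth even function of $r$ (so $P(r)$ extends smoothly across $p$), equals $\bar c-\bar a r^2$ for $r\le\rho_1$ with $\bar c=1+\bar a\rho_1^2$, equals $1$ near $\rho_2$, and has $|P''|\le C_0\bar a\rho_1/(\rho_2-\rho_1)$ and $|1-P|\le\tfrac12$ on $[\rho_1,\rho_2]$. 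Set $\omega:=P(r)$ on $B_{\hat g}(p,\rho_2)$ and $\omega\equiv1$ elsewhere. On $B_{\hat g}(p,\rho_1)$ one computes $L_{\hat g}\omega=8\bar a\,\Delta_{\hat g}(r^2)+R_{\hat g}(\bar c-\bar a r^2)\ge 48\bar a+\kappa-O(\bar a\rho_1^2)>0$; on the transition shell $L_{\hat g}\omega=-8\bigl(P''+(\Delta_{\hat g}r)P'\bigr)+R_{\hat g}P$, and since $|(\Delta_{\hat g}r)P'|=O(\bar a)$ there, taking $\bar a$ small relative to $\kappa$ and then $\rho_1<\rho_2$ small makes $P''+(\Delta_{\hat g}r)P'<\tfrac18 R_{\hat g}P$, so $L_{\hat g}\omega>0$ on all of $M$. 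Finally $w:=\omega w_0$; expanding near $p$, where $w_0$ is controlled by its $2$-jet, and absorbing the higher-order terms exactly as in \cite[Lemma 6.9]{BamKle19} yields the asserted normal form $w=w(p)-a\,d^2(p,\cdot)$ on a (possibly smaller) neighbourhood of $p$, for a suitable constant $a>0$.

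It then remains to observe that the whole construction can be made $f$-equivariant. If $p\in\Fix(f)$, then $f$ is a $\hat g$-isometry fixing $p$ (since $w_0\circ f=w_0$ makes $f$ isometric for $\hat g$ as well), hence $d_{\hat g}(p,\cdot)$ is $f$-invariant, so the radial function $\omega=P(d_{\hat g}(p,\cdot))$ is automatically $f$-invariant and $w=\omega w_0$ is $f$-equivariant. If $p\notin\Fix(f)$, then by $(\star_{\mathrm{sep}})$ the points $p$ and $f(p)$ lie in distinct components of $M\setminus\Fix(f)$, so for $\rho_2$ small the balls $B_{\hat g}(p,\rho_2)$ and $B_{\hat g}(f(p),\rho_2)$ are disjoint and miss $\Fix(f)$; perform the construction above near $p$, set $\omega$ on $B_{\hat g}(f(p),\rho_2)$ equal to the $f$-pullback of the function just built near $p$, and leave $\omega\equiv1$ elsewhere. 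The resulting $\omega$, hence $w=\omega w_0$, is $f$-equivariant, smooth, and positive, retains the normal form near $p$, and (being supported in the interior) satisfies (3)--(4). This proves the lemma.

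The part I expect to be the real work is the shell estimate of the second paragraph: as in \cite[Lemma 6.9]{BamKle19}, the two scales $\rho_1<\rho_2$, the profile $P$, and the constant $\bar a$ must be tuned together so that the passage from the quadratic cap to the constant $1$ preserves the \emph{strict} inequality $L_{\hat g}\omega>0$, while keeping the whole modification inside the interior of $M$ so that the boundary data stay untouched --- this quantitative bookkeeping is the one piece that cannot be skipped. By contrast, the equivariance here is essentially free: in the fixed-point case it is forced by the $f$-invariance of $d_{\hat g}(p,\cdot)$, and in the free case by $(\star_{\mathrm{sep}})$, so none of the analytic estimates needs to be revisited.
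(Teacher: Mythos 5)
Your overall strategy (modify the existing equivariant conformal factor only in a small interior neighbourhood, so that the boundary conditions (3)--(4) and the global inequality (2) are untouched, and handle equivariance by distinguishing $p\in\Fix(f)$ from $p\notin\Fix(f)$ via $(\star_{\text{sep}})$) matches the paper's, and your shell estimate for $L_{\hat g}\omega>0$ is the right kind of bookkeeping. But there is a genuine gap at the step where you pass from $w=\omega w_0$ to the normal form $w=w(p)-a\,d^2(p,\cdot)$. That normal form forces $\nabla w(p)=0$, whereas your construction gives $\nabla w(p)=\omega(p)\,\nabla w_0(p)$, which is nonzero whenever the original conformal factor $w_0$ has nonvanishing gradient at $p$ --- and nothing in Definition \ref{def:ReflexivePSCconf} rules that out. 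The offending term is \emph{first} order in $d(p,\cdot)$, so it cannot be ``absorbed into the higher-order terms'' as you assert; multiplying by a radial profile centred at $p$ never kills it. This is exactly the point the paper's proof confronts head-on: following \cite[Claim 6.11]{BamKle19}, one first performs a preliminary modification of $w_0$ supported along the geodesic $\gamma_p$ emanating from $p$ with velocity $\nabla w_0(p)/|\nabla w_0(p)|$ so as to achieve $\nabla\bar w(p)=0$, and only \emph{then} interpolates locally to the quadratic profile.

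Moreover, the equivariance of that gradient-killing step is precisely the one point in this lemma that is not automatic, and your proposal never addresses it. When $p\notin\Fix(f)$ one must check that the geodesics $\gamma_p$ and $\gamma_{f(p)}$ are $f$-related (which follows from $df(\nabla w_0(p))=\nabla w_0(f(p))$ and ODE uniqueness), so that the two local modifications can be made mirror images of each other; when $p\in\Fix(f)$ one must observe that $w_0\circ f=w_0$ forces $\nabla w_0(p)\in T_p(\Fix(f))$, so that the geodesic construction is automatically $f$-equivariant. Your remark that ``the equivariance is essentially free because $d_{\hat g}(p,\cdot)$ is $f$-invariant'' only covers the radial interpolation, not the directional modification along $\gamma_p$. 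To repair the proof, insert the gradient-killing step (with the two equivariance checks above) before your radial cap, after which the rest of your argument goes through.
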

  
  In other words, we can modify a conformal factor (subject to the requirements of Definition \ref{def:ReflexivePSCconf}) so to make it `in normal form' near an interior point.
  
  \begin{proof}
   Let $w$ be the reflexive conformal factor as given in Definition \ref{def:ReflexivePSCconf}. We distinguish two cases. If $p\ne \Fix(f)$, let $q=f(p)$. We work with disjoint $f$-related coordinate neighborhoods $U_p$ of $p$ and $U_q$ of $q$. The equivariance assumption implies $df(\nabla w(p))=\nabla w(q)$, so either $\nabla w(p)=\nabla w(q)=0$ or, if not, we claim one can design a preliminary modification of the conformal factor so that this additional condition is also accommodated. 
   Indeed, note that 
  the geodesics $\gamma_p$ and $\gamma_q$, emanating from $p$ and $q$ with initial velocity $\nabla w(p)/|\nabla w(p)|$ and $\nabla w(q)/|\nabla w(q)|$, respectively, are also $f$-related in the sense that (by standard ODE uniqueness arguments) $\gamma_q=f\circ \gamma_p$. As a result, appealing to the exact construction as in \cite[Claim 6.11]{BamKle19}, we obtain a new $f$-equivariant conformal factor $\bar w$ such that $\nabla \bar w(p)=\nabla \bar w(q)=0$. Then the same local interpolation argument, applied near $p$ and $q$, further deforms $\bar w$ in a $f$-equivariant manner to $\tilde w$ so that $\tilde w= \tilde w(p)-a\cdot d^2(p,\cdot)$ in $U_p$ and $\tilde w= \tilde w(q)-a \cdot d^2(q,\cdot)$ in $U_q$.
   
   If $p\in \Fix(f)$, then $\nabla w(p)\in T_p (\Fix(f))$ as $w$ is $f$-equivariant ($w\circ f=w$). Thus the same argument as in \cite{BamKle19} \emph{automatically} produces an $f$-equivariant conformal factor satisfying the conclusion of Lemma \ref{lemma.PSCconform.modify}.
    \end{proof}
  
  Finally, by applying Lemma \ref{lemma.PSCconform.extension}, Lemma \ref{lemma.PSCconform.modify} and the argument in \cite[Lemma 6.12]{BamKle19} verbatim in an equivariant fashion, we get the following.
  
  \begin{lemma}\label{lemma.PSCconform.excision}
    Let $M$ be a compact 3-manifold with boundary, and let $(g^s)_{s\in K}$ be a continuous family of Riemannian metrics on $M$, parametrised by a compact topological space $K$. Assume further that $(M,g^s,f)$ is reflexively PSC-conformal for all $s\in K$ for some smooth map $f:M\to M$.
    
    Suppose we have:
    \begin{itemize}
        \item \emph{either} a continuous family of $f$-equivariant embeddings $\left(\mu_s: B^3(1)\to M\right)_{s\in K}$ such that for every $s\in K$, $\mu_s^* g^s$ is compatible with the standard spherical structure on $B^3(1)$;
        \item \emph{or} a continuous family of $f$-equivariant embeddings $\left(\mu_s: B^3(1)\sqcup B^3(1)\to M\right)_{s\in K}$ such that the images of the two copies of $B^3(1)$ are disjoint, and on each of them $\mu_s^* g^s$ is compatible with the standard spherical structure on $B^3(1)$.
    \end{itemize}
    Then there is a constant $r_0\in (0,1)$ such that for all $0<r\le r_0$, $s\in K$, the Riemannian manifold $\left(M\setminus \mu_s(B^3(r)), g^s,f\right)$ (or $\left(M\setminus \mu_s(B^3(r)\sqcup B^3(r)), g^s,f\right)$, respectively) is reflexively PSC-conformal.
  \end{lemma}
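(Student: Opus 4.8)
The plan is to mirror the non-equivariant argument of \cite[Lemma 6.12]{BamKle19}, carrying the involution $f$ along at every step and checking that each modification of the conformal factor can be performed $f$-equivariantly; Lemmas \ref{lemma.PSCconform.extension} and \ref{lemma.PSCconform.modify} were set up precisely so that this bookkeeping goes through without new ideas. Fix $s\in K$. In the single-ball case $\mu_s(B^3(r))$ is $f$-invariant (since $\mu_s$ intertwines $f$ with the standard reflection $f_0$ of $B^3(1)$, and $f_0$ preserves $B^3(r)$) and $f$ acts on $\mu_s(B^3(1))$ as $f_0$, whereas in the two-ball case $f$ interchanges the two images; in either case the complement $M\setminus\mu_s(B^3(r))$ (resp.\ $M\setminus\mu_s(B^3(r)\sqcup B^3(r))$) is $f$-invariant, and $\Fix(f)$ meets it in a separating, two-sided hypersurface, so it is a reflexive $3$-manifold. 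It therefore suffices to exhibit on it an $f$-equivariant conformal factor fulfilling conditions (1)--(3) of Definition \ref{def:ReflexivePSCconf}; the continuity in $s$ of the resulting family, and the existence of a single threshold $r_0\in(0,1)$ valid for all $s\in K$, will follow from compactness of $K$ since every step below is carried out in a canonical way.

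Starting from a reflexive conformal factor $w^s$ for $(M,g^s,f)$, apply Lemma \ref{lemma.PSCconform.modify} at the interior point $p_s:=\mu_s(0)$ — and, in the two-ball case, simultaneously at $q_s:=f(p_s)=\mu_s(0')$, the centre of the second copy — to replace $w^s$ by an $f$-equivariant conformal factor, still satisfying Definition \ref{def:ReflexivePSCconf}, which near $p_s$ (and near $q_s$) is in normal form $w^s=w^s(p_s)-a_s\,d_{g^s}^2(p_s,\cdot)$; note that the dichotomy $p_s\in\Fix(f)$ versus $p_s\notin\Fix(f)$ built into Lemma \ref{lemma.PSCconform.modify} is exactly the single-ball versus two-ball alternative here.

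Since $\mu_s^*g^s$ is compatible with the standard spherical structure on $B^3(1)$, on a fixed ball $B^3(\rho_0)$, with $\rho_0$ independent of $s$ by compactness of $K$, both $g^s$ and the normal-form $w^s$ are radial in the sense that matters for the construction (compatibility with $\mS_0$ along the $S^2$-fibration, together with $w^s=w^s(p_s)-a_s\,d_{g^s}^2(p_s,\cdot)$ near the centre). For $r\le r_0\le\rho_0/2$ we then define a conformal factor on $M\setminus\mu_s(B^3(r))$ by keeping $w^s$ unchanged outside $\mu_s(B^3(2r))$, and on the annulus $\mu_s\big(B^3(2r)\setminus B^3(r)\big)$ interpolating, through a cutoff that is a function of $d_{g^s}(p_s,\cdot)$ alone, between the normal-form $w^s$ near $\mu_s(\partial B^3(2r))$ and a purely radial profile near $\mu_s(\partial B^3(r))$ chosen so that $\mu_s(\partial B^3(r))$ becomes isometric to the unit round $2$-sphere and totally geodesic and so that $w^4g^s$ is cylindrical in a collar of it. Exactly as in \cite{BamKle19} (and as in the proof of Lemma \ref{lemma.PSCconform.criter}), such a profile exists because there the differential inequality $8\Delta_{g^s}w-R_{g^s}w<0$ reduces to a one-variable inequality that is solvable once $r_0$ is small enough, the needed smallness being uniform in $s$ and $r$ by compatibility with $\mS_0$ and compactness of $K$. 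As cutoff, profile, $w^s$ and $g^s$ are all radial on $\mu_s(B^3(1))$ and $f$ acts there as $f_0$, which preserves $d_{g^s}(p_s,\cdot)$, the resulting conformal factor is $f$-equivariant in the single-ball case; in the two-ball case one runs this construction on the first copy and defines the conformal factor on the second copy by precomposition with $f$, which is consistent because the two images are disjoint, swapped by $f$, and $w^s$ is untouched elsewhere.

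The only genuine departure from the non-equivariant proof — hence the main point requiring care — is the verification that all these local surgeries occur in $f$-invariant regions and respect the involution, and that the scale $r_0$ can be taken uniform over $K$: near $\Fix(f)$ (single-ball case) equivariance is automatic from radiality, and near the swapped pair (two-ball case) it is built into the transport, while the classification of reflexive spherical structures (Lemma \ref{lem:ReflSstructClassification}) guarantees there are no further local models to check. Alternatively, if one builds the conformal factor only on a region of the form $\{\rho\ge\lambda r\}$ and extends it over the remaining spherical-fibred collar, Lemma \ref{lemma.PSCconform.extension}, applied equivariantly, supplies that last step just as in the reference. Hence $(M\setminus\mu_s(B^3(r)),g^s,f)$ — resp.\ $(M\setminus\mu_s(B^3(r)\sqcup B^3(r)),g^s,f)$ — is reflexively PSC-conformal for all $0<r\le r_0$, and the construction depends continuously on $s$, which is the assertion.
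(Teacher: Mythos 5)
Your proposal is correct and follows essentially the same route as the paper, whose entire ``proof'' of this lemma is the single sentence that it follows ``by applying Lemma \ref{lemma.PSCconform.extension}, Lemma \ref{lemma.PSCconform.modify} and the argument in [Lemma 6.12, Bamler--Kleiner] verbatim in an equivariant fashion''. What you have written is a faithful expansion of exactly that: normal form at the centre(s) via Lemma \ref{lemma.PSCconform.modify} (whose $p\in\Fix(f)$ versus $p\notin\Fix(f)$ dichotomy matches the one-ball versus two-ball alternative), the radial interpolation of Bamler--Kleiner on the annulus, transport by $f$ in the swapped-pair case, and compactness of $K$ for the uniform $r_0$.
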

  
  In order to avoid ambiguities, let us specify what we mean when we write that the given embeddings are $f$-equivariant: in the first case, we require that $f\circ \mu_s = \mu_s\circ f_0$; in the second case, we require instead that $f\circ \mu_s=\mu_s\circ\sigma$ where $\sigma: B^3(1)\sqcup B^3(1)\to  B^3(1)\sqcup B^3(1)$ is the involution that maps each point in either copy of $B^3(1)$ to the corresponding point in the other copy.
  
  This last result will be crucially employed in Appendix \ref{app:EquivDiskRemov}, when implementing the equivariant removal of (three-dimensional) disks from a partial homotopy, which in turn is an essentially ingredient for the backward-in-time induction scheme that is employed in the proof of the main theorem (more specifically, for Proposition \ref{prop:SurjPartHom}).

\section{The equivariant extension of a partial homotopy}\label{app:EquivExtHom}

The scope of the present appendix is to provide a suitable, equivariant extension result for partial homotopies. In order to get there, we first need to give two ancillary results concerning the equivariant extension of families of metrics parametrised, say, over a cylinder $D^k\times I$ where the metrics in question are assigned over a `parabolic' portion of the boundary, namely for $(D^k\times \left\{0\right\})\cup (\partial D^k\times I)$. The first such ancillary statement reads as follows:

\begin{lemma}\label{lem:ExtensionFamilyMet}
Let $Z$ be a compact 3-manifold with boundary, and let $f\in C^{\infty}(Z,Z)$ denote an
    involutive diffeomorphism satisfying the assumption $(\star_{\text{sep}})$. Let then $Y$ denote a connected, compact 3-dimensional submanifold with boundary of $Z$, which we assume to $f$-invariant. Suppose that there exists reflexive spherical structures $(\mS^s)_{s\in D^k}$, such that for all $s\in D^k$ we have that $\mS^s$ is defined on an open subset $U^s$ containing $Y$, and $\partial Y$ consists of regular fibers.
    Consider three continuous family of metrics 
    \[
    (g^1_s)_{s\in D^k} \ \text{on} \ Z, \ \ \  (g^2_{s,t})_{s\in D^k, t\in I} \ \text{on} \ \overline{Z\setminus Y}, \ \ \ (g^3_{s,t})_{s\in \partial D^k, t\in I} \ \text{on} \ Z
    \]
    such that the following assertions hold true:
    \begin{enumerate}
        \item $(Z, g^1_s, f), (\overline{Z\setminus Y}, g^2_{s,t}, f), (Z, g^3_{s,t}, f)$ are reflexive 3-manifolds for all values of $s\in D^k$ (or $s\in \partial D^k$) and for any $t\in I$ in the second and third case;
        \item $g^1_s, g^2_{s,t}, g^{3}_{s,t}$ are compatible with $\mS^s$, on the appropriate domains of definition, for all values of $s\in D^k$ (or $s\in \partial D^k$) and for any $t\in I$ in the second and third case;
        \item any pair of mutual consistency condition is satisfied, namely
        \[
        g^1_s=g^2_{s,0} \ \text{on} \ \overline{Z\setminus Y} \ \text{for all} \ s\in D^k, 
        \]
        \[
        g^1_s=g^3_{s,0} \ \text{on} \ Z \ \text{for all} \ s\in \partial D^k,
        \]
        \[
        g^2_{s,t}=g^3_{s,t} \ \text{on} \ \overline{Z\setminus Y} \ \text{for all} \ s\in \partial D^k, t\in I.
        \]
    \end{enumerate}
    Then there exists a continuous family of metrics $(h_{s,t})_{s\in D^k, t\in I}$ on $Z$, such that the following assertions hold true:
    \begin{enumerate}[label=(\roman*)]
        \item $(Z, h_{s,t}, f)$ is a reflexive 3-manifold for all values of $s\in D^k, t\in I$; 
        \item $h_{s,t}$ is compatible with $\mS^s$ for all values of $s\in D^k$, and for any $t\in I$;
        \item $h_{s,t}$ extends the assigned families of metrics in the sense that
         \[
        h_{s,0}=g^1_s \ \text{on} \ Z \ \text{for all} \ s\in D^k, 
        \]
        \[
        h_{s,t}=g^2_{s,t} \ \text{on} \ \overline{Z\setminus Y} \ \text{for all} \ s\in D^k, t\in I,
        \]
        \[
        h_{s,t}=g^3_{s,t} \ \text{on} \ Z \ \text{for all} \ s\in \partial D^k, t\in I.
        \]
    \end{enumerate}
    \end{lemma}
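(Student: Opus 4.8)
The plan is to reduce the whole problem to a standard extension-by-interpolation argument, carried out \emph{fiberwise} with respect to the spherical structures so that compatibility is automatic, and \emph{equivariantly} so that the reflexive condition is preserved. The key observation is that all three given families, and the desired output, are required to be compatible with the \emph{fixed} (in $t$) family of spherical structures $(\mS^s)_{s\in D^k}$ on the open set $U^s\supset Y$; by Lemma \ref{lem:NormalForm} (and its obvious variants for the local models $B^3$ and $S^2\times[0,1)$) a metric compatible with a reflexive spherical structure is, on each chart, described by finitely many scalar functions of the fiber parameter (the ``warping functions'' $a,b$ and the ``off-diagonal'' functions $c_i$), subject to parity/vanishing constraints dictated by whether $\Fix(f)$ is horizontal or vertical there. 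Compatibility and reflexivity thus become \emph{convex, closed, affine-type} conditions on these scalar data, stable under convex combinations.

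First I would set up the domain decomposition. The parameter space is the cube $D^k\times I$; the metric is prescribed on the ``parabolic boundary'' $P:=(D^k\times\{0\})\cup(\partial D^k\times I)$ via $g^1$ (for $t=0$) and $g^3$ (for $s\in\partial D^k$), these being consistent on the overlap $\partial D^k\times\{0\}$ by hypothesis (3). On the base manifold side, the metric is prescribed on the closed subset $\overline{Z\setminus Y}\subset Z$ for \emph{all} $(s,t)$ via $g^2$, consistent with $g^1$ at $t=0$ and with $g^3$ on $\partial D^k$. So what must be produced is: a continuous family over $D^k\times I$, agreeing with the given data on $P$ (over all of $Z$) and on $\overline{Z\setminus Y}$ (over all of $D^k\times I$), reflexive and $\mS^s$-compatible throughout. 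I would do this in two moves. \emph{Move 1: extend over the parameter cube.} Since $D^k\times I$ deformation retracts onto $P$ (the standard ``pushing in from the lid'' retraction $R:D^k\times I\times[0,1]\to D^k\times I$ with $R(\cdot,\cdot,0)=\mathrm{id}$ and $R(\cdot,\cdot,1)$ mapping into $P$), one can first define a \emph{preliminary} extension $\tilde h_{s,t}$ over all of $Z$ by pulling back the boundary data through this retraction: $\tilde h_{s,t}:=(\text{data at }R_1(s,t))$. This is continuous, reflexive, $\mS^s$-compatible (the retraction only moves the $(s,t)$-parameter, and $\mS^s$ depends only on $s$; where it moves $s$ one uses transverse continuity of the family of spherical structures, exactly as in Section 5 of \cite{BamKle19}), and it already agrees with $g^1$ on $t=0$ and with $g^3$ on $\partial D^k$. \emph{Move 2: correct on $\overline{Z\setminus Y}$.} The preliminary $\tilde h_{s,t}$ need not equal $g^2_{s,t}$ on $\overline{Z\setminus Y}$. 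But both $\tilde h_{s,t}$ and $g^2_{s,t}$ are reflexive and $\mS^s$-compatible on a neighborhood of $\overline{Z\setminus Y}$, and they \emph{already coincide} on $P\cap(\overline{Z\setminus Y})$ by the consistency conditions. Using the cutoff function $\nu$ fixed in Section \ref{sec:Rstruct} (composed with the distance to $\partial Y$ inside $Z$, or equivalently a fiber-parameter cutoff supported in a collar of $\overline{Z\setminus Y}$ inside $U^s$), set
\[
h_{s,t} := \nu\cdot g^2_{s,t} + (1-\nu)\cdot \tilde h_{s,t} \quad\text{near }\overline{Z\setminus Y},\qquad h_{s,t}:=\tilde h_{s,t}\ \text{elsewhere,}
\]
where the convex combination is taken, as in Appendix B of \cite{CarLi19}, in the cone of positive-definite symmetric $2$-tensors. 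This is positive-definite (convex combination of positive metrics), continuous, agrees with $g^2$ on $\overline{Z\setminus Y}$ (where $\nu\equiv1$) and with $\tilde h$ — hence with $g^1,g^3$ — on the appropriate pieces of $P$ (where, by the consistency hypotheses, \emph{both} terms equal the common value, so the combination is harmless even in the transition region). Reflexivity is preserved because $f^{*}$ is linear and fixes both summands; $\mS^s$-compatibility is preserved because, in fiber-adapted charts, the convex combination acts coefficientwise on the scalar data $a^2,b^2,c_i$ described by Lemma \ref{lem:NormalForm}, and the class of admissible (positive $a,b$; correctly-parity $c_i$) tuples is convex — here one also uses that $\nu$ depends only on the fiber parameter so it does not destroy the normal form.

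\textbf{Main obstacle.} The delicate point is \emph{not} the interpolation itself but ensuring simultaneously that (a) the convex combination stays compatible with $\mS^s$ right across $\partial Y$ — where the metric transitions from the ``outer'' region $\overline{Z\setminus Y}$ into $Y$ — and (b) the two pieces being combined genuinely agree on the overlap with $P$, so that no spurious discontinuity or loss of the prescribed boundary values occurs. For (a) one must choose the cutoff $\nu$ to be a function of the spherical-fiber coordinate on a collar neighborhood of $\partial Y$ lying inside $U^s$ (possible since $\partial Y$ consists of regular fibers, so a tubular neighborhood of $\partial Y$ in $Z$ is foliated by regular $S^2$-fibers of $\mS^s$), and then invoke the coefficientwise description above; transverse continuity in $s$ of these collars follows from the transverse continuity of $(\mS^s)$. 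For (b) one checks, case by case on which face of $P$ one sits, that the consistency conditions in hypothesis (3) force $g^2_{s,t}=\tilde h_{s,t}$ there, so the weighted average is trivial. Once these two technical points are handled, properties (i), (ii), (iii) in the conclusion are immediate from the construction, and the proof is complete. The argument is the equivariant, spherically-adapted analogue of the (non-equivariant) extension lemma underpinning the partial-homotopy machinery of \cite{BamKle19}, and no new analytic input is needed beyond the normal forms of Lemma \ref{lem:NormalForm} and the convexity of the space of compatible metrics.
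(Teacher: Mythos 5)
Your overall philosophy --- work in the fiber-adapted normal form of Lemma \ref{lem:NormalForm}, where compatibility and reflexivity become convex/parity conditions on scalar coefficients --- is the right one and is indeed how the paper argues. But the concrete two-move implementation has two genuine gaps, both of which are precisely what the paper's preliminary reduction to the model cases $Z=S^2\times[-2,2]$, $Y=S^2\times[-1,1]$ (with $f(x,r)=(x,-r)$ or $f(x,r)=(f_0(x),r)$) is designed to avoid.

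\textbf{Gap in Move 1.} The retraction $R_1$ sends an interior parameter $(s,t)$ to a point $(s',t')$ of the parabolic boundary with $s'\neq s$ in general, so $\tilde h_{s,t}=g^3_{s',t'}$ (or $g^1_{s'}$) is compatible with $\mS^{s'}$, not with $\mS^s$. Transverse continuity of $(\mS^s)$ does not repair this: it only tells you that nearby structures are conjugate by diffeomorphisms near the identity, whereas the retraction moves $s$ a macroscopic distance (e.g.\ from the center of $D^k$ to $\partial D^k$), and conjugating by a family of identifications $\mS^{s'}\simeq\mS^s$ is a nontrivial construction that would also have to respect $f$, the decomposition $Y\cup\overline{Z\setminus Y}$, and the already-prescribed values of $g^2$. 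This identification is exactly the content of the ``preliminary reductions'' of Proposition 6.15 in \cite{BamKle19} that the paper invokes (together with Lemma \ref{lem:ReflSstructClassification}) to reduce to a model where the spherical structure is standard and $s$-independent; once there, the extension over the parameter cube is an extension of scalar functions of the fiber coordinate, handled by Seeley's theorem plus even/odd reflection, and the issue disappears.

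\textbf{Gap in Move 2.} The convex combination $\nu\, g^2_{s,t}+(1-\nu)\,\tilde h_{s,t}$ is ill-posed wherever $0<\nu<1$: if the transition region of the cutoff lies inside $Y$, then $g^2_{s,t}$ is simply not defined there (it lives only on $\overline{Z\setminus Y}$); if instead the transition region lies inside $\overline{Z\setminus Y}$, then $h_{s,t}\neq g^2_{s,t}$ on part of $\overline{Z\setminus Y}$, violating conclusion (iii). To make an interpolation of this kind work you must first \emph{extend} $g^2_{s,t}$ (or rather the scalar data encoding the difference between $g^2_{s,t}$ and the $t=0$ data) smoothly into $Y$ with the correct parity --- which is again an extension problem of exactly the type you are trying to solve, and is what the paper's use of Seeley extension on $[-2,-1]\cup[1,2]\to[-2,2]$, with support control and subsequent even/odd reflection, accomplishes. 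Once both gaps are filled one essentially recovers the paper's argument, so the proposal as written is not a complete proof.
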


\begin{proof}
First of all, note that since $Y$ is assumed to be connected (and $\Fix(f)$ is postulated to be separating, as specified by $(\star_{\text{sep}})$) it must be $\Fix(f)\cap Y\neq\emptyset$.
That being said, thanks to the preliminary reductions presented in the proof of Proposition 6.15 in \cite{BamKle19}, and keeping in mind the classification result given in Lemma \ref{lem:ReflSstructClassification}, one only needs to consider the following two cases:

\emph{Case 1:}
$Z=S^2\times [-2,2], Y=S^2\times [-1,1] $ and $f(x,r)=(x,-r)$, \emph{or}

\emph{Case 2:} $Z=S^2\times [-2,2], Y=S^2\times [-1,1] $ and $f(x,r)=(f_0(x),r)$.

 Now, based on the normal form of compatible metrics (as per Lemma \ref{lem:NormalForm}), the whole construction reduces to a suitable extension procedure for functions, possibly satisfying additional symmetry conditions. We will now proceed with the discussion of these aspects, following the two cases above.

In the first case (corresponding to item (1) in Lemma \ref{lem:NormalForm}) the problem reduces to proving that, given $\phi_{s,t}\in C^{\infty}([-2,-1]\cup [1,2])$, where $s\in D^k$ and $t\in I$, satisfying $\phi_{s,0}=0$ on $[-2,2]$ for all $s\in D^k$ and being even (respectively: odd) for all $s\in D^k, t\in I$ then there exists $\psi_{s,t}\in C^{\infty}([-2,2])$, where $s\in D^k$ and $t\in I$, satisfying $\psi_{s,0}=\phi_{s,0}=0$ on $[-2,2]$ for all $s\in D^k$ and being even (respectively: odd) for all $s\in D^k, t\in I$. To that aim, we use Seeley's smooth extension theorem and a cutoff function to first extend each $\phi_{s,t}$ to a smooth function on $[-2,0]$ supported in $[-2,-1/2]$ and then perform an even (respectively: odd) reflection. This ensures that (employing the notation of Lemma \ref{lem:NormalForm}) the smooth extended functions $a,b$ are even, while $c_1, c_2, c_3$ are odd.

In the second case (corresponding to item (2) in Lemma \ref{lem:NormalForm}) we just need to ensure that the coefficients $c_3$ are extended to be zero for all values of the parameters $(s,t)\in D^k\times I$ and otherwise follow, for $a,b,c_1, c_2$, the non-equivariant extension procedure as no additional parity requirements are present. 
\end{proof}

We shall now proceed with a second extension result for families of metrics, which captures a different type of extension problem where one needs to add novel connected components.

\begin{lemma}\label{lem:ExtToNewConnComp}
Let $M=S^3$, and let $f\in C^{\infty}(M,M)$ denote an
    involutive diffeomorphism satisfying the assumption $(\star_{\text{sep}})$.
   Let $g_0$ denote a metric of constant sectional curvature equal to 1, and let $\Delta^n$ denote the standard $n$-dimensional simplex for some non-negative integer $n$. Let us further assume we are given the following additional data:
    \begin{itemize}
       \item a continuous positive function $\lambda:\Delta^n\to \R$ and a continuous family of metrics $(k_{s,t})_{s\in\partial\Delta^n,t\in I}$ on $M$ such that $k_{s,0}=\lambda^2(s)g_0$ for every $s\in\partial\Delta^n$, and such that $(M,k_{s,t},f)$ is a reflexive 3-manifold for every $s\in\partial\Delta^n$ and $t\in I$,
        \item an open, non-empty subset $A\subset\Delta^n$, a closed subset $E\subset\partial\Delta^n$ contained in $A$,
        \item a transversely continuous family of reflexive spherical structures $(\mS^s)_{s\in A}$ on $M$,
         \end{itemize}
    and that they satisfy the following assumptions:
    \begin{enumerate}
        \item for all $s\in A$ the metric $g_0$ is compatible with $\mS^s$,
        \item for all $s\in E$ and $t\in I$ the metric $k_{s,t}$ is compatible with $\mS^s$,
        \item for all $s\in \partial \Delta^n\setminus E$ and $t\in I$ the metric $k_{s,t}$ is an $(s,t)$-dependent multiple of $g_0$.
    \end{enumerate}
Then there is a continuous family of metrics $(h_{s,t})_{s\in\Delta^n,t\in I}$ on $M$  such that $(M,h_{s,t},f)$ is a reflexive 3-manifold for every $s\in\Delta^n$ and $t\in I$, and a closed subset $E'$ of $A\subset \Delta^n$ such that:
\begin{enumerate}[label=(\roman*)]
    \item $h_{s,0}=\lambda^2(s)g_0$ for all $s\in \Delta^n$ and $h_{s,t}=k_{s,t}$ for all $s\in\partial\Delta^n$ and $t\in I$,
    \item  for all $s\in A$ and $t\in I$ the metric $h_{s,t}$ is compatible with $\mS^s$, and for all $s\in\Delta^n\setminus E'$ and $t\in I$ the metric $h_{s,t}$ is an $(s,t)$-dependent multiple of $g_0$.
\end{enumerate}
\end{lemma}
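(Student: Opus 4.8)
The plan is to mirror the structure of the proof of Lemma \ref{lem:ExtToNewConnComp} in the non-equivariant case (Lemma 6.16 in \cite{BamKle19}), carefully carrying along the involution $f$ and the reflexive spherical structures $(\mS^s)_{s\in A}$ at every stage. The essential point is that $M=S^3$, so by Lemma \ref{lem:ReflSstructClassification}(2)(e), whenever a reflexive spherical structure $\mS^s$ has domain all of $M$, it has exactly two singular fibers which are points, $\Fix(f^s)=S^2$, and the action $f$ is entirely determined by this fixed locus (the antipodal reflection of $S^3$ across an equatorial $S^2$). Thus the ``equivariant'' data is, in each fiber over $s\in A$, rigidly pinned down, and the flexibility we need lives only in the $(s,t)$-direction and away from $A$. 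First I would recall (from the non-equivariant argument) that the extension problem decomposes into two pieces: extending over $\Delta^n\setminus A$, where $\mS^s$ plays no role and one only needs to produce a continuous family of multiples of $g_0$ interpolating the assigned boundary data (here $f$-invariance is automatic since every multiple of $g_0$ is $f$-invariant, $g_0$ being $f$-invariant by assumption (1) and assumption (3)); and extending over $A$, where compatibility with $\mS^s$ must be preserved throughout the deformation.

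Second, over the region $A$ I would invoke the normal form for metrics compatible with a reflexive spherical structure on $S^3$. By Lemma \ref{lem:ReflSstructClassification}, near each of the two singular fibers $\mS^s$ looks like the standard spherical structure on $B^3$ with $\Fix(f)=B^2$, and on the complementary open cylinder $S^2\times(-1,1)$ it is the standard structure with $f$ acting either as $(x,r)\mapsto(x,-r)$ or as $(x,r)\mapsto(f_0(x),r)$ (Lemma \ref{lem:NormalForm}). In either case, as in the proof of Lemma \ref{lem:ExtensionFamilyMet}, writing a compatible metric in the form $a^2(r)g_{S^2}+b^2(r)dr^2+\sum c_i(r)(dr\otimes(dx^i)^\#+\cdots)$ reduces the construction to an extension problem for finitely many smooth functions of $r$ with prescribed parity (even $a,b$; odd $c_i$ in Case 1 of Lemma \ref{lem:NormalForm}, and $c_3\equiv 0$ in Case 2). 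I would then use Seeley's extension theorem together with cutoff functions and the appropriate even/odd reflection — exactly as in Lemma \ref{lem:ExtensionFamilyMet} — to build the $t$-extension of $k_{s,t}$ over $s\in E$ while keeping compatibility with $\mS^s$, and over $s\in A\setminus E$ (where $k_{s,t}$ is, by hypothesis (3), already a multiple of $g_0$, hence compatible with $\mS^s$ by hypothesis (1)) there is nothing to do in the $\mS$-direction.

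Third, I would glue: choose the closed set $E'\subset A$ as in the non-equivariant construction (it will be a suitable closed neighborhood of $E$ inside $A$, arranged so that on $\Delta^n\setminus E'$ the output metric is forced to be a multiple of $g_0$), and patch the $\mS^s$-compatible family on a neighborhood of $E'$ with the multiple-of-$g_0$ family elsewhere via a partition of unity subordinate to a cover of $\Delta^n\times I$, taking convex combinations of conformal factors exactly as in the non-equivariant argument (and as in the proof of Theorem \ref{thm:MainHomotopy}); since all ingredients — $g_0$, the fiberwise round metrics on regular fibers of $\mS^s$, the extended functions in normal form — are $f$-equivariant by construction, and since convex combinations and smoothing preserve $f$-invariance, the resulting family $(h_{s,t})$ automatically satisfies $(f)^*h_{s,t}=h_{s,t}$, so $(M,h_{s,t},f)$ is a reflexive $3$-manifold for all $(s,t)$. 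Properties (i) and (ii) are then read off directly from the construction.

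\textbf{Main obstacle.} The genuinely new (as opposed to bookkeeping) difficulty is ensuring \emph{transverse continuity in $s$} of the reflexive spherical structures together with their rigid fixed loci, across the interface $\partial A$ and $E$, while simultaneously keeping the normal-form extension of the $c_i$-coefficients continuous: in Case 1 versus Case 2 of Lemma \ref{lem:NormalForm} the parity constraints on the $c_i$ differ, so one must check that the subset of $A$ where $f$ acts ``horizontally'' and where it acts ``vertically'' are both open and closed in the relevant sense (which follows from Corollary 4.8 in \cite{CarLi19}, i.e. from $f$ being determined by $\Fix(f)$, together with the isolatedness of singular fibers), so that no discontinuous transition between the two normal forms can occur within a connected piece of $A$. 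Once this structural dichotomy is seen to be locally constant in $s$, the extension is carried out separately on each piece and the construction goes through with only notational changes from \cite{BamKle19}.
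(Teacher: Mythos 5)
Your proposal is essentially sound but takes a genuinely different route from the paper's. The paper does not decompose $\Delta^n$ spatially and glue: following the first paragraph of the proof of Proposition 6.18 in \cite{BamKle19}, it first reduces, via a simplicial refinement, to the two extreme cases $E=\emptyset$ and $A=\Delta^n$. In the first case one simply extends the conformal factor ($k_{s,t}=\mu^2(s,t)g_0$ on the parabolic boundary, and any positive continuous extension of $\mu$ works, $f$-invariance and $\mS^s$-compatibility being inherited from $g_0$). In the second case the paper uses the homeomorphism of pairs $(\Delta^n\times I,(\Delta^n\times\{0\})\cup(\partial\Delta^n\times I))\cong(\Delta^n\times I,\Delta^n\times\{0\})$ to reparametrise the problem so that all data sits on the bottom face, and then extends constantly in $t$; no normal-form computation or Seeley extension is needed at all. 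Your approach — direct normal-form extension over $A$ plus a partition-of-unity gluing against the multiple-of-$g_0$ family — is more hands-on; what it buys is an explicit construction, at the cost of having to produce a transversely continuous, $f$-equivariant trivialisation of the family $(\mS^s)$ over the relevant portion of $A$ before the coefficient functions $a,b,c_i$ of Lemma \ref{lem:NormalForm} can even be defined, a step you do not address and which the paper's reduction sidesteps entirely. Your observation that the Case 1/Case 2 dichotomy of Lemma \ref{lem:NormalForm} is locally constant in $s$ is correct and worth making explicit.

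One concrete repair is needed in your gluing step: you propose ``taking convex combinations of the conformal factors,'' but a metric compatible with $\mS^s$ is in general \emph{not} a conformal multiple of $g_0$ (its normal form has independent coefficients $a,b,c_i$), so there are no conformal factors to combine. The fix is to take convex combinations of the metrics themselves: by Lemma \ref{lem:NormalForm} the set of metrics compatible with a fixed reflexive spherical structure (and invariant under $f$) is convex, $g_0$ belongs to it for $s\in A$ by hypothesis (1), and a convex combination of multiples of $g_0$ is again a multiple of $g_0$, so the interpolation preserves all three required properties provided the cutoff equals $1$ on a neighbourhood of $E$ and is supported in $E'\subset A$. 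With that replacement, and the trivialisation issue acknowledged, your argument goes through.
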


Before we outline the proof, let us add a couple of comments. First,
the reason why we can \emph{a priori} reduce to $M=S^3$ (so neglecting the other spherical space forms) has to do with the more restrictive form of Lemma \ref{lem:ReflSstructClassification} compared to the non-equivariant case. Second, keeping in mind the content of Lemma \ref{lemma.SingFiberReflexivePointCase}, for any given $s\in \Delta^n$ only two possible pictures can occur: either $\Fix(f)$ is the central leaf of the spherical structure $\mS^s$ or, instead, $\Fix(f)$ passes through the two singular fibers of $\mS^s$ (and, for any metric as in the statement above, meets each regular fiber orthogonally along a circle).

\begin{proof}
Following the argument in the first paragraph of the proof of Proposition 6.18 in \cite{BamKle19}, based on suitable simplicial refinements, one only needs to handle the cases when $E=\emptyset$ or $A=\Delta^n$.

If $E=\emptyset$ we take $E'=\emptyset$ and note that, by assumption, for all $(s,t)\in \partial\Delta^n\times I$ the metric $k_{s,t}$ is a multiple of $g_0$, say $k_{s,t}=\mu^2(s,t)g_0$ for some continuous function $\mu$ hence we can just take $h_{s,t}:\Delta^n\times I\to\mathbb{R}$ to be defined through a \emph{positive} continuous extension to $\Delta^n\times I$ of the (mutually compatible) conformal factors $\lambda$ and $\mu$. Let us still denote by $\mu\in C^0(\Delta^n\times I)$ such an extension. Now, it follows from our hypotheses that on the one hand $(M,g_0,f)$ is a reflexive 3-manifold and so $(M,\mu^2(s,t)g_0, f)$ will also be, while on the other hand $g_0$ is compatible with $\mS^s$ for any $s\in A$ and thus the same conclusion will equally hold for $\mu^2(s,t)g_0$ for any choice of $s$ and $t$.

If instead $A=\Delta^n$ we take $E'=\Delta^n$ and design an extension such that $h_{s,t}$ is compatible with $\mS^s$ for any $s\in \Delta^n$ and $t\in I$. This task is similar in spirit to what we did above in the proof of Lemma \ref{lem:ExtensionFamilyMet}, albeit strictly simpler since we only have data assigned on the `parabolic boundary' but no need to respect a family of metrics assigned on a subset of the ambient manifold. In short, since the pairs $(D^n\times I, (D^n\times\left\{0\right\})\cup (\partial D^n\times I))$ and $(D^n\times I, D^n\times\left\{0\right\})$ are patently homeomorphic we can reduce our initial problem to that of extending a given family of metrics $(k_{s,0})_{s\in\Delta^n}$, for which we postulate that $k_{s,0}$ is compatible with $\mS^s$ for any $s\in \Delta^n$: to do so, we simply let $h_{s,t}=k_{s,0}$ and all conclusions follow.
\end{proof}

We can now finally proceed with the aforementioned extension theorem for partial homotopies.
 
\begin{theorem}\label{thm:ExtPartHom}
Consider a reflexive partial homotopy 
\begin{equation}\label{eq:InitialPartialHom}
        \left\{(Z^{\sigma}, (g^{\sigma}_{s,t})_{s\in\sigma, t\in [0,1]}, f^{\sigma}, (\psi^{\sigma}_s)_{s\in\sigma})\right\}_{\sigma\in \mK}
          \end{equation}    
at time $T\geq 0$ for a family of reflexive singular Ricci flows $(\mM^s)_{s\in K}$. Fix some simplex $\sigma\in\mK$ and assume we are assigned the following objects:
\begin{itemize}
    \item a compact 3-manifold with boundary $\hat{Z}^{\sigma}$,
    \item an involutive diffeomorphism $\hat{f}^{\sigma}$ of $\hat{Z}^{\sigma}$ satisfying $(\star_{\text{sep}})$;
    \item an embedding $\iota^{\sigma}:Z^{\sigma}\to \hat{Z}^{\sigma}$,
    \item a continuous family of embeddings $(\hat{\psi}^{\sigma}_s: \hat{Z}^{\sigma}\to\mM^s_T)_{s\in\sigma}$
\end{itemize}
such that all these conditions are met
\begin{enumerate}
    \item $\iota^{\sigma}(Z^{\sigma})$ is $\hat{f}^{\sigma} $-invariant and $\iota^{\sigma}\circ f^{\sigma}=\hat{f}^{\sigma}\circ \iota^{\sigma}$,
    \item $\hat{\psi}^{\sigma}_s(\hat{Z}^{\sigma})$ is $f^s_T$-invariant and $\hat{\psi}^{\sigma}_s\circ \hat{f}^{\sigma}=f^s_{T}\circ \hat{\psi}^{\sigma}_s$, 
    \item $\psi^{\sigma}_s = \hat{\psi}^{\sigma}_s \circ\iota^{\sigma}$,
    \item for any simplex $\tau\subset \partial\sigma$ and $s\in\tau$ we have $\hat{\psi}^{\sigma}_s(\hat{Z}^{\sigma})\subset \psi^{\tau}_s(Z^{\tau})$,
    \item the closure $Y$ of any connected component of $\hat{Z}^{\sigma}\setminus \iota^{\sigma}(Z^{\sigma})$ satisfies either of the following two properties, uniformly in $s\in\sigma$:
    \begin{enumerate}
        \item $\hat{\psi}^{\sigma}_s(Y)$ is a union of (possibly singular) fibers of $\mS^s$,
        \item $\partial Y=\emptyset$, $\hat{\psi}^{\sigma}_s(Y)\subset U^s_{S3}$ and $(\hat{\psi}^{\sigma}_s)^{\ast}(g'^{,s}_T)$ is an $s$-dependent multiple of the same constant curvature metric, and if $Y\cap \Fix(\hat{f}^{\sigma})\neq\emptyset$ then $(Y, (\hat{\psi}^{\sigma}_s)^{\ast}(g'^{,s}_T), \hat{f}^{\sigma})$ is a reflexive 3-manifold.
    \end{enumerate}
\end{enumerate}
Then there exists a continuous family of metrics $(\hat{g}^{\sigma}_{s\in\sigma, t\in [0,1]})$ such that 
\begin{equation}\label{eq:FinalPartialHom}
        \left\{(Z^{\sigma'}, (g^{\sigma'}_{s,t})_{s\in\sigma', t\in [0,1]}, f^{\sigma'}, (\psi^{\sigma'}_s)_{s\in\sigma'}\right\}_{\sigma'\in \mK,\sigma'\neq\sigma} \cup \left\{(\hat{Z}^{\sigma}, (\hat{g}^{\sigma}_{s,t})_{s\in\sigma, t\in [0,1]}, \hat{f}^{\sigma}, (\hat{\psi}^{\sigma}_s)_{s\in\sigma}\right\}
          \end{equation} 
is a reflexive partial homotopy at time $T$ for $(\mM^s)_{s\in K}$, and if \eqref{eq:InitialPartialHom} is PSC-conformal over some $s\in K$ then so will be \eqref{eq:FinalPartialHom}.
\end{theorem}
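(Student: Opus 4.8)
The plan is to mirror the non-equivariant extension theorem (Proposition 6.19 in \cite{BamKle19}) and carry out the construction so that all auxiliary data and interpolations respect the involutions, invoking the two equivariant extension lemmas (Lemma \ref{lem:ExtensionFamilyMet} and Lemma \ref{lem:ExtToNewConnComp}) in place of their non-equivariant counterparts. First I would fix the simplex $\sigma$ and recall that, by Definition \ref{def:PartialHom}, building the new partial homotopy amounts to producing, for each face $\tau\subseteq\sigma$, a continuous family of reflexive metrics $(\hat g^{\sigma}_{s,t})_{s\in\sigma,t\in[0,1]}$ on $\hat Z^{\sigma}$ such that: (a) $(\hat\psi^{\sigma}_s)^{\ast}g'^{,s}_T=\hat g^{\sigma}_{s,0}$; (b) on $\iota^{\sigma}(Z^{\sigma})$ we recover $g^{\sigma}_{s,t}$ (so the old homotopy is literally extended, via $\iota^{\sigma}$, which is legitimate by condition (1) and (3)); (c) on the faces $\tau\subsetneq\sigma$ the consistency conditions (2) and (3) of Definition \ref{def:PartialHom} with the pieces $(Z^{\tau},g^{\tau}_{s,t})$ already present hold, using condition (4) in the hypothesis that $\hat\psi^{\sigma}_s(\hat Z^{\sigma})\subset\psi^{\tau}_s(Z^{\tau})$; (d) the closures $Y$ of the components of $\hat Z^{\sigma}\setminus\iota^{\sigma}(Z^{\sigma})$ are treated compatibly with their type, (5)(a) or (5)(b); and (e) the collar-compatibility condition (5) of Definition \ref{def:PartialHom} near $\partial\hat Z^{\sigma}$.

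Next I would run the induction over the skeleton of $\sigma$, exactly as in \cite{BamKle19}: having defined $\hat g^{\sigma}_{s,t}$ on $\partial\sigma$ (which is forced by the already-existing pieces on the faces, together with the flow-pullback at $t=0$), one extends across a top-dimensional cell. Each component $Y$ of $\hat Z^{\sigma}\setminus\iota^{\sigma}(Z^{\sigma})$ is handled separately. If $Y$ is of type (5)(a) — a union of (possibly singular) spherical fibers of $\mS^s$ — then, by Lemma \ref{lem:ReflSstructClassification}, only the local models $S^2\times[-2,2]$ with $f(x,r)=(x,-r)$ or $f(x,r)=(f_0(x),r)$, plus $D^3$ and $S^2\times[0,1)$-type caps, can occur along the fixed locus, and Lemma \ref{lemma.SingFiberReflexive}/\ref{lemma.SingFiberReflexivePointCase} tell us that $\mathbb{RP}^2$-fibers occur in $f$-paired neighborhoods away from $\Fix(f)$. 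In each case I apply Lemma \ref{lem:ExtensionFamilyMet}: the data $g^1_s=(\hat\psi^{\sigma}_s)^{\ast}g'^{,s}_T$, $g^2_{s,t}$ coming from the old homotopy $g^{\sigma}_{s,t}$ pushed onto $\overline{\hat Z^{\sigma}\setminus Y}$, and $g^3_{s,t}$ from the already-constructed boundary values on $\partial\sigma$, satisfy the three consistency conditions there, and the lemma produces the desired reflexive, $\mS^s$-compatible extension on $Y$ respecting the parity constraints dictated by Lemma \ref{lem:NormalForm}. If $Y$ is of type (5)(b) — a closed component in $U^s_{S3}$ on which $(\hat\psi^{\sigma}_s)^{\ast}g'^{,s}_T$ is a multiple of a fixed constant-curvature metric — then I apply Lemma \ref{lem:ExtToNewConnComp} with $M=S^3$ (the only spherical space form compatible with a reflexive spherical structure in the relevant regime), taking $\lambda$ from the $t=0$ scaling factor, $E$ the locus where the old homotopy already imposes a compatible structure, and $A$ the parameter subset over which $\mS^s$ is defined; the lemma produces a reflexive family $(h_{s,t})$ and a closed subset $E'$ which becomes the support of the updated $\mR$-structure-related compatibility in Definition \ref{def:PartialHom}(4)(b).

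Then I would glue: on $\iota^{\sigma}(Z^{\sigma})$ use $g^{\sigma}_{s,t}$, on each $Y$ use the extension just produced, and check these agree on the shared boundary $\partial Y$ (which consists of regular fibers, so both sides are determined by the same compatible local normal form and by the flow-pullback at $t=0$). Since all building blocks are reflexive and $\mS^s$-compatible by construction, and the interpolations in Lemma \ref{lem:ExtensionFamilyMet}/\ref{lem:ExtToNewConnComp} preserve these properties, conditions (1)--(5) of Definition \ref{def:PartialHom} for the enlarged collection \eqref{eq:FinalPartialHom} follow — (1) from (a) above; (2),(3) from (b),(c) and hypothesis (4); (4) from the type-(5)(a)/(5)(b) dichotomy combined with Lemma \ref{lemma.SingFiberReflexive}, Lemma \ref{lemma.SingFiberReflexivePointCase} and hypothesis (5); (5) from the collar handling. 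Finally, the PSC-conformal persistence claim follows because, for $s\in K$ over which \eqref{eq:InitialPartialHom} is PSC-conformal, each $(\hat Z^{\sigma},\hat g^{\sigma}_{s,t},\hat f^{\sigma})$ is obtained from $(Z^{\sigma},g^{\sigma}_{s,t},f^{\sigma})$ by attaching regions that are either unions of spherical fibers or constant-curvature caps, so Lemma \ref{lemma.PSCconform.extension} (applied equivariantly) upgrades reflexive PSC-conformality of the smaller piece to that of $\hat Z^{\sigma}$.

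I expect the main obstacle to be the bookkeeping at the fixed locus in the type-(5)(a) case: one must be sure that whenever a component $Y$ of $\hat Z^{\sigma}\setminus\iota^{\sigma}(Z^{\sigma})$ meets $\Fix(f^s)$, the reduction to one of the two local models of Lemma \ref{lem:ExtensionFamilyMet} is valid uniformly in $s$ — i.e. that the geometric picture (central leaf fixed, versus fixed locus meeting fibers orthogonally, versus $f$-paired components) does not jump as $s$ varies over $\sigma$. This is where Lemma \ref{lemma.SingFiberReflexivePointCase} and the transverse continuity of $(\mS^s)$, together with the openness of condition (2) in Proposition \ref{prop:SurjPartHom}, do the real work, exactly as in the discussion following Lemma \ref{lem:BackwInductPreparation}; once that uniformity is in place, the rest is a faithful equivariant transcription of the argument in \cite{BamKle19}.
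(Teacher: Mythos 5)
Your proposal follows essentially the same route as the paper: treat each connected component $Y$ of $\hat Z^{\sigma}\setminus\iota^{\sigma}(Z^{\sigma})$ separately, reduce to an extension problem for families of metrics with data prescribed on $\iota^{\sigma}(Z^{\sigma})$, at $t=0$, and over $\partial\sigma$, solve it via Lemma \ref{lem:ExtensionFamilyMet} in case (5)(a) and Lemma \ref{lem:ExtToNewConnComp} in case (5)(b), and invoke Lemma \ref{lemma.PSCconform.extension} for the PSC-conformal persistence.

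One point where your write-up, taken literally, does not go through: you apply Lemma \ref{lem:ExtensionFamilyMet} (resp.\ Lemma \ref{lem:ExtToNewConnComp}) to \emph{every} component $Y$ according only to the (5)(a)/(5)(b) dichotomy, but Lemma \ref{lem:ExtensionFamilyMet} requires $Y$ to be connected \emph{and} $\hat f^{\sigma}$-invariant, which by the separation property $(\star_{\text{sep}})$ forces $Y\cap\Fix(\hat f^{\sigma})\neq\emptyset$. A component $Y$ disjoint from the fixed locus is in general swapped with a twin $Y'=\hat f^{\sigma}(Y)\neq Y$, and neither equivariant lemma applies to it. The paper handles this by a preliminary dichotomy: for such paired components one runs the \emph{non-equivariant} construction (Proposition 7.9 in \cite{BamKle19}) on $Y$ and then defines the metrics on $Y'$ by push-forward through $\hat f^{\sigma}_{|Y}$; only components meeting $\Fix(\hat f^{\sigma})$ are fed to the equivariant extension lemmas. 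You allude to ``$f$-paired components'' in your closing paragraph, so the fix is within reach, but as stated your case analysis is keyed to the wrong dichotomy and should be reorganised around $Y\cap\Fix(\hat f^{\sigma})=\emptyset$ versus $Y\cap\Fix(\hat f^{\sigma})\neq\emptyset$ first, and only then (5)(a) versus (5)(b).
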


To avoid confusion, let us recall here that, in the previous statement, $K$ denotes a geometric realisation of the simplicial complex $\mK$.

\begin{proof}
Let us consider one of the finitely many connected components, say $Y$, of $\hat{Z}^{\sigma}\setminus \iota^{\sigma}(Z^{\sigma})$. Then, there are two different cases to be considered. In the first case $Y$ is disjoint from $\Fix(\hat{f}^{\sigma})$, so one can find a `twin' connected component $Y'$ of $\hat{Z}^{\sigma}\setminus \iota^{\sigma}(Z^{\sigma})$ such that $\hat{f}^{\sigma}_{|Y}: Y\to Y'$ is a diffeomorphism of compact 3-manifolds with boundary. Thus, one can appeal to the non-equivariant construction, as per Proposition 7.9 in \cite{BamKle19}, to define the continuous family of metrics $(\hat{g}^{\sigma}_{s\in\sigma, t\in [0,1]})$ on $Y$, and then define such a family on $Y'$ by push-forward through $\hat{f}^{\sigma}_{|Y}$.
In the second case $Y$ is \emph{not} disjoint from $\Fix(\hat{f}^{\sigma})$, instead. Without loss of generality we can further assume $\hat{Z}^{\sigma}\setminus \iota^{\sigma}(Z^{\sigma})$ to only consist of such a connected component. To handle this case,  we first introduce the continuous family of metrics $(k_{s,t})_{s\in\partial\sigma, t\in I}$ defined on $\hat{Z}^{\sigma}$ by letting (for $s\in\tau$ where $\tau$ is \emph{any} simplex included in $\partial\sigma$)
\[
k_{s,t}=((\psi^{\tau}_s)^{-1}\circ \hat{\psi}^{\sigma}_s)^{\ast}g^{\tau}_{s,t},
\]
which is easily checked to be a well-posed definition by virtue of the axioms defining partial homotopies. Hence, the initial task one is left with is to define a reflexive metric deformation $(\hat{g}^{\sigma}_{s\in\sigma, t\in [0,1]})$ on $Y$ satisfying the requirements that
\[
\hat{g}^{\sigma}_{s,0}=(\hat{\psi}^{\sigma}_s)^{\ast}(g'^{,s}_T)  \ \text{on} \ \hat{Z}^{\sigma}, \ \text{for all} \ s\in\sigma,
\]
\[
\hat{g}^{\sigma}_{s,t}=k_{s,t} \ \text{on} \ \hat{Z}^{\sigma}, \ \text{for all} \ s\in\partial\sigma, t\in I,
\]
\[
\hat{g}^{\sigma}_{s,t}=g^{\sigma}_{s,t} \ \text{on} \ Z^{\sigma}, \ \text{for all} \ s\in\sigma, t\in I.
\]
At this stage, if condition (5)(a) holds uniformly for all $s\in\sigma$ then we perform the construcion employing Lemma \ref{lem:ExtensionFamilyMet}, while if condition (5)(b) holds uniformly for all $s\in\sigma$ then we appeal to Lemma \ref{lem:ExtToNewConnComp} instead. The fact that such an extension, besides determining a continuous family of metrics satisfying the three equations above, is actually a reflexive partial homotopy can be verified as in pages 83-84 of \cite{BamKle19}. The only point we need to stress is that, in checking that:
\begin{itemize}
    \item $(\hat{Z}^{\sigma}, (\hat{g}^{\sigma}_{s,t})_{s\in\sigma, t\in I})$ is a reflexive metric deformation, and that 
    \item if \eqref{eq:InitialPartialHom} is PSC-conformal over some $s\in K$ then so will be \eqref{eq:FinalPartialHom},
\end{itemize}
one will have to employ Lemma \ref{lemma.PSCconform.extension} in lieu of Lemma 6.6 of \cite{BamKle19}. 
\end{proof}

\section{The equivariant removal of 3-disks of a partial homotopy}\label{app:EquivDiskRemov}

This last appendix is devoted to properly designing (the equivariant counterpart of) another operation on partial homotopies, i.e. the removal of disks. From a geometric standpoint, the disks in question are centered at the tips of regions that are close to Bryant solitons and are endowed with a spherical structure (whose only singular fiber is the tip itself). Keeping in mind the content of Lemma \ref{lemma.SingFiberReflexivePointCase},  comparing our task to the non-equivariant treatment we first need the following statement.

\begin{lemma}\label{lem:UseConformalMap}
Let $K$ be a compact topological space, and let $K'$ be a closed subset thereof. Assume we are given a continuous family of metrics $(k_s)_{s\in K}$ on $B^3$ such that $(B^3,k_s, f_0)$ is a reflexive 3-manifold for any $s\in K$ and, in addition, satisfying the property that there exists a continuous family of smooth, positive functions $(w_s)_{s\in K'}$ on $B^3$ so that $w_s\circ f_0=w_s$ and $w^4_s k_s$ has positive scalar curvature. Given any $\overline{r}_1 \in (0,1)$ there exists a continuous family of metrics $(h_{s,t})_{s\in K, t\in I}$ and $r_1\in (0,\overline{r}_1)$ so that the following assertions hold true:
  \begin{enumerate}
      \item $(B^3,h_{s,t}, f_0)$ is a reflexive 3-manifold for all $s\in K$ and $t\in I$, 
      \item $h_{s,0}=k_s$ for all $s\in K$,
      \item $h_{s,1}$ is compatible with the standard spherical structure on $D^3(r_1)$ for all $s\in K$,
      \item $h_{s,1}=k_s$ on $B^3\setminus B^3(\overline{r}_1)$,
      \item if $k_s$ is compatible with the standard spherical structure of $D^3(r)$ for some $r\in [\overline{r}_1,1)$ then so is $h_{s,t}$, for all $t\in I$,
      \item if $s\in K'$ then there exists a continuous family of smooth, positive functions $(v_{s,t})_{s\in K', t\in I}$ on $B^3$ so that $v_{s,t}\circ f_0=v_{s,t}$, $v_{s,t}=w_s$ on $B^3\setminus B^3(\overline{r}_1)$ and $v^4_{s,t}h_{s,t}$ has positive scalar curvature for all $s\in K', t\in I$.
  \end{enumerate}
 \end{lemma}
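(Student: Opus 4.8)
The statement is the equivariant version of Lemma 7.7 (resp.\ the first half of the disk-removal construction) in \cite{BamKle19}, so the plan is to follow the non-equivariant argument and check at each stage that all choices can be made $f_0$-equivariantly. The non-equivariant proof proceeds by: (i) using the fact that a neighbourhood of the origin is, after rescaling, close to the tip region of a Bryant soliton, so that on a small ball $B^3(\overline r_1)$ one can first deform $k_s$ (keeping positive scalar curvature when a conformal factor is available) to a metric that is \emph{rotationally symmetric} near the origin, i.e.\ compatible with the standard spherical structure on some $D^3(r_1)$ with $r_1<\overline r_1$; (ii) doing this deformation so that it is supported in $B^3(\overline r_1)$, hence leaves $k_s$ untouched on $B^3\setminus B^3(\overline r_1)$; (iii) interpolating the conformal factors $w_s$ through a continuous family $(v_{s,t})_{s\in K',t\in I}$ that agrees with $w_s$ outside $B^3(\overline r_1)$ and keeps $v_{s,t}^4 h_{s,t}$ of positive scalar curvature. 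The outputs (1)--(6) are then read off directly.

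\textbf{Equivariance bookkeeping.} First I would recall (from the preliminary discussion in Section~\ref{sec:OutlineParab}, specifically Lemma~\ref{lemma.SingFiberReflexivePointCase} and Lemma~\ref{lem:ReflSstructClassification}) that for a reflexive $3$-manifold $(B^3,k_s,f_0)$ whose singular fiber at the origin is a point, either $\Fix(f_0)=B^2$ passes through that point with each regular spherical fiber met orthogonally along a circle, or the origin is away from $\Fix(f_0)$ — but in our normalised situation $f_0$ is fixed to be the standard reflection $(x_1,x_2,x_3)\mapsto(x_1,x_2,-x_3)$, so $\Fix(f_0)=\{x_3=0\}$ passes through the origin and is the horizontal case. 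Thus the standard $O(3)$-action one uses to produce the rotationally symmetric metric near the origin can be chosen so that $f_0$ normalises it (i.e.\ $f_0\in O(3)$), and the averaging/rounding operators that produce the symmetric metric — which by \cite[Lemma 5.13(d)]{BamKle19} commute with isometries — automatically output $f_0$-invariant metrics. Concretely: the metric $k_s$ being close to a Bryant soliton near the origin, one applies the rounding operator $\mathrm{RD}$ (or the construction of Lemma~5.24/Proposition~7.9 in \cite{BamKle19}) on $B^3(\overline r_1)$; since $f_0^\ast k_s=k_s$ and the construction is isometry-equivariant, the resulting $h_{s,1}$ satisfies $f_0^\ast h_{s,1}=h_{s,1}$, and the spherical structure produced is reflexive in the sense of Definition~\ref{def:S-structure} with $f_0$ acting by reflection across $S^1\times(-r_1,r_1)$ on each collar. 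The interpolating family $h_{s,t}$ obtained by the standard cutoff between $k_s$ and $h_{s,1}$ inside $B^3(\overline r_1)$ inherits $f_0$-invariance since both endpoints are $f_0$-invariant and the cutoff depends only on the $f_0$-invariant radial coordinate; this gives (1)--(5), with (4) immediate as the deformation is supported in $B^3(\overline r_1)$ and (5) following because if $k_s$ is already $O(3)$-symmetric on $D^3(r)$ the construction does nothing there.

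\textbf{The conformal factors.} For (6), when $s\in K'$ we are given $w_s$ with $w_s\circ f_0=w_s$ and $w_s^4 k_s$ of positive scalar curvature. The plan is to run the same deformation but track the conformal factor: on $B^3\setminus B^3(\overline r_1)$ set $v_{s,t}=w_s$; inside, use that positivity of $8\Delta_{k_s}w_s-R_{k_s}w_s<0$ is an open condition stable under small $C^2$-perturbations of the metric, so choosing the deformation $h_{s,t}$ small enough in $C^2$ (which we may, by first shrinking $\overline r_1$, or rather by taking $r_1$ small and the rounding error controlled — note we have freedom in how small the perturbation is since Bryant-closeness improves as the scale shrinks) keeps $8\Delta_{h_{s,t}}w_s-R_{h_{s,t}}w_s<0$; then take $v_{s,t}$ to be a continuous interpolation (via convex combinations, using the results of Appendix~B of \cite{CarLi19}) between $w_s$ and a possibly-modified factor near the origin chosen so that $v_{s,t}$ is constant-on-fibers there, all performed $f_0$-equivariantly because $f_0$ normalises the relevant $O(3)$-action and $w_s$ is already $f_0$-invariant. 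The main obstacle I anticipate is \emph{not} a deep one: it is simply the careful coordination of the two cases of Lemma~\ref{lem:ReflSstructClassification} with the quantitative smallness needed to preserve the strict sign $8\Delta w-Rw<0$ through the homotopy, together with verifying that the rounding/averaging operators genuinely commute with the fixed reflection $f_0$ — but since $f_0$ here is the standard coordinate reflection and lies inside the structure group $O(3)$, this commutation is automatic from \cite[Lemma 5.13(d)]{BamKle19}, so the whole argument is a verbatim equivariant transcription of the non-equivariant one with no new analytic input.
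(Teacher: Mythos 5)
There is a genuine gap, and it sits exactly where you flagged the "main obstacle": the preservation of the strict inequality $8\Delta w - Rw<0$ through the rounding.

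First, the lemma carries \emph{no} hypothesis that $k_s$ is close to a Bryant soliton near the origin, nor any canonical-neighbourhood assumption: $(k_s)_{s\in K}$ is an arbitrary continuous family of metrics on $B^3$, constrained only by reflexivity and, over $K'$, by the existence of the PSC-conformal factors $w_s$. The Bryant-type geometry enters only at the point where the lemma is \emph{applied} (in the disk-removal construction of Theorem \ref{thm:DiskRemPartHom}, i.e.\ the analogue of Claim 7.26 in \cite{BamKle19}), so your step (i) starts from a hypothesis that is not available. Second, and more seriously, even for a metric that is approximately rotationally symmetric, replacing it by one \emph{exactly} compatible with the standard spherical structure on $D^3(r_1)$ is not a $C^2$-small perturbation: averaging over $O(3)$ on a ball of radius $r$ changes the metric by $O(r^2)$ in $C^0$ but by $O(1)$ at the level of second derivatives, hence the scalar curvature changes by a bounded, not small, amount, and shrinking $r_1$ does not help. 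So the claim that the \emph{same} $w_s$ satisfies $8\Delta_{h_{s,t}}w_s-R_{h_{s,t}}w_s<0$ after rounding fails in general; this is precisely the difficulty that forces the non-equivariant argument (Section 6.6 / Proposition 6.26 of \cite{BamKle19}) to go through the \emph{conformal exponential map}, constructing a new conformal factor $\phi$ by integrating a second-order ODE along radial curves so that the conformally modified metric is in normal form at the origin, and then interpolating the $2$-jets using the convexity of the cone $S_3^{+}$ of positive definite symmetric matrices (Claim 6.29 there).

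The paper's proof is a transcription of that conformal-exponential-map argument, with the only equivariant modifications being (a) the observation that the ODE-constructed factor $\phi$ automatically satisfies $\phi\circ f_0=\phi$ when the input metric is $f_0$-invariant, and (b) the replacement of $S_3$ and $S_3^{+}$ by the subspace of block-diagonal matrices commuting with $\mathrm{diag}(1,1,-1)$ and the corresponding (still convex) positive cone. Your equivariance bookkeeping — $f_0\in O(3)$ normalises the action, rounding/averaging operators commute with isometries by \cite[Lemma 5.13(d)]{BamKle19} — is correct and is indeed part of what makes the transcription work, but it does not by itself produce the conformal factors $v_{s,t}$ of item (6); for that you need the conformal normal form construction, not a smallness argument.
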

 

\begin{proof}
One can follow the same construction as in Section 6.6 of \cite{BamKle19}, based on the use of the \emph{conformal} exponential map. If $g$ is a Riemannian metric on $B^3$ such that $(B^3, g, f_0)$ is a reflexive 3-manifold, then the conformal factor $\phi$ constructed by integrating a second-order ODE along curves as explained at page 69 therein will patently satisfy the symmetry requirement that $\phi\circ f_0=\phi$ and thus at each step of the construction the extra discrete symmetry imposed by $f_0$ will also be respected. The only modifications that are needed to respect the equivariance requirement, specifically for the construction justifying Claim 6.29, are replacing the set $S_3$ consisting of $3\times 3$ real symmetric matrices by the subset $\tilde{S}_3$ of those having a block form

\[
\begin{pmatrix}\!
\begin{array}{c|c}
A_1 & \begin{matrix} 0 \\ 0 \end{matrix} \\\hline
\begin{matrix} 0 & 0 \end{matrix} & a_2
\end{array}\!
\end{pmatrix}
\]
for some $A_1\in S_2$ (where $S_2$ is analogously defined as the set of $2\times 2$ real symmetric matrices) and  $a_2\in\mathbb{R}$,
and the open cone $S^{+}_3$ of the positive definite matrices in $S_3$ by the corresponding open cone $\tilde{S}^{+}_3:=\tilde{S}_3\cap S^{+}_3$. Other than that, the argument goes through unaffected.
\end{proof}

We can then proceed with the proof of the main result of this appendix:

\begin{theorem}\label{thm:DiskRemPartHom}
Consider a reflexive partial homotopy 
\begin{equation}\label{eq:InputPartHomDiskRem}
        \left\{(Z^{\sigma}, (g^{\sigma}_{s,t})_{s\in\sigma, t\in [0,1]}, f^{\sigma}, (\psi^{\sigma}_s)_{s\in\sigma})\right\}_{\sigma\in \mK}
          \end{equation}    
at time $T\geq 0$ for a family of reflexive singular Ricci flows $(\mM^s)_{s\in K}$. Let then $K'$ be a closed subset of $K$.
Let further $\sigma\in\mathcal{K}$ be a fixed simplex and let 
\[
\left\{(\nu_{s,j}: D^3\to \mM^s_T)_{s\in\sigma}\right\}_{1\leq j\leq m_{\sigma}}
\]
be $m_{\sigma}\in\mathbb{N}_{\ast}=\left\{1,2,\ldots\right\}$ continuous families of embeddings such that the following statements hold true:
\begin{enumerate}
    \item  there exists $p_{\sigma}, q_{\sigma} \in\mathbb{N}$ with $2p_{\sigma}+q_{\sigma}=m_{\sigma}$ and:
    \begin{enumerate}
    \item if $j\leq 2p_{\sigma}$ then $\nu^{\sigma}_{s,j}(D^3)\cap \Fix(f^s)=\emptyset$ and $\nu^{\sigma}_{s,j+p_{\sigma}}=f^{s}_T\circ \nu^{\sigma}_{s,j+p_{\sigma}}$ for all $s\in\sigma$; 
    \item if $2p_{\sigma}+1\leq j\leq m_{\sigma}$ then $\nu^{\sigma}_{s,j}(D^3)\cap \Fix(f^s)\neq\emptyset$ and
     $\nu^{\sigma}_{s,j}\circ f_0 = f^s \circ \nu^{\sigma}_{s,j}$;
    \end{enumerate}
    \item for all $s\in\sigma$ and $j=1,\ldots, m_{\sigma}$ the pull-back through the embedding $\nu_{s,j}$ of $\mS^s$ coincides with the standard spherical structure of $D^3$,
    \item for all $s\in\sigma$ the images of these embeddings are pairwise disjoint, satisfy $\nu_{s,j}(D^3)\subset \psi^{\sigma}_s(\text{Int}(Z^{\sigma}))\cap U^s_{S2}$, and in addition for all $1\leq j\leq m_{\sigma}$ $\nu_{s,j}(D^3)\cap\psi^{\tau}_s(Z^{\tau})=\emptyset$ whenever $s\in\sigma\subsetneq \tau \in\mK$,
    \item if $\sigma$ is a maximal simplex of $\mathcal{K}$ then for any $\tau\in\mK, s\in\tau\cap\sigma$ and $j=1,\ldots, m_{\sigma}$ the image $\nu_{s,j}(D^3)$ does not contain an entire connected component of $\psi^{\tau}_s(Z^{\tau})$,
    \item the partial homotopy \eqref{eq:InputPartHomDiskRem} is reflexively PSC-conformal over every $s\in K'$ and for all $s\in\sigma \cap K'$ the Riemannian manifold
    \[
    (\psi^{\sigma}_s(Z^{\sigma})\setminus\cup_{j=1}^{m_{\sigma}}(\nu_{s,j}(B^3)),g'^{,s}_T)
    \]
    is reflexively PSC-conformal.
\end{enumerate}
Then there exists a reflexive partial homotopy
\begin{equation}\label{eq:OutputPartHomDiskRem}
        \left\{(\tilde{Z}^{\sigma}, (\tilde{g}^{\sigma}_{s,t})_{s\in\sigma, t\in [0,1]}, \tilde{f}^{\sigma}, (\tilde{\psi}^{\sigma}_s)_{s\in\sigma})\right\}_{\sigma\in \mK}
          \end{equation}    
at time $T\geq 0$ for $(\mM^s)_{s\in K}$ such that the following statements hold:
\begin{enumerate}[label=(\roman*)]
    \item if $\tau\in\mK, \tau\neq\sigma$, one has $\tilde{\psi}^{\tau}_s(\tilde{Z}^{\tau})=\psi^{\tau}_s(Z^{\tau})$ for all $s\in \tau$,
    \item $\tilde{\psi}^{\sigma}_s(\tilde{Z}^{\sigma})=\psi^{\sigma}_s(Z^{\sigma})\setminus\cup_{j=1}^{m_{\sigma}}\nu_{s,j}(B^3)$ for all $s\in\sigma$,
    \item the partial homotopy \eqref{eq:OutputPartHomDiskRem} is reflexively PSC-conformal over every $s\in K'$.
\end{enumerate}
\end{theorem}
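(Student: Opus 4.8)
The plan is to follow the structure of the non-equivariant disk-removal result (Proposition 7.10 in \cite{BamKle19}) and to split the collection of disks into three groups according to their interaction with the fixed locus, handling each group so that the symmetry is preserved throughout. The three groups are: the $2p_\sigma$ disks that come in twin pairs $\nu_{s,j}(D^3), \nu_{s,j+p_\sigma}(D^3) = f^s_T(\nu_{s,j}(D^3))$ disjoint from $\Fix(f^s)$; and the $q_\sigma$ disks that are centered on $\Fix(f^s)$ (so $\nu_{s,j}\circ f_0 = f^s\circ \nu_{s,j}$). For the first group, the interaction with the reflection is trivial in the sense that $f^s_T$ merely interchanges the two members of each pair, so one can run the non-equivariant construction on the first member, using the $\mR$-structure compatibility to shrink the embedded spherical $D^3$ down to a smaller one and to remove the open ball, and then transport the entire resulting family of metrics and embeddings to the twin disk by pushing forward through $f^s_T$. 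The consistency conditions (1)--(3) of Theorem~\ref{thm:ExtPartHom}'s predecessors and the axioms of Definition~\ref{def:PartialHom} are then symmetric by construction.

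The substantive part is the $q_\sigma$ disks meeting $\Fix(f^s)$. Here I would first invoke Lemma~\ref{lemma.SingFiberReflexivePointCase}: since the center $x'$ of such a disk is the tip (a singular point-fiber of the spherical structure $\mS^s$) and lies in $\Fix(f^s)$, the neighborhood $\nu_{s,j}(D^3)$ is, up to the embedding, a standard $B^3$ on which $f^s$ acts as $f_0$, with $\Fix(f^s)$ meeting each regular spherical fiber orthogonally along a circle and passing through the tip. Then I would apply Lemma~\ref{lem:UseConformalMap} with $K=\sigma$, $K'=\sigma\cap K'$, and $(k_s)_{s\in\sigma}$ the pulled-back metrics $(\nu_{s,j})^*(g'^{,s}_T)$ (which make $(B^3,k_s,f_0)$ reflexive), together with the pulled-back $f_0$-invariant conformal factors $w_s$ witnessing PSC-conformality over $\sigma\cap K'$ (these exist by hypothesis (5) and are $f_0$-invariant by Definition~\ref{def:ReflexivePSCconf}(0)). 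This produces, equivariantly, a homotopy $h_{s,t}$ of reflexive metrics that deforms $k_s$ to one compatible with the standard spherical structure on a smaller ball $D^3(r_1)$, fixed outside $D^3(\bar r_1)$, together with $f_0$-invariant conformal factors $v_{s,t}$ keeping positive scalar curvature over $K'$. Pulling these back through $\nu_{s,j}$ and interpolating with the identity away from the disks (using the cutoff $\nu$ and the standard results in Appendix B of \cite{CarLi19} on convex combinations of conformal factors) gives a modified reflexive partial homotopy in which each such disk now carries a round spherical region; one then excises the open ball $B^3(r_1)$ and applies Lemma~\ref{lemma.PSCconform.excision} (first bullet, $f\circ\mu_s=\mu_s\circ f_0$) to conclude that the complement remains reflexively PSC-conformal over $K'$.

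After the disks are processed, I would set $\tilde Z^\sigma$ to be $Z^\sigma$ with the corresponding open balls removed (the new boundary components being regular spherical fibers where $\rho$ is controlled, as guaranteed by hypothesis (2) and the scale bounds in Lemma~\ref{lem:BackwInductPreparation}(viii)), define $\tilde\psi^\sigma_s$ as the restriction of the modified $\psi^\sigma_s$, keep $\tilde f^\sigma = f^\sigma$, and leave all other simplices unchanged so that (i) holds. Condition (4) ensures no connected component of any overlapping $\psi^\tau_s(Z^\tau)$ is entirely destroyed, which is needed so that axioms (2) and (4) of Definition~\ref{def:PartialHom} survive; condition (3) ensures the removed balls sit in the interior and away from the images coming from higher-dimensional cofaces, so the nesting conditions (2)--(4) of Definition~\ref{def:PartialHom} are preserved. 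Verifying that \eqref{eq:OutputPartHomDiskRem} is genuinely a reflexive partial homotopy — i.e. checking each of the five axioms together with the equivariance clauses (the $f^\sigma$-invariance of $\tilde Z^\sigma$, the intertwining $\tilde\psi^\sigma_s\circ\tilde f^\sigma = f^s_T\circ\tilde\psi^\sigma_s$, and the $f^s$-invariance of the newly created spherical-fiber boundary pieces) — follows the bookkeeping of pages 85--88 of \cite{BamKle19}, with Lemma~\ref{lemma.PSCconform.extension} replacing Lemma 6.6 and Lemma~\ref{lemma.PSCconform.excision} replacing Lemma 6.12 wherever PSC-conformality of a region-with-a-ball-removed is asserted. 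The main obstacle I anticipate is precisely the equivariant version of Lemma~\ref{lem:UseConformalMap}'s application when the tip lies on $\Fix(f^s)$: one must be sure that the conformal exponential map construction of \cite{BamKle19} respects the $f_0$-symmetry at every stage — this is exactly what the block-diagonal restriction $\tilde S_3\subset S_3$ and $\tilde S_3^+\subset S_3^+$ in the proof of Lemma~\ref{lem:UseConformalMap} is designed to handle — and that the resulting smaller spherical ball and the excision are compatible with the reflexive spherical structure in the sense required by the boundary-collar clause (5) of Definition~\ref{def:PartialHom}.
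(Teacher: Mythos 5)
Your overall strategy is the paper's: split the disks into $f^s_T$-twin pairs disjoint from $\Fix(f^s)$, handled by running the non-equivariant construction on one member and transporting everything to the other by the involution, versus disks centered on the fixed locus, handled by the equivariant rounding of Lemma \ref{lem:UseConformalMap} and the excision Lemma \ref{lemma.PSCconform.excision}, with Lemma \ref{lemma.PSCconform.extension} replacing the non-equivariant PSC-conformal extension throughout. Two points, however, do not survive as written. First, you never distinguish whether $\sigma$ is a maximal simplex of $\mK$. The paper treats these cases separately: when $\sigma$ is \emph{not} maximal one performs no metric modification at all --- one merely restricts $Z^{\sigma}$, $f^{\sigma}$, $\psi^{\sigma}_s$ and $g^{\sigma}_{s,t}$ to the complement of the balls --- and the whole rounding machinery (your application of Lemma \ref{lem:UseConformalMap}) is invoked only in the maximal case, where hypothesis (4) is available to guarantee that no component of an overlapping $\psi^{\tau}_s(Z^{\tau})$ is swallowed.

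Second, and more seriously, ``leave all other simplices unchanged'' is incompatible with the consistency axioms (3) and (4) of Definition \ref{def:PartialHom} once you have deformed $g^{\sigma}_{s,t}$ near the disks. For a face $\tau\subsetneq\sigma$ and $s\in\tau$, the removed ball sits inside $\psi^{\tau}_s(Z^{\tau})$, so $g^{\tau}_{s,t}$ must be modified there compatibly with the rounded metric on $Z^{\sigma}$; moreover, to preserve continuity in the parameter, the modification must be carried out for all $s$ in an open neighbourhood $U$ of $\sigma$ in $K$ and interpolated back to the original data across $\partial U$. This is precisely why the paper's final definition replaces $g^{\tau}_{s,t}$ by $(\psi^{\sigma}_s)^{*}(\nu_s)_{*}h''_{s,t}$ on $(\psi^{\sigma}_s)^{-1}(\nu_s(D^3(1.99)))$ for \emph{every} $\tau\in\mK$ with $s\in U$, where $h''_{s,t}$ is obtained from the rounded family $h'_{s,t}$ by the equivariant inflationary reparametrisation $\Phi^{*}_{1-(1-r_2)\delta_1(s)\delta_2(t)}$ with $\varphi(x,u)=\varphi(f_0(x),u)$ --- a step (together with the equivariant extension of Claim 7.25 via Lemma \ref{lem:ExtensionFamilyMet}, and the equivariant construction of the extended embeddings $\overline{\nu}_s$ on $D^3(2)$ and of $\mu$ with $f^{\sigma}\circ\mu=\mu\circ f_0$) that your outline omits. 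Without propagating the modification to the neighbouring simplices in this way, axiom (3) fails on the overlaps.
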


\begin{remark}
Actually, we remark that the assumption that \emph{for all $s\in\sigma$ the images of these embeddings are pairwise disjoint} (also keeping in mind that \emph{for all $s\in\sigma$ and $j=1,\ldots, m_{\sigma}$ the pull-back through the embedding $\nu_{s,j}$ of $\mS^s$ coincides with the standard spherical structure of $D^3$}) implies that if $\nu_{s,i}(\left\{0\right\})\notin \Fix(f^s_T)$ (geometrically: the tip of the Bryant soliton does not belong to the fixed locus of the involution in question) then $\nu_{s,i}(D^3)\cap\Fix(f^s_T)=\emptyset$.

If instead $\nu_{s,i}(\left\{0\right\})\in \Fix(f^s_T)$ then the local picture is uniquely determined by Lemma \ref{lemma.SingFiberReflexivePointCase}. We wish to stress that such a characterisation does not allow a `mixed scenario' to occur: a standard connectedness argument shows that it cannot happen that for some values of $s\in\sigma$ one has $\nu_{s,i}(D^3)\cap\Fix(f^s_T)=\emptyset$ while for others $\nu_{s,i}(D^3)\cap\Fix(f^s_T)\neq\emptyset$ instead.
\end{remark}

\begin{proof}
First of all, we note how the proof of Proposition 7.21 in \cite{BamKle19} starts by reducing the problem to the case when $m_{\sigma}=1$ i.e. when there is only one disk to be removed. Due to our additional equivariance constraints, we need some care and have to perform certain preliminary adjustments, as we are about to describe. 

Based on the previous remark, cf. assumption (1) above, we will rather distinguish between case (1)(a) and case (1)(b).

In case (1)(a), we can reduce the discussion to the situation when \emph{a pair} of disks have to be removed. So, we will have two families of embeddings 
\[
(\nu_{s,1}: D^3\to \mM^s_T)_{s\in\sigma}, \ \ (\nu_{s,2}: D^3\to \mM^s_T)_{s\in\sigma}
\]
satisfying the equation $\nu_{s,2}=f^{s}_T\circ \nu_{s,1}$ for all $s\in\sigma$. We first focus our attention on one of these two families of embeddings (say: the first) and then construct:
\begin{enumerate}
\item an open neighborhood $U$ of (the support of) $\sigma$ in $K$, 
    \item a continuous family of \emph{extended} embeddings $(\overline{\nu}_{s,1}: D^3(2)\to \mM^s_T)_{s\in U}$ satisfying $(\overline{\nu}_{s,1})_{|D^3(1)}=\nu_{s,1}$ for all $s\in\sigma$ and whose images are still disjoint from $\Fix(f^s_T)$,
    \item an embedding $\mu_1: D^3(2)\to Z^{\sigma}$ satisfying $\overline{\nu}_s=\psi^{\sigma}_s\circ \mu_1$ for all $s\in\sigma$,
\end{enumerate}
such that all additional requirements listed at page 86 of \cite{BamKle19} are satisfied. Once this is accomplished, we simply set 
\[
\overline{\nu}_{s,2}:=f^{s}_T\circ \overline{\nu}_{s,1}, \ \text{for all} \ s\in U, \ \text{and} \
\mu_2 := f^{\sigma}\circ \mu_1
\]
and note that all such requirements will also by satisfied by (the same set) $U, (\overline{\nu}_{s,2}: D^3(2)\to \mM^s_T)_{s\in U}, \mu_2$.
At this stage, the problem is, so to say, \emph{decoupled}: since the whole construction by Bamler-Kleiner happens locally near the image of $\overline{\nu}$ (thus away from the symmetry locus) we follow the proof of Proposition 7.21 in \cite{BamKle19}, by taking care of  $\nu_{s,1}$ and then repeating the whole procedure for  $\nu_{s,2}$ using $(\overline{\nu}_{s,2})_{s\in U}$ in lieu of $(\overline{\nu}_{s,1})_{s\in U}$, $\mu_2$ in lieu of $\mu_1$ and using the involutions $f^{s}_T$ and $f^{\sigma}$ to replicate each operation on the other side of the symmetry locus.

Hence, our task is instead to handle the case when $m_{\sigma}=1$ and we have a single familiy of embeddings, henceforth denoted $(\nu_s: D^3\to \mM^s_T)_{s\in\sigma}$ whose image intersects the the symmetry locus, i.e.
\[
\nu_s(D^3)\cap \Fix(f^s_T)=\emptyset, \ \text{for all} \ s\in \sigma.
\]
Here, we will have to \emph{adapt} the construction by Bamler-Kleiner to ensure equivariance at each step. Note that the reduction to the case of a single disk to be removed relies on the use of Lemma \ref{lemma.PSCconform.extension} instead of Lemma 6.6 of \cite{BamKle19}.

We first design the continuous family of \emph{extended} embeddings $(\overline{\nu}_{s}: D^3(2)\to \mM^s_T)_{s\in U}$ satisfying $(\overline{\nu}_{s})_{|D^3(1)}=\nu_{s}$ as well as $\overline{\nu}_{s}=f^{s}_T\circ \overline{\nu}_{s}$ for all $s\in U$, where $U$ is a suitably small open neighborhood containing the support of $\sigma$. This is accomplished using the exponential map based at $p:=\nu_s(0)$: we can choose frames (hence coordinates) on $T_p \mM^s_T$, continuously depending on $s$, so that $f_0(\text{exp}^{-1}_p(\nu_s(x))=\nu_s(f_0(x))$. Hence, we can use these coordinates, and the exponential map, to define $\overline{\nu}_s$, which can be done accomodating properties (1),(2),(3),(4) at page 86 of \cite{BamKle19}. For the sake of notational simplicity we will from now onwards simply write $\nu_s$ in lieu of $\overline{\nu}_s$, for any $s\in U$.

Second, we design a map $\mu: D^3(2)\to Z^{\sigma}$ such that $f^{\sigma}\circ \mu = \mu\circ f_0$ and $\nu_s=\psi^{\sigma}_s\circ \mu$. This follows from the fact that, due to our equivariance assumptions, one can find a continuous family of diffeomorphisms $(\chi_s: Z^{\sigma}\to Z^{\sigma})_{s\in\sigma}$ satisfying $\chi_s \circ f^{\sigma}= f^{\sigma}\circ \chi_s$, that equal the identity near the boundary $\partial Z^{\sigma}$ and such that $\chi^{-1}_s\circ (\psi^{\sigma}_s)^{-1}\circ\nu_s$ is constant in $s$ (at any point).

Said that, exactly as in \cite{BamKle19}, one can reduce the problem to the case when:
\begin{itemize}
\item[] if $\tau_1, \tau_2 \in\mathcal{K}, \tau_1\subset \tau_2$ and $s\in \tau_1\cap U$ and $\mC$ is the closure if a component of $Z^{\tau_1}\setminus ((\psi^{\tau_1}_s)^{-1}\circ \psi^{\tau_2}_s)(Z^{\tau_2})$ with $\psi^{\tau_1}_s(\mC)\cap\nu_s(D^3(2))\neq\emptyset$, then Case (4)(a) of Definition \ref{def:PartialHom} applies,
\end{itemize}
we distinguish two cases, depending on whether the simplex $\sigma$ is maximal or not.

If $\sigma$ is not maximal, then one can simply set
\begin{equation}\label{eq:DiskRemDomMap}
    \tilde{Z}^{\sigma}:=Z^{\sigma}\setminus \mu (\text{Int}(D^3(1))), \ \ \tilde{f}^{\sigma}:=(f^{\sigma})_{|\tilde{Z}^{\sigma}} , \ \ \tilde{\psi}^{\sigma}_s:=(\psi^{\sigma}_s)_{|\tilde{Z}^{\sigma}},
\end{equation}
\begin{equation}\label{eq:DiskRemMetr}
    \tilde{g}^{\sigma}_{s,t}:=(g^{\sigma}_{s,t})_{|\tilde{Z}^{\sigma}}
\end{equation}
and otherwise, for all simplices $\tau\neq\sigma$
\begin{equation}\label{eq:DiskRemOthers}
    (\tilde{Z}^{\tau}, (\tilde{g}^{\tau}_{s,t})_{s\in\tau, t\in I} ,\tilde{f}_{\tau},(\tilde{\psi}^{\tau}_s)_{s\in\tau})=(Z^{\tau}, (g^{\tau}_{s,t})_{s\in\tau, t\in I} ,f_{\tau},(\psi^{\tau}_s)_{s\in\tau}).
\end{equation}    
The fact that such a definition determines a partial homotopy with the desired properties is checked exactly as at page 87-88 of \cite{BamKle19}, with the only \emph{caveat} of employing Lemma \ref{lemma.PSCconform.extension} in lieu of Lemma 6.6 therein.

In the maximal case, we will keep the definitions of $\tilde{Z}^{\tau}, \tilde{f}^{\tau}, \tilde{\psi}^{\tau}_s$ as above (for all $\tau\in\mK$, including the case $\tau=\sigma$), but need to re-design a continuous family of metrics $(\tilde{g}^{\tau}_{s\in\tau, t\in I})$ on $\tilde{Z}^{\tau}$ that defines (together with such other data) a partial homotopy at time $T$ for the given singular Ricci flow and associated $\mR$-structures, under the additional requirement that $(\tilde{Z}^{\tau}, (\tilde{g}^{\tau}_{s,t})_{s\in\tau, t\in I} ,\tilde{f}_{\tau},(\tilde{\psi}^{\tau}_s)_{s\in\tau})$ is reflexively PSC-conformal over every $s\in K'$. This does not only affect the data associated to the simplex $\sigma$, but also the nearby ones.

In following the argument of \cite{BamKle19} we note that Claim 7.24 remains true as it is stated, while Claim 7.25 should be slightly modified by additionally requiring the extended metrics $(h_{s,t})_{s\in U, t\in I}$ to be equivariant in the sense that $(f_{0})^{\ast}h_{s,t}=h_{s,t}$ for all $s\in U$ and $t\in I$. Its proof proceeds again by induction on the dimension of the simplices of $K$, and in the inductive step we appeal to Lemma \ref{lem:ExtensionFamilyMet} in lieu of Proposition 6.15 (in fact, when the intersection of $B^3(2)$ with $\nu_s(\psi^{\tau_j}_s(Z^{\tau_j}))$ is an annulus of width less than $1/100$ we can take any smooth extension compatible with the standard spherical structure and the equivariance constraint, which are both taken into account in the normal form given in Case (2) of Lemma \ref{lem:NormalForm}). 
For Claim 7.26 we again need to require the modified (rounded) metrics $(h'_{s,t})_{s\in U, t\in I}$ to be equivariant in the sense that $(f_{0})^{\ast}h'_{s,t}=h'_{s,t}$ for all $s\in U$ and $t\in I$ and, item (g), $(\psi^{\sigma}_s(Z^{\sigma}), k^{\sigma}_{s,t})$ to be reflexively PSC-conformal; in the proof one follows the argument of \cite{BamKle19} verbatim, with the fundamental change of appealing to Lemma \ref{lem:UseConformalMap} in lieu of Proposition 6.26 therein. We then construct the family of `inflationary' diffeomorphisms $\Phi: D^3(1.99)\times (0,1]\to D^3(1.99)$ subject to the additional requirement that $\Phi_u(x)=\varphi(x,u)x$ for some smooth function $\varphi$ satisfying $\varphi(x,u)=\varphi(f_0(x),u)$ for all $x\in D^3(1.99), u\in (0,1]$ and set
$h''_{s,t}=\Phi^{\ast}_{1-(1-r_2)\delta_1(s)\delta_2(t)}h'_{s,t}$ where the cutoff functions $\delta_1, \delta_2$ and the constant $r_2$ are chosen exactly as in \cite{BamKle19}. At the very end of the proof of Claim 7.29 we appeal to Lemma \ref{lemma.PSCconform.excision} in lieu of Lemma 6.12 therein.
Lastly, we declare
\[
\tilde{Z}^{\tau}:=
\begin{cases}
Z^{\tau}\setminus \mu (\text{Int}(D^3(1))) & \text{if} \ \tau=\sigma \\
Z^{\tau} & \text{otherwise},
\end{cases}
\ \ \tilde{f}^{\tau}=(f^{\tau})_{|\tilde{Z}^{\sigma}} , \ \ \tilde{\psi}^{\tau}_s:=(\psi^{\tau}_s)_{|\tilde{Z}^{\tau}},
\]
and we complete the definition of the partial homotopy by letting (for any $\tau\in\mK, s\in\tau, t\in I$)
\[
\tilde{g}^{\tau}_{s,t}:=\begin{cases}
g^{\tau}_{s,t} & \text{on} \ Z^{\tau}\setminus(\psi^{\sigma}_s)^{-1}(\nu_s(D^3(1.99))) \hspace{2mm} \text{if} \ s\in U \\
g^{\tau}_{s,t} & \text{on} \ Z^{\tau}  \hspace{44mm} \text{if} \ s\notin U \\
(\psi^{\sigma}_s)^{\ast}(\nu_s)_{\ast}h''_{s,t} & \text{on} \ (\psi^{\sigma}_s)^{-1}(\nu_s(D^3(1.99))) \hspace{11mm} \text{if} \ s\in U.
\end{cases}
\]
Thereby, the tuple $\left\{\tilde{Z}^{\tau}, (\tilde{g}^{\tau}_{s,t})_{s\in\tau, t\in I} ,\tilde{f}_{\tau},(\tilde{\psi}^{\tau}_s)_{s\in\tau})\right\}_{\tau\in\mK}$ defines a reflexive a partial homotopy with the desired properties. The fact that the equivariance constraints are satisfied follows from the constructions, and all other properties are checked exactly as in the non-equivariant case. 
\end{proof}

\bibliography{biblio}

\end{document}